\newcommand{\asref}[2]{GC-$($\ref{#1},#2$)$}
\newcommand{\Id}{\operatorname{Id}}
\newcommand{\Fix}{\operatorname{Fix}}
\newtheorem{lemma}{Lemma}[section]
\newtheorem{theorem}[lemma]{Theorem}
\newtheorem{proposition}[lemma]{Proposition}
\theoremstyle{definition}
\newtheorem{definition}[lemma]{Definition}
\newtheorem{remark}[lemma]{Remark}
\def\keywords{
    \vspace{1ex}
    \noindent
    \if@twocolumn
      \small{\bf  Keywords}\/---$\!$    \else
      \begin{center}\small\ {\bf Keywords}\end{center}\quotation\small
    \fi}
\def\endkeywords{\vspace{0.6em}\par\if@twocolumn\else\endquotation\fi
    \normalsize\rm}
\newcommand{\G}{\ensuremath{\mathcal G}}
\newcommand{\calA}{\ensuremath{\mathcal A}}
\renewcommand{\L}{\ensuremath{\mathcal L}}
\newcommand{\calN}{\ensuremath{\mathcal N}}
\DeclareMathOperator{\phg}{phg}
\DeclareMathOperator{\loc}{loc}
\DeclareMathOperator{\comp}{comp}
\DeclareMathOperator{\Int}{Int}
\newcommand{\mb}[1]{\ensuremath{\mathbb{#1}}}
\newcommand{\N}{{\mb{N}}}
\newcommand{\R}{{\mb{R}}}
\newcommand{\C}{{\mb{C}}}
\newcommand{\eps}{\varepsilon}
\newcommand{\M}{\ensuremath{\mathcal M}}
\newcommand{\T}{\ensuremath{\mathbb T}}
\newcommand{\D}{\ensuremath{\mathscr D}}
\let \Re \relax
\DeclareMathOperator{\Re}{Re}
\let \Im \relax
\DeclareMathOperator{\Im}{Im}
\newcommand{\ovl}[1]{\overline{#1}}
\DeclareMathOperator{\supp}{supp}
\DeclareMathOperator{\WF}{WF}
\DeclareMathOperator{\Char}{Char}
\DeclareMathOperator{\vect}{span}
\DeclareMathOperator{\Op}{Op}
\DeclareMathOperator{\id}{Id}
\renewcommand{\d}{\ensuremath{\partial}}
\newcommand{\eo}{{\mbox{\scriptsize $^e\!\!\slash_{\!o}$}}}
\newcommand{\E}{\mathscr E}
\newcommand{\mc}[1]{\mathcal{#1}}
\newcommand{\la}{\left\langle}
\newcommand{\ra}{\right\rangle}
\newcommand\bna{\begin{eqnarray*}} 
\newcommand\ena{\end{eqnarray*}}
\newcommand\bnan{\begin{eqnarray}} 
\newcommand\enan{\end{eqnarray}}
\newcommand\bnp{\begin{proof}} 
\newcommand\enp{\end{proof}}
\newcommand\bneq{\begin{eqnarray*}\left\lbrace \begin{array}{rcl}}
\newcommand\eneq{\end{array} \right.\end{eqnarray*}}
\newcommand\bneqn{\begin{eqnarray}\left\lbrace \begin{array}{rcl}}
\newcommand\eneqn{\end{array} \right.\end{eqnarray}}
\newcommand{\GS}{\mc{G}^{\Sigma}}
\newcommand{\ES}{\bar{\mc{E}}^{\Sigma}}
\newtheorem*{assume*}{\assumenumber}
\providecommand{\assumenumber}{}
\newenvironment{assume}[2]
 {%
  \renewcommand{\assumenumber}{Assumption GC-$($#1,#2$)$}%
  \begin{assume*}
  \protected@edef\@currentlabel{#1}%
 }
 {%
  \end{assume*}
 }
 \newtheorem{conjecture}{Conjecture}
\newcommand{\e}{\varepsilon}
\newcommand{\TM}{{^bT^*M}}
\newcommand{\TRM}{{^bT^*(\R\times M)}}
\newcommand{\TRMsig}{{^bT_{\R\times \Sigma_0}^*(\R\times M)}}
\newcommand{\sgn}{\operatorname{sgn}}
\newcommand{\Ell}{\operatorname{Ell}}
\numberwithin{equation}{section}
\title{Control from an Interior Hypersurface}
\author{Jeffrey Galkowski\footnote{Department of Mathematics, Stanford University, Stanford, CA, USA. Email: jeffrey.galkowski@stanford.edu}, Matthieu L\'eautaud\footnote{\'Ecole Polytechnique, Centre de Math\'ematiques Laurent Schwartz UMR7640,  91128 Palaiseau cedex France. 
Most of this research was done when the second author was in CRM, CNRS UMI 3457, Universit\'e de Montr\'eal, Case Postale 6128, Succursale Centre-Ville, Montr\'eal (QC) Canada H3C 3J7 and Universit\'e Paris Diderot, IMJ-PRG, UMR 7586, B\^atiment Sophie Germain, 75205 Paris Cedex 13 France. Email: matthieu.leautaud@polytechnique.edu.} 
}
\begin{document}

\maketitle
\begin{abstract}
We consider a compact Riemannian manifold $M$ (possibly with boundary) and $\Sigma \subset M\setminus \partial M$ an interior hypersurface (possibly with boundary). We study observation and control from $\Sigma$ for both the wave and heat equations. For the wave equation, we prove controllability from $\Sigma$ in time $T$ under the assumption $(\mc{T}GCC)$ that all generalized bicharacteristics intersect $\Sigma$ transversally in the time interval $(0,T)$. For the heat equation we prove unconditional controllability from $\Sigma$. As a result, we obtain uniform lower bounds for the Cauchy data of Laplace eigenfunctions on $\Sigma$ under $\mc{T}GCC$ and unconditional exponential lower bounds on such Cauchy data.
\end{abstract}

\setcounter{tocdepth}{1}
\tableofcontents

%%%%%%%%%%%%%%%%%%%%%%%%%%%%%%%%
% Section                      %
%%%%%%%%%%%%%%%%%%%%%%%%%%%%%%%%

\section{Introduction}

Let $(M,g)$ be a compact $n$ dimensional Riemannian manifold possibly with boundary $\d M$ and denote $\Delta_g$ the (non-positive) Laplace-Beltrami operator on $M$. We study the observability and controllability questions from interior hypersurfaces in $M$. 
 
{To motivate the more involved developments in control theory, let us start by stating (slightly informally) the counterpart of our observability/controllability results for lower bounds for eigenfunctions, i.e. solutions to 
 \begin{equation}
 \label{e:eigenfunction}
 (-\Delta_g -\lambda^2) \phi=0,\qquad \phi |_{\partial M}=0.
\end{equation}
 } For more precision, see Section~\ref{s:intro-eigfct}.
{
 \begin{theorem}
 \label{t:prelimEig}
Assume $M$ is connected and let $\Sigma$ be a nonempty interior hypersurface. Then there exists $c>0$ so that for all $\lambda \geq 0$ and $\phi \in L^2(M)$ solutions to~\eqref{e:eigenfunction}, we have
\begin{equation}
\label{e:genLow}
\|\phi|_{\Sigma}\|_{L^2(\Sigma)}+\|\partial_\nu \phi|_{\Sigma}\|_{L^2(\Sigma)} \geq c e^{-\lambda/c} \|\phi \|_{L^2(M)}.
 \end{equation}
Furthermore, if we assume that all generalized geodesics of some finite length cross $\Sigma$ transversally, then there is $c>0$ so that for all $\lambda \geq 0$ and $\phi \in L^2(M)$ solutions to~\eqref{e:eigenfunction}, we have
\begin{equation}
\label{e:dynLow}
\|\phi|_{\Sigma}\|_{L^2(\Sigma)}+\|\langle \lambda \rangle^{-1}\partial_\nu \phi|_{\Sigma}\|_{L^2(\Sigma)} \geq c  \|\phi\|_{L^2(M)} .
 \end{equation}
 \end{theorem}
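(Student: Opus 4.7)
The plan is to deduce Theorem~\ref{t:prelimEig} from the wave and heat observability inequalities from $\Sigma$ announced in the abstract and proved in the main body of the paper. In this reduction, \eqref{e:dynLow} will come from wave observability under the dynamical assumption $\mathcal{T}GCC$, while \eqref{e:genLow} will come from unconditional heat observability, equivalently from a spectral inequality for the Cauchy trace on $\Sigma$. In both cases, the eigenfunction estimate is obtained by substituting a particular solution built from $\phi$ into the appropriate observability inequality.

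For \eqref{e:dynLow}, the strategy is to apply the wave observability estimate to the solution $u(t,x)=e^{i\lambda t}\phi(x)$, which satisfies Dirichlet conditions on $\partial M$. The wave energy $E(u)$ is comparable to $\langle \lambda \rangle^2 \|\phi\|_{L^2(M)}^2$, the Dirichlet trace contributes a term comparable (after time integration) to $\|\phi|_\Sigma\|_{L^2(\Sigma)}^2$, and the Neumann trace contributes $\|\partial_\nu \phi|_\Sigma\|_{L^2(\Sigma)}^2$. The characteristic weight $\langle \lambda \rangle^{-1}$ on the normal derivative in \eqref{e:dynLow} matches the fact that $\partial_\nu$ is of order $1$ in frequency relative to the Dirichlet trace, which is precisely what balances the powers of $\lambda$ when one divides the resulting inequality through by $\langle\lambda\rangle^2$. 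The regime $\lambda$ bounded is then handled separately: if the Cauchy data of $\phi$ on $\Sigma$ vanished, Holmgren's theorem applied on each side of $\Sigma$ would force $\phi \equiv 0$, and a standard compactness/contradiction argument upgrades this to the uniform lower bound on any fixed bounded range of $\lambda$.

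For \eqref{e:genLow}, the plan is to rewrite the statement as the spectral inequality
\[
\|\phi\|_{L^2(M)} \leq C e^{C\lambda}\left(\|\phi|_\Sigma\|_{L^2(\Sigma)} + \|\partial_\nu \phi|_\Sigma\|_{L^2(\Sigma)}\right),
\]
and to derive it either directly from Carleman estimates for $-\Delta_g - \lambda^2$ with explicit $\lambda$-dependence, or equivalently from the unconditional heat observability from $\Sigma$ via the Lebeau--Robbiano machinery. In either case, one starts with a local Carleman estimate in a tubular neighborhood of $\Sigma$ controlling $\phi$ from its Cauchy data, then propagates across $M$ through a finite chain of Carleman estimates, using connectedness of $M$ to reach every point. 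The exponential loss $e^{C\lambda}$ is precisely what the Lebeau--Robbiano scheme produces and it encodes the Gevrey-$2$ regularization of the heat semigroup.

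The main obstacle is not in this reduction, which is essentially formal once the observability inequalities are available, but in the proof of those observability inequalities themselves. For the wave case, the delicacy lies in the fact that $\Sigma$ is an interior hypersurface: generalized bicharacteristics may be tangent to $\Sigma$ or reflect off $\partial M$ near $\Sigma$, and the $\mathcal{T}GCC$ condition has to be used microlocally to control Dirichlet and weighted Neumann traces simultaneously. For the heat case, the Carleman estimates need to be carried out with care near $\Sigma$ (especially when $\Sigma$ itself has boundary inside $M$), and propagation arguments must be compatible with the boundary of $M$. These microlocal and boundary issues are where the real work of the paper lies; once those estimates are in hand, Theorem~\ref{t:prelimEig} follows by the substitutions above.
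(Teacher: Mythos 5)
Your proposal is correct and follows essentially the same route as the paper: \eqref{e:dynLow} is the paper's Theorem~\ref{t:uniqueControl}, obtained by substituting $u(t,x)=e^{i\lambda t}\phi(x)$ into the wave observability inequality of Theorem~\ref{thm:observe}, and \eqref{e:genLow} is the paper's Theorem~\ref{t:expo-bound-eigenfunctions}, obtained by applying the Carleman-based global interpolation inequality for $-\partial_s^2-\Delta_g$ (Theorem~\ref{t:global-interp}) to $v(s,x)=e^{\lambda s}\phi(x)$, rather than passing through heat observability as an intermediate step. The one minor imprecision in your account of the first substitution is that the Dirichlet trace appears in the observability inequality in an $H^1$ norm including time, so after inserting $e^{i\lambda t}\phi$ it contributes $\langle\lambda\rangle^2\|\phi|_\Sigma\|_{L^2(\Sigma)}^2$ rather than $\|\phi|_\Sigma\|_{L^2(\Sigma)}^2$, and it is precisely this extra factor that places $\phi|_\Sigma$ and $\langle\lambda\rangle^{-1}\partial_\nu\phi|_\Sigma$ on the same footing after dividing both sides by $\langle\lambda\rangle^2$; also, your separate Holmgren argument for bounded $\lambda$ is not needed because the compactness--uniqueness step is already built into Theorem~\ref{thm:observe}.
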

 Here, we write $\langle \lambda\rangle: =(1+|\lambda|^2)^{1/2}$.
Generalized geodesics are usual geodesics of $g$ inside $\Int(M)$, and reflect on $\d M$ according to laws of geometric optics (see below).}
 As far as the authors are aware~\eqref{e:genLow} is the first general lower bound to appear for restrictions of Laplace eigenfunctions to hypersurfaces and~\eqref{e:dynLow} is the first uniform lower bound for such restrictions without either taking a full density subsequence of eigenfunctions or imposing restrictive assumptions on $M$. We will prove Theorem~\ref{t:prelimEig} in the process of studying controllability for the heat and wave equations from interior hypersurfaces. Because of this, we postpone further discussion of Theorem~\ref{t:prelimEig} {(including optimality of~\eqref{e:genLow} and~\eqref{e:dynLow})} to Section~\ref{s:intro-eigfct}.

We define interior hypersurfaces as follows:
\begin{definition}
\label{def:hypersurface}
We say that $\Sigma$ is a {\em hypersurface} of $M$ if there is $\Sigma_0$ a compact embedded submanifold of $M$ of dimension $n-1$, possibly with boundary, such that $\Sigma$ is the closure of an open subset of $\Sigma_0$. The manifold $\Sigma_0$ shall be referred to as an extension of $\Sigma$. 
\begin{itemize}
\item[-]We say that $\Sigma$ is an {\em interior hypersurface} if moreover $\Sigma \subset \Int(\Sigma_0) \subset \Int(M)$.
\item[-]We say that $\Sigma$ is a {\em compact interior hypersurface} if it is a compact embedded submanifold of $\Int(M)$ of dimension $n-1$, without boundary.
\item[-]We say that $\Sigma$ is cooriented if $\Sigma_0$ is (i.e. the normal bundle $T_{\Sigma_0} M/T\Sigma_0$ is an orientable vector bundle)\footnote{In case $M$ is oriented, note that $\Sigma_0$ is cooriented iff it is oriented. However, if $M$ is not orientable, $\Sigma_0$ might be orientable without being coorientable, and {\em vice versa}.}. If not mentioned, all hypersurfaces considered in this paper are assumed to be coorientable.
\end{itemize}
\end{definition}
\noindent 
In the particular case $\Sigma$ is a compact interior hypersurface, then it is an interior hypersurface with $\Sigma_0 = \Sigma$. 
Since  $M$ is endowed with a Riemannian structure, the coorientability assumption is equivalent to that of having a smooth global vector field $\d_\nu$ normal to $\Int(\Sigma_0)$. Note that the coorientability condition can be slightly relaxed, see the discussion in Section~\ref{s:finite-union} below.

\bigskip
Given an interior hypersurface $\Sigma$, {the main goal of this paper is to study} the controllability of some evolution equations with a control force of the form 
\begin{equation}
\label{e:Dist-cont}
f_0 \delta_\Sigma+f_1 \delta'_{\Sigma},
\end{equation} where the distributions $f_0\delta_\Sigma$ and $f_1\delta_{\Sigma}'$ are defined by
\begin{equation}
\label{e:defDist}
\langle f_0\delta_\Sigma,\varphi\rangle =\int_{\Sigma}f_0\varphi d\sigma,\qquad \langle f_1\delta'_\Sigma, \varphi\rangle =-\int_{\Sigma}f_1\partial_\nu \varphi  d\sigma .
\end{equation}
In this expression $\sigma$ denotes the Riemannian surface measure on $\Sigma$ induced by the metric $g$ on $M$. 
This contrasts with usual control problems for PDEs, for which the control function appears in the equation: 
\begin{itemize}
\item either as a localized right handside (distributed or internal control) $\mathds{1}_\omega f$, where $\omega$ is an open subset of $M$, and typically, the control function $f$ is in $L^2((0,T)\times \omega)$;
\item or, in case $\d M\neq \emptyset$, as a localized boundary term, e.g. under the form $u|_{\d M} = \mathds{1}_\Gamma f$, where $\Gamma$ is an open subset of $\d M$, and typically, the control function $f$ is in $L^2((0,T)\times \Gamma)$ (here, $u$ denotes the function to be controlled).
\end{itemize}
Concerning the wave equation, the main result is the Bardos-Lebeau-Rauch Theorem~\cite{BLR:92,BG:97} providing a necessary and sufficient condition for the exact controllability with such control forces (see also e.g.~\cite{DL:09,LL:16,LRLTT:16} for recent developments).
Concerning the heat equation, the question of null-controllability with internal or boundary control was solved independently by Lebeau-Robbiano~\cite{LR:95} and Fursikov-Imanuvilov~\cite{FI:96}.
The aim of the present paper is threefold:
\begin{itemize}
\item Formulating a well-posedness result as well as an analogue of the Bardos-Lebeau-Rauch Theorem, for the wave equation with control like~\eqref{e:Dist-cont} (see Section~\ref{s:intro-cont-wave}); 
\item Formulating an analogue of the Lebeau-Robbiano-Fursikov-Imanuvilov Theorem for the heat equation with control like~\eqref{e:Dist-cont} (see Section~\ref{s:intro-cont-heat});
\item Formulating general lower bounds for restrictions on $\Sigma$ of eigenfunctions on $M$ (see Theorem~\ref{t:prelimEig} above and Section~\ref{s:intro-eigfct}). These are analogues of the observability inequalities used to prove the above controllability statements and are of their own interest.
\end{itemize}

%%%%%%%%%%%%%%%%%%%%%%%%
\subsection{Controllability for the wave equation}
\label{s:intro-cont-wave}
In this section, we state our main result concerning the wave equation controlled by an interior hypersurface $\Sigma$, namely
\begin{equation}
\label{e:waveControl}
\begin{cases}\Box v=f_0 \delta_\Sigma+f_1 \delta'_{\Sigma}&\text{on }(0,T)\times \Int(M) , \\
v =0 &\text{on }(0,T)\times \d M, \\
(v,\partial_t v)|_{t=0}=(v_0,v_1)&\text{in } \Int(M) .
\end{cases}
\end{equation}
where $\Box$ denotes the D'Alembert operator on $\R\times M$, 
$$
\Box= \d_t^2 - \Delta_g .
$$
Before considering the control problem, we need to investigate conditions on $f_0,f_1$ under which the Cauchy problem of~\eqref{e:waveControl} is well-posed.
Both the well-posedness and the control statements require the introduction of some geometric/microlocal definitions.

For a pseudodifferential operator $P$ on $\R \times M$, we write
 $$
 \Char(P)=\{q\in T^*(\R\times M)\setminus 0\mid \sigma(P)(q)=0\}
 $$
 and $\sigma(P)$ denotes the principal symbol of $P$. In particular, writing $|\xi|_g = \sqrt{g(\xi,\xi)}$ the Riemannian norm of a cotangent vector, we are interested in
 $$
 \sigma(\Box)(t,x,\tau,\xi)  = -\tau^2 + |\xi|_g^2 ,\qquad  \Char(\Box) = \{(t,x,\tau, \xi)\in T^*(\R\times M)\setminus 0 \mid  |\xi|_g^2 = \tau^2\} .
 $$
 Next, we define the glancing and the elliptic sets for $\Box$ above $\Sigma$ as
\begin{equation}
\label{e:squid}
 \begin{gathered}
 \G = \Char(\Box) \cap \iota(T^*(\R\times \Int(\Sigma)),\qquad
 \G^\Sigma = \iota^{-1}( \G) , \\
 \mc{E}=\{q\in T^*(\R\times M)\setminus 0\mid \sigma(\Box)(q)>0\}\cap \iota(T^*(\R\times \Int(\Sigma))),\qquad \mc{E}^{\Sigma}=\iota^{-1}(\mc{E}),
 \end{gathered}
 \end{equation}
 where \begin{equation}
 \label{e:def-iota}
 \iota:T^*(\R\times \Int(\Sigma_0))\hookrightarrow T^*(\R\times M)
 \end{equation} 
 is the inclusion map. A more explicit expression of these sets in normal coordinates is given in Section~\ref{s:intro-glancing-sets} below.
 
 Roughly speaking, the elliptic set $\mc{E}$ (resp. $\mc{E}^\Sigma$) consists in points $(t,x, \tau,\xi)$ in the whole phase space (resp. in tangential phase space to $\Sigma$) such that $x \in \Int(\Sigma)$ in which no ``ray of optics'' for $\Box$ lives. The glancing set $\G$ (resp. $\G^\Sigma$) consists in points $(t,x, \tau,\xi)$ in the whole phase space (resp. in tangential phase space to $\Sigma$) such that $x \in \Int(\Sigma)$, through which ``rays of optics'' for $\Box$ may pass {\em tangentially}. The complement of $\G \cup \mc{E}$ in the characteristic set of $\Box$ above $\R\times \Int(\Sigma)$ is the set of point  through which ``rays of optics'' for $\Box$ may pass {\em transversally}.
 
 \begin{definition}
\label{d:TGCC}
We say that $(\Sigma,T)$ satisfies the transverse geometric control condition ($\mc{T}$GCC) if every generalized bicharacteristic of $\Box$ intersects $T^*_{(0,T) \times \Int(\Sigma)}(\R\times M) \setminus \mc{G}$. We say that $\Sigma$ satisfies $\mc{T}$GCC if $(\Sigma, T)$ does for some $T>0$.
\end{definition}
Definition~\ref{d:TGCC} roughly says that $\mc{T}$GCC is satisfied if every ray of geometric optics intersects $\Int(\Sigma)$ in the time interval $(0,T)$ at a {\em transversal point}, i.e. a non-tangential point. 
In case $\d M = \emptyset$, "generalized bicharacteristics" are only bicharacteristics of $\Box$ and project on geodesics on $M$ (see e.g. \cite[Section~2.2]{DLRL:14}).
For a precise definition of generalized bicharacteristics in case $\d M \neq \emptyset$ {(and the geodesics of $M$ have no contact of infinite order with $\d M$}), we refer to~\cite[Section~3]{MS:78}, \cite[Chapter 24]{Hoermander:V3}, or~\cite[Section~1.3.1]{LRLTT:16}.
For a simple example of a compact manifold $M$ and a compact interior hypersurface $\Sigma$ satisfying $\mc{T}$GCC, see Figure~\ref{f:1}.

\begin{figure}
\centering
\includegraphics{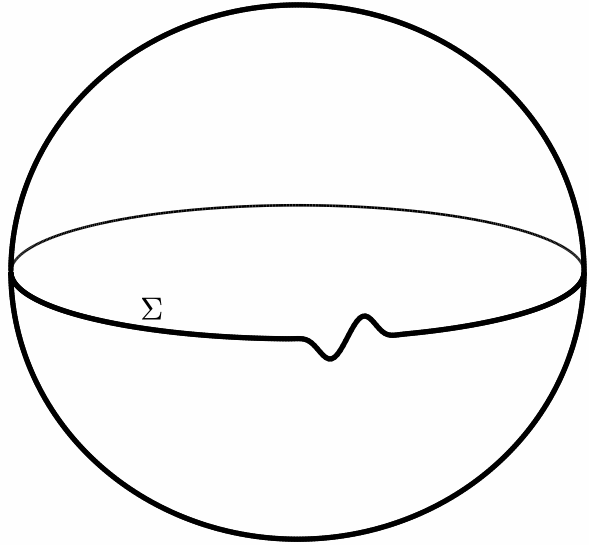}
\caption{\label{f:1} Here $M$ is the standard unit sphere $S^2$ in $\R^3$ and $\Sigma$ gives an example where $(\Sigma, 2\pi+\eps)$ satisfies $\mc{T}$GCC for $\eps>0$.}
\end{figure}

With these definitions in hand, our well-posedness result may be stated as follows.
 \begin{theorem}
\label{t:well-posedness-simple}
For all $(v_0,v_1)\in L^2(M) \times H^{-1}(M)$ and for all $f_0 \in H^{-1}_{\comp}(\R_+^* \times \Int(\Sigma))$ and $f_1 \in L^2_{\comp}( \R_+^* \times \Int(\Sigma))$ such that 
\bnan
\label{e:WF-condition}
\WF^{-\frac12}(f_0) , \WF^{\frac12}(f_1) \cap \G^\Sigma = \emptyset ,
\enan
there exists a unique  $v \in L^2_{\loc}(\R_+^*; L^2(M))$ solution of \eqref{e:waveControl}.
\end{theorem}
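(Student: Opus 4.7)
I would prove Theorem~\ref{t:well-posedness-simple} by the transposition method. Fix $T'>0$ so that the supports of $f_0,f_1$ lie in $(0,T')$, and for each $g \in L^2((0,T')\times M)$ let $w$ be the unique finite-energy solution of the backward adjoint problem
\begin{equation*}
\Box w = g,\qquad w|_{\R\times \partial M}=0,\qquad (w,\partial_t w)|_{t=T'}=0 .
\end{equation*}
Standard theory gives $w \in C^0([0,T'];H^1_0(M))\cap C^1([0,T'];L^2(M))$ with $\|w\|_{L^\infty H^1_0}+\|\partial_t w\|_{L^\infty L^2}\lesssim \|g\|_{L^2}$. A formal integration by parts against a putative smooth solution of~\eqref{e:waveControl} yields the transposition identity
\begin{equation*}
\int_0^{T'}\!\!\int_M v\, \overline{g} \,dxdt = \langle f_0, w|_\Sigma\rangle - \langle f_1,\partial_\nu w|_\Sigma\rangle + \langle v_1, w|_{t=0}\rangle_{H^{-1},H^1_0} - \langle v_0,\partial_t w|_{t=0}\rangle_{L^2} ,
\end{equation*}
which I take as the \emph{definition} of $v$ as an antilinear functional of $g$. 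It suffices to bound the right-hand side by $C\|g\|_{L^2}$, so that $v$ exists in $L^2((0,T');L^2(M))$ by Riesz representation; letting $T'\to\infty$ then yields $v\in L^2_{\loc}$.

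The initial-data terms are immediately controlled by $(\|v_0\|_{L^2}+\|v_1\|_{H^{-1}})\|g\|_{L^2}$ via the energy estimate on $w$. The substantive point is the surface terms, which I would handle using a \emph{microlocal hidden-regularity} estimate: for any finite-energy $w$ with $\Box w \in L^2$, the Cauchy data on $\Sigma$ satisfy $w|_\Sigma\in H^1$ and $\partial_\nu w|_\Sigma\in L^2$ microlocally on $T^*(\R\times\Int(\Sigma))\setminus \GS$. This is proved by elliptic regularity on $\ES$, where $w$ is microlocally $H^2$, and by the Lasiecka-Lions-Triggiani/Tataru hidden-regularity argument on the hyperbolic component $T^*(\R\times\Sigma)\setminus(\GS\cup \ES)$: at transversal characteristic points one decomposes $w$ microlocally into two half-wave pieces propagating along bicharacteristics crossing $\Sigma$ transversally, whose traces are as regular as $w$ itself (with $\partial_\nu$ losing one derivative rather than the naive $3/2$).

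With these estimates in hand, choose a tangential pseudodifferential cutoff $\chi$ supported in a small conic neighborhood of $\GS$ where $\WF^{-\frac12}(f_0)$ and $\WF^{\frac12}(f_1)$ are empty. Splitting
\begin{equation*}
\langle f_0, w|_\Sigma\rangle = \langle \chi f_0, w|_\Sigma\rangle + \langle (1-\chi) f_0, w|_\Sigma\rangle ,
\end{equation*}
the first term pairs $\chi f_0\in H^{-\frac12}$ (by the wavefront hypothesis) with $w|_\Sigma\in H^{\frac12}$ from the standard trace theorem, while the second pairs $(1-\chi)f_0\in H^{-1}$ with the microlocally $H^1$ part of $w|_\Sigma$ off $\GS$ supplied by the hidden-regularity estimate. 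A strictly parallel splitting of $\langle f_1,\partial_\nu w|_\Sigma\rangle$ matches $\chi f_1 \in H^{\frac12}$ with $\partial_\nu w|_\Sigma\in H^{-\frac12}$ and $(1-\chi)f_1\in L^2$ with the microlocally $L^2$ trace of $\partial_\nu w$ off $\GS$. Summing, the transposition functional is $L^2$-continuous in $g$. Uniqueness is automatic from the definition, and a routine verification (testing against $\varphi\in C_c^\infty((0,T')\times \Int(M))$) shows that $v$ so defined is a distributional solution of~\eqref{e:waveControl}.

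\textbf{Main obstacle.} The technical heart is the microlocal hidden-regularity estimate off $\GS$, particularly in the boundary case $\partial M\neq\emptyset$: the underlying propagation argument must be performed with respect to Melrose-Sjöstrand generalized bicharacteristics reflecting from $\partial M$, and one must ensure that glancing on $\partial M$ (away from $\Sigma$) does not spoil the transversal trace regularity at $\Sigma$. Once this microlocal trace bound is in place, the rest of the argument reduces to the quantitative bookkeeping of the pseudodifferential decomposition described above.
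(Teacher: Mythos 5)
Your proposal follows essentially the same route as the paper: transposition against the backward adjoint problem, with the continuity of the transposition functional reduced to a microlocal trace estimate for the free wave equation that is sharp ($H^{1/2}\times H^{-1/2}$) overall and improved ($H^1\times L^2$) away from the glancing set $\mc{G}^\Sigma$, matched to the wavefront hypothesis on $(f_0,f_1)$ by a pseudodifferential cutoff near $\mc{G}^\Sigma$. The paper (Proposition~\ref{prop: regularity-waves}, Lemmas~\ref{l:H-loc-comp}--\ref{l:dual}, Theorem~\ref{t:well-posedness}) packages exactly this splitting in the spaces $H^{s,k}_{\comp,\mc{G}^\Sigma}$ / $H^{s,k}_{\loc,\mc{G}^\Sigma}$ and their duality, which is what you do informally with $\chi$ and $1-\chi$.

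One comment on your ``main obstacle.'' You worry that the hidden-regularity estimate requires a propagation argument along Melrose--Sj\"ostrand generalized bicharacteristics and that glancing at $\partial M$ could spoil the transversal trace regularity at $\Sigma$. This is not where the difficulty is. Since $\Sigma$ is an \emph{interior} hypersurface, the microlocal trace estimate is purely local: one works in Fermi coordinates in a neighborhood $V_\eps\subset \Int(M)$ of $\Sigma_0$, uses only that $\Box w = g$ and $w\in H^1$ there, and proves the bound region by region (elliptic estimate, the factorization $\Box \sim (D_{x_1}-\Lambda_-)(D_{x_1}+\Lambda_+)$ plus a one-dimensional hyperbolic energy estimate at transversal points, and a scaled cutoff argument at glancing points). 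No propagation from $\partial M$ to $\Sigma$ is involved; the boundary enters only through the standard energy estimate $\|w\|_{H^1((0,T')\times M)}\lesssim\|g\|_{L^2}$. The Melrose--Sj\"ostrand flow and its continuity are needed in the \emph{observability} argument of Section~\ref{s:obs-cont-wave}, not in well-posedness. Also, for what it is worth, the overall $H^{-1/2}$ regularity of $\d_\nu w|_\Sigma$ is not ``the standard trace theorem'' ($\d_\nu w$ is merely $L^2$); it comes from the ellipticity of $\Box$ on the conormal bundle $N^*(\R\times\Sigma_0)$, as in Lemma~\ref{l:restrict}.
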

We refer e.g. to~\cite[Definition~1.2.21]{Lerner:10} for a definition of the $H^s$ wavefront set $\WF^s$ of a distribution. The wavefront condition states roughly that $(f_0, f_1)$ should have improved (namely $H^{-\frac12} \times H^{\frac12}$) microlocal regularity near the glancing set $\G^\Sigma$ (when compared to overall the $H^{-1}(\R\times \Sigma)\times L^2(\R\times \Sigma)$ regularity) for the Cauchy problem to be well-posed. A more precise version of this result is given in Theorem~\ref{t:well-posedness} below (where, in particular, the meaning of ``solution'' is made precise in the sense of transposition, see~\cite{Lio:88}).
This wavefront set condition on $f_0,\,f_1$ is far from sharp because we use a very rough analysis of solutions to the free wave equation near $\mc{G}$. A more detailed analysis near $\mc{G}$, similar to that in \cite{Ga16}, would yield sharper regularity requirements.

\bigskip
With this well-posedness result and the definition of $\mc{T}$GCC, we now give a sufficient condition for the null-controllability of~\eqref{e:waveControl} from $\Sigma$. 
\begin{theorem}
\label{thm:control}
Assume that the geodesics of $M$ have no contact of infinite order with $\d M$ and that $(\Sigma,T)$ satisfies $\mc{T}$GCC.
Then for any $(v_0,v_1)\in L^2(M)\times H^{-1}(M)$ there exist $(f_0,f_1)\in H^{-1}_{\comp}((0,T) \times \Int(\Sigma))\times L^2_{\comp}((0,T) \times \Int(\Sigma))$ with 
$$\WF(f_0),\WF(f_1)\cap  (\G^\Sigma\cup \mc{E}^\Sigma) = \emptyset ,
$$
 so that the solution to \eqref{e:waveControl} has $v\equiv 0$ for $t\geq T$. 
 \end{theorem}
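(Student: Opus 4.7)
The strategy is the standard HUM duality. Null-controllability of~\eqref{e:waveControl} with data $(v_0,v_1)\in L^2\times H^{-1}$ by controls of the prescribed class is equivalent, by transposition, to an observability inequality for the free wave equation: we must show there is $C>0$ such that every solution $u\in C(\R;H^1_0(M))\cap C^1(\R;L^2(M))$ of $\Box u = 0$, $u|_{\d M}=0$, satisfies
\begin{equation*}
\|(u(0),\d_t u(0))\|_{H^1_0\times L^2}^2 \leq C\big(\|B_0(u|_\Sigma)\|^2_{L^2((0,T)\times\Sigma)}+\|B_1(\d_\nu u|_\Sigma)\|^2_{L^2((0,T)\times\Sigma)}\big),
\end{equation*}
for suitable pseudodifferential cutoffs $B_0,B_1$ supported in $(0,T)\times\Int(\Sigma)$ whose essential phase-space supports lie in the ``transversal'' region, namely the complement of $\G^\Sigma\cup\mc{E}^\Sigma$. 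The microlocalization of $B_0,B_1$ is precisely the dual side of the wavefront restriction imposed on $(f_0,f_1)$.

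To prove the observability I would follow the usual two-step scheme. For the \emph{high-frequency} part I argue by contradiction using microlocal defect measures: given a unit-energy sequence $(u_n)$ whose observations tend to zero, extract a nonnegative defect measure $\mu$ on $b$-phase space. By standard arguments $\mu$ is carried by $\Char(\Box)$, and Melrose--Sj\"ostrand propagation (applicable thanks to the no-infinite-order-contact assumption on $\d M$) shows $\mu$ is invariant under the generalized bicharacteristic flow. A microlocal trace computation identifies the restriction of $\mu$ to the hyperbolic region $T^*(\R\times\Int(\Sigma))\setminus\G^\Sigma$ with the joint defect of the boundary traces $(u_n|_\Sigma,\d_\nu u_n|_\Sigma)$, so the vanishing observation forces $\mu$ to vanish at every transversal point of $T^*_{(0,T)\times\Int(\Sigma)}(\R\times M)\setminus\G$. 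The hypothesis $\mc{T}$GCC says every generalized bicharacteristic crosses such a point in time $(0,T)$, and flow-invariance then forces $\mu\equiv 0$, contradicting unit mass. The remaining \emph{low-frequency} contribution, produced as a compact remainder of the defect-measure argument, is absorbed by a standard compactness-uniqueness step, whose core input is the unique continuation statement ``$u|_{(0,T)\times\Int(\Sigma)}=\d_\nu u|_{(0,T)\times\Int(\Sigma)}=0\Rightarrow u\equiv 0$,'' itself a consequence of H\"ormander's uniqueness theorem applied across $\Sigma$ combined with propagation of regularity.

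With observability in hand, HUM produces, for every target $(v_0,v_1)\in L^2\times H^{-1}$, a control $(f_0,f_1)$ realized explicitly as the traces of the minimizer of the dual quadratic functional; the microlocalization of the observations by $B_0,B_1$ translates, by duality, into the wavefront-set condition $(\WF(f_0)\cup\WF(f_1))\cap(\G^\Sigma\cup\mc{E}^\Sigma)=\emptyset$, and well-posedness of the resulting transposed equation is guaranteed by Theorem~\ref{t:well-posedness-simple}. I expect the principal technical difficulty to be microlocal: propagating the defect measure through reflections on $\d M$ requires the $b$-calculus Melrose--Sj\"ostrand machinery, and, just as subtly, the microlocal trace calculation on $\Sigma$ that converts vanishing observations into the vanishing of $\mu$ on transversal rays must be genuinely uniform up to $\G^\Sigma$. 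These trace estimates are the same non-trivial analytic input that underlies Theorem~\ref{t:well-posedness-simple} and that motivates the wavefront-set formulation of both the well-posedness and the control statement.
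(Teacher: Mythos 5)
Your overall architecture -- observability by contradiction using microlocal defect measures and Melrose--Sj\"ostrand flow invariance, a compactness-uniqueness step for the low frequencies, and a duality argument to produce the control -- is indeed the route the paper takes (via Proposition~\ref{tempThm}, Lemma~\ref{lemma: N(T) = 0}, Theorem~\ref{thm:observe}, and Theorem~\ref{thm:control2}). However, the phrase ``standard HUM duality'' conceals the main difficulty in the final step, and this is a genuine gap. The transposition solution $v$ of~\eqref{e:waveControl} is only shown to be in $L^2((0,T);L^2(M))$ (Theorem~\ref{t:well-posedness}), \emph{not} in $C^0([0,T];L^2(M))\cap C^1([0,T];H^{-1}(M))$, so the value $v(T)$ (or $v|_{t\ge T}$) is not defined pointwise in time. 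The classical HUM argument -- minimize a conjugate functional over data $(u_T,\tilde u_T)$ at $t=T$ of the adjoint wave equation, take the traces of the minimizer as the control, and verify $v(T)=0$ by testing against such data -- cannot be carried out directly, because you cannot test against data at $t=T$. The paper circumvents this by working on an extended interval $(0,T_1)$ with $T_1>T$, using as test functions forcing terms $F\in L^2([T,T_1]\times M)$ for a backward inhomogeneous problem, so that ``$v\equiv 0$ on $[T,T_1]$'' has a distributional meaning. Your proposal, as written, would stall here.

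A second, smaller gap concerns the \emph{$C^\infty$} wavefront-set claim on $(f_0,f_1)$. The duality argument naturally produces, for each fixed $N$, controls $(f_{0,N},f_{1,N})$ with finite microlocal regularity $H^N$ near $\G^\Sigma\cup\mc E^\Sigma$, because the observability inequality is formulated with a finite-order weak-norm remainder (the $\bar H^{-N}$ terms in Theorem~\ref{thm:observe}). To conclude $\WF(f_i)\cap(\G^\Sigma\cup\mc E^\Sigma)=\emptyset$, one has to show that the control does not depend on $N$, by exploiting the density of the nested microlocal spaces $H^{-1,k}_{\comp,\ES}\subset H^{-1,N}_{\comp,\ES}$ for $k>N$; this is a real step, not just ``duality.'' Finally, as a more minor point, the trace argument you sketch -- identifying the restriction of $\mu$ to the hyperbolic region with a joint trace-defect measure of $(u_n|_\Sigma,\partial_\nu u_n|_\Sigma)$ -- is a valid-in-principle route but differs from the paper, which avoids trace measures entirely: it factorizes $\Box$ near a transversal point (Lemma~\ref{l:factor}), uses the hyperbolic Cauchy problem across $\Sigma$ to get a quantitative estimate of $u$ near the two emanating rays in terms of the traces (Lemma~\ref{l:mushrooms}), and then reads off that $\mu$ vanishes near those rays. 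The factorization approach is more self-contained and, being quantitative, also allows removing the contradiction argument entirely when $\partial M=\emptyset$.
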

 
Here, $\WF$ stands for the usual $C^\infty$ wavefront set.
Theorem \ref{thm:control} follows from an observability inequality given in Theorem \ref{thm:observe} below. 

Of course, it is classical to check that a necessary condition for controllability to hold is that all generalized bicharacteristics intersect $T^*_{(0,T) \times \Sigma}(\R\times M)$. As for the well-posedness problem, the issue of rays touching $\R\times \Sigma$ only at points of $\G^\Sigma$ is very subtle, and will be addressed in future work. See the discussion in Section~\ref{s:weak-TGCC} below.

 \subsection{Controllability from a hypersurface for the heat equation}
\label{s:intro-cont-heat}
 We next consider the controllability of the heat equation from a hypersurface, namely
 \begin{equation}
\begin{cases}
\label{e:heat-control}
(\d_t -\Delta)v = f_0 \delta_\Sigma +  f_1 \delta_\Sigma'   &\text{on }(0,T)\times \Int(M), \\
v =0 &\text{on }(0,T)\times \d M, \\
v|_{t=0}= v_0 &\text{in } \Int(M) .
\end{cases}
\end{equation}
Well-posedness in the sense of transposition follows from the standard parabolic estimates, and is proved in Section~\ref{s:WP:heat}. 
We only state a null-controllability result for~\eqref{e:heat-control}.
 \begin{theorem}
\label{t:control-heat}
Suppose $M$ is connected and $\Sigma$ is any nonempty interior hypersurface.
Then there exist $C,c>0$ such that for all $T>0$ and all $v_0 \in H^{-1}(M)$, there exist $f_0, f_1 \in L^2((0,T) \times \Sigma)$ with
$$
\| f_0 \|_{L^2((0,T)\times\Sigma)} + \| f_1 \|_{L^2((0,T)\times\Sigma)} \leq C e^{\frac{c}{T}}\|v_0\|_{H^{-1}(M)} ,
$$
such that the solution $v$ of~\eqref{e:heat-control} satisfies $v|_{t=T} = 0$.
\end{theorem}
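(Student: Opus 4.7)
The plan is the classical Lebeau--Robbiano strategy, the keystone being a spectral inequality for eigenfunction sums observed via their Cauchy data on $\Sigma$. By the Hilbert Uniqueness Method, Theorem~\ref{t:control-heat} is equivalent to an observability inequality of the form
\begin{equation*}
\|\psi(0)\|_{H^1_0(M)}^2 \leq C e^{\frac{2c}{T}} \int_0^T \bigl( \|\psi|_\Sigma\|_{L^2(\Sigma)}^2 + \|\partial_\nu \psi|_\Sigma\|_{L^2(\Sigma)}^2 \bigr) \, dt ,
\end{equation*}
for $\psi$ a smooth solution of the backward heat equation $(\partial_t + \Delta_g)\psi = 0$ on $(0,T)\times \Int(M)$ with Dirichlet boundary conditions. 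Parabolic regularization makes both traces on the right-hand side well defined for $t<T$ and dualises against the $H^{-1}$ norm of $v_0$.

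The key lemma to establish is a Cauchy-data spectral inequality: there exists $K>0$ such that for every $\mu \geq 1$ and every finite linear combination $u = \sum_{\lambda_j \leq \mu} a_j \phi_j$ of Dirichlet eigenfunctions,
\begin{equation*}
\|u\|_{L^2(M)} \leq K e^{K\mu} \bigl(\|u|_\Sigma\|_{L^2(\Sigma)} + \|\partial_\nu u|_\Sigma\|_{L^2(\Sigma)}\bigr) .
\end{equation*}
To obtain this I would lift to the augmented cylinder $(-1,1)_s \times M$ and consider $F(s,x) := \sum_{\lambda_j \leq \mu} a_j \lambda_j^{-1} \sinh(\lambda_j s) \phi_j(x)$, which solves the elliptic Dirichlet problem $(\partial_s^2 + \Delta_g)F = 0$ with $F|_{s=0} = 0$ and $\partial_s F|_{s=0} = u$. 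A global elliptic Carleman estimate on the cylinder, with a weight that propagates from a small neighbourhood of $\{0\}\times \Int(\Sigma)$ to all of $\{0\}\times M$ (feasible because $M$ is connected and $\Int(\Sigma)\neq\emptyset$), delivers an interpolation bound
\begin{equation*}
\|\partial_s F|_{s=0}\|_{L^2(M)} \leq K e^{K\mu}\bigl(\|F\|_{L^2((-1,1)\times\Sigma)} + \|\partial_\nu F\|_{L^2((-1,1)\times\Sigma)}\bigr) .
\end{equation*}
The $L^2$-orthogonality of the $\phi_j$ together with the elementary bound $|\sinh(\lambda_j s)/\lambda_j| \leq Ce^\mu$ for $\lambda_j \leq \mu$ and $|s|\leq 1$ then converts the right-hand side into $Ke^{K\mu}(\|u|_\Sigma\|_{L^2(\Sigma)} + \|\partial_\nu u|_\Sigma\|_{L^2(\Sigma)})$, and the left-hand side is $\|u\|_{L^2(M)}$ by construction. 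The single-frequency case of this inequality is exactly the unconditional bound \eqref{e:genLow} of Theorem~\ref{t:prelimEig}.

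With the spectral inequality in hand, the Lebeau--Robbiano telescoping proceeds verbatim. Partition $[0,T]$ into alternating active and passive intervals of length $\tau_k \simeq T\,2^{-k}$. On the $k$-th active interval one steers the low-frequency projection $\Pi_{\mu_k} v$ to zero with an $L^2((0,T)\times\Sigma)$ control of cost $\lesssim e^{K\mu_k}$, obtained by inverting the HUM operator restricted to $\mathrm{Range}(\Pi_{\mu_k})$, where the spectral inequality provides invertibility with this quantitative bound; on the $k$-th passive interval the high-frequency part decays by $e^{-\mu_k^2 \tau_k/2}$. Choosing $\mu_k = 2^k$ ensures $\mu_k^2\tau_k \gg \mu_{k+1}$, so the successive control norms are summable and the total cost is $\lesssim e^{c/T}\|v_0\|_{H^{-1}(M)}$, the $H^{-1}$ regularity of the datum being absorbed in the first passive interval by parabolic smoothing. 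The main obstacle is the spectral inequality itself: the required exponential dependence on $\mu$ forces the use of a quantitative global Carleman estimate rather than a qualitative unique continuation argument, and the simultaneous presence of both traces $u|_\Sigma$ and $\partial_\nu u|_\Sigma$ in the right-hand side --- which matches exactly the Cauchy-data control \eqref{e:Dist-cont} --- is essential since $\Sigma$ is non-characteristic for the augmented operator $\partial_s^2 + \Delta_g$, so both Cauchy data are simultaneously needed and available in the Carleman step.
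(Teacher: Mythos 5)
The core of your argument is the Cauchy-data spectral inequality
\[
\|u\|_{L^2(M)} \leq K e^{K\mu}\bigl(\|u|_\Sigma\|_{L^2(\Sigma)} + \|\partial_\nu u|_\Sigma\|_{L^2(\Sigma)}\bigr),\qquad u=\textstyle\sum_{\lambda_j\leq\mu}a_j\phi_j,
\]
and the step you propose to obtain it from the Carleman/interpolation bound does not work. After lifting to the cylinder with $F(s,x)=\sum a_j\lambda_j^{-1}\sinh(\lambda_j s)\phi_j(x)$, the interpolation inequality observes $F$ and $\partial_\nu F$ on the set $(-1,1)\times\Sigma$ (this is forced: $\{0\}\times\Sigma$ has codimension $2$ in the cylinder, and elliptic Carleman estimates observe from open sets or hypersurfaces, never from codimension-$2$ submanifolds). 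You then invoke ``$L^2$-orthogonality of the $\phi_j$'' to convert $\|F\|_{L^2((-1,1)\times\Sigma)}$ into $e^{K\mu}\|u|_\Sigma\|_{L^2(\Sigma)}$, but orthogonality of the $\phi_j$ holds on $M$, not on $\Sigma$. The restrictions $\phi_j|_\Sigma$ are not orthogonal in $L^2(\Sigma)$, so $F(s,\cdot)|_\Sigma=\sum_j a_j\lambda_j^{-1}\sinh(\lambda_j s)\,\phi_j|_\Sigma$ and $u|_\Sigma=\sum_j a_j\phi_j|_\Sigma$ are genuinely different linear combinations of nonorthogonal vectors, and there is no general control of one by the other. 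The best one extracts from the interpolation inequality is an observability estimate for the \emph{elliptic evolution over the full $s$-interval} with data in $E_\lambda\times E_\lambda$, which is strictly weaker than the single-slice spectral inequality you need for the telescoping à la Lebeau--Zuazua.

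This is exactly the obstruction the paper anticipates: it explicitly revisits the \emph{original} Lebeau--Robbiano method rather than the simplified one, noting the latter ``relies on a stronger spectral inequality'' that is not available here. The actual route in Section~\ref{s:controlHeat} is: (i) the interpolation inequality of Theorem~\ref{t:global-interp}, applied to the elliptic evolution with data $(v_0,v_1)\in E_\lambda\times E_\lambda$, yields Lemma~\ref{l:obs-ell-lambda} --- observability of the elliptic evolution with observation on $(-S,S)\times\Sigma$, cost $Ce^{c\lambda}$; (ii) an Ervedoza--Zuazua transmutation kernel (Lemma~\ref{l:elliptic-parabolic}) converts this into the low-frequency \emph{parabolic} observability with cost $Ce^{c\lambda+c/T}$, bypassing the spectral inequality entirely; (iii) Miller's refinement (Lemma~\ref{e:BF-HF-abstract}) then performs the high/low-frequency iteration. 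Your telescoping scheme in the last paragraph is fine in outline, but it presupposes the spectral inequality. To repair the proof you would replace step (ii) by the transmutation argument and give up the idea of a single-slice spectral inequality, or else prove the spectral inequality by a completely different mechanism --- which, as far as the paper's technology goes, is open.
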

Note that we also provide an estimate of the cost of the control as $T\to0^+$, similar to the one in case of internal/boundary control~\cite{FI:96,Miller:10}.

\subsection{Eigenfunction Restriction Bounds}
\label{s:intro-eigfct}
{As usual, the above two control results (or rather, the equivalent observability estimates) have related implication concerning eigenfunctions, stated in Theorem~\ref{t:prelimEig} above.
%Namely, denoting by $\phi_j$  the eigenfunctions associated to the eigenvalues $\lambda_j^2$,
%\bnan
%\label{e:eigenfcts}
%(-\Delta_g-\lambda_j^2) \phi_j =0, \quad \phi_j|_{\d M} = 0, \quad (\phi_j , \phi_k)_{L^2(M)} = \delta_{jk} , \quad \lambda_j \geq 0,
%\enan
%counterparts of Theorem~\ref{t:control-heat} and~\ref{thm:control} can be written respectively as 
%\bnan
% \|\phi_j|_{\Sigma}\|_{L^2(\Sigma)}+\|\lambda_j^{-1} \partial_\nu \phi_j|_{\Sigma} \|_{L^2(\Sigma)} \geq C e^{-c\lambda_j} \quad \text{ if }M \text{ connected, } \Sigma \neq \emptyset , 
% \label{e:eigenfct-estime-expo}  \\
%\|\phi_j|_{\Sigma}\|_{L^2(\Sigma)}+\|\lambda_j^{-1} \partial_\nu \phi_j|_{\Sigma} \|_{L^2(\Sigma)} \geq C  \quad \text{ if }\Sigma \text{ satisfies $\mc{T}$GCC}.
% \label{e:eigenfct-estime-TGCC}  
%\enan
We now formulate these results under the (stronger) form of resolvent estimates. Below, we write $\langle \lambda\rangle: =(1+|\lambda|^2)^{1/2}$.
}
\begin{theorem}[Universal lower bound for eigenfunctions]
\label{t:expo-bound-eigenfunctions}
Assume $M$ is connected and $\Sigma$ is a nonempty interior hypersurface.
Then there exist $C,c>0$ so that for all $\lambda \geq 0$ and all $u \in H^2(M)\cap H^1_0(M)$ we have
\begin{equation}
\label{e:unique}
\| u \|_{L^2(M)}\leq Ce^{c\lambda}\big( \|u|_{\Sigma}\|_{L^2(\Sigma)}+\|\la \lambda \ra^{-1} \partial_\nu u|_{\Sigma}\|_{L^2(\Sigma)} + \|(-\Delta_g-\lambda^2) u\|_{L^2(M)} \big).
\end{equation}
\end{theorem}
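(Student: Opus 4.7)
The plan is to prove \eqref{e:unique} by a Lebeau--Robbiano type argument \cite{LR:95}, reducing the semiclassical lower bound for Helmholtz quasimodes to a standard interpolation inequality for an elliptic operator on a product space. Set $U(s,x)=\cosh(\lambda s)u(x)$ on $(-S,S)_s\times M$; a direct computation gives
$$
PU:=(-\partial_s^2-\Delta_g)U=\cosh(\lambda s)f,\qquad f:=(-\Delta_g-\lambda^2)u,
$$
together with the Dirichlet condition $U|_{(-S,S)\times\partial M}=0$ and the Cauchy traces $(U,\partial_\nu U)|_{(-S,S)\times\Sigma}=\cosh(\lambda s)(u|_\Sigma,\partial_\nu u|_\Sigma)$. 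Since $(-S,S)\times\Sigma$ is a non-characteristic interior hypersurface for the elliptic operator $P$, conditional smallness propagates off $\Sigma$; bounding $U$ at $s=0$ will recover the desired bound on $u$, because $\|u\|_{L^2(M)}\leq S^{-1/2}\|U\|_{L^2((-S/2,S/2)\times M)}$.

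The second step is to establish, for the elliptic operator $P$, an interpolation inequality
$$
\|U\|_{H^1((-S/2,S/2)\times M)}\le C\mc{D}^\alpha\|U\|_{H^1((-S,S)\times M)}^{1-\alpha},\qquad \mc{D}:=\|U|_\Sigma\|_{L^2}+\|\partial_\nu U|_\Sigma\|_{L^2}+\|PU\|_{L^2},
$$
with $\alpha\in(0,1)$ and $C$ independent of $\lambda$. This is classical: a local Carleman estimate across $\Sigma$ (non-characteristic for $P$, and centered at an interior point of $\Sigma$ well inside $\Sigma_0$ to avoid $\partial\Sigma$) yields a three-ball inequality, and iterating along a finite cover reaches any point of $(-S/2,S/2)\times M$; the Dirichlet boundary $\partial M$ and the artificial caps $\{s=\pm S/2\}$ are handled by a boundary Carleman estimate compatible with $U|_{\partial M}=0$, exploiting the connectedness of $M$.

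For the conclusion, substituting back $U=\cosh(\lambda s)u$, elliptic regularity and Poincar\'e yield the crude bound $\|U\|_{H^1((-S,S)\times M)}\le C_Se^{\lambda S}\langle\lambda\rangle\|u\|_{L^2(M)}$, while $\mc{D}\le C_Se^{\lambda S}\bigl(\|u|_\Sigma\|_{L^2}+\|\partial_\nu u|_\Sigma\|_{L^2}+\|f\|_{L^2}\bigr)$. Plugging these into the interpolation inequality together with the $S^{-1/2}$ bound above, and applying Young's inequality $ab\le\varepsilon a^{1/(1-\alpha)}+\varepsilon^{-(1-\alpha)/\alpha}b^{1/\alpha}$ to pass to a sum (optimizing $S$ against $\alpha$), one absorbs the $\|u\|_{L^2(M)}$ factor on the left and obtains $\|u\|_{L^2(M)}\le Ce^{c\lambda}(\|u|_\Sigma\|_{L^2}+\|\partial_\nu u|_\Sigma\|_{L^2}+\|f\|_{L^2})$. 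The extra $\langle\lambda\rangle^{-1}$ weight on $\partial_\nu u|_\Sigma$ that appears in \eqref{e:unique} is recovered by running the whole argument at semiclassical scale $h=\langle\lambda\rangle^{-1}$, i.e.\ applying the Carleman estimates to $-h^2\partial_s^2-h^2\Delta_g$; the semiclassical boundary Carleman estimate weights the normal trace by $h$, so $\|\partial_\nu u|_\Sigma\|_{L^2}$ is replaced by $\|\langle\lambda\rangle^{-1}\partial_\nu u|_\Sigma\|_{L^2}$ on the right-hand side.

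The main obstacle is constructing Carleman weights that cover all of $M$ starting from an interior point of $\Sigma$ and propagating up to $\partial M$ with exponent uniform in $\lambda$. In particular, the boundary Carleman estimate near $\partial M$ requires a weight whose normal derivative has the right sign, compatible with $U|_{\partial M}=0$, and the iterated chain of interior estimates must be assembled so that the exponent $\alpha$ is independent of $\lambda$; these ingredients are standard but must be set up carefully to produce a single universal exponential rate $e^{c\lambda}$.
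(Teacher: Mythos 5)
Your proposal follows essentially the same route as the paper: conjugate to the elliptic operator $Q=-\partial_s^2-\Delta_g$ on $(-S,S)\times M$ via $v(s,x)=e^{\lambda s}u(x)$ (you use $\cosh(\lambda s)u(x)$, which works equally well), apply a global interpolation/propagation-of-smallness inequality obtained by combining a local Carleman estimate across $\Sigma$ from \cite{LR:97} with the propagation estimate of \cite[Section~3]{LR:95} that handles both $\partial M$ and the artificial caps $\{s=\pm S\}$, and then translate back to $u$ by dividing by a power of $\|u\|_{L^2(M)}$ (equivalently, Young's inequality). This matches Theorem~\ref{t:global-interp} and the subsequent proof.

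One remark: the semiclassical rescaling you invoke to recover the $\langle\lambda\rangle^{-1}$ weight on $\partial_\nu u|_\Sigma$ is unnecessary and would actually require verifying uniformity of the iterated three-ball constants under rescaling. Since $\langle\lambda\rangle$ is sub-exponential, writing $\|\partial_\nu u|_\Sigma\|_{L^2}=\langle\lambda\rangle\,\|\langle\lambda\rangle^{-1}\partial_\nu u|_\Sigma\|_{L^2}$ and absorbing the extra $\langle\lambda\rangle$ factor into $e^{c\lambda}$ gives the stated estimate \eqref{e:unique} directly, which is what the paper does.
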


As far as the authors are aware, estimates \eqref{e:genLow}-\eqref{e:unique} are the first general lower bounds to appear for restrictions of eigenfunctions. Moreover, these estimates are sharp  in the sense that simultaneously neither the growth rate $e^{c\lambda}$ nor the presence of both $u$ and $\partial_\nu u$ can be improved in general. This is demonstrated by the following example.
\begin{proposition}
\label{prop:counterexample-revolution}
Consider the manifold 
$$
M = [-\pi, \pi ] \times \T^1, 
$$
with variables $(z, \theta)$, endowed with the warped product metric
$$
g(z, \theta) = dz^2 + R(z)^2 d\theta^2 .
$$
Assume that $R$ is smooth, that 
$$
R(z)=R(-z), \quad  R(z)\geq 1\text{ for all }z \in [0, \pi] , \quad  R(0)=1, \quad R(\frac{\pi}{2})=\sqrt{5} , \quad R(\pi)= \frac{1}{\sqrt{2}}. 
$$
Let $\Sigma=\{z=0\}\times \T^1 \subset M$.
Then, there exist $C,c >0$ and sequences $\lambda_j^{\eo}\to + \infty$ and $\phi_j^{\eo} \in L^2(M)$ such that 
 $$(-\Delta-(\lambda_j^{\eo})^2) \phi_j^{\eo}=0,\quad \|\phi_j^{\eo}\|_{L^2(M)}=1,\quad \phi_j^{\eo}(\pm \pi)=0,$$
with
$$
\d_\nu \phi_j^e|_{\Sigma} = 0 , \quad  \| \phi_j^e|_{\Sigma}\|_{L^2(\Sigma)} \leq C e^{-c \lambda_j^e}, 
\quad \text{ and } \quad 
\phi_j^o |_{\Sigma} = 0 , \quad  \| \d_\nu \phi_j^o|_{\Sigma}\|_{L^2(\Sigma)} \leq C e^{-c \lambda_j^o} .
$$
\end{proposition}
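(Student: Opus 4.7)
Since $M$ is a surface of revolution I would separate variables: seek eigenfunctions $\phi_n(z,\theta) = u_n(z) e^{in\theta}$, which converts $(-\Delta_g - \lambda^2)\phi = 0$ to the 1D Sturm-Liouville problem
\[
u_n'' + \frac{R'}{R} u_n' + \left(\lambda^2 - \frac{n^2}{R^2}\right) u_n = 0, \qquad u_n(\pm \pi) = 0.
\]
The Liouville substitution $v = R^{1/2} u_n$ converts this to Schr\"odinger form
\[
-v'' + W_n(z) v = \lambda^2 v, \qquad W_n(z) = \frac{n^2}{R(z)^2} + V_0(z),
\]
with $V_0$ a smooth, $n$-independent potential built from $R$ and its derivatives. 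This change of unknown is an $L^2$ isometry (up to a $\sqrt{2\pi}$ factor from $\theta$), and using $R(0)=1$, $R'(0)=0$ we have $u_n(0) = v(0)$ and $u_n'(0) = v'(0)$. Because $R$ is even, reflection commutes with the operator; by Sturm-Liouville theory the eigenfunctions are simple and, together with oscillation, strictly alternate between even and odd parity.

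For large $n$, $W_n$ is a symmetric double well: bottom of depth $n^2/5 + O(1)$ near $z = \pm\pi/2$ (where $R=\sqrt{5}$), barrier of height $n^2 + O(1)$ at $z = 0$ (where $R=1$), and confining walls of height $2n^2 + O(1)$ near $z = \pm\pi$ (where $R=1/\sqrt{2}$). Placing two smooth bump trial functions with disjoint supports near $\pm \pi/2$ into the quadratic form gives Rayleigh quotients $\leq n^2/5 + C$. The min-max principle then yields two eigenvalues $\mu_1(n) < \mu_2(n)$ of $-\partial_z^2 + W_n$ with $\mu_j(n) \leq (1-\delta)n^2$ for some $\delta > 0$ independent of $n$, and the parity alternation above forces $\mu_1(n)$ to be even and $\mu_2(n)$ to be odd. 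Taking square roots and reindexing furnishes the sequences $\lambda_j^e, \lambda_j^o \to \infty$ and eigenfunctions $\phi_j^e, \phi_j^o$; parity alone already gives $\partial_\nu \phi_j^e|_\Sigma = 0$ and $\phi_j^o|_\Sigma = 0$ exactly.

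The remaining exponential bounds $|v(0)| \lesssim e^{-c n}$ (even) and $|v'(0)| \lesssim e^{-c n}$ (odd) come from a one-dimensional Agmon/Carleman estimate across the classically forbidden interval $[-z_0, z_0]$, on which $W_n - \lambda_n^2 \geq c^2 n^2$ uniformly in $n$. I would conjugate the ODE by a weight $e^{\Phi(z)}$ with $\Phi$ piecewise linear, $\Phi(\pm z_0)=0$, $|\Phi'| = c' n$ for some $c'<c$, multiply by $e^{2\Phi}v$ and integrate by parts on $[-z_0, z_0]$. The crucial point is that parity forces the $z=0$ boundary contribution $v(0)v'(0)$ to vanish identically, while the boundary terms at $\pm z_0$ are handled by the routine a priori bound $\|v\|_{H^2} \leq C n^2 \|v\|_{L^2}$ coming from the ODE. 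A 1D Sobolev trace then gives
\[
|v(0)|^2 + n^{-2}|v'(0)|^2 \leq C n^{\alpha} e^{-2c' n z_0} \|v\|_{L^2}^2,
\]
and the polynomial prefactor is absorbed into a slightly smaller exponential. Translating back via $\|\phi_j^\bullet|_\Sigma\|_{L^2(\Sigma)} = \sqrt{2\pi}\, |v(0)|$, $\|\partial_\nu \phi_j^\bullet|_\Sigma\|_{L^2(\Sigma)} = \sqrt{2\pi}\, |v'(0)|$, and $\lambda_j^\bullet \asymp n$, this yields the stated exponential bounds. The main technical obstacle is keeping the Agmon constant uniform in $n$ while controlling $v(0)$ and $v'(0)$ simultaneously; it is precisely the parity symmetry that eliminates the $z=0$ boundary contributions, which would otherwise coincide with the very quantities one is trying to bound.
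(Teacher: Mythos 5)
Your proof is correct and follows the paper's overall strategy: separate variables, apply the Liouville transform $v = R^{1/2}u$ to bring the one-dimensional problem to Schr\"odinger form with semiclassical double-well potential $1/R^2$, and close with one-dimensional Agmon estimates across the barrier at $z=0$, where the parity symmetry of $R$ gives both the exact vanishing of one trace and the exponential smallness of the other. You differ from the paper in two sub-steps, both in a more elementary direction. To produce eigenvalues near $n^2/5$, you use min-max with disjoint bump trials near $\pm\pi/2$, where the paper invokes the semiclassical Weyl law to show the energy window $[1/5,1/2]$ is nonempty for small $h=1/n$. To split by parity, you appeal to Sturm--Liouville simplicity and nodal alternation on the full symmetric interval $(-\pi,\pi)$, where the paper works on the half-line $(0,\pi)$ with explicit Neumann or Dirichlet conditions at $z=0$ and glues the two copies by the reflection Lemma~\ref{l:flip}. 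Your variational route is self-contained and avoids citing Weyl asymptotics; the paper's route yields a whole band of eigenvalues of each parity rather than just the lowest pair, though that extra generality is not needed here. The Agmon step is essentially the same in both proofs: the barrier value $V(0)=1$ strictly exceeds the admissible energies, creating a forbidden neighborhood of $z=0$ that is uniform in $n$, and the parity constraint at $z=0$ eliminates the interior boundary term $v(0)v'(0)$ that would otherwise obstruct the estimate.
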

\noindent This result is proved in Appendix \ref{s:sharpUnique}.

We expect that the symmetry in this example is the obstruction for removing one of the traces in the right handside of \eqref{e:genLow}, and formulate the following conjecture.
\begin{conjecture}
Let $(M,g)$ be a Riemannian manifold and $\Sigma$ an interior hypersurface with {\em positive definite second fundamental form}. Then there exists $C,c,\lambda_0>0$ so that for all $(\lambda, \phi) \in [\lambda_0,\infty) \times L^2(M)$ satisfying~\eqref{e:eigenfunction}, we have
$$\|\phi\|_{L^2(M)}\leq Ce^{c\lambda}\|\phi |_{\Sigma}\|_{L^2(\Sigma)},\quad \text{ and } \quad \|\phi \|_{L^2(M)}\leq Ce^{c\lambda}\|\lambda^{-1}\partial_\nu \phi|_{\Sigma}\|_{L^2(\Sigma)}.$$ 
\end{conjecture}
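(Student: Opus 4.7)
The plan is to establish the conjecture by combining a local semiclassical Carleman estimate near $\Sigma$, taking advantage of the strict convexity assumption, together with the global propagation already used to prove Theorem~\ref{t:expo-bound-eigenfunctions}. The heuristic is that positive definiteness of the second fundamental form is exactly the geometric condition that allows one to solve Hörmander's (pseudo)convexity condition for $-\Delta-\lambda^2$ across $\Sigma$, so that a single Cauchy trace controls $\phi$ in a tubular neighborhood of $\Sigma$.

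First, I would work in Fermi coordinates $(x',x_n)$ near $\Sigma$, with $\Sigma = \{x_n=0\}$ and $x_n$ the signed normal distance. In these coordinates the metric reads $g=dx_n^{2}+h_{ij}(x',x_n)dx^{i}dx^{j}$, and $-\Delta_g$ takes the form $D_{x_n}^{2}-iH(x)D_{x_n}+R(x,D_{x'})$ where $R$ is the Laplacian on the level set $\{x_n=\text{const}\}$ and $\partial_{x_n}h_{ij}|_{x_n=0}$ encodes the second fundamental form $\mathrm{II}$. I then set $h=\langle\lambda\rangle^{-1}$ and look for a Carleman weight of the form $\psi(x)=x_n+\tfrac{\alpha}{2}x_n^{2}+\beta |x'-x_0'|^{2}$ with $\alpha,\beta>0$ to be chosen.

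The key computation is the pseudoconvexity condition for $P_h:=-h^{2}\Delta_g-1$ on $\Sigma$. A direct symbolic calculation shows that, restricted to the characteristic set of $P_h$ above $\Sigma$, the Poisson bracket $\{p_\psi,\overline{p_\psi}\}/(2i)$ is, up to a positive factor, a non-negative quadratic form in the tangential momenta $\xi'$ whose quadratic part is $\mathrm{II}(\xi',\xi')+\alpha|\xi'|^{2}_{h}$ plus lower-order pieces controlled by $\beta$; positivity of $\mathrm{II}$ allows one to choose $\alpha,\beta$ making this strictly positive uniformly in $\lambda$. Once strong pseudoconvexity across $\Sigma$ is verified, the boundary Carleman estimate of Lebeau-Robbiano/Tataru type yields, for $\chi$ a cutoff supported in a tubular neighborhood $U$ of a point of $\Sigma$,
\begin{equation*}
h\int_{U}e^{2\psi/h}|\phi|^{2}\,dx\ \leq\ C\int_{\{\chi'\neq 0\}}e^{2\psi/h}|\phi|^{2}\,dx\ +\ C\int_{\Sigma} e^{2\psi/h}\bigl(|\phi|_\Sigma|^{2}+h^{2}|\partial_\nu\phi|_\Sigma|^{2}\bigr)\,d\sigma.
\end{equation*}
The crucial point is that, because $\nabla\psi|_\Sigma$ is purely normal, a careful bookkeeping of the boundary terms (integrating by parts the crossed terms against $\partial_{x_n}\psi>0$) allows one to trade the Neumann contribution for the Dirichlet one, yielding an estimate involving only $\|\phi|_\Sigma\|_{L^{2}}$ on the right (and symmetrically an estimate involving only $h\|\partial_\nu\phi|_\Sigma\|_{L^{2}}$, for the second inequality, by choosing a weight with opposite sign of $\partial_{x_n}\psi$).

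Finally, the commutator term supported in $\{\chi'\neq 0\}$ lives where $\psi$ is strictly smaller than its maximum over $U$, so it can be absorbed at the cost of a factor $e^{c/h}=e^{c\lambda}$, giving a local bound $\|\phi\|_{L^{2}(V)}\leq C e^{c\lambda}\|\phi|_\Sigma\|_{L^{2}(\Sigma)}$ in a one-sided neighborhood $V$ of $\Sigma$. Combining this with the hypersurface-free global Lebeau-Robbiano propagation argument (the same one that upgrades the local bound near $\Sigma$ to a global one in the proof of Theorem~\ref{t:expo-bound-eigenfunctions}) produces the desired global inequality $\|\phi\|_{L^{2}(M)}\leq Ce^{c\lambda}\|\phi|_\Sigma\|_{L^{2}(\Sigma)}$, and the analogous argument handles the Neumann case. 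The main obstacle, and the reason this remains a conjecture, is the boundary-term algebra: verifying quantitatively that the non-negativity on $\Sigma$ coming from positive $\mathrm{II}$ is strong enough to dominate \emph{both} the crossed Dirichlet-Neumann and the pure-Neumann boundary contributions produced by conjugation, uniformly in the semiclassical parameter $h=\langle\lambda\rangle^{-1}$, without having to re-insert the other trace. This is the delicate step that distinguishes the conjecture from the standard two-trace Calderón-type estimate underlying Theorem~\ref{t:expo-bound-eigenfunctions}.
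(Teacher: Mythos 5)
This statement is not proved in the paper: it is presented explicitly as a \emph{Conjecture}, and the only remark the authors make in its support is that positive definiteness of the second fundamental form rules out the reflection-symmetry construction that produces the counterexamples in Proposition~\ref{prop:counterexample-revolution}. There is therefore no ``paper proof'' to compare against, and your proposal should be judged on its own terms as an outline of a possible attack.

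Your sketch is a reasonable strategy, and you correctly diagnose the exact point where it stalls. The high-level plan --- work in Fermi coordinates, conjugate by a semiclassical Carleman weight $\psi$ with $\partial_{x_n}\psi>0$ across $\Sigma$, verify H\"ormander subellipticity for the conjugated operator, and use the global propagation machinery already in place for Theorem~\ref{t:expo-bound-eigenfunctions} --- is a natural way to try to leverage strict convexity. But the crucial assertion, that ``a careful bookkeeping of the boundary terms ... allows one to trade the Neumann contribution for the Dirichlet one,'' is stated without any computation, and this is the entire content of the conjecture. Concretely, the Carleman estimate produces on $\Sigma$ a quadratic boundary form in $\bigl(\phi|_\Sigma, h\,\partial_\nu\phi|_\Sigma\bigr)$ whose coefficients depend on the tangential frequency $\xi'$ through the symbol of the conjugated operator. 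Dropping the Neumann trace requires showing this form is controlled by the Dirichlet entry alone \emph{uniformly over the cosphere}, including near the glancing set $r_0(x',\xi')=1$ where the two conjugated roots coalesce and the form degenerates; this degeneration is exactly the regime where Proposition~\ref{e:h1/4} already shows the naive single-trace bound cannot hold with the sharp constant. Whether the positivity of $\mathrm{II}$ injected through the weight $\tfrac{\alpha}{2}x_n^{2}$ is quantitatively strong enough to absorb the off-diagonal terms at and near glancing, without re-inserting the other trace, is precisely the open question; your proposal names it but does not resolve it.

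In short: you have not proved the conjecture, you have restated it in Carleman-theoretic language together with a plausible but unverified route to it, and you have been honest about that. That is an appropriate thing to say about a conjecture, but it is not a proof, and the missing step --- an explicit sign analysis of the Carleman boundary quadratic form on the full tangential cosphere, including the glancing region --- is not a routine bookkeeping exercise; it is the heart of the problem.
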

Note that if $\Sigma$ has positive definite second fundamental form, then it is geodesically curved and in particular, not fixed by a nontrivial involution. This prevents the construction of counterexamples via the methods used to prove Lemma~\ref{prop:counterexample-revolution}.

Under the geometric control condition $\mc{T}$GCC the estimate \eqref{e:unique} can be improved. 
\begin{theorem}[Improved lower bound for eigenfunctions under $\mc{T}$GCC]
\label{t:uniqueControl}Assume that the geodesics of $M$ have no contact of infinite order with $\d M$ and that $\Sigma$ satisfies $\mc{T}$GCC. Then there exists $C>0$ so that for all $\lambda \geq 0$ and $u\in H^2(M) \cap H_0^1(M)$, we have
\begin{equation}
\label{e:uniqueControl}\|u\|_{L^2(M)}\leq C\big( \|u|_{\Sigma}\|_{L^2(\Sigma)}+\|\langle \lambda\rangle^{-1}\partial_\nu u|_{\Sigma}\|_{L^2(\Sigma)}+\langle \lambda\rangle^{-1}\|(-\Delta_g-\lambda^2)u\|_{L^2(M)} \big) .
\end{equation}
\end{theorem}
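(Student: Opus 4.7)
The plan is to deduce \eqref{e:uniqueControl} from the wave observability inequality of Theorem~\ref{thm:observe} (dual to the controllability statement of Theorem~\ref{thm:control}) via the standard elliptic-to-wave lifting. Given $u\in H^2(M)\cap H^1_0(M)$, set $g := (-\Delta_g-\lambda^2)u \in L^2(M)$, and introduce
$$w(t,x) := e^{i\lambda t} u(x), \qquad (t,x)\in \R\times M.$$
Then $w|_{\R\times \partial M}=0$, $\Box w = e^{i\lambda t} g$, and $(w,\partial_t w)|_{t=0}=(u,i\lambda u)$ has $H^1_0\times L^2$ norm bounded below by $\langle \lambda\rangle\|u\|_{L^2(M)}$. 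The low-frequency regime $\lambda \leq \lambda_0$ is treated separately by standard elliptic regularity and is absorbed into the source term on the right of \eqref{e:uniqueControl}.

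The next step is to decompose $w=w_h+w_p$, where $w_h$ solves the homogeneous wave equation with Cauchy data $(u,i\lambda u)$ and $w_p$ solves $\Box w_p = e^{i\lambda t}g$ with zero Cauchy data. Since $(\Sigma,T)$ satisfies $\mc{T}$GCC, Theorem~\ref{thm:observe} applied to $w_h$ yields
$$\langle \lambda \rangle \, \|u\|_{L^2(M)} \leq C\, \mc{N}\bigl(w_h|_\Sigma,\, \partial_\nu w_h|_\Sigma\bigr),$$
where $\mc{N}$ is the observation norm on $(0,T)\times\Sigma$ dual to the control space $H^{-1}\times L^2$ of Theorem~\ref{thm:control}. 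By the triangle inequality, the right-hand side is bounded by $\mc{N}(w|_\Sigma, \partial_\nu w|_\Sigma) + \mc{N}(w_p|_\Sigma, \partial_\nu w_p|_\Sigma)$. The separable form of $w = e^{i\lambda t}u$ gives factors $\sqrt{T}$ from $t$-integration and factors $\lambda$ from each $\partial_t$, producing
$$\mc{N}\bigl(w|_\Sigma,\, \partial_\nu w|_\Sigma\bigr) \leq C\bigl( \langle \lambda\rangle \|u|_\Sigma\|_{L^2(\Sigma)} + \|\partial_\nu u|_\Sigma\|_{L^2(\Sigma)}\bigr).$$
For the particular part, the standard energy estimate for $\Box w_p = e^{i\lambda t}g$ with zero Cauchy data, together with the hidden regularity of wave traces on interior hypersurfaces, yields $\mc{N}(w_p|_\Sigma, \partial_\nu w_p|_\Sigma) \leq C_T \|g\|_{L^2(M)}$. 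Dividing through by $\langle\lambda\rangle$ produces \eqref{e:uniqueControl}.

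The main technical obstacle is verifying that the observation norm $\mc{N}$ applied to $w$ and to $w_p$ really produces the clean bounds above, with no uncontrolled contribution near the glancing set $\G^\Sigma$ and no spurious tangential derivatives on $\Sigma$ appearing on the right-hand side. This rests on the sharp microlocal trace theory for wave solutions on interior hypersurfaces that already underpins the well-posedness Theorem~\ref{t:well-posedness}: under $\mc{T}$GCC every bicharacteristic of $\Box$ crosses $(0,T)\times\Int(\Sigma)$ transversally, away from $\G^\Sigma$, and this transversality is what allows the trace maps entering $\mc{N}$ to be controlled by energy-type norms on $M$ and, in the separable case $w=e^{i\lambda t}u$, by the pointwise $L^2(\Sigma)$-norms of $u$ and $\partial_\nu u$ appearing in~\eqref{e:uniqueControl}.
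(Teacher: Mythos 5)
Your main idea — apply the wave observability inequality of Theorem~\ref{thm:observe} to the separable solution $e^{i\lambda t}u$ and unwind the norms — is exactly the paper's proof. But two of your intermediate steps are unnecessary detours and a third, the one you correctly flag as the ``main technical obstacle,'' is left unresolved. First, the decomposition $w=w_h+w_p$ is not needed: Theorem~\ref{thm:observe} is already stated for the inhomogeneous problem $\Box u=F$, with the penalty $\|F\|_{L^2((0,T)\times M)}$ built into the right-hand side of \eqref{e:obs-waves}, so you can apply it directly to $w=e^{i\lambda t}u$ with $F=e^{i\lambda t}(-\Delta_g-\lambda^2)u$ and skip the particular/homogeneous splitting. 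Second, the low-frequency separation is also unnecessary, and your proposed resolution (``standard elliptic regularity'') would not suffice on its own since $\lambda$ may be an eigenvalue; in fact the lower bound $\|(u,i\lambda u)\|_{H^1_0\times L^2}\geq c\langle\lambda\rangle\|u\|_{L^2}$ follows from Poincar\'e uniformly for \emph{all} $\lambda\geq 0$, so no separate low-frequency argument is required.

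The substantive gap is the bound $\|A_\delta(w|_\Sigma)\|_{\bar H^1(\R\times\Sigma)}\lesssim\langle\lambda\rangle\|u|_\Sigma\|_{L^2(\Sigma)}$. As stated, this does not follow from ``the separable form'' of $w$ alone: the $\bar H^1$ norm carries tangential $x'$-derivatives on $\Sigma$, and $\|\nabla_{x'}u|_\Sigma\|_{L^2}$ is certainly not controlled by $\langle\lambda\rangle\|u|_\Sigma\|_{L^2}$. Your appeal to ``transversality under $\mc{T}$GCC'' and ``sharp microlocal trace theory'' points in the wrong direction — the key is a structural property of the cutoff operator $A_\delta$ itself, not of the dynamics. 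Because $\sigma(A_\delta)$ is supported where $r_0(x',\xi')-\tau^2 \leq -\frac{\delta}{2}\tau^2$, one has $\tau\neq 0$ on $\WF(A_\delta)$, so $\varphi_{\delta_0}D_t$ is elliptic there; hence $\|A_\delta(w|_\Sigma)\|_{\bar H^1}\lesssim\|\varphi_{\delta_0}D_t w\|_{L^2}+\|w|_\Sigma\|_{L^2}$, and $D_t e^{i\lambda t}u=\lambda e^{i\lambda t}u$ then supplies exactly the factor $\langle\lambda\rangle$. Without this observation, your estimate on $\mc{N}(w|_\Sigma,\partial_\nu w|_\Sigma)$ does not close.
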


\begin{conjecture}
Let $(M,g)$ be a Riemannian manifold and $\Sigma$ an interior hypersurface with {\em positive definite second fundamental form} satisfying $\mathcal{T}$GCC. Then there exists $C,c,\lambda_0>0$ so that for all $(\lambda, \phi) \in [\lambda_0,\infty) \times L^2(M)$ satisfying~\eqref{e:eigenfunction}
%$$(-\Delta_g-\lambda^2)u=0, \quad u|_{\d M}=0 ,$$
we have
$$\|\phi\|_{L^2(M)}\leq C \|\phi|_{\Sigma}\|_{L^2(\Sigma)},\quad \text{ and } \quad  \|\phi\|_{L^2(M)}\leq C \|\lambda^{-1}\partial_\nu \phi|_{\Sigma}\|_{L^2(\Sigma)}.$$ 
\end{conjecture}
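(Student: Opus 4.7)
The plan is to bootstrap the two-trace estimate of Theorem~\ref{t:uniqueControl} into a one-trace estimate by showing that, under strict convexity of $\Sigma$, either trace microlocally determines the other on $\Sigma$. Concretely, if one can prove the two $L^2(\Sigma)$-comparisons
\begin{equation*}
\|\langle\lambda\rangle^{-1}\partial_\nu\phi|_\Sigma\|_{L^2(\Sigma)} \leq C\|\phi|_\Sigma\|_{L^2(\Sigma)} + \e_\lambda\|\phi\|_{L^2(M)}, \qquad \|\phi|_\Sigma\|_{L^2(\Sigma)} \leq C\|\langle\lambda\rangle^{-1}\partial_\nu\phi|_\Sigma\|_{L^2(\Sigma)} + \e_\lambda\|\phi\|_{L^2(M)},
\end{equation*}
with $\e_\lambda \to 0$ as $\lambda \to \infty$, then combining each with \eqref{e:uniqueControl} and absorbing the small term on the right-hand side yields the two inequalities of the conjecture.

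To establish these comparisons, I would work semiclassically with $h = \langle\lambda\rangle^{-1}$ and perform a microlocal partition of $T^*\Sigma$ into three zones relative to the boundary symbol of $-h^2\Delta_g - 1$: the elliptic zone $\mc E^\Sigma$, the hyperbolic zone (the complement of $\mc E^\Sigma \cup \mc G^\Sigma$), and a small neighborhood of the glancing set $\mc G^\Sigma$. In the elliptic zone, standard evanescent-wave estimates show that both traces of $\phi$ microlocalized there are $O(h^\infty)\|\phi\|_{L^2(M)}$. In the hyperbolic zone, the semiclassical Dirichlet-to-Neumann operator is an elliptic tangential pseudodifferential operator with principal symbol $i\sqrt{1-|\xi'|^2_g}$; from it one reads off that the two traces are comparable modulo lower-order smoothing remainders, with constants controlled by the distance to $\mc G^\Sigma$.

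The strict convexity of $\Sigma$ is used precisely in the glancing zone. In Fermi coordinates $(x',x_n)$ with $\Sigma = \{x_n = 0\}$, the semiclassical Laplacian admits the Melrose--Taylor normal form
\begin{equation*}
\xi_n^2 + |\xi'|^2_{g(x',0)} - 1 + 2\, \mathrm{II}(x',\xi')\, x_n + O(x_n^2),
\end{equation*}
where $\mathrm{II}$ is proportional to the second fundamental form of $\Sigma$. Positive definiteness of $\mathrm{II}$ on unit tangential covectors excludes any tangency of order $>1$ and produces a genuine Airy parametrix at the scale $x_n \sim h^{2/3}$. Since $\mathrm{Ai}(0)$ and $\mathrm{Ai}'(0)$ are both nonzero, the Dirichlet and rescaled Neumann traces of these Airy model solutions are linked, in the glancing band $||\xi'|_g - 1| \lesssim h^{2/3}$, by a nondegenerate factor of size at most $h^{-1/3}$ in either direction. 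Because the tangential measure of the glancing band is $O(h^{2/3})$, this factor contributes only an $O(h^{1/3})$ loss after reassembling the $L^2(\Sigma)$ norm, giving the required comparisons with $\e_\lambda = O(h^{1/3})$.

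The main obstacle is making the glancing analysis quantitative and uniform in the present geometry. First, one must implement the Melrose--Taylor Airy calculus on both the diffractive and the gliding side of $\Sigma$: positive definiteness of $\mathrm{II}$ makes $\Sigma$ convex from one side and concave from the other, so the two sides require dual analyses that must be patched consistently across $\Sigma$. Second, when $\partial M \neq \emptyset$, the $h^{2/3}$-scale normal form near $\mc G^\Sigma$ must be coupled with the generalized bicharacteristic flow underlying Theorem~\ref{t:uniqueControl}, and one has to verify that the semiclassical defect measure arguments transporting mass from $\Sigma$ through the interior retain uniform constants across $\mc G^\Sigma$. Finally, a conceptual hurdle is to convert the heuristic absence of the symmetric obstruction of Proposition~\ref{prop:counterexample-revolution} --- ruled out by strict convexity --- into a genuinely quantitative input, rather than a soft nondegeneracy, since it is this quantitative form that enters the Airy factor above.
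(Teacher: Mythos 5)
This statement is labeled a \emph{Conjecture} in the paper, not a theorem; the authors do not claim a proof, and none is given. So there is no ``paper's own proof'' to compare against --- you have offered a proof attempt for an open problem. Your proposal is an honest sketch and you acknowledge in your final paragraph that the hard work is left undone, but it is worth pointing out a concrete fallacy in the part of the argument you present as essentially routine.

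The reduction to the two $L^2(\Sigma)$-comparison inequalities with small error $\e_\lambda$ is a sensible target, and the elliptic and hyperbolic zone contributions are indeed standard. The problem is the glancing zone. You assert that the Airy factor of size $h^{-1/3}$ linking the Dirichlet and rescaled Neumann data ``contributes only an $O(h^{1/3})$ loss after reassembling the $L^2(\Sigma)$ norm'' because the glancing band has tangential measure $O(h^{2/3})$. That measure count does not produce the claimed smallness. The $h^{-1/3}$ factor acts on the portion of $\phi|_\Sigma$ (resp.\ $h\,\partial_\nu\phi|_\Sigma$) that sits in the glancing band; if that is where the $L^2(\Sigma)$ mass of the trace actually lives, the smallness of the band's measure in $\xi'$ does not shrink anything --- it only says the band is thin, not that the restriction is small there. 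Proposition~\ref{e:h1/4} is exactly the cautionary example: the eigenfunctions $Y_l^{l-1}$ on $S^2$ have their restriction to the great circle microlocalized $O(\lambda^{-1})$ close to the glancing set, and the trace comparison degrades to a power-law defect $\lambda^{-1/4}$, which is not $O(h^{1/3})$ of the \emph{interior} norm but of the same order as the traces themselves. So the measure-times-loss bookkeeping, as written, would ``prove'' something close to a statement that the paper's own Proposition~\ref{e:h1/4} shows cannot hold without convexity, and it does not by itself use convexity.

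The actual content of the conjecture, then, is precisely the quantitative claim that positive definiteness of the second fundamental form prevents the $L^2(\Sigma)$ mass of $\phi|_\Sigma$ (and of $h\,\partial_\nu\phi|_\Sigma$) from concentrating in the $O(h^{2/3})$ glancing band at a rate that would defeat the Airy loss. That is a nontrivial dynamical/microlocal statement --- related to, but stronger than, the improved restriction upper bounds of Burq--G\'erard--Tzvetkov for nondegenerately curved hypersurfaces, and to the mechanism behind the Bourgain--Rudnick lower bounds, which in dimension $3$ already require arithmetic input. Your proposal gestures at this in the ``main obstacle'' paragraph but does not supply an argument, so the key step is missing rather than merely technically incomplete. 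If you want to make progress, the thing to try to prove first is a quantitative \emph{non-concentration} estimate of the form $\|\mathbf{1}_{\{||\xi'|_g-1|\leq h^{2/3}\}}\,\phi|_\Sigma\|_{L^2(\Sigma)} \leq \delta(h)\|\phi|_\Sigma\|_{L^2(\Sigma)} + C\,h^N\|\phi\|_{L^2(M)}$ with $\delta(h)\to 0$, since it is exactly this which your Airy-factor bookkeeping needs and currently assumes.
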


 Other known lower bounds come from the quantum ergodic restriction theorem and apply to a full density subsequence of eigenfunctions rather than to the whole sequence~\cite{TZ:12,TZ:13,DZ:13,TZ:17}.
 These hold under a ergodicity assumption on the geodesic (or the billiard) flow, together with a microlocal asymmetry condition for the surface $\Sigma$. This assumption states roughly that the measure of the set of geodesics through $\Sigma$ whose tangential momenta agree at adjacent intersections with $\Sigma$ is zero. 
 In another direction, the work of Bourgain--Rudnick \cite{BoRu12, BoRu11, BoRu09} shows that on the torus $\mathbb{T}^d$, $d=2,3$ for any hypersurface $\Sigma$ with positive definite curvature, \eqref{e:dynLow} holds with the normal derivative removed from the left hand side. While the results of Bourgain--Rudnick do not hold on a general Riemannian manifold, we expect that either of the terms in the left hand side of \eqref{e:genLow} can be removed whenever $\Sigma$ is not totally geodesic (which is even weaker that $\Sigma$ having positive definite second fundamental form). 

\subsection{Weakening Assumption $\mc{T}$GCC}
\label{s:weak-TGCC}
One might hope that Theorem~\ref{t:uniqueControl} and its analog for the wave equation (the control result of Theorem~\ref{thm:control} above and the observability inequality of Theorem~\ref{thm:observe} below) when the assumption $\mc{T}$GCC is replaced by the  (weaker) assumption that  
\bnan
\label{e:hopeAssume}
\text{every generalized bicharacteristics of $\Box$ intersects $T^*((0,T) \times \Int(\Sigma))$}
\enan
 (rather than $T^*((0,T) \times \Int(\Sigma)) \setminus \mc{G}^\Sigma$).
 The following example shows that this is more subtle (see Appendix~\ref{s:appendix-h1/4} for the proof).

\begin{proposition}
\label{e:h1/4}
Assume $M = S^2$ and $\Sigma$ is a great circle. Then there exists a sequence $(\lambda_j, \phi_j)$ satisfying $(-\Delta_g-\lambda_j^2) \phi_j = 0$ together with $\lambda_j \to + \infty$ and
$$
\phi_j|_{\Sigma} = 0 , \quad \|\lambda_j^{-1}\partial_\nu \phi_j|_{\Sigma}\|_{L^2(\Sigma)} \leq \lambda_j^{-1/4} \|\phi_j\|_{L^2(M)}. 
$$
\end{proposition}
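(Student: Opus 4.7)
\medskip
\noindent\textbf{Proof plan for Proposition~\ref{e:h1/4}.}
The idea is to exhibit an explicit sequence of spherical harmonics that concentrate Gaussian-beam-style on the great circle $\Sigma$ while being forced, by a simple parity consideration, to vanish on $\Sigma$ itself. After rotating we may assume $\Sigma$ is the equator $\{\theta=\pi/2\}$ in standard spherical coordinates $(\theta,\varphi) \in [0,\pi]\times (\R/2\pi\Z)$, with metric $g = d\theta^2 + \sin^2\theta\, d\varphi^2$. The plan is to take, for $\ell\ge 2$,
\[
\phi_\ell(\theta,\varphi) := C_\ell \, \cos\theta\,\sin^{\ell-1}\theta \, e^{i(\ell-1)\varphi},
\]
i.e.\ (up to normalization) the associated Legendre harmonic $Y_\ell^{\ell-1}$, and $\lambda_\ell := \sqrt{\ell(\ell+1)}$. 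These are genuine eigenfunctions of $-\Delta_{S^2}$ with eigenvalue $\lambda_\ell^2$, and the prefactor $\cos\theta$ shows at once that $\phi_\ell|_{\Sigma}=0$.

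The core of the argument is then a pair of asymptotic computations as $\ell\to\infty$. First I would compute the $L^2(M)$ norm: using the surface element $\sin\theta\, d\theta\, d\varphi$,
\[
\|\phi_\ell\|_{L^2(M)}^2 = 2\pi\,|C_\ell|^2 \int_0^\pi \cos^2\theta\,\sin^{2\ell-1}\theta\, d\theta = 2\pi\,|C_\ell|^2\, B(\ell, \tfrac{3}{2}),
\]
and Stirling gives $B(\ell,\tfrac32) = \Gamma(\ell)\Gamma(\tfrac32)/\Gamma(\ell+\tfrac32) \sim \tfrac{\sqrt\pi}{2}\,\ell^{-3/2}$. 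Normalizing $\|\phi_\ell\|_{L^2(M)}=1$ forces $|C_\ell| \sim c\,\ell^{3/4}$. Second, the outward conormal to $\Sigma$ is $\partial_\nu = \partial_\theta$, and differentiating at $\theta=\pi/2$ the second term (carrying the $\cos\theta$ factor) drops, yielding
\[
\partial_\nu \phi_\ell|_{\Sigma}(\varphi) = -C_\ell\, e^{i(\ell-1)\varphi}, \qquad \|\partial_\nu \phi_\ell|_{\Sigma}\|_{L^2(\Sigma)} = \sqrt{2\pi}\,|C_\ell|.
\]
Combining the two estimates, $\lambda_\ell^{-1}\|\partial_\nu \phi_\ell|_{\Sigma}\|_{L^2(\Sigma)} \sim \ell^{3/4}/\ell = \ell^{-1/4} \sim \lambda_\ell^{-1/4}$, which is exactly the claimed bound (absorbing the implicit constant into the choice of $c$, or by taking $\ell$ large enough).

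There is no real analytic obstacle; the only thing to be a bit careful about is bookkeeping the two competing effects. Heuristically, writing $s = \theta - \pi/2$ near the equator one has $\cos\theta\sim -s$ and $\sin^{\ell-1}\theta \sim e^{-(\ell-1)s^2/2}$, so $\phi_\ell$ looks like a first Hermite function transverse to $\Sigma$ of width $\ell^{-1/2}$: this explains both the $\ell^{3/4}$ normalization factor (since the transverse $L^2$ mass is $\sim \ell^{-3/2}$) and the $|C_\ell|$-sized normal derivative at $s=0$ (the derivative of $s\,e^{-(\ell-1)s^2/2}$ at $0$). From the microlocal viewpoint the sequence $\phi_\ell$ concentrates on the closed geodesic $\Sigma$, with wavefront set living in $\mc{G}^{\Sigma}$ — precisely the glancing set that assumption $\mc{T}$GCC excludes — so the proposition says that when only the weaker hypothesis~\eqref{e:hopeAssume} holds, one cannot get an $O(1)$ lower bound in~\eqref{e:dynLow}, but at best a polynomial loss, and in this example the loss is at least $\lambda^{1/4}$.
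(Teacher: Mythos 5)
Your proof is correct and takes essentially the same approach as the paper: both use the spherical harmonics $Y_\ell^{\ell-1}$ on $S^2$, note that they vanish on the equator, and extract the $\ell^{3/4}$ growth of the normalized normal derivative on $\Sigma$. The only difference is computational bookkeeping: you compute the normalization constant directly via the Beta-function integral $\int_0^\pi\cos^2\theta\sin^{2\ell-1}\theta\,d\theta = B(\ell,\tfrac32)$, whereas the paper cites the standard normalization of $Y_\ell^m$ and evaluates $\d_\phi P_\ell^{\ell-1}(0)$ through Gamma-function identities; the substance is identical.
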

In particular, this shows that Theorem \ref{t:uniqueControl} and associated observability inequality for the wave equation cannot hold under only \eqref{e:hopeAssume}.  Moreover, the proof shows that $\phi_j$ is microlocalized $\lambda_j^{-1}$ close to the glancing set on $\Sigma$, this calculation suggests that one must scale the normal derivative and restriction of an eigenfunction as in \cite{Ga16} to obtain an analog of Theorem \ref{t:uniqueControl} under \eqref{e:hopeAssume}. More precisely, 
\begin{conjecture}
Suppose that $\Sigma$ is a compact interior hypersurface. Then there exists $C>0$ so that if
 $(\lambda, \phi)$ satisfies~\eqref{e:eigenfunction}, then
$$\|(1+\lambda^{-2}\Delta_\Sigma)_+^{1/4}\phi |_{\Sigma}\|_{L^2}+\|[(1+\lambda^{-2}\Delta_\Sigma)_++\lambda^{-1}]^{-1/4}\lambda^{-1}\partial_\nu \phi |_{\Sigma}\|_{L^2(\Sigma)} 
\le \|\phi\|_{L^2(M)} .$$
Suppose moreover that $\Sigma$ satisfies \eqref{e:hopeAssume}. Then there exists $C>0$ so that if
 $(\lambda_j, \phi_j)$ satisfies~\eqref{e:eigenfunction}, then
$$\|\phi\|_{L^2(M)}\le C(\|(1+\lambda^{-2}\Delta_\Sigma)_+^{1/4}\phi |_{\Sigma}\|_{L^2}+\|[(1+\lambda^{-2}\Delta_\Sigma)_++\lambda^{-1}]^{-1/4}\lambda^{-1}\partial_\nu \phi |_{\Sigma}\|_{L^2(\Sigma)}.$$
where $\Delta_\Sigma$ is the Laplace-Beltrami operator on $\Sigma$ induced from $(M,g)$, and the operator $(1+\lambda^{-2}\Delta_\Sigma)_+$ is defined via the functional calculus, see also~\cite[Section 1]{Ga16}.
\end{conjecture}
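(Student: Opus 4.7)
The plan is to split the conjecture into its two inequalities, handling the first as a semiclassical restriction bound and the second as an observability estimate proved by a defect measure argument. I would set $h=\lambda^{-1}$ and work in Fermi normal coordinates $(x_n,x')$ near $\Sigma$, so that $-h^2\Delta_g = D_{x_n}^2 + R(x_n,x',hD_{x'}) + hS$ with $R(0,x',\xi')=|\xi'|^2_g$, and partition $T^*\Sigma$ via the tangential symbol $\sigma(x',\xi')=1-|\xi'|^2_g$ into the elliptic region $\{\sigma<-\delta\}$, the hyperbolic region $\{\sigma>\delta\}$, and the glancing region $\{|\sigma|<2\delta\}$. The weights $\sigma_+^{1/4}$ and $[\sigma_++h]^{-1/4}$ are engineered to match the behaviour of Airy-type model solutions across these three regimes.

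For the upper bound I would microlocalize $\phi$ to each region. In the hyperbolic region the standard semiclassical trace/restriction lemma gives $\|\phi|_\Sigma\|_{L^2} + \|h\partial_\nu\phi|_\Sigma\|_{L^2} \leq C\|\phi\|_{L^2(M)}$, and both weights are bounded there, so this piece is handled immediately. In the elliptic region, $\sigma_+\equiv 0$ kills the Dirichlet weight; for the Neumann weight, a one-dimensional Agmon/WKB estimate applied to the elliptic normal ODE $-h^2 D_{x_n}^2 u - \sigma u = F$ on a collar around $\{x_n=0\}$ yields the $h^{1/4}$ smallness that exactly compensates the bound $[\sigma_++h]^{-1/4}\leq h^{-1/4}$. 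The glancing region I would handle via the Melrose--Taylor Airy calculus as in \cite{Ga16}: after the rescaling $x_n = h^{2/3}y$ the normal operator becomes $D_y^2 + y$ modulo lower order, and direct computation with the Airy function gives the sharp $h^{1/6}$ trace scale, matching $\sigma_+^{1/4}\sim h^{1/6}$ and $[\sigma_++h]^{-1/4}\sim h^{-1/6}$ in the glancing band.

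For the reverse inequality I would argue by contradiction via defect measures. Assume a normalized sequence $(\lambda_j,\phi_j)$ with $\|\phi_j\|_{L^2(M)}=1$ but weighted traces tending to $0$, and extract a subsequence carrying a semiclassical defect measure $\mu$ on $T^*M$; by the standard theorem (extended via Melrose--Sj\"ostrand at $\partial M$), $\mu$ is a probability measure invariant under the generalized bicharacteristic flow. The refined form of the upper bound identifies the weighted trace norm, microlocally near $\Sigma$, with the mass of $\mu$ above $\Sigma$ \emph{including} in the glancing set --- this identification is precisely what the $1/4$ exponent is engineered to produce. Vanishing of the traces then forces $\mu$ to vanish on a conic neighbourhood of $T^*_\Sigma M$; under assumption \eqref{e:hopeAssume} every generalized bicharacteristic intersects $T^*((0,T)\times\Int(\Sigma))$, possibly only tangentially, so flow-invariance propagates the vanishing along every ray and forces $\mu\equiv 0$, contradicting $\mu(S^*M)=1$.

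The main obstacle is the glancing analysis on both sides. On the upper-bound side one must control all lower-order perturbations of the Airy model uniformly in the rescaled variable, which is exactly where the second-microlocal Melrose--Taylor calculus used in \cite{Ga16} becomes essential; verifying that the regularization $[\sigma_++h]^{-1/4}$ is the correct choice (and not, say, $[\sigma_++h^{2/3}]^{-1/4}$) requires a careful tracking of the two scales $h$ and $h^{2/3}$ through the Airy model. On the lower-bound side the bicharacteristic flow is only continuous, not smooth, at glancing, and Proposition~\ref{e:h1/4} shows that with any weaker weight the trace can be genuinely too small to detect the defect measure, so the specific $1/4$ exponent is forced on us. A secondary issue is the boundary of $\Sigma$: one must cut off away from $\partial\Sigma$ using Definition~\ref{def:hypersurface} without creating spurious contributions, and when $\partial M \neq \emptyset$ the transversal boundary reflections must be grafted onto the glancing analysis at $\Sigma$ without interference.
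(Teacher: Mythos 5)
The statement you were asked to prove is, in the paper, explicitly labelled a \emph{Conjecture}; the authors provide no proof of it, only the heuristic remark that the $\lambda_j^{-1}$-scale concentration near glancing in Proposition~\ref{e:h1/4} ``suggests that one must scale the normal derivative and restriction of an eigenfunction as in \cite{Ga16}.'' There is therefore no argument in the paper for you to be compared against, and a complete proof of either inequality would be new.

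That said, your sketch is broadly aligned with the strategy the paper hints at, and your identification of the difficulties is accurate. The upper bound would indeed rest on a three-region decomposition (hyperbolic / elliptic / glancing) and a second-microlocal Airy analysis \`a la Melrose--Taylor, as in \cite{Ga16}; you are right that the key is tracking the two scales $h$ and $h^{2/3}$ to justify the regularization $[\sigma_+ + h]^{-1/4}$ rather than $[\sigma_+ + h^{2/3}]^{-1/4}$. The acid test is precisely Proposition~\ref{e:h1/4}: with $\Sigma$ a great circle and $\phi_j$ the highest-weight spherical harmonic $Y_l^{l-1}$, one has $\lambda^{-1}\partial_\nu\phi_j|_\Sigma \sim \lambda^{-1/4}$ concentrated at tangential frequency $1-\lambda^{-2}m^2 = O(\lambda^{-1})$, so the Neumann weight $[\sigma_++h]^{-1/4}\sim\lambda^{1/4}$ exactly saturates the bound; this sanity check passes.

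The genuine gap in your sketch is the central step of the defect-measure argument. You write that ``the refined form of the upper bound identifies the weighted trace norm, microlocally near $\Sigma$, with the mass of $\mu$ above $\Sigma$, including in the glancing set.'' That identification is the entire content of the conjecture's second half, not a corollary of the first. The mechanism that fails without it is this: under the hypothesis \eqref{e:hopeAssume} a generalized bicharacteristic may intersect $T^*((0,T)\times\Int(\Sigma))$ \emph{only} at glancing points (e.g., a closed geodesic tangent to $\Sigma$), and the defect measure may concentrate entirely on such rays. Flow-invariance of $\mu$ then gives you nothing unless you can show that vanishing of the $1/4$-weighted traces forces $\mu$ to vanish on a glancing ray. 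Establishing such a ``glancing trace implies glancing mass'' statement would require propagating the second-microlocal Airy control of $\phi$ near $\Sigma$ into a statement about the semiclassical measure away from $\Sigma$, uniformly as the ray leaves the glancing region at a Friedlander/diffractive rate. This is exactly the step that has no analogue in the transversal ($\mathcal{T}$GCC) proof via Lemma~\ref{l:mushrooms}, where the hyperbolic factorization gives a pointwise energy estimate. You also need to handle the possibility of infinitely degenerate tangency (geodesics with contact of infinite order with $\Sigma$), where the Airy model does not apply and no second-microlocal normal form is available; the conjecture as stated does not exclude this, and the argument you outline would break there. Finally, a minor but real point: the boundary $\partial M$ plays no role in the $S^2$ example, but in general the gliding rays along $\partial M$ and the glancing rays along $\Sigma$ can interact, and one would need the Melrose--Sj\"ostrand continuity of the compressed flow to mesh with the $\Sigma$-glancing normal form; this requires more than ``grafting''.

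In short: your outline matches the route the authors anticipate and your inventory of obstacles is correct, but the difficult steps are asserted rather than proved, and the statement remains open in the paper.
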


\subsection{Finite unions of hypersurfaces}
\label{s:finite-union}
In all of our results, one may replace $\Sigma$ by any finite union of cooriented interior hypersurfaces $\cup_{i=1}^m \Sigma_i$ where we replace the distribution $f_0\delta_\Sigma + f_1\delta'_{\Sigma}$ by  
\bnan
\label{e:sums-dirac}
\sum_{i=1}^m \Big( f_0^i\delta_{\Sigma_i}+ f_1^i\delta'_{\Sigma_i} \Big).
\enan
Then, all above results generalize with the sole modification that generalized bicharacteristics need only intersect {\em one of the} $\Sigma_i$'s transversally. This furnishes several simple examples for which our controllability/observability results for waves holds. Take e.g. $\T^2 \simeq [-\pi, \pi]^2$ with $\Sigma_1 = \{0\} \times \T^1$ and $\Sigma_2 = \T^1 \times \{0\}$.

\bigskip
This remark can also be used to remove the coorientability assumption. 
If the interior hypersurface $\Sigma$ is not coorientable, we can cover it by a union of overlapping cooriented hypersurfaces $\Sigma = \cup_{i=1}^m \Sigma_i$ and control from $\Sigma$ by a sum like~\eqref{e:sums-dirac}. In this context, we still obtain controllability results with controls supported by the hypersurface $\Sigma$, but the form of the control is changed slightly.

%%%%%%%%%%%%%%%%%%%%%%%%%%%
\subsection{Sketch of the proofs and organization of the Paper}

We start in Section~\ref{s:prelim} with the introduction of coordinates, some geometric definitions and Sobolev spaces on $\Sigma$.

Section~\ref{s:wellposed} is devoted to the proof of (a slightly more precise version of) the well-posedness result of Theorem~\ref{t:well-posedness-simple}. The definition of solutions in the sense of transposition follows~\cite{Lio:88}. The well-posedness result relies on {\em a priori} estimates on an adjoint equation -- the free wave equation. The well-posedness statement then reduces to the proof of regularity bounds for restrictions on $\Sigma$. This is done in Section~\ref{s:regularity}. Namely, we show that if $u$ is an $H^1$ solution to $\Box u=0$, then the restriction $(u|_\Sigma , \d_\nu u|_\Sigma)$ belong to $H^\frac12 \times H^{-\frac12}$ overall $\R \times \Sigma$, and have the additional (microlocal) regularity $H^1 \times L^2$ everywhere except near Glancing points ($\G^\Sigma$). This fact is already known (see e.g.~\cite{Tataru:98}) but we rewrite a short proof for the convenience of the reader. Then, Section~\ref{s:spaces-comp-loc} is aimed at defining the appropriate spaces for the statement of the precise version of the well-posedness result. These are needed in particular to state the stability result associated to well-posedness, as well as to formulate the duality between the control problem and the observation problem.
They are (loc and comp) Sobolev spaces on $\R\times \Sigma$ that have different regularities near and away from the Glancing set $\G^\Sigma$.
With these spaces in hand, we define properly solutions of~\eqref{e:waveControl} and prove well-posedness in Section~\ref{s:def-sol-WP}.

\bigskip
Section \ref{s:obs-cont-wave} is devoted to the proof of the control result of Theorem~\ref{t:well-posedness-simple}. Before entering the proofs, we briefly explain how~Theorem~\ref{t:uniqueControl} is deduced from the observability inequality of Theorem \ref{thm:observe}. Firstly, we prove in Section~\ref{s:TGCC} that the condition $\mc{T}$GCC implies a stronger geometric statement. Namely, using the openness of the condition and a compactness argument, we prove that all rays intersect in $(\eps, T-\eps)$ an open set of $\Sigma$ ``$\eps$-transversally'' (i.e. $\eps$ far away from the glancing region) for some $\eps>0$. Secondly, this condition is used in Section~\ref{s:weakObs} to prove an observability inequality, stating roughly that the observation of both traces $(u|_\Sigma , \d_\nu u|_\Sigma)$ of microlocalized $\eps$ far away from the glancing region in the time interval $(\eps, T-\eps)$ determines the full energy of solutions of $\Box u =0$ (in appropriate spaces). The proof proceeds as in~\cite{Leb:96} by contradiction, using microlocal defect measures. It contains two steps: first, we prove that the strong convergence of a sequence $(u_k|_\Sigma , \d_\nu u_k|_\Sigma)\to 0$ near a transversal point of $\Sigma$ implies the strong convergence of the sequence $u_k$ in a microlocal neighborhood of the two rays passing through this point (using the hyperbolic Cauchy problem). Then, a classical propagation argument (borrowed from~\cite{Leb:96, BL:01} in case $\d M\neq \emptyset$) implies the strong convergence of $(u_k)$ everywhere, which yields a contradiction with the fact that the energy of the solution is normalized.
This observability inequality contains, as in the usual strategy of~\cite{BLR:92}, a lower order remainder term (in order to force the weak limit of the above sequence to be $0$). The latter is finally removed in Section~\ref{s:obs} by the traditional compactness uniqueness argument of~\cite{BLR:92}, concluding the proof of the observability inequality.
Finally, in Section~\ref{s:control}, we deduce the controllability statement Theorem \ref{thm:control} (or its refined version Theorem~\ref{thm:control2}) from the observability inequality (Theorem~\ref{thm:observe}) via a functional analysis argument. The latter is not completely standard, since we do not know whether the solution of the controlled wave equation~\ref{e:waveControl} has the usual $C^0(0,T; L^2(M)) \cap C^1(0,T; H^{-1}(M))$ regularity, but only prove $L^2(0,T; L^2(M))$. As a consequence, we cannot use data of the adjoint equation at time $t=T$ as test functions. The test functions we use are rather forcing terms $F$ in the right hand-side of the adjoint equation, that are supported in $t \in (T, T_1)$, that is, outside of the time interval $(0,T)$. Also, we construct control functions having $H^N$ regularity near $\G^\Sigma$ and prove that they do not depend on $N$, yielding the statement with the $C^\infty$ wavefront set.

 \bigskip
 Section~\ref{s:controlHeat} deals with the case of the heat equation and the universal lower bound of Theorem~\ref{t:expo-bound-eigenfunctions}, in the spirit of the seminal article~\cite{LR:95}.
 First, Section~\ref{s:WP:heat} states the well-posedness result, in the sense of transposition. Again, it relies on the regularity of restrictions to $\Sigma$ of solutions of the adjoint free heat equations. The latter are deduced from standard parabolic regularity combined with Sobolev trace estimates.
 Then, to prove observability/controllability, we proceed with the Lebeau-Robbiano method~\cite{LR:95}. The starting point is a local Carleman estimate near $\Sigma$, borrowed from~\cite{LR:97}, from which we deduce in Section~\ref{s:unique} a global interpolation inequality for the operator $-\d_s^2-\Delta_g$. Theorem~\ref{t:expo-bound-eigenfunctions} directly follows from this interpolation inequality.
To deduce the observability of the heat equation, we revisit slightly (in an abstract semigroup setting) the original Lebeau-Robbiano method (as opposed to the simplified one~\cite{LZ:98,Miller:06,LeLe:09}, relying on a stronger spectral inequality) in Section~\ref{s:LR95-revisited}. The interpolation inequality yields as usual an observability result for a finite dimensional elliptic evolution equation (i.e. cutoff in frequency), from which we deduce observability for the finite dimensional parabolic equation, with precise dependence of the constant with respect to the cutoff frequency and observation time. The latter argument simplifies the original one by using an idea of Ervedoza-Zuazua~\cite{EZ:11s,EZ:11}. The observability of the full parabolic equation is finally deduced using the iterative Lebeau-Robbiano argument combining high-frequency dissipation with low frequency control/observation. We in particular use the method as refined by Miller~\cite{Miller:10}. We explain in Section~\ref{s:heat-application} how the heat equation observed by/controlled from $\Sigma$ fits into the abstract setting.

\bigskip
Appendix~\ref{s:pseudo} contains some background information on pseudodifferential operators used in Sections~\ref{s:wellposed} and~\ref{s:obs-cont-wave}. for the wave equation. Appendix~\ref{s:sharpUnique} proves Proposition~\ref{prop:counterexample-revolution}, i.e. constructs an example showing that Theorem \ref{t:expo-bound-eigenfunctions} is sharp. Finally, Appendix~\ref{s:appendix-h1/4} gives a proof of Proposition~\ref{e:h1/4}.

\bigskip
\noindent
{\em Acknowledgements.}  The authors would like to thank John Toth for having encouraged them to write a proof of~\eqref{e:genLow}. The first author is grateful to the National Science Foundation for support under the Mathematical Sciences Postdoctoral Research Fellowship  DMS-1502661.
The second author is partially supported by the Agence Nationale de la Recherche under grants GERASIC ANR-13-BS01-0007-01 and ISDEEC ANR-16-CE40-0013.

\section{Preliminary definitions}
\label{s:prelim}
\subsection{Fermi normal coordinates in a neighborhood $\Sigma_0$}
\label{s:fermi}
Throughout the article we shall use Fermi normal coordinates in a (sufficiently small) neighborhood, say $V_\eps$, of $\Sigma_0$. Namely since $\Sigma_0$ is cooriented, for $\eps$ sufficiently small, there exists a diffeomorphism (see \cite[Appendix~C.5]{Hoermander:V3})
\begin{align*}
[- \eps, \eps]  \times\Int(\Sigma_0) &\to  V_\eps \\
  (x_1 , x' ) &\mapsto  x,
\end{align*}
so that the differential operator $- \Delta_g$ takes the form 
\begin{equation*}
- \d_{x_1}^2  + r(x_1 , x' ,D_{x'}) + c(x,D),
\end{equation*}
where $c(x,D)$ is a first order differential operator and $r(x_1 , x' ,D_{x'})$ is an $x_1$-family of second-order elliptic differential operators on $\Int(\Sigma_0)$, i.e. a tangential operator, with principal symbol $r (x_1 , x' , \xi' )$, $\xi' \in T_{x'}^*\Int(\Sigma_0)$, that satisfies
\begin{equation}
  \label{eq: ellipticity r}
 r (x_1 , x' , \xi' ) \in \R,\quad \text{and}
  \quad C_1|\xi'|^2 \leq r (x_1 , x' , \xi' ) \leq C_2   |\xi'|^2,
\end{equation}
for some $0<C_1\leq C_2<\infty$. 

In these coordinates, note that we have in particular $|x_1(p)|=d(p,\Sigma)$, $\d_\nu = \d_{x_1}$ (up to changing $x_1$ into $-x_1$), as well as 
$$\sigma(-\Delta_g)=\xi_1^2+r(x_1,x',\xi')$$
and 
$$
-\Delta_{\Sigma_0} = r(0,x',D_{x'}), \quad \text{ with } \quad \sigma(-\Delta_{\Sigma_0})(x',\xi')= r(0,x',\xi')=:r_0(x',\xi'),
$$
where $-\Delta_{\Sigma_0}$ is the Laplacian on $\Int(\Sigma_0)$ given by the induced metric on $\Sigma_0$. We also recall that 
$$
\sigma(\Box)=-\tau^2+\sigma(-\Delta_g)  = -\tau^2+|\xi|_g^2 = -\tau^2 + \xi_1^2 + r(x_1 , x' , \xi') .
$$
With a slight abuse of notation, we shall also denote by $(x_1,x') \in \R \times \R^{n-1}$ (and $(\xi_1, \xi') \in \R \times \R^{n-1}$ associated cotangent variables) local coordinates in a neighborhood of a point in $\Int(\Sigma_0)$.

In these coordinates, the Hamiltonian vector field of $\Box$ is given by 
\bnan
\label{e:Ham-vect-coord}
H_{\sigma(\Box)} = -2 \tau \d_t + 2 \xi_1 \d_{x_1} - \d_{x_1}r(x_1, x' , \xi') \d_{\xi_1} + \d_{\xi'}r(x_1, x' , \xi') \d_{x'} - \d_{x'}r(x_1, x' , \xi') \d_{\xi'}
\enan
and generates the Hamiltonian flow of $\Box$ (these coordinates beeing away from the boundary $\R \times \d M$).

\subsection{The compressed cotangent bundle over $M$}
This section is independent of the hypersurface $\Sigma$ and is only aimed at defining, in case $\d M \neq \emptyset$, the space $Z$ on which the Melrose-Sj\"ostrand bicharacteristic flow is defined, as well as some properties of the flow. In case $\d M = \emptyset$, this set is $\Char(\Box) \subset T^*(\R \times M)\setminus 0$, the flow is the usual bicharacteristic flow of $\Box$, and this section not needed and may be skipped.
We refer to~\cite{MS:78}, \cite[Appendix A2]{Leb:96} for more complete treatments.

\medskip
We first embed $M\hookrightarrow \tilde{M}$ into a manifold, $\tilde{M}$, without boundary and write
$$T^*(\R\times M):=T_{\R\times M}^*(\R\times \tilde{M}).$$

Let $\TRM \simeq \big( T^*(\R\times \Int(M))\setminus 0 \big) \sqcup \big( T^*(\R\times \d M)\setminus 0 \big)$ denote the compressed cotangent bundle of of $\R \times M$ and $$j:T^*(\R\times M)\to \TRM$$ be the natural ``compression'' map. In any coordinates $(x',x_n)$ on $M$ where $x_n$ defines $\partial M$ and $x_n>0$ on $M$, $j$ has the form
\begin{equation}
\label{e:jdef}
j(t,x,\tau,\xi)=(t,x,\tau,\xi',x_n\xi_n).
\end{equation}
{The map $j$ endows $\TRM$ with a structure of homogeneous topological space.
We then write
\begin{equation}
\label{e:defZ} Z=j ( \Char(\Box)) , \quad \hat{Z} = Z \cup j \big(T^*_{\R \times \d M}(\R \times M )\big) ,
\end{equation}
and 
\begin{equation}
\label{e:defSZ} 
S\hat Z = \big(\hat{Z}\setminus(\R\times M) \big) /\R_+^* ,
\end{equation}
 the associated sphere bundle, which, endowed with the induced topology, are locally compact metric spaces.

Away from the boundary, $j$ is a bijection and we shall systematically identify $^bT^*(\R\times \Int(M))$ with $T^*(\R \times \Int(M))$ and $Z \cap \ ^bT^*(\R\times \Int(M))$ with $\Char(\Box)\cap T^*(\R\times \Int(M))$. 
This will be the case in particular near the hypersurface $\R \times \Sigma$.

\bigskip
Under the assumption that the geodesics of $M$ have no contact of infinite order with $\d M$, and with $Z$ as in~\eqref{e:defZ}, the (compressed) generalized bicharacteristic flow for the symbol $\frac{1}{2}(-\tau^2+|\xi|_g^2)$ is a (global) map 
\bnan
\label{e:def-flow}
\varphi :\R\times Z\to Z , \quad  (s, p) \mapsto \varphi(s,p)
\enan
We refer to \cite[Section~3]{MS:78}, \cite[Chapter 24]{Hoermander:V3}, \cite[Section~3.1]{BL:01} or~\cite[Section~1.3.1]{LRLTT:16} for a definition. In particular, it has the following properties
\begin{itemize}
\item $\varphi$ coincides with the usual bicharacteristic flow of $\Box$ (i.e. the Hamiltonian flow of $\sigma(\Box)$) in the interior $\Char(\Box) \cap T^*(\R\times \Int(M))$;
\item $\varphi$ satisfies the flow property 
\bnan
\label{e:phi-flowprop}
\varphi(t, \varphi(s, p)) = \varphi(t + s, p) ,\quad \text{ for all } t, s \in \R , p \in Z; 
\enan
\item $\varphi$ is homogeneous in the fibers of $Z$, in the sense that 
\bnan
 \label{e:flow-hom}
 M_{\lambda} \circ \varphi(s\lambda,\cdot) =\varphi(s,M_\lambda \cdot),
 \enan
where $M_\lambda$ denotes multiplication in the fiber by $\lambda>0$;
Hence, it induces a flow on $S\hat Z$.
\item $\varphi :\R\times Z\to Z$ is continuous, see~\cite[Theorem~3.34]{MS:78}.
\end{itemize}
}

\subsection{Glancing Sets over $\Sigma$}
\label{s:intro-glancing-sets}
For the following definitions, we use the above identification $\TRMsig \cong T^*_{\R\times \Sigma_0}(\R\times  M)$ for the cotangent bundle of $\R\times M$ with foot points at $\R \times\Sigma_0$, since in this case, we may assume in Definition~\ref{def:hypersurface} that $\Sigma_0 \cap \d M = \emptyset$.
Using the coordinates of Section~\ref{s:fermi}, the map $\iota$ defined in~\eqref{e:def-iota} reads
$$
\iota(t, x', \tau, \xi') = (t, 0, x', \tau , 0 , \xi') .
$$
Still in coordinaltes, we define for $\eps\geq 0$, the sets
\begin{gather*}
\mc{G}_{\e}=\{(t,0, x',\tau,\xi_1, \xi') \in T_{\R \times \Int(\Sigma)}^*(\R\times M) \setminus 0\mid \xi_1^2+r(0,x',\xi') = \tau^2 ,\,  \xi_1^2\leq \eps \tau^2 \},\\
\mc{T}_{\e}:=\{(t,0, x',\tau,\xi_1, \xi') \in T_{\R \times \Int(\Sigma)}^*(\R\times M)\setminus 0\mid  \xi_1^2+r(0,x',\xi') = \tau^2 , \,  \xi_1^2> \eps \tau^2 \}.
\end{gather*}
Let also
\begin{equation}
\label{e:SigmaSets}
\begin{aligned}
\G^{\Sigma}_\e& :=  \left\{ (t,x', \tau , \xi') \in T^*(\R \times \Int(\Sigma_0))\setminus 0 \mid x' \in \Int(\Sigma) ,\, -\e\tau^2\leq  \tau^2-r_0(x',\xi')\leq \e\tau^2\right\}.\\
\mc{T}_\e^{\Sigma}&:= \left\{ (t,x', \tau , \xi') \in T^*(\R \times \Int(\Sigma_0))\setminus 0 \mid x' \in \Int(\Sigma) , \, \e\tau^2< \tau^2-r_0(x',\xi')\right\},\\
\mc{E}_\e^{\Sigma}&:=  \left\{ (t,x', \tau , \xi') \in T^*(\R \times \Int(\Sigma_0)) \setminus 0\mid x' \in \Int(\Sigma) ,\,  \tau^2-r_0(x',\xi')< -\e\tau^2\right\}.
\end{aligned}
\end{equation}
Observe that $\G^{\Sigma}_\e\neq \pi(\mc{G}_\e)$ (although $\G^\Sigma_0=\pi(\G_0)$) where 
\begin{equation}
\label{e:projection} \pi:T^*_{\Int(\Sigma_0)}(\R \times M)\to T^*(\R\times \Int(\Sigma_0))\text{ is projection along }N^*(\R\times \Int(\Sigma_0)).
\end{equation}
In the above coordinates, $\pi(t,0,x',\tau,\xi_1,\xi')=(t,x',\tau,\xi').$
 Observe also that $\G_0=\G$ and $\G_0^\Sigma=\G^\Sigma$ where $\G$ and $\G^{\Sigma}$ are defined in~\eqref{e:squid}.

\begin{remark}
\label{rem:transverse}
In these coordinates, $\Sigma_0 = \{x_1 = 0\}$ and, according to~\eqref{e:Ham-vect-coord}, we have $\langle dx_1 , H_{\sigma(\Box)} \rangle = \langle dx_1 , 2 \xi_1 \d_{x_1}\rangle = 2 \xi_1$. Since $\xi_1 \neq 0$ on 
$\mc{T}_0$ this implies in particular that the vector field $H_{\sigma(\Box)}$ is transverse to the Hypersurface $\R \times \Sigma$ on this set (which explains its name $\mc{T}_0$).
\end{remark}

With these definitions, $\mc{T}$GCC can be written as
\begin{assume}{$0$}{$T$}\label{GC}
For all $p\in Z$, $$\bigcup\limits_{s\in \R}\{\varphi(s,p)\}\cap \mc{T}_0\cap {T^*((0,T)\times M)}\neq \emptyset.$$
\end{assume}

\subsection{Spaces on interior hypersurfaces}
In case $\Sigma$ is a compact internal hypersurface, then the Sobolev spaces $H^s(\Sigma)$ have a natural definition. Here, we give a definition adapted to the case $\d\Sigma \neq \emptyset$.
\begin{definition}
\label{d:def-ext-Hs}
Let $S$ be an interior hypersurface of a $d$ dimensional manifold $X$, and let $S_0$ be an extension of $S$ (see Definition~\ref{def:hypersurface}). 
Given $s\in\R$, we say that $u \in \bar{H}^s(S)$ (extendable Sobolev space) if there exists $\underline{u} \in H^s_{\comp}(S_0)$ such that $\underline{u}|_{S} = u$.

To put a norm on $\bar{H}^s(S)$, let $\chi \in C^\infty_c( \Int (S_0))$ such that $\chi=1$ in a neighborhood of $S$. We denote by $(U_j , \psi_j)_{j \in J}$ an atlas of $S_0$ such that for all $j\in J$
$$U_j \cap \supp \chi=\emptyset \qquad \text{ or }\qquad U_j \cap \partial S_0= \emptyset,$$
 and write $J_S=\{j \in J, U_j \cap \supp \chi \neq \emptyset\}$ and $J_\d =\{j \in J, U_j \cap (\supp\chi\setminus \Int(S)) \neq \emptyset\} \subset J_S$ (possibly empty). 
Let $(\chi_j)_{j \in J}$ be a partition of unity of $S_0$ subordinated to $(U_j)_{j \in J}$.
Given , we define 
\begin{equation}
\label{e:def-overHs}
\begin{gathered}
\|u\|_{\bar{H}^s(S)}= \sum_{j \in J_S \setminus J_\d} \|(\chi_ju) \circ \psi_j^{-1}\|_{H^s(\R^{d-1})} 
+\inf_{\underline{u}\in E_u}  \sum_{j \in J_\d} \| (\chi_j  \chi \underline{u}) \circ \psi_j^{-1}\|_{H^s(\R^{d-1})},\\
E_u:=\{\underline{u} \in H^s_{\comp}(\Int(S_0)) ,\, \underline{u}|_S=u\}.
\end{gathered}
\end{equation}
\end{definition}
The definition of the norm $\bar{H}^s(S)$ depends on $S_0 , \chi$, the choice of charts $(U_j , \psi_j)$ and the partition of unity $(\chi_j)$. One can however prove that, once $S_0$ and $\chi$ are fixed, two such choices of charts $(U_j , \psi_j)$ and partition of unity $(\chi_j)$ lead to equivalent norms $\bar{H}^s(S)$. In what follows, $(U_j , \psi_j, \chi_j)$ shall be traces on $S_0$ of charts and partition of unity on $X$. In case $S$ is a compact interior hypersurface, then the spaces $\bar{H}^s(S)$,  $\|\cdot\|_{\bar{H}^{s}(S)}$ coincides with usual $H^s(S)$ space.

%%%%%%%%%%%%%%%%%%%%%%
\section{Regularity of traces and well-posedness for the wave equation}
\label{s:wellposed}

The ultimate goal of the present section is to prove the well-posedness result for~\eqref{e:waveControl}, see Theorem~\ref{t:well-posedness-simple}. Defining solutions by transposition as in~\cite{Lio:88}, this amounts to proving regularity of traces on $\Sigma$ of solutions to the free wave equation. 

%%%%%%%%%%%%%%%%%%%%%%
\subsection{Regularity of traces}
\label{s:regularity}

We start by giving estimates on the restriction to $\Sigma$ of a solution to
\begin{equation}
\label{e:waveObs}
\begin{cases}\Box u= F &\text{on }\R\times \Int(M),\\
u=0&\text{on }\R\times \partial M,\\
(u,\partial_tu)|_{t=0}=(u_0,u_1)&(u_0,u_1)\in H^1(M)\times L^2(M).
\end{cases}
\end{equation} These bounds, indeed stronger bounds, can be found in \cite{Tataru:98}, but we choose to give the proof of the simpler estimates here for the convenience of the reader. They are closely related to the semiclassical restriction bounds from \cite{BGT,Tac:10,Tac:14,ChHaTo,Ga16}.

\begin{proposition}
\label{prop: regularity-waves}
Fix $T>0$. Then for any $A \in \Psi^0_{\phg}(\R \times \Int(\Sigma))$, with principal symbol vanishing in a neighborhood of $\mc{G}_0^\Sigma$ and all $\varphi\in C_c^\infty(\R)$, there exists $C>0$ so that for any  
 $(u_0,u_1)\in H^1(M)\times L^2(M)$ and $F\in L^2(\R\times M)$ with $\supp F\subset [0,T]\times M$ the solution $u$ to \eqref{e:waveObs} satisfies
\begin{multline} 
\label{e:cookie}
\|\varphi(t)u|_{\Sigma}\|_{\bar{H}^{1/2}(\R\times \Sigma)}^2+ \|\varphi(t)\partial_\nu u|_{\Sigma}\|_{\bar{H}^{-1/2}(\R\times \Sigma)}^2  +\|A \varphi(t)(u|_{\Sigma})\|_{\bar{H}^1(\R\times \Sigma)}^2 +\|A \varphi(t)(\partial_\nu u|_{\Sigma})\|_{L^2(\R \times \Sigma)}^2  \\
 \leq C(\|(u_0,u_1)\|_{H^1(M)\times L^2(M)}^2+\|F\|_{L^2}^2)
 \end{multline}
\end{proposition}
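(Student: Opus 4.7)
The plan is to work in the Fermi normal coordinates of Section~\ref{s:fermi}, where $\Box = \partial_t^2 - \partial_{x_1}^2 + r(x_1,x',D_{x'}) + c(x,D)$ near $\Sigma_0 = \{x_1 = 0\}$, so that $\partial_\nu = \partial_{x_1}$ and the glancing set projected to $\R \times \Sigma_0$ is $\{\tau^2 = r_0(x',\xi')\}$. I will split \eqref{e:cookie} into (i) a global basic trace estimate at the level of $\bar H^{1/2} \times \bar H^{-1/2}$, independent of $A$, and (ii) a microlocal half-derivative gain produced by $A$, itself split into an elliptic and a transverse piece.

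For (i), classical energy estimates for $\Box u = F$ yield $u \in C^0(\R;H^1(M)) \cap C^1(\R;L^2(M))$ with norm controlled by $\|(u_0,u_1)\|_{H^1\times L^2}+\|F\|_{L^2}$. Hence $\varphi(t)u \in H^1(\R\times M)$, and the classical trace theorem gives the $\bar H^{1/2}$ bound for $\varphi u|_\Sigma$. For $\partial_\nu u|_\Sigma$, I will use the Lions-type hidden regularity argument: define $\partial_\nu u|_\Sigma$ by duality via Green's formula against test functions extended into a neighborhood of $\R\times\Sigma$, and control the resulting pairing by $\|u\|_{H^1}$ and $\|\Box u\|_{L^2}$.

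For the microlocal gain, write $A = A_{\mc E} + A_{\mc T}$ with $\sigma(A_{\mc E})$ supported in $\mc E_{2\e}^\Sigma$ and $\sigma(A_{\mc T})$ in $\mc T_{2\e}^\Sigma$ for some $\e>0$ (possible since $\sigma(A)$ vanishes near $\mc G_0^\Sigma$), and lift each to a tangential operator $\tilde A_\bullet \in \Psi^0$ on a tubular neighborhood of $\R\times\Sigma_0$, with $\xi_1$-independent symbol supported in a slightly larger conic set of the same type. On $\supp\sigma(\tilde A_{\mc E})\times\R_{\xi_1}$ one has $\sigma(\Box) = -\tau^2 + \xi_1^2 + r \gtrsim \tau^2 + |\xi|^2$, after shrinking $V_\e$ so that $r \approx r_0$ and using that $r_0 \geq (1+2\e)\tau^2$ on the support, so $\Box$ is microlocally elliptic there. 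Applying a microlocal parametrix to $\Box \tilde A_{\mc E} u = \tilde A_{\mc E} F + [\Box,\tilde A_{\mc E}] u$ puts $\tilde A_{\mc E} u$ in $H^2_{\mathrm{loc}}$ microlocally near $\Sigma$, and the classical trace theorem then delivers $\bar H^{3/2}$ and $\bar H^{1/2}$ bounds, exceeding what is needed.

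The main technical step is the transverse region, where $\tau^2 - r \geq c(\tau^2 + |\xi'|^2)>0$ on $\supp\sigma(\tilde A_{\mc T})$. There I will factor $\Box \equiv -(D_{x_1} - B_+)(D_{x_1} - B_-)$ modulo smoothing on this set, with $B_\pm \in \Psi^1$ tangential and self-adjoint, having real principal symbols $\pm\sqrt{\tau^2 - r(x_1,x',\xi')}$. Setting $v_\pm = (D_{x_1} - B_\mp)\tilde A_{\mc T} u$, these satisfy first-order tangentially hyperbolic equations in $x_1 \in [-\e,\e]$ with source controlled by $\tilde A_{\mc T} F$ and the commutators $[\Box,\tilde A_{\mc T}]u$, $[B_\pm,\tilde A_{\mc T}]u$. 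A standard $x_1$-energy/symmetrizer estimate for first-order systems bounds the traces $v_\pm|_{x_1=0}$ in $L^2(\R\times\Sigma)$ by the $H^1$-energy of $u$ inside $V_\e$ plus $\|F\|_{L^2}$, and unwinding the factorization produces the $\bar H^1\times L^2$ bound on $(\tilde A_{\mc T} u|_\Sigma, \tilde A_{\mc T}\partial_\nu u|_\Sigma)$. The main obstacle I anticipate is here: the factorization only holds microlocally on $\supp\sigma(\tilde A_{\mc T})$, the $B_\pm$ are only defined modulo lower-order $x_1$-dependent errors, and one must carefully align the microlocal supports and cutoffs with the partition of unity in Definition~\ref{d:def-ext-Hs} used to norm $\bar H^s(\Sigma)$. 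Summing the global, elliptic and transverse bounds then gives \eqref{e:cookie}.
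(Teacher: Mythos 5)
Your proposal is correct, but it takes a genuinely different route in two places. For the basic $\bar{H}^{1/2}\times\bar{H}^{-1/2}$ trace bound, you combine the classical trace theorem (for $u|_\Sigma$) with a Green's-formula/duality argument (for $\partial_\nu u|_\Sigma$); the paper instead proves a single Fourier/parametrix lemma (Lemma~\ref{l:restrict}) that yields both traces at once from the ellipticity of $\Box$ on $N^*(\R\times\Sigma_0)\subset\{\tau=0\}$, a statement that is then reused elsewhere. For the microlocal half-derivative gain, you decompose the operator $A$ into an elliptic and a transverse piece (which is permissible since $\sigma(A)$ vanishes near $\mc{G}_0^\Sigma$), lift each to a tangential operator on a tube, and handle the transverse piece with the two-factor factorization $\Box\equiv(D_{x_1}-B_+)(D_{x_1}-B_-)$ plus first-order $x_1$-energy estimates; the paper instead performs a microlocal partition of unity on $T^*(\R\times M)$ with four regions (elliptic for $\Box$; away from $\Sigma$; transversal, using the \emph{single}-factor factorization $\sigma(\Box)=e(\xi_1-b)$ via the implicit function theorem and Lemma~\ref{l:energy}; and glancing, where the cutoff $\tilde{A}_\eps\Op(\chi)$ is shown to drop an order because $\sigma(A)$ vanishes there). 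Your two-factor factorization is essentially the paper's own Lemma~\ref{l:factor}, but that lemma appears only later in Section~\ref{s:weakObs} (inside Lemma~\ref{l:mushrooms}, for the observability argument), so in effect you prove the regularity statement with the machinery the paper reserves for observability. Both strategies are valid; yours trades the explicit glancing case for a slightly heavier factorization, and has the pleasant feature of localizing $A$ rather than $u$, so the ``off-$\Sigma$'' case never arises. The points you flag yourself -- aligning the tangential supports with the partition of unity in Definition~\ref{d:def-ext-Hs}, and the $x_1$-dependent lower-order corrections in $B_\pm$ -- are genuine but routine; the paper dispatches the former by choosing $\tilde\Sigma\Subset\Int(\Sigma)$ with $A=\mathds{1}_{\tilde\Sigma}A\mathds{1}_{\tilde\Sigma}$, and you would need the analogous remark.
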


To prove Proposition \ref{prop: regularity-waves} we need the following elementary lemma.
\begin{lemma}
\label{l:restrict}
Suppose that $S$ is an interior hypersurface of the $d$ dimensional manifold $X$ (in the sense of Definition~\ref{def:hypersurface}) and $P\in \Psi_{\phg}^m(\Int(X))$ is elliptic on the conormal bundle to $\Int(S_0)$, 
$N^*\Int (S_0).$ 
Then for any $s\in \R$, $k\geq 0$ and $\epsilon>0$, there exists $C=C(\epsilon,k,s)>0$ so that for all $u \in C^\infty(M)$,
$$\|\partial_\nu^ku|_{S}\|_{\bar{H}^s(S)}\leq C(\|u\|_{H^{s+k+1/2}(X)}+\|Pu\|_{H^{1/2+k+\epsilon-m}(X)}).$$
\end{lemma}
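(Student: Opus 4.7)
The plan is to work in Fermi normal coordinates $(x_1, x')$ adapted to $S_0$, in which $S_0 = \{x_1 = 0\}$, $\partial_\nu = \partial_{x_1}$, and $N^*\Int(S_0)$ corresponds to $\{\xi' = 0\}$ in the cotangent fibers. Using Definition~\ref{d:def-ext-Hs} of $\bar H^s(S)$ with the admissible extension $\underline u = \chi \partial_\nu^k u|_{S_0}$ for some $\chi \in C_c^\infty(\Int S_0)$ equal to $1$ near $S$, and a partition of unity, I reduce to bounding $\|\partial_{x_1}^k(\chi u)|_{x_1 = 0}\|_{H^s(\R^{d-1})}$. For $s > 0$ the classical trace theorem $H^{s+k+1/2}(\R^d) \to H^s(\R^{d-1})$ yields the estimate using only the first term of the right-hand side, so the real content of the lemma lies in the range $s \leq 0$.

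\medskip

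For $s \leq 0$ I introduce a microlocal decomposition $u = Au + Bu$, where $A \in \Psi^0_{\phg}(\Int X)$ has principal symbol supported in a small conic neighborhood $\mc V$ of $N^*\Int(S_0)$ on which $P$ is elliptic, and equal to $1$ on a smaller conic subneighborhood; set $B = I - A$. On the essential support of $\sigma(B)$ one has $|\xi'| \gtrsim |\xi_1|$, hence $\langle \xi\rangle \sim \langle \xi'\rangle$. Writing, on the Fourier side, $\widehat{\partial_{x_1}^k v|_{x_1=0}}(\xi') = \frac{1}{2\pi}\int (i\xi_1)^k \hat v(\xi_1, \xi')\,d\xi_1$ and applying Cauchy--Schwarz with the $\xi_1$-integration restricted to the conic support, a direct computation yields the sharp trace bound
$$\|\partial_\nu^k(Bu)|_{x_1=0}\|_{H^s(\R^{d-1})} \leq C\|u\|_{H^{s+k+1/2}(X)} + C_N\|u\|_{H^{-N}(X)}$$
valid for any $s \in \R$ and any $N > 0$, the remainder term accounting for commutators $[\partial_{x_1}^k, B] \in \Psi^{k-1}$ and smoothing corrections in the quantization of $B$.

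\medskip

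For the conormal piece $Au$ I use ellipticity of $P$ on $\mc V \supset \mathrm{WF}'(A)$ to construct a microlocal parametrix $Q \in \Psi^{-m}_{\phg}(\Int X)$ with $AQP = A + R$, where $R$ is smoothing modulo wavefront set outside $\mc V$. This gives $\|Au\|_{H^{1/2 + k + \eps}(X)} \leq C\|Pu\|_{H^{1/2+k+\eps-m}(X)} + C_N\|u\|_{H^{-N}(X)}$, and the classical trace theorem (applicable with $\eps > 0$) then yields
$$\|\partial_\nu^k(Au)|_{x_1=0}\|_{H^\eps(\R^{d-1})} \leq C\|Pu\|_{H^{1/2+k+\eps-m}(X)} + C_N\|u\|_{H^{-N}(X)},$$
which bounds $\|\cdot\|_{H^s(S)}$ for $s \leq \eps$ after localization (using $H^\eps \hookrightarrow H^s$ on compactly supported distributions). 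Summing the two microlocal pieces and absorbing $\|u\|_{H^{-N}}$ into $\|u\|_{H^{s+k+1/2}}$ by choosing $N$ large completes the estimate for $s \leq 0$; combined with the $s > 0$ case of the first paragraph, this yields the lemma.

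\medskip

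The main technical obstacle is the sharp trace bound for the tangential piece $Bu$ in the second paragraph: one must obtain a lossless estimate valid for all $s \in \R$ by genuinely exploiting the conic support of $\sigma(B)$ away from $N^*S_0$, keeping careful track of the commutator $[\partial_{x_1}^k, B]$ and the smoothing remainder produced by the quantization. The parametrix construction for $Au$ is, by contrast, routine, and the loss of an arbitrarily small $\eps$ in the hypothesis on $Pu$ comes precisely from requiring the target trace of $Au$ to lie in a slightly positive Sobolev space.
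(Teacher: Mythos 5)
Your proposal is correct and follows essentially the same microlocal strategy as the paper: split the trace into a piece near $N^*S_0$ (controlled via elliptic parametrix for $P$, at the cost of $\eps$ in the exponent) and a piece away from $N^*S_0$ (controlled by a Cauchy--Schwarz argument in the $\xi_1$-Fourier variable, exploiting that $\langle\xi\rangle\sim\langle\xi'\rangle$ on the tangential region). The paper performs this split directly at the level of the Fourier integral $\int\hat z\,d\xi_1$ by multiplying by a cutoff $\chi_\delta(\xi_1,\xi')$, while you split the function itself as $u=Au+Bu$ with pseudodifferential operators; these are the same idea in slightly different clothing. The other notable difference is that the paper reduces $k>0$ to $k=0$ by an auxiliary operator $\tilde P$ elliptic on $N^*S_0$ with $\WF(\tilde P)\subset\Ell(P)$, while you carry $\partial_\nu^k$ along throughout — both are fine.

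One imprecision you should fix: you lump $[\partial_{x_1}^k,B]\in\Psi^{k-1}$ into the $C_N\|u\|_{H^{-N}}$ remainder, but this commutator is of order $k-1$, not smoothing, so it cannot be absorbed into an arbitrarily weak norm. What saves the estimate is that $\WF([\partial_{x_1}^k,B])\subset\WF(B)$ stays away from $N^*S_0$, so the \emph{same} tangential trace bound applies to it and yields $\|u\|_{H^{s+(k-1)+1/2}}\leq\|u\|_{H^{s+k+1/2}}$; after a finite induction in $k$ (or a one-line observation that all commutators remain in the tangential calculus), the term is indeed controlled by $\|u\|_{H^{s+k+1/2}}$, not by $\|u\|_{H^{-N}}$. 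The final inequality you state is therefore correct, but the stated justification for the commutator contribution is not.
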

\bnp
We start by proving the case $k=0$. 
In case $s>0$, the stronger inequality $\| u|_{S}\|_{\bar{H}^s(S)}\leq C\|u\|_{H^{s+k+1/2}(X)}$ holds as a consequence of standard trace estimates \cite[Theorem B.2.7]{Hoermander:V3} (that the $\bar{H}^s(S)$ norm is the appropriate one in case $S$ is not compact is made clear below).

We now assume that $s\leq 0$, and estimate each term in the definition~\eqref{e:def-overHs} of $\|u|_{S}\|_{\bar{H}^s(S)}$ in local charts. For this, we use charts $(\Omega_i, \kappa_i)_{i \in I}$ of $\Int(X)$ such that $S_0 \subset \bigcup_{i\in I}\Omega_i$ and such that $(\Omega_i \cap S_0, \kappa_i|_{S_0})_{i \in I}$ satisfy the assumptions of Definition~\ref{d:def-ext-Hs}. In a neighborhood of $S_0$, we have $u = \sum_i \tilde{\chi}_i u$ (where $(\tilde{\chi}_i)$ is now a partition of unity of $S_0$ associated to $\Omega_i$, and hence, $(\tilde{\chi}_i|_{S_0})$ satisfies the assumptions of Definition~\ref{d:def-ext-Hs}), and estimating $\|u|_{S}\|_{\bar{H}^s(S)}$ amounts to estimating each 
$$\|((\tilde{\chi}_ju)|_{S_0}) \circ \kappa_j^{-1}\|_{H^s(\R^{d-1})} =  \| (\check{\chi} w)|_{x_1=0} \|_{H^s(\R^{d-1})}$$ 
with $\check{\chi}=\tilde{\chi}_j \circ \kappa_j^{-1}$ and $w =u \circ \kappa_j^{-1}$.
We may now work locally, where $S$ is a subset of $\{x_1=0\}$, and estimate the trace of $z=\check{\chi} w$.

Let $\chi\in C_c^\infty(\R)$ have $\chi \equiv 1$ on $[-1,1]$ with $\supp \chi \subset [-2,2]$, $0\leq \chi \leq 1$ and fix $\delta>0$ small enough so that $P$ (which, by abuse of notation, we use for the operator in local coordinates) is elliptic on a neighborhood of 
$$\{|\xi_1|\geq\delta^{-1}|\xi'|\}\supset N^*(\{0\} \times \R^{d-1}) = \{\xi'=0\}$$
 and let $\chi_\delta(\xi_1,\xi')=\chi\left(\frac{2|\xi'|}{\delta \xi_1}\right)$ for $\xi_1\neq 0$ and $\chi_\delta(0,\xi')=0$. Then, we have
\begin{align*}
\|z|_{x_1=0}\|_{H^s(\R^{d-1})}^2&=\int_{\R^{d-1}} \langle \xi'\rangle^{2s} \left|\int_\R \hat{z}(\xi_1,\xi')d\xi_1\right|^2d\xi' \leq 2(A+B) ,
\end{align*}
with
\begin{align*}
A = \int_{\R^{d-1}}\langle \xi'\rangle^{2s}\left|\int_\R  (1-\chi_\delta)  \hat{z}(\xi_1,\xi')d\xi_1\right|^2d\xi' , \quad \text{and} \quad  B = \int_{\R^{d-1}}\langle \xi'\rangle^{2s}\left|\int_\R  \chi_\delta  \hat{z}(\xi_1,\xi')d\xi_1\right|^2d\xi'  .
\end{align*}
We now estimate each term. With the Cauchy Schwarz inequality, the first term is estimated by
\begin{align*}
A &= \int\langle \xi'\rangle^{2s}\left|\int \frac{(1-\chi_\delta)}{\langle \xi\rangle^{s+1/2} } \langle \xi\rangle^{s+1/2} \hat{z}(\xi_1,\xi')d\xi_1\right|^2d\xi'\\
&\leq  \int_{\R^{d-1}} \left(\int_\R \frac{\langle \xi'\rangle^{2s}(1-\chi_\delta)^2}{\langle \xi\rangle^{2s+1}}d\xi_1 \right) \left(\int_\R \langle \xi\rangle^{2s+1}|\hat{z}|^2(\xi_1,\xi')d\xi_1 \right)d\xi'\\ 
&\leq C_{s,\delta}\|z\|_{H^{s+1/2}(\R^d)}^2 ,
\end{align*}
since 
\begin{align*} 
\int_\R \frac{\langle \xi'\rangle^{2s}(1-\chi_\delta)^2}{\langle \xi\rangle^{2s+1}}d\xi_1 &=\int_\R \frac{1}{(1+t^2)^{s+1/2}} \left(1-\chi\left( \frac{2|\xi'|}{\delta \langle \xi' \rangle t}\right) \right)^2 dt\\
& \leq \int_{|t|\leq 2/\delta} \frac{1}{(1+t^2)^{s+1/2}} dt = : C_{s,\delta}
\end{align*}
 (which is large since $s\leq 0$).

Again with the Cauchy Schwarz inequality, the second term is estimated by
\begin{align*}
B &= \int_{\R^{d-1}} \langle\xi'\rangle^{2s}\left|\int_\R \frac{\langle \xi\rangle^{1/2+\epsilon}\chi_\delta}{\langle \xi\rangle^{1/2+\epsilon} }\hat{z}(\xi_1,\xi')d\xi_1\right|^2d\xi' \\
&\leq \int_{\R^{d-1}} \left(\int_\R \frac{\langle \xi'\rangle^{2s}}{\langle \xi\rangle^{1+2\epsilon}}d\xi_1 \right) \left(\int_\R \langle \xi\rangle^{1+2\epsilon}\chi_\delta^2|\hat{z}|^2(\xi_1,\xi')d\xi_1 \right)d\xi'\\
&\leq  C_\epsilon\|\chi_\delta(D) z\|_{H^{1/2+\epsilon}(\R^d)}^2 ,
\end{align*}
since 
$$\int_\R \frac{\langle \xi'\rangle^{2s}}{\langle \xi\rangle^{1+2\epsilon}}d\xi_1 = \langle \xi'\rangle^{2s-2\epsilon} \int_\R \frac{1}{(1+t^2)^{1/2+\epsilon}}  dt =  \langle \xi'\rangle^{2s-2\epsilon}  C_\epsilon,$$ 
with $C_\epsilon$ finite as soon as $\epsilon>0$, and $\langle \xi'\rangle^{2s-2\epsilon}\leq 1$ since $s\leq 0$. Combining the last three estimates and recalling that $z=\check{\chi} w$ yields
\begin{align}
\label{e:estim-loc-1}
\|\check{\chi} w|_{x_1=0}\|_{H^s(\R^{d-1})}^2&\leq C_{s,\delta}\| \check{\chi} w\|_{H^{s+1/2}(\R^d)}^2 + C_\epsilon\|\chi_\delta(D) \check{\chi} w\|_{H^{1/2+\epsilon}(\R^d)}^2,
\end{align}

Now, according to the definition of $\chi_\delta$, the operator $P$ is elliptic on a neighborhood of $\supp(\check\chi) \times \supp(\chi_\delta)$, a classical parametrix construction (see for instance \cite[Theorem~18.1.9]{Hoermander:V3}) implies, for any $N\in \N$, 
\begin{align}
\label{e:estim-loc-2}
\|\chi_\delta(D)\check{\chi} w\|_{H^{1/2+\epsilon}(\R^{d})}\leq C_N(\| \check{\check{\chi}} Pw\|_{H^{1/2+\epsilon-m}(\R^d)}+\| \check{\check{\chi}} w\|_{H^{-N}(\R^d)}),
\end{align}
where $\check{\check{\chi}}$ is supported in the local chart and equal to one in a neighborhood of $\supp(\check{\chi})$. Recalling that $w$ is the localization of $u$, and summing up the estimates~\eqref{e:estim-loc-1}-\eqref{e:estim-loc-2} in all charts yields the sought result for $k=0$.

We now show that the $k=0$ case implies the $k>0$ case. Let $\tilde{P}\in \Psi^m_{\phg}(\Int (X))$ be elliptic on $N^*(\Int(S_0))$ with $\WF(\tilde{P})\subset \{\sigma(P)\neq 0\}$ (see e.g. Appendix~\ref{s:pseudo} for a definition of $\WF(A)$ for a pseudodifferential operator $A$). Then, applying the case $k=0$ to the operator $\tilde{P}$, we obtain
\begin{align}
\label{e:estim-loc-3}
\|\partial_\nu ^k u|_{\Sigma}\|_{\bar{H}^s(\Sigma)}&\leq C \left( \|\partial_\nu ^k u\|_{H^{s+\frac{1}{2}}}+\|\tilde{P}\partial_\nu ^k u\|_{H^{\frac{1}{2}+\epsilon-m}} \right)
\leq C \left(\|u\|_{H^{s+k+\frac{1}{2}}}+\|\tilde{P}\partial_\nu ^k u\|_{H^{\frac{1}{2}+\epsilon-m}} \right) .
\end{align}
Now, we write
$$ \tilde{P}\partial_\nu ^ku=\partial_\nu^k \tilde{P}u+[\tilde{P},\partial_\nu^k]u.$$
Since $P$ is elliptic on $\WF(\tilde{P})$, by the elliptic parametrix construction, we can find $E_1\in \Psi^{k}_{\phg}(\Int(X))$, and $E_2\in \Psi^{k-1}_{\phg}(\Int(X))$ so that 
$$ \partial_\nu^k \tilde{P}=E_1P+R_1,\qquad [\tilde{P},\partial_\nu^k]=E_2P+R_2$$
with $R_i\in \Psi^{-\infty}_{\phg}(\Int(X))$. Hence, we obtain
\begin{align*}
\|\tilde{P}\partial_\nu ^k u\|_{H^{\frac{1}{2}+\epsilon-m}}
& \leq C\left(\|E_1Pu\|_{H^{\frac{1}{2}+\epsilon-m}}+\|E_2Pu\|_{H^{\frac{1}{2}+\epsilon-m}}+\|u\|_{H^{s+k+\frac{1}{2}}}  \right)\\
& \leq 
C\left(\|Pu\|_{H^{\frac{1}{2}+k+\epsilon-m}}+\|u\|_{H^{s+k+\frac{1}{2}}}\right) ,
\end{align*}
which, combined with~\eqref{e:estim-loc-3}, yields the result.
\enp

We now proceed with the proof of Proposition \ref{prop: regularity-waves}.
\bnp[Proof of Proposition \ref{prop: regularity-waves}]
First, observe that standard estimates for the Cauchy problem imply that for any $\tilde{\varphi}\in C_c^\infty(\R)$, 
$$\|\tilde{\varphi} u\|_{H^1}\leq C_{T,\tilde{\varphi}}(\|(u_0,u_1)\|_{H^1(M)\times L^2(M)}+\|F\|_{L^2(0,T;L^2(M))}),$$
so we may estimate by terms of the form $\|\tilde{\varphi} u\|_{H^1}$. 

Second, notice that $N^*(\R\times \Int(\Sigma_0))\subset \{\tau=0\}$, so $\Box$ is elliptic on  $N^*(\R\times \Int(\Sigma_0))$ and hence Lemma~\ref{l:restrict} implies 
$$\|\varphi(t)u|_{\Sigma}\|_{\bar{H}^{1/2}(\R\times\Sigma)}+\|\varphi(t)\partial_\nu u|_{\Sigma}\|_{\bar{H}^{-1/2}(\R\times \Sigma)}\leq C(\|\tilde{\varphi}(t)u\|_{H^1}+\|\tilde{\varphi}F\|_{L^2})$$
where $\tilde{\varphi}\in C_c^\infty(\R)$ with $\tilde{\varphi}\equiv 1$ on $\supp \varphi$.

Now, observe that since $\Sigma$ is an interior hypersurface, we may work in a fixed compact subset, $K$ of $\Int(M)$. Note also that there exists $\tilde{\Sigma}$ an interior hypersurface with $\tilde{\Sigma}\subset \Int(\Sigma)$ so that $A=\mathds{1}_{\tilde{\Sigma}}A \mathds{1}_{\tilde{\Sigma}}.$

\medskip
We proceed by making a microlocal partition of unity on a neighborhood  $T^*(\R \times K).$ It suffices to obtain the estimate
\begin{equation}
\label{e:microEst} \|A (\Op(\chi)\varphi(t)u|_{\Sigma})\|_{\bar{H}^1(\R\times \Sigma)}^2 +\|A (\partial_\nu \Op(\chi)\varphi(t)u)|_{\Sigma})\|_{L^2(\R \times \Sigma)}^2  
 \leq C(\|(u_0,u_1)\|_{H^1(M)\times L^2(M)}^2+\|F\|_{L^2}^2),
 \end{equation}
 for $\chi$ supported in a conic neighborhood of an arbitrary point, $q_0 =(t_0,\tau_0,x_0,\xi_0)$ in $T^*(\R\times K).$  
  We will focus on four regions: $q_0\notin \Char(\Box)$ (an elliptic point); $q_0\in \Char(\Box)$ but away from $\Sigma$; $q_0\in T^*_{\R\times \tilde{\Sigma}}(\R\times M) \cap \mc{T}_0$ (a transversal point); and $q_0\in T^*_{\R\times \tilde\Sigma}(\R\times M) \cap \mc{G}_0$ (a glancing point). 

In all regions, we shall use that given $\chi\in S_{\phg}^0$ a cutoff to a conic neighborhood, $U$ of $q_0$, we have
\begin{equation}
\label{e:commute1}\|\Box \Op(\chi)\varphi u\|_{L^2} \leq  \|[\Box ,\Op(\chi)\varphi] u\|_{L^2} +\| \Op(\chi)\varphi \Box u\|_{L^2} \leq C\left(\| \tilde{\varphi} u\|_{H^1}+\|\tilde{\varphi}F\|_{L^2}\right).
\end{equation} 

First start with $q_0$ in the elliptic region: $q_0\notin \Char(\Box)$. Shrinking the neighborhood if necessary, the microlocal ellipticity of $\Box$ near $q_0$ with~\eqref{e:commute1} yields
$$\|\Op(\chi)\varphi  u\|_{H^2}\leq  C\left(\|\tilde{\varphi}u\|_{H^1}+\|\tilde{\varphi}F\|_{L^2}\right) .
$$ 
Hence, rough trace estimates imply
$$\|(\partial_\nu \Op(\chi)\varphi u)|_{\Sigma}\|_{L^2(\R\times \Sigma)}+\|( \Op(\chi)\varphi u)|_{\Sigma}\|_{\bar{H}^1(\R\times \Sigma)}\leq  C\left(\|\tilde{\varphi}u\|_{H^1}+\|\tilde{\varphi}F\|_{L^2}\right) ,
$$
and boundedness of $A$ proves~\eqref{e:microEst} in this case.

Second, suppose that $q_0\in \Char(\Box)$ but $x_0\notin \Sigma$, then clearly there is a neighborhood $U$ of $q_0$ and $\chi$ elliptic at $q_0$ with $\supp \chi \subset U$ so that
$$\|(\partial_\nu \Op(\chi) \varphi u)|_{\Sigma}\|_{L^2(\R\times \Sigma)}+\|(\Op(\chi) \varphi u)|_{\Sigma}\|_{\bar{H}^1(\R\times \Sigma)}\leq C\|\tilde{\varphi}u\|_{L^2}$$
and again boundedness of $A$ proves~\eqref{e:microEst}.

Third, suppose $q_0\in T^*_{\R\times\Sigma}(\R\times M)$ is a transversal point. In that case, we use local Fermi normal coordinates (see Section~\ref{s:fermi}) near $\Sigma_0$ so that $x_0\mapsto (0,0)$. Note that since $q_0\in \Char(\Box)$, we have $\sigma(\Box) (q_0) = - \tau_0^2 + (\xi_0)_1^2 + r(x_0,\xi_0')=0$. Since $q_0 \in \mc{T}_0$, we have moreover $r_0(0,\xi_0')<\tau_0^2$ and hence $\partial_{\xi_1}\sigma(\Box) (q_0)=2(\xi_0)_1\neq 0$. Therefore, by the implicit function theorem, there exist a neighborhood $U$ of $q_0$ and real valued symbols $b(\tau,x,\xi')\in C^\infty((-\e,\e);S^1_{\phg}(T^*(\R\times \{x_1=0\})))$ and $e(\tau,x,\xi)\in S_{\phg}^1(T^*\R\times \R^n)$ elliptic near $q_0$ so that in $U$ we have
$$\sigma(\Box)=e(\tau,x,\xi)(\xi_1-b(\tau,x,\xi')).$$
Thus, letting $\tilde{\chi}\in S^0_{\phg}(\R \times \R^n)$ with $\tilde{\chi}\equiv 1$ on $\supp \chi$ and $\supp \tilde\chi \cap N^*(\{x_1=0\})=\emptyset$ (this is possible since we have $\Char(\Box) \cap N^*(\{x_1=0\})= \emptyset$ and $q_0 \in  \Char(\Box)$, so that we may assume $\supp \chi \cap N^*(\{x_1=0\})=\emptyset$), we have $b \tilde{\chi}\in S^1_{\phg}(T^*\R\times \R^n)$ (see~\cite[Theorem 18.1.35]{Hoermander:V3}) and in particular $\Op(b)\Op(\tilde{\chi})\in \Psi^{1}_{\phg}(\R\times \R^n).$ Therefore,
$$
\Box \Op(\chi)=\Op(e)(D_{x_1}-\Op(b)\Op(\tilde{\chi}))\Op(\chi)+R,
$$
where $R \in \Psi^{1}_{\phg}(\R\times \R^n)$ and hence, using a microlocal parametrix for $\Op(e)$ on $\supp \chi$, we have, using~\eqref{e:commute1}
$$\|(D_{x_1}-\Op(b))\Op(\chi) \varphi u\|_{H^1}\leq C(\|\tilde{\varphi}u\|_{H^1}+\|\tilde{\varphi}F\|_{L^2})$$
and also
\begin{gather*} 
\|(D_{x_1}-\Op(b))\partial_{x_1} (\Op(\chi) \varphi u)\|_{L^2}\leq C(\|\tilde{\varphi}u\|_{H^1}+\|\tilde{\varphi}F\|_{L^2}).
 \end{gather*}
So, by Lemma~\ref{l:energy}, we obtain
\begin{equation}
\label{e:energy1}
\begin{aligned}
\|(\Op(\chi) \varphi u)|_{x_1=0}\|_{H^1}&\leq C(\|\tilde{\varphi}u\|_{H^1}+\|\tilde{\varphi}F\|_{L^2}),\\
\|(\partial_{x_1}\Op(\chi) \varphi u)|_{x_1=0}\|_{L^2}&\leq C(\|\tilde{\varphi}u\|_{H^1}+\|\tilde{\varphi}F\|_{L^2}).
\end{aligned}
\end{equation}
Boundedness of $A$ and \eqref{e:energy1} implies~\eqref{e:microEst}.

Finally, it remains to show that for $q_0\in T^*_{\R\times \Sigma}(\R\times M)$ a glancing point (i.e. with $\tau_0^2-r_0(0,\xi_0')=0$) and $\chi$ supported sufficiently close to $q_0$, we have
$$\| A(\Op\chi \varphi u)|_{x_1=0}\|_{\bar{H}^1(\R\times \Sigma)}+\| A(\partial_{x_1}\Op\chi \varphi u)|_{x_1=0}\|_{L^2(\R\times \Sigma)}\leq \|\tilde{\varphi} u\|_{H^1(\R\times M)}.$$

Let $\psi\in C_c^\infty(\R)$ with $\psi \equiv 1$ near $0$ and define $\psi_\e(x_1)=\psi(\e^{-1}x_1).$ Then define 
$$\tilde{A}_\e u(x_1,x')=[\psi_\e(x_1)Au(x_1,\cdot)](x'), $$
so that $\tilde{A}_\e u|_{x_1=0} =A (u|_{x_1=0}).$ 
Then by \cite[Theorem 18.1.35]{Hoermander:V3}, 
$\tilde{A}_\e \Op(\chi)\varphi(t)\in \Psi_{\phg}^0(\R\times M)$ and for $\e>0$ small enough and $\chi$ supported sufficiently close to $q_0$, $\sigma(\tilde{A}_\e\Op(\chi))=0$. In particular, 
$\tilde{A}_\e \Op(\chi)\varphi(t)\in \Psi_{\phg}^{-1}(\R\times M)$. Similarly, $\tilde{A}_\e \partial_{x_1}\Op(\chi)\varphi(t)\in \Psi_{\phg}^0(\R\times M).$ Rough Sobolev trace estimates thus yield 
\begin{gather*}
\|\tilde{A}_\e \Op(\chi)\varphi(t)u|_{x_1=0}\|_{\bar{H}^1(\R\times \Sigma)}\leq C\|\tilde{\varphi}u\|_{H^{1}(\R\times M)},\\
\|\tilde{A}_\e \partial_{x_1}\Op(\chi)\varphi(t)u|_{x_1=0}\|_{L^2(\R\times \Sigma)}\leq C\|\tilde{\varphi}u\|_{H^{1}(\R\times M)}, 
\end{gather*}
and the proof is finished.
 
\enp

%%%%%%%%%%%%%%%%%%%%%%
\subsection{Microlocal spaces on the hypersurface}
\label{s:spaces-comp-loc}

This section is aimed at defining the appropriate spaces for the statement of the well-posedness and control results in the present context. 
{All along the section, a sequence $\mc{S} = (\eps_j)_{j \in \N}$, $\eps_j \to 0$ is fixed and $\eps , \eps' \in \mc{S}$. This precision is sometimes omitted for concision.}
Fix a family of interior hypersurfaces $\Sigma_\e$ with 
\begin{equation}
\label{e:sigs}
\Sigma_{\e'}\subset \Int(\Sigma_\e)\subset \Sigma_\e\subset \Int(\Sigma),\quad \e<\e',\qquad \bigcup_{\e>0}\Sigma_\e=\Int(\Sigma).
\end{equation}
Let 
\begin{equation}
\label{e:spaceSet} 
\Gamma\subset T^*(\R\times \Int(\Sigma))\setminus 0\text{ be a closed and conic set}.
\end{equation}
We define spaces adapted to $\Gamma$, i.e. measuring different regularities near and away from $\Gamma$. 
In the applications below, we shall take $\Gamma = \mc{G}^\Sigma$ for the study of the Cauchy problem and $\Gamma = \E^\Sigma \cup \mc{G}^\Sigma = \overline{\E}^\Sigma$ for the study of the control problem.

To this end, let $\e\mapsto \Gamma_\e$, $\e \in \mc{S}$, be a family of closed conic subset of $T^*\R\times \Int(\Sigma)\setminus0$ such that
\begin{equation}
\begin{gathered}
\Gamma_\e \text{ is closed and conic for any }\e, \qquad \Gamma_\e\subset \Int(\Gamma_{\e'}),\quad \e<\e',\qquad \Gamma=\bigcap_{\e>0}\Gamma_\e.
\end{gathered}
\end{equation}
Next, fix a family of cutoff functions
\begin{equation}
\label{e:cutoff1}
\varphi_\eps \in C^\infty_c((0,T)\times \Int \Sigma), \qquad \varphi_\e\equiv 1\text{ on }[\e,T-\e]\times \Sigma_\e.
\end{equation}
and a family of cutoff operators
\begin{equation}
\label{e:cutoff}
\begin{gathered}
B^{\Gamma}_\eps \in \Psi^0_{\phg} ((0,T)\times \Int(\Sigma)), \quad B^{\Gamma}_\eps \text{ selfadjoint on } L^2(\R\times \Sigma),\\
 \WF(B^{\Gamma}_\eps)\cap \Gamma_\eps=\emptyset , \quad \WF(\varphi_\e(1-B^{\Gamma}_\eps))\cap {T}_{[\e,T-\e]\times \Sigma_\e}^*(\R\times \Int(\Sigma)) \setminus \Gamma_{2\eps}=\emptyset,\\
 \WF(B^{\Gamma}_{\eps'})\subset \Ell(B^{\Gamma}_\eps),\quad \e<\e' \in \mc{S},\qquad \qquad B^{\Gamma}_\eps\varphi_{\eps}=B^{\Gamma}_{\eps}=\varphi_\e B^\Gamma_\e.
\end{gathered}
\end{equation}
Note that once $\Gamma$ will be fixed, (see Sections~\ref{s:def-sol-WP} and~\ref{s:obs-cont-wave}), a more explicit expression for the symbol of the operators $B^{\Gamma}_\eps$ will be given.

Next, we define for $k \geq s$, the Banach space 
\bna
H^{s,k}_{\comp, \Gamma,\e} (\Sigma_T) = \left\{f \in H_{\comp}^s((0,T) \times \Int(\Sigma)) , 
\supp (f) \subset [\eps,T-\eps] \times \Sigma_\e , (1-B^{\Gamma}_\eps) f \in H_{\comp}^k((0,T) \times\Int( \Sigma))\right\}, 
\ena
normed by 
$$
\|f\|_{H^{s,k}_{\comp, \Gamma,\e} (\Sigma_T)}^2 := \|f \|_{\bar{H}^s([0,T] \times \Sigma)}^2 + \| (1-B^{\Gamma}_\eps) f \|_{\bar{H}^k([0,T] \times \Sigma)}^2
$$
Notice that $(1-B^{\Gamma}_\e)$ measures regularity in $\Gamma_\e$ and therefore, for $f \in H^{s,k}_{\comp, \Gamma,\e}$, we have $f=\varphi_\eps f$ and $\WF^{k}(f) \subset {T}^*(\R\times \Int(\Sigma)) \setminus \Gamma_{\eps}$.  We define the Fr\'echet space
 \begin{gather*}
 H^{s,k}_{\comp,\Gamma} (\Sigma_T)= \bigcup_{\eps >0} H^{s,k}_{\comp, \Gamma,\e} (\Sigma_T)
  =  \left\{f \in H_{\comp}^s((0,T) \times \Int(\Sigma)), \WF^{k}(f) \cap \Gamma = \emptyset\right\}
  \end{gather*}
with topology given by the seminorms $\|\cdot\|_{H^{s,k}_{\comp, \Gamma,\e} (\Sigma_T)}$ (taken for a sequence of $\eps$ going to zero).
Functions/distributions in the space $H^{s,k}_{\comp,\Gamma} (\Sigma_T)$ are $H^s$ overall and microlocally $H^k$ ($k\geq s$) on $\Gamma$. In case $k=s$, we simply have $H^{s,k}_{\comp,\Gamma} (\Sigma_T) = H^{k}_{\comp} ((0,T)\times \Int(\Sigma))$.

\medskip
Similarly, we define for $k \leq s$, the vector space 
\bna
H^{s,k}_{\loc, \Gamma,\e} (\Sigma_T) = \Big\{u \in \D' ((0,T) \times \Int(\Sigma)) , \varphi_\e u \in H_{\comp}^k((0,T) \times \Int(\Sigma)) , B^{\Gamma}_\eps u \in H_{\comp}^s((0,T) \times \Int(\Sigma)) \Big\} ,
\ena
endowed with the seminorm
$$
\|u\|_{H^{s,k}_{\loc,\Gamma, \eps} (\Sigma_T)}^2 := \| \varphi_\e u \|_{\bar{H}^k([0,T]\times \Sigma) }^2 + \| B^{\Gamma}_\eps u \|_{\bar{H}^s([0,T] \times \Sigma)}^2.
$$

We define as well the Fr\'echet space
 \begin{align*}
 H^{s,k}_{\loc,\Gamma} (\Sigma_T)&= \bigcap_{\eps >0} H^{s,k}_{\loc, \Gamma,\e} (\Sigma_T)  \\
 &=  \Big\{f \in \D' ((0,T) \times \Int( \Sigma)) , f \in H_{\loc}^k((0,T) \times \Int( \Sigma)) , B f \in H_{\comp}^s((0,T) \times \Int(\Sigma))\\
&\qquad\text{ for all } B \in \Psi^0_{\phg}((0,T) \times \Int(\Sigma)),\text{ s.t. } \WF(B)\cap \Gamma=\emptyset \Big\} ,
\end{align*}
 with topology given by the seminorms $\|\cdot\|_{H^{s,k}_{\loc, \Gamma,\e} (\Sigma_T)}$.
 Functions/distributions in the space $H^{s,k}_{\loc,\Gamma} (\Sigma_T)$ are locally $H^k$ overall and microlocally $H^s$ ($s\geq k$) outside of  $\Gamma$. Remark again that in case $k=s$, we simply have $H^{s,k}_{\loc,\Gamma} (\Sigma_T) = H^{k}_{\loc} ((0,T)\times \Int(\Sigma))$.

 \begin{lemma}
 \label{l:H-loc-comp}
For $s, k \in \R, s \geq k$ the sesquilinear map 
\bna
C^\infty_c((0,T)\times \Int( \Sigma)) \times C^\infty((0,T)\times \Int(\Sigma))  \to \mathbb{C} , \qquad (f, u) \mapsto \int_{(0,T)\times \Sigma}  f(t,x) \overline{u} (t,x)dtd\sigma(x),
\ena
extends uniquely as a continuous sesquilinear map
$$
H^{-s,-k}_{\comp,\Gamma} \times H^{s,k}_{\loc,\Gamma} \to \mathbb{C} ,
$$
which we shall denote $\langle f, u \rangle_{H^{-s,-k}_{\comp} \times H^{s,k}_{\loc}}$. Moreover, for $(f,u)\in H^{-s,-k}_{\comp,\Gamma,\e} \times H^{s,k}_{\loc,\Gamma,\e}$, we have
$$|\langle f,u\rangle_{H^{-s,-k}_{\comp,\Gamma} \times H^{s,k}_{\loc,\Gamma}}|\leq  \| f\|_{H^{-s,-k}_{\comp , \Gamma, \eps}( \Sigma_T)} \| u \|_{H^{s,k}_{\loc, \Gamma,\eps}(\Sigma_T)} .$$
\end{lemma}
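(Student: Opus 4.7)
The plan is to define the pairing explicitly by
\begin{equation*}
\langle f, u \rangle := \langle f, B^\Gamma_\e u \rangle_{H^{-s} \times H^s} + \langle (1 - B^\Gamma_\e) f, \varphi_\e u \rangle_{H^{-k} \times H^k},
\end{equation*}
where each term on the right-hand side is the standard Sobolev duality pairing between compactly supported distributions on $(0,T) \times \Int(\Sigma)$. I would first verify that this is well-defined: all four distributions involved are compactly supported --- $f$ by hypothesis; $B^\Gamma_\e u = \varphi_\e B^\Gamma_\e u$ and $B^\Gamma_\e f = B^\Gamma_\e \varphi_\e f$ by the identities $\varphi_\e B^\Gamma_\e = B^\Gamma_\e = B^\Gamma_\e \varphi_\e$ together with $\varphi_\e \in C^\infty_c$; $(1-B^\Gamma_\e)f = f - B^\Gamma_\e f$ as a difference of two compactly supported distributions; and $\varphi_\e u$ trivially --- and each has the required Sobolev regularity by the definitions of the two spaces.

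Next I would check that this formula extends the standard $L^2$ pairing. The support condition $\supp f \subset [\e, T-\e]\times \Sigma_\e$ together with $\varphi_\e \equiv 1$ on that set gives $f = \varphi_\e f$, so for smooth $f, u$,
\begin{equation*}
\int f \bar u \, dt \, d\sigma = \int f \, \overline{\varphi_\e u} = \int B^\Gamma_\e f \cdot \overline{\varphi_\e u} + \int (1-B^\Gamma_\e) f \cdot \overline{\varphi_\e u},
\end{equation*}
and the self-adjointness of $B^\Gamma_\e$ combined with $B^\Gamma_\e \varphi_\e = B^\Gamma_\e$ recasts the first integral as $\int f \, \overline{B^\Gamma_\e u}$, producing the defining formula. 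The inequality stated in the lemma then follows from applying the two standard Sobolev duality bounds to the two terms and Cauchy--Schwarz in $\R^2$ to the resulting sum of products of norms.

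The main obstacle is the uniqueness assertion, which rests on density of $C^\infty_c((0,T) \times \Int(\Sigma))$ in each seminormed piece $H^{-s,-k}_{\comp, \Gamma, \e}(\Sigma_T)$ in the relevant topology. I would establish this by mollification in local coordinates covering $\supp f$: convolution with a smooth mollifier $\chi_\delta$ yields $f_\delta \to f$ in $\bar H^{-s}$, and a commutator analysis (using that $B^\Gamma_\e \in \Psi^0_{\phg}$ together with the uniform boundedness of $\chi_\delta *$ on Sobolev spaces) shows $(1 - B^\Gamma_\e) f_\delta \to (1 - B^\Gamma_\e) f$ in $\bar H^{-k}$. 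Any continuous extension of the $L^2$ pairing from smooth functions must then agree with the explicit formula on this dense subspace, which simultaneously gives uniqueness and independence of the formula from the choice of $\e$ and of the specific cutoffs $B^\Gamma_\e, \varphi_\e$ satisfying \eqref{e:cutoff1}--\eqref{e:cutoff}.
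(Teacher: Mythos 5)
Your proposal is correct and follows essentially the same route as the paper: both use the decomposition into a $B^\Gamma_\e$ piece and a $(1-B^\Gamma_\e)$ piece, move $(1-B^\Gamma_\e)$ onto $f$ by self-adjointness, apply the two Sobolev dualities, invoke Cauchy--Schwarz in $\R^2$, and finish by density. The only difference is that you sketch a mollification argument for the density claim, which the paper simply asserts without proof.
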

\bnp
Let $(f, u) \in C^\infty_c((0,T)\times \Int(\Sigma)) \times C^\infty((0,T)\times \Int(\Sigma))$.  Fix $\e>0$ so that $\varphi_\e f=f.$ We compute
\begin{align*}
|(f,u)_{L^2((0,T)\times \Sigma)}| &= |( f,\varphi _\e u)_{L^2((0,T)\times \Sigma)}| \\
& \leq \left|\left(  f,  B^{\Gamma}_\eps\varphi_\e  u\right)_{L^2((0,T)\times\Sigma)}  \right|  
+ \left|\left(  f,  (1-B^{\Gamma}_\eps) \varphi_\e u\right)_{L^2((0,T)\times\Sigma)}  \right|   \\
&\leq \| f\|_{\bar{H}^{-s}([0,T]\times \Sigma)} \|B^{\Gamma}_\eps \varphi_\e u \|_{\bar{H}^s([0,T]\times \Sigma)} 
 +\|(1-B^{\Gamma}_\eps)  f\|_{\bar{H}^{-k}([0,T]\times \Sigma)} \|\varphi_\e u \|_{\bar{H}^{k}([0,T]\times \Sigma)}\\
 &\leq \| f\|_{H^{-s,-k}_{\comp ,\Gamma, \eps}( \Sigma_T)} \| u \|_{H^{s,k}_{\loc,\Gamma, \eps}(\Sigma_T)} .
\end{align*}
Then, the density of $C^\infty_c((0,T)\times \Int(\Sigma))$ in $H^{-s,-k}_{\comp,\Gamma}(\Sigma_T)$ and that of $C^\infty((0,T)\times \Int(\Sigma))$ in $H^{s,k}_{\loc,\Gamma}( \Sigma_T)$ prove the statement.
\enp

\begin{lemma}
\label{l:dual}
For all $s,k\in \R$, $k\geq s$, we have $\big(H^{s,k}_{\comp,\Gamma}(\Sigma_T)\big)'=H^{-s,-k}_{\loc,\Gamma}(\Sigma_T)$.
\end{lemma}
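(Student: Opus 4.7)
The plan is to establish the two inclusions in $(H^{s,k}_{\comp,\Gamma}(\Sigma_T))' = H^{-s,-k}_{\loc,\Gamma}(\Sigma_T)$ separately.

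The inclusion $H^{-s,-k}_{\loc,\Gamma}(\Sigma_T) \hookrightarrow (H^{s,k}_{\comp,\Gamma}(\Sigma_T))'$ is essentially the content of Lemma~\ref{l:H-loc-comp}. Applied with the pair $(s,k)$ of that lemma replaced by $(-s,-k)$, its hypothesis ``$s\geq k$'' becomes our ``$k\geq s$''; the lemma then directly yields a continuous pairing $H^{s,k}_{\comp,\Gamma} \times H^{-s,-k}_{\loc,\Gamma} \to \mathbb{C}$ with the needed seminorm bound, which is exactly the required embedding.

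For the reverse inclusion, given $L \in (H^{s,k}_{\comp,\Gamma})'$, I would first extract from $L$ a distribution $u \in \D'((0,T)\times \Int(\Sigma))$ by restriction to test functions (using density of $C^\infty_c$ in each $H^{s,k}_{\comp,\Gamma,\e}$ and then in the $LF$-limit), and then verify that for every $\e \in \mc{S}$ the two defining conditions of $H^{-s,-k}_{\loc,\Gamma,\e}$ hold, namely $\varphi_\e u \in H^{-k}_{\comp}$ and $B^\Gamma_\e u \in H^{-s}_{\comp}$. By duality these amount to testing against $\psi \in C^\infty_c$ the quantities $L(\varphi_\e \psi)$ and $L(B^\Gamma_\e \psi)$, respectively. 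The first condition reduces to the routine bound $\|\varphi_\e \psi\|_{H^{s,k}_{\comp,\Gamma,\e}} \leq C_\e\|\psi\|_{H^k}$: the $\bar H^s$-piece is controlled via $\|\psi\|_{H^s} \leq \|\psi\|_{H^k}$ (since $k\geq s$) together with smoothness of $\varphi_\e$, and the $\bar H^k$-piece uses that $(1-B^\Gamma_\e)\varphi_\e$ is of order zero. Continuity of $L$ on the $\e$-step then yields $\varphi_\e u \in H^{-k}$, with compact support by construction.

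The delicate step, and the main obstacle, is the second condition. I would select $\e'' \in \mc{S}$ with $\e'' < \e/2$ (possible since $\mc{S}$ is a null sequence); then $B^\Gamma_\e \psi$ is supported in $\supp \varphi_\e \subset [\e'',T-\e'']\times \Sigma_{\e''}$, so $B^\Gamma_\e\psi \in H^{s,k}_{\comp,\Gamma,\e''}$. One needs $\|B^\Gamma_\e \psi\|_{H^{s,k}_{\comp,\Gamma,\e''}} \leq C\|\psi\|_{H^s}$; the $\bar{H}^s$-piece is trivial by $L^2$-boundedness of $\Psi^0$, and the $\bar{H}^k$-piece would follow at once if the composition $(1-B^\Gamma_{\e''})B^\Gamma_\e$ is a smoothing operator. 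To establish this, I would combine $\WF(B^\Gamma_\e) \subset T^*_{[\e,T-\e]\times\Sigma_\e}\setminus \Gamma_\e$ with the condition $\WF(\varphi_{\e''}(1-B^\Gamma_{\e''}))\cap T^*_{[\e'',T-\e'']\times\Sigma_{\e''}}\setminus\Gamma_{2\e''}= \emptyset$ and the monotonicity inclusion $\Gamma_{2\e''}\subset \Gamma_\e$ (from $2\e''<\e$) to conclude that $(1-B^\Gamma_{\e''})$ is microlocally smoothing at every point of $\WF(B^\Gamma_\e)$, whence $(1-B^\Gamma_{\e''})B^\Gamma_\e \in \Psi^{-\infty}$. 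The $\bar H^k$-norm is then dominated by any Sobolev norm of $\psi$, in particular $\|\psi\|_{H^s}$, and continuity of $L$ on the $\e''$-step closes the argument.
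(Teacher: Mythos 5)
Your proof is correct and follows essentially the same strategy as the paper: the easy inclusion is Lemma~\ref{l:H-loc-comp}, and the reverse inclusion is obtained by testing $\varphi_\e u$ and $B^\Gamma_\e u$ against $C^\infty_c$ functions and invoking continuity of $L$ on a suitable $\e$-step. The one place where you go beyond the paper is the smoothing property of $(1-B^{\Gamma}_{\e''})B^{\Gamma}_\e$: the paper simply declares that one may choose $\e'$ small enough so that $\WF\big[(1-B^{\Gamma}_{\e'})B^{\Gamma}_\e\big]=\emptyset$, whereas you attempt to \emph{derive} this from the axioms \eqref{e:cutoff}. Your derivation works (modulo a small slip: from $\WF(B^\Gamma_\e)\cap\Gamma_\e=\emptyset$ and $B^\Gamma_\e=\varphi_\e B^\Gamma_\e$ one only gets $\WF(B^\Gamma_\e)\subset T^*_{\supp\varphi_\e}\setminus\Gamma_\e$, not $T^*_{[\e,T-\e]\times\Sigma_\e}\setminus\Gamma_\e$; but since you already assume $\supp\varphi_\e\subset[\e'',T-\e'']\times\Sigma_{\e''}$, the chain of inclusions you need still closes). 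Note that the identity $(1-B^{\Gamma}_{\e''})B^{\Gamma}_\e=\varphi_{\e''}(1-B^{\Gamma}_{\e''})B^{\Gamma}_\e$ (which follows from $\varphi_{\e''}B^{\Gamma}_{\e''}=B^{\Gamma}_{\e''}$ and $\varphi_{\e''}\varphi_\e=\varphi_\e$) is worth making explicit, since it is what lets you apply the axiom, which is phrased in terms of the compactly supported operator $\varphi_{\e''}(1-B^{\Gamma}_{\e''})$ rather than $1-B^{\Gamma}_{\e''}$ itself.
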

\begin{proof}
Lemma~\ref{l:H-loc-comp} proves $H^{-s,-k}_{\loc,\Gamma}(\Sigma_T)\subset (H^{s,k}_{\comp,\Gamma}(\Sigma_T))'.$ Suppose $\mu\in (H^{s,k}_{\comp,\Gamma}(\Sigma_T))'$. Then, since $C_c^\infty((0,T)\times \Int(\Sigma))\subset H^{s,k}_{\comp,\Gamma}(\Sigma_T)$, $\mu\in \mc{D}'((0,T)\times \Int(\Sigma)).$ Fix $\e>0$. Then for $\chi\in C_c^\infty((0,T)\times \Int(\Sigma))$, 
$$|\langle \varphi_\e\mu, \chi\rangle|=|\langle \mu,\varphi_\e \chi\rangle|\leq C_\e (\|\varphi_\e\chi \|_{\bar{H}^s([0,T] \times \Sigma)} + \| (1-B^\Gamma_\eps) \varphi_\e\chi \|_{\bar{H}^k([0,T] \times \Sigma)}).$$
So, since $k\geq s$, we obtain in particular
$$|\langle \varphi_\e \mu, \chi\rangle|\leq C_\e \|\varphi_\e \chi \|_{\bar{H}^k([0,T] \times \Sigma)} \leq C_\e\|\chi\|_{\bar{H}^{k}([0,T]\times \Sigma)}$$
and hence $\varphi_\e \mu\in H^{-k}_{\comp}((0,T)\times \Int \Sigma)$ with 
\bnan
\label{e:H-kbdd}
\|\varphi_\e \mu\|_{\bar{H}^{-k}([0,T]\times \Sigma)}\leq C_\e.
\enan

Fix any $\e \in \mc{S}$ and $\chi\in C_c^\infty((0,T)\times \Int(\Sigma))$. Then there exists $\e_0>0$ depending only on $\e$ such that for $\e_0> \e' \in \mc{S}$, $B^\Gamma_\e\chi\in H^{s,k}_{\comp,\Gamma,\e'}(\Sigma_T)$. Choose $\e'<\e_0$ small enough so that $\WF[(1-B^{\Gamma}_{\eps'})B^{\Gamma}_\eps]=\emptyset$. Then, we have
\begin{align*}
|\langle B^{\Gamma}_{\e}\mu,\chi\rangle|&=|\langle \mu,B^{\Gamma}_\e\chi\rangle|\leq C_{\e'}(\|B^{\Gamma}_{\e}\chi \|_{\bar{H}^s([0,T] \times \Sigma)} + \| (1-B^{\Gamma}_{\eps'}) B^{\Gamma}_{\e}\chi \|_{\bar{H}^k([0,T] \times \Sigma)})\\
&\leq C_{\e'}(\|B^{\Gamma}_{\e}\chi \|_{\bar{H}^s([0,T] \times \Sigma)} + \| \chi \|_{\bar{H}^{-N}([0,T] \times \Sigma)})\\
&\leq C_{\e'} \|\chi\|_{\bar{H}^s([0,T]\times \Sigma)}.
\end{align*}
Therefore, $B^{\Gamma}_\e\mu\in H^{-s}_{\comp}((0,T)\times \Int(\Sigma))$ with 
$$\|B^{\Gamma}_\e\mu\|_{\bar{H}^{-s}([0,T]\times \Sigma)}\leq C_{\e'}.$$
This, together with~\eqref{e:H-kbdd} proves that $\mu \in H^{-s,-k}_{\loc,\Gamma}(\Sigma_T)$, and hence the lemma.
\end{proof}

%%%%%%%%%%%%%%%%%%%%%%%%%%%
\subsection{Definition of solutions and well-posedness}
\label{s:def-sol-WP}

Observe that $\GS$ and $\ES:=\GS\cup\mc{E}^\Sigma$ satisfy~\eqref{e:spaceSet} and for $k\geq s$, we therefore have Fr\'echet spaces $H^{s,k}_{\comp,\GS}(\Sigma_T)$, $H^{s,k}_{\comp,\ES}(\Sigma_T)$ with dual spaces $H^{-s,-k}_{\loc,\GS}(\Sigma_T)$, $H^{-s,-k}_{\loc,\ES}(\Sigma_T)$.

With these definitions in hand, we can reformulate Proposition~\ref{prop: regularity-waves} as follows: 
For any $T>0$, the map 
\begin{equation}
\label{e:regularityState}
\begin{aligned}
H^1_0(M) \times L^2(M) \times L^2(0,T;L^2(M))  &\to   H^{1,\frac12}_{\loc,\GS}(\Sigma_T )\times H^{0,-\frac12}_{\loc,\GS}(\Sigma_T) \\
(u_0,u_1, F) &\mapsto (u|_{\Sigma} , \d_\nu u|_{\Sigma})
\end{aligned}
\end{equation}
(where $u$ is solves \eqref{e:waveObs}) is continuous.

 We can now study the well-posedness for the control problem~\eqref{e:waveControl}.
We first recall that, given $f_0,\,f_1\in C^\infty_c (\R\times \Sigma)$, $f_0 \delta_\Sigma$ and $f_1 \delta_\Sigma'$ are usual distributions defined by~\eqref{e:defDist}.
\begin{lemma}
\label{l:IPP-smooth}
Given $T>0$, assume that the functions $v\in C^\infty([0,T]\times M\setminus \Sigma)\cap C^1((0,T); L^2(M))$ $u,F \in  C^\infty([0,T]\times M)$ and $f_0, f_1 \in C^\infty_c((0,T)\times \Int(\Sigma))$ solve
$$
\Box v = f_0 \delta_\Sigma +  f_1 \delta_\Sigma' \text{ in } \D'((0,T)\times \Int(M)), \quad \text{ and }\quad \Box u = F .
$$
Then, we have the identity
$$
\left[ (\d_t v, u)_{L^2(M)} - ( v,\d_t u)_{L^2(M)} \right]_0^T + (v,F)_{L^2((0,T)\times M)} 
= \int_{(0,T)\times\Sigma} \Big(f_0 u|_{\Sigma}  - f_1 \d_\nu u|_{\Sigma} \Big) dtd\sigma .
$$
\end{lemma}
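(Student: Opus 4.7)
The plan is to perform a Riemannian Green-type identity on the complement of a small tubular neighbourhood of $\Sigma$, where $v$ is smooth and classically satisfies $\Box v=0$, and then to let the tubular thickness tend to zero: the boundary terms on the two sides of $\Sigma$ produce jump contributions which I will identify with $-f_0$ and $-f_1$ by pairing the distributional equation against a generic test function. The only mildly delicate point is the existence of one-sided smooth traces $v|_{0^\pm}$ and $\partial_\nu v|_{0^\pm}$, which I take as part of the natural reading of $v\in C^\infty([0,T]\times M\setminus\Sigma)$ as smoothness on each closed half-collar of $\Sigma$.

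Concretely, using the Fermi normal coordinates of Section~\ref{s:fermi} in a tubular neighbourhood $V$ of $\Sigma_0$, I set $V_\epsilon=\{|x_1|<\epsilon\}\cap V$ and $M_\epsilon=M\setminus\overline{V_\epsilon}$, with $\epsilon$ small enough that $V_\epsilon\cap\partial M=\emptyset$. Since $\supp\Box v\subset\Sigma\subset V_\epsilon$, the function $v$ is smooth on $[0,T]\times M_\epsilon$ with $\Box v=0$ there classically. Green's identity for $\Box=\partial_t^2-\Delta_g$ on $[0,T]\times M_\epsilon$, combined with $\Box u=F$ and the implicit Dirichlet condition $u|_{\partial M}=v|_{\partial M}=0$ (the natural convention here, which annihilates the $\partial M$ piece of $\partial M_\epsilon$), yields
\begin{equation*}
\bigl[(\partial_t v,u)_{L^2(M_\epsilon)}-(v,\partial_t u)_{L^2(M_\epsilon)}\bigr]_0^T + \int_0^T(v,F)_{L^2(M_\epsilon)}\,dt = \int_0^T\!\!\int_{\partial V_\epsilon}(u\,\partial_n v - v\,\partial_n u)\,d\sigma\, dt,
\end{equation*}
with $n$ the outward unit normal to $M_\epsilon$ along $\partial V_\epsilon$. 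Letting $\epsilon\to 0$, the interior integrals converge by dominated convergence and by the $C^1([0,T];L^2(M))$ regularity of $v$; on $\{x_1=\pm\epsilon\}$ the outward normal to $M_\epsilon$ is $\mp\partial_{x_1}$, so using the one-sided smoothness of $v_\pm:=v|_{\pm x_1>0}$ up to $\Sigma$, the boundary integral converges to
\begin{equation*}
\int_0^T\!\!\int_\Sigma \bigl([v]\,\partial_\nu u|_\Sigma - u|_\Sigma\,[\partial_\nu v]\bigr)\,d\sigma\, dt,\qquad [v]:=v|_{0^+}-v|_{0^-},\ [\partial_\nu v]:=\partial_{x_1}v|_{0^+}-\partial_{x_1}v|_{0^-}.
\end{equation*}

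To identify the jumps, I pair $\Box v=f_0\delta_\Sigma+f_1\delta_\Sigma'$ with an arbitrary $\varphi\in C^\infty_c((0,T)\times\Int(M))$: the very same integration-by-parts argument (with $\varphi$ in place of $u$, whose endpoint terms in $t$ now vanish) gives
\begin{equation*}
(v,\Box\varphi)_{L^2((0,T)\times M)} = -\int_0^T\!\!\int_\Sigma \varphi\,[\partial_\nu v]\,d\sigma\, dt + \int_0^T\!\!\int_\Sigma [v]\,\partial_\nu\varphi\,d\sigma\, dt,
\end{equation*}
which compared against the distributional definition~\eqref{e:defDist} of $f_0\delta_\Sigma+f_1\delta_\Sigma'$ forces $[\partial_\nu v]=-f_0$ and $[v]=-f_1$. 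Substituting these into the previous limit identity and rearranging yields exactly the stated equality.
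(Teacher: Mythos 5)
Your proof is correct. The paper itself states Lemma~\ref{l:IPP-smooth} without proof (it is treated as an elementary jump/Green's formula), and your two-step argument -- Green's identity on $M_\epsilon=M\setminus\overline{V_\epsilon}$ followed by the passage $\epsilon\to0$, together with the separate identification of the jumps $[v]=-f_1$, $[\partial_\nu v]=-f_0$ by pairing the distributional equation against test functions with prescribed Dirichlet and Neumann traces on $\Sigma$ -- is exactly the natural way to establish it. Two small remarks: (i) the statement as printed does not explicitly impose Dirichlet conditions on $\partial M$ for $u$ and $v$, but, as you observe, these are the only reading compatible with the intended use of the lemma (Definition~\ref{d:transp-sol} and Theorem~\ref{t:well-posedness}); one could alternatively simply remark that the computation localizes near $\Sigma$ since $\Box v=0$ away from it and the $\partial M$ boundary term is the standard Green term independent of the present argument. (ii) You correctly read $C^\infty([0,T]\times M\setminus\Sigma)$ as one-sided smoothness up to $\Sigma$; this is indeed what makes the one-sided traces $v|_{0^\pm}$, $\partial_{x_1}v|_{0^\pm}$ meaningful and hence the jump relations into honest pointwise identities on $\Sigma$.
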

The duality property of Lemma~\ref{l:H-loc-comp}, together with the formula of Lemma~\ref{l:IPP-smooth}, valid for smooth functions, and~\eqref{e:regularityState} suggest that taking $f_0 \in H^{-1, -\frac12}_{\comp,\GS}(\Sigma_T)$ and $f_1 \in H^{0,\frac12}_{\comp,\GS}(\Sigma_T)$ could be an appropriate set of spaces for control functions, as well as the following definition of transposition solutions for the control problem.

\begin{definition}
\label{d:transp-sol}
Given $T>0$, $(v_0,v_1) \in L^2(M) \times H^{-1}(M), f_0 \in H^{-1, -\frac12}_{\comp,\GS}(\Sigma_T), f_1 \in H^{0,\frac12}_{\comp,\GS}(\Sigma_T)$, we say that $v$ is a solution of \eqref{e:waveControl} if $v \in L^2((0,T); L^2(M))$ and for any $F \in L^2((0,T);L^2 (M))$, we have
\bna
\int_0^T  (v,F)_{L^2(M)} dt& = &
\langle v_1 , u(0) \rangle_{H^{-1}(M), H^1(M)} - ( v_0 ,\d_t u(0))_{L^2(M)}  \\
&& +  \langle f_0 , u|_{\Sigma} \rangle_{H^{-1, -\frac12}_{\comp,\GS}(\Sigma_T), H^{1,\frac12}_{\loc,\GS}(\Sigma_T)} 
 - \langle f_1 , \d_\nu u|_{\Sigma} \rangle_{H^{0,\frac12}_{\comp,\GS}(\Sigma_T) , H^{0,-\frac12}_{\loc,\GS}(\Sigma_T)} .
\ena
where $u$ is the unique solution to
\begin{equation}
\begin{cases}
\label{e:F-IC=0}
\Box u=F &\text{on }(0,T)\times \Int(M)\\
(u,\partial_tu)|_{t=T}=(0,0)&\text{in } \Int(M) .
\end{cases}
\end{equation}
\end{definition}
Note in particular that taking $F \in C^\infty_c((0,T)\times\Int( M))$ implies that such a solution is a solution of the first equation of \eqref{e:waveControl} in the sense of distributions.

\begin{theorem}
\label{t:well-posedness}
Let $T>0$. For all $(v_0,v_1)\in L^2(M) \times H^{-1}(M)$ and for all $f_0 \in H^{-1, -\frac12}_{\comp,\GS}(\Sigma_T)$ and $f_1 \in H^{0,\frac12}_{\comp,\GS}(\Sigma_T)$, there exists a unique $v \in L^2((0,T); L^2(M))$ solution of \eqref{e:waveControl} in the sense of Definition~\ref{d:transp-sol}. The linear map
\bna
L^2(M) \times H^{-1}(M) \times H^{-1, -\frac12}_{\comp,\GS}(\Sigma_T)\times H^{0,\frac12}_{\comp,\GS}(\Sigma_T) & \to &  L^2(0,T;L^2(M))\\
(v_0,v_1, f_0 , f_1) &\mapsto& v 
\ena
is continuous. 
\end{theorem}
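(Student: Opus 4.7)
The proof is a standard transposition/Riesz-representation argument once the regularity of traces (Proposition~\ref{prop: regularity-waves}, repackaged as~\eqref{e:regularityState}) and the duality pairings (Lemma~\ref{l:H-loc-comp}) are in hand. The plan is to define the appropriate linear functional on $L^2((0,T);L^2(M))$ coming from the right-hand side of Definition~\ref{d:transp-sol}, show it is continuous, and then invoke Riesz representation.

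\textbf{Step 1: Solve the adjoint problem.} Given $F\in L^2((0,T);L^2(M))$, the standard theory for the Dirichlet wave equation produces a unique solution $u\in C^0([0,T];H^1_0(M))\cap C^1([0,T];L^2(M))$ to~\eqref{e:F-IC=0}, with
$$
\|u(0)\|_{H^1(M)}+\|\partial_t u(0)\|_{L^2(M)}\leq C\|F\|_{L^2((0,T);L^2(M))}.
$$
Applying~\eqref{e:regularityState} (a direct reformulation of Proposition~\ref{prop: regularity-waves}) yields $(u|_\Sigma,\partial_\nu u|_\Sigma)\in H^{1,\frac12}_{\loc,\GS}(\Sigma_T)\times H^{0,-\frac12}_{\loc,\GS}(\Sigma_T)$ with, for every $\e\in\mc{S}$, a bound
$$
\|u|_\Sigma\|_{H^{1,\frac12}_{\loc,\GS,\e}(\Sigma_T)}+\|\partial_\nu u|_\Sigma\|_{H^{0,-\frac12}_{\loc,\GS,\e}(\Sigma_T)}\leq C_\e\|F\|_{L^2((0,T);L^2(M))}.
$$

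\textbf{Step 2: Define and estimate the linear form.} For fixed $(v_0,v_1,f_0,f_1)$, set
$$
L(F):=\langle v_1,u(0)\rangle_{H^{-1},H^1}-(v_0,\partial_t u(0))_{L^2}+\langle f_0,u|_\Sigma\rangle_{H^{-1,-\frac12}_{\comp}\times H^{1,\frac12}_{\loc}}-\langle f_1,\partial_\nu u|_\Sigma\rangle_{H^{0,\frac12}_{\comp}\times H^{0,-\frac12}_{\loc}}.
$$
Since $f_0\in H^{-1,-\frac12}_{\comp,\GS}(\Sigma_T)$ and $f_1\in H^{0,\frac12}_{\comp,\GS}(\Sigma_T)$, there exists $\e\in\mc{S}$ such that $\supp f_0,\supp f_1\subset[\e,T-\e]\times\Sigma_\e$ with finite norms at level $\e$. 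The duality bound of Lemma~\ref{l:H-loc-comp} combined with the Step 1 estimate gives
$$
|L(F)|\leq C_\e\bigl(\|v_0\|_{L^2(M)}+\|v_1\|_{H^{-1}(M)}+\|f_0\|_{H^{-1,-\frac12}_{\comp,\GS,\e}}+\|f_1\|_{H^{0,\frac12}_{\comp,\GS,\e}}\bigr)\|F\|_{L^2((0,T);L^2(M))},
$$
so $L$ is a continuous conjugate-linear form on $L^2((0,T);L^2(M))$.

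\textbf{Step 3: Existence, uniqueness and continuity.} By the Riesz representation theorem there exists a unique $v\in L^2((0,T);L^2(M))$ such that $\int_0^T(v,F)_{L^2(M)}dt=L(F)$ for every $F\in L^2((0,T);L^2(M))$, which is exactly Definition~\ref{d:transp-sol}. Uniqueness of $v$ in Definition~\ref{d:transp-sol} is immediate: if $v$ corresponds to zero data, then $\int_0^T(v,F)dt=0$ for all $F\in L^2((0,T);L^2(M))$, hence $v=0$. Continuity of the solution map $(v_0,v_1,f_0,f_1)\mapsto v$ is inherited from the bound on $L(F)$ above, after noting that the seminorms defining the Fr\'echet topologies of $H^{-1,-\frac12}_{\comp,\GS}$ and $H^{0,\frac12}_{\comp,\GS}$ are exactly the $\e$-level norms appearing in that bound.

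\textbf{Main obstacle.} All non-trivial analytic content sits upstream: the microlocal trace regularity of solutions to the free wave equation (Proposition~\ref{prop: regularity-waves}), which dictates that the data $(f_0,f_1)$ must sit in dual spaces whose microlocal regularity near $\GS$ is exactly one derivative better than the overall Sobolev regularity, and the correct duality statement of Lemma~\ref{l:H-loc-comp} (which requires $s\geq k$, a condition the pairs $(s,k)=(1,\tfrac12)$ and $(s,k)=(0,-\tfrac12)$ both satisfy). Once these are in place, the transposition construction is formal, and the only care needed is to verify that the defining identity is preserved under density of smooth data (which is precisely how Lemma~\ref{l:H-loc-comp} was proved) so that the final functional reproduces~\eqref{e:waveControl} in $\D'((0,T)\times\Int(M))$, as indicated by the smooth computation of Lemma~\ref{l:IPP-smooth}.
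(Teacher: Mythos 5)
Your proof is correct and follows essentially the same route as the paper: define the linear functional $L(F)$ given by the transposition identity, estimate it via Lemma~\ref{l:H-loc-comp} and the trace regularity~\eqref{e:regularityState} (i.e.\ Proposition~\ref{prop: regularity-waves}), and apply Riesz representation. The only cosmetic difference is that the paper invokes Proposition~\ref{prop: regularity-waves} with the explicit choice $A=1-B^{\GS}_\eps$ rather than citing~\eqref{e:regularityState} directly, but that is a repackaging of the same step.
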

\begin{remark}
Note that, given two different times $T <T'$, an initial data $(v_0,v_1)$ and control functions $f_0 , f_1$ compactly supported in $(0,T)\subset (0,T')$, the above definition/theorem yield two different solutions: one defined on $(0,T)$ and one defined on $(0,T')$. However, one can observe that these two solutions coincide by extending all test functions $F\in L^2((0,T); L^2(M))$ by zero on $(T,T')$ to obtain test functions in $L^2((0,T'); L^2(M))$. With this in mind, Theorem~\ref{t:well-posedness-simple} is a direct consequence (and a simplified version) of Theorem~\ref{t:well-posedness}.
\end{remark}

\bnp[Proof of Theorem~\ref{t:well-posedness}]
First, we define 
 \bna
 \ell (F) &: = &
\langle v_1 , u(0) \rangle_{H^{-1}(M), H^1(M)} - ( v_0 ,\d_t u(0))_{L^2(M)}  \\
&& +  \langle f_0 , u|_{\Sigma} \rangle_{H^{-1, -\frac12}_{\comp,\GS}(\Sigma_T), H^{1,\frac12}_{\loc,\GS}(\Sigma_T)} 
 - \langle f_1 , \d_\nu u|_{\Sigma} \rangle_{H^{0,\frac12}_{\comp,\GS}(\Sigma_T) , H^{0,-\frac12}_{\loc,\GS}(\Sigma_T)} ,
 \ena
 and prove that it is as a continuous linear form on $L^2(0,T;L^2(M))$, with appropriate norm. We have
\bna 
|\ell (F) | &\leq & \|v_1\|_{H^{-1}} \| u(0)\|_{H^1_0} + \| v_0\|_{L^2(M)} \| \d_t u(0)\|_{L^2(M)}+R \\
& \leq &  \|(v_0, v_1)\|_{L^2 \times H^{-1}} \| F\|_{L^2(0,T;L^2(M))} +R,
\ena
with 
$$
R = \left| \langle f_0 , u|_{\Sigma} \rangle_{H^{-1, -\frac12}_{\comp,\GS}(\Sigma_T), H^{1,\frac12}_{\loc,\GS}(\Sigma_T)} 
 - \langle f_1 , \d_\nu u|_{\Sigma} \rangle_{H^{0,\frac12}_{\comp,\GS}(\Sigma_T) , H^{0,-\frac12}_{\loc,\GS}(\Sigma_T)} \right| .
$$
From the definition of the spaces in Section~\ref{s:spaces-comp-loc}, there exists $\eps>0$ such that $(f_0 , f_1) \in H^{-1, -\frac12}_{\comp,\GS, \eps}(\Sigma_T)\times H^{0,\frac12}_{\comp,\GS, \eps}(\Sigma_T)$ and hence, we obtain from Lemma~\ref{l:H-loc-comp},
$$
R \leq  \|f_0\|_{H^{-1, -\frac12}_{\comp,\GS, \eps}(\Sigma_T)}    \|u|_{\Sigma}\|_{H^{1,\frac12}_{\loc,\GS,\eps}(\Sigma_T)} 
 + \| f_1\|_{H^{0,\frac12}_{\comp,\GS,\eps}(\Sigma_T)}\| \d_\nu u|_{\Sigma} \|_{H^{0,-\frac12}_{\loc,\GS, \eps}(\Sigma_T)}  .
$$
Proposition~\ref{prop: regularity-waves} with $A=1-B^{\mc{G}^\Sigma}_\e$ (satisfying the appropriate conditions) then yields
$$
R \leq C_\eps \left( \|f_0\|_{H^{-1, -\frac12}_{\comp,\GS, \eps}(\Sigma_T)}    
 + \| f_1\|_{H^{0,\frac12}_{\comp,\GS,\eps}(\Sigma_T)} \right)\| F\|_{L^2(0,T;L^2(M))} .
$$

Coming back to $\ell$, we have obtained the existence of $\eps \in \mc{S} , C_\eps>0$ such that 
\bna 
|\ell (F) | \leq  C_\eps \left(  \|(v_0, v_1)\|_{L^2 \times H^{-1}} + C_\eps \|(f_0 ,  f_1)\|_{H^{-1, -\frac12}_{\comp,\GS, \eps}(\Sigma_T) \times H^{0,\frac12}_{\comp,\GS,\eps}(\Sigma_T)}  \right) \| F\|_{L^2(0,T;L^2(M))} .
\ena
Hence, $\ell$ is a continuous linear form on $L^2(0,T;L^2(M))$. There is thus a unique $v\in L^2(0,T;L^2(M))$ such that $\ell(F) = \int_0^T (v(t) , F(t) )_{L^2(M)} dt$ for all $F\in L^2(0,T;L^2(M))$, that is precisely the definition of a solution of \eqref{e:waveControl}  in Definition~\ref{d:transp-sol}. This solution moreover satisfies, for $(f_0 ,  f_1) \in H^{-1, -\frac12}_{\comp,\GS, \eps}(\Sigma_T) \times H^{0,\frac12}_{\comp,\GS,\eps}(\Sigma_T)$, the estimate
$$
\|v\|_{L^2(0,T; L^2(M))} \leq  \|(v_0, v_1)\|_{L^2 \times H^{-1}} + C_\eps \|(f_0 ,  f_1)\|_{H^{-1, -\frac12}_{\comp,\GS, \eps}(\Sigma_T) \times H^{0,\frac12}_{\comp,\GS,\eps}(\Sigma_T)} ,
$$
which is the continuity statement.
This concludes the proof of the Theorem.
\enp

%%%%%%%%%%%%%%%%%%%%%%
\section{Observability and controllability for the wave equation}
\label{s:obs-cont-wave}
The aim of this section is to study the observability of~\eqref{e:waveObs} from $\Sigma$. In particular, we prove
\begin{theorem}[Observability]
\label{thm:observe}
Let $\chi\in C^\infty(\R)$ have $\chi \equiv 1$ on $(-\infty,-1]$ and $\supp \chi \subset (-\infty,-\frac{1}{2}]$.
Under Assumption \asref{GC}{T}, there exists $\delta_0>0$, so that for all $\delta\in (0, \delta_0)$, all $A_\delta \in \Psi^0_{\phg}((0,T) \times \Int(\Sigma))$ with principal symbol $\chi \big(\frac{r_0(x', \xi')-\tau^2}{\delta \tau^2} \big) \varphi_\delta$, where $\varphi_\delta\in C_c^\infty((0,T)\times \Int(\Sigma))$ with $\varphi_\delta\equiv 1$ on $[\delta,T-\delta]\times \Sigma_{\delta}$ where $\Sigma_{\delta}$ is as in~\eqref{e:sigs}, for all $N>0$, there exists $c_N>0$ so that for any solution $u$ to~\eqref{e:waveObs}, we have
\begin{equation}
\label{e:obs-waves}
\begin{aligned}
c_N\|(u_0,u_1)\|_{H^1\times L^2}^2& \leq \| \varphi_\delta \partial_\nu u|_{\Sigma_0}\|_{\bar{H}^{-N}(\R\times \Sigma)}^2+\|\varphi_\delta u|_{\Sigma_0}\|_{\bar{H}^{-N}(\R\times \Sigma)}^2\\
&\qquad +\|A_\delta (\partial_\nu u|_{\Sigma_0})\|_{L^2(\R\times \Sigma)}^2+\|A_\delta(u|_{\Sigma_0})\|_{\bar{H}^1(\R\times \Sigma)}^2 +\|F\|_{L^2((0,T)\times M)}.
\end{aligned}
\end{equation}
\end{theorem}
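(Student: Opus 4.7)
The strategy follows the classical Lebeau-Bardos-Lebeau-Rauch scheme by contradiction with microlocal defect measures, adapted to observation on a hypersurface. I will proceed in two main stages: (i) first upgrade $\mc{T}$GCC into a uniform, open transversality statement; (ii) then run a contradiction argument in which the traces of a putative counterexample are shown, via propagation of a microlocal defect measure, to force all of the energy to vanish.

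\textbf{Step 1: Strengthened geometric condition.} Since $\mc{G}$ is closed, $\mc{T}_0$ is open, and the compressed generalized bicharacteristic flow $\varphi$ is continuous on $Z$ (see~\eqref{e:def-flow}), I will show that there exists $\delta_0>0$ such that every compressed generalized bicharacteristic intersects $\mc{T}_{\delta_0}\cap T^*_{(\delta_0, T-\delta_0)\times \Sigma_{\delta_0}}(\R\times M)$. This is a standard compactness argument: from $\mc{T}$GCC, each $p\in SZ$ admits an open neighborhood (in the quotient) of bicharacteristics all meeting a fixed open transverse set in $(0,T)\times\Int(\Sigma)$; covering $SZ$ (compact, homogeneous) and extracting a finite subcover yields the claim.

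\textbf{Step 2: Contradiction argument via defect measures.} Fix $\delta<\delta_0$ and $N>0$; assume~\eqref{e:obs-waves} fails. Then there is a sequence $(u_0^k,u_1^k,F_k)$ normalized by $\|(u_0^k,u_1^k)\|_{H^1\times L^2}=1$ with associated solutions $u_k$ of~\eqref{e:waveObs} such that the right-hand side of~\eqref{e:obs-waves} tends to $0$. Up to extraction, $u_k\rightharpoonup u$ weakly in $H^1_{\loc}(\R\times M)$, and by Proposition~\ref{prop: regularity-waves} the traces $(u_k|_\Sigma,\partial_\nu u_k|_\Sigma)$ are bounded in $\bar H^{1/2}\times \bar H^{-1/2}$. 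The $\bar H^{-N}$ and $L^2$ terms on the right of~\eqref{e:obs-waves} being compact lower-order perturbations (either via the compact Sobolev embedding on the traces, or via $F_k\to 0$ in $L^2$), the compactness-uniqueness argument of~\cite{BLR:92} combined with Holmgren-type unique continuation across $\Sigma$ applied to $u$ gives $u\equiv 0$. Hence $u_k\rightharpoonup 0$, and the question reduces to ruling out a nontrivial microlocal defect measure $\mu$ on $SZ$ (a Radon measure, via the $H^1$-scaled construction of Gérard-Tartar, extended to boundary-value problems as in~\cite{BL:01}) associated to $u_k$.

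\textbf{Step 3: Support and invariance of the measure.} The measure $\mu$ is supported on $\Char(\Box)$ (more precisely $Z$), nonnegative and, by the Burq-Lebeau generalization~\cite{BL:01} of Gérard's propagation theorem to boundaries without infinite-order contact, invariant under the compressed bicharacteristic flow $\varphi$. The key link between the observed quantities and $\mu$ is the following hyperbolic trace computation at transversal points: near $q_0\in\mc{T}_{\delta_0}$, the factorization $\sigma(\Box)=e(\xi_1-b_+)(\xi_1-b_-)$ (as in the transversal region analysis in the proof of Proposition~\ref{prop: regularity-waves}) yields a microlocal parametrix that reconstructs $u_k$ from $(u_k|_\Sigma,\partial_\nu u_k|_\Sigma)$ in a conic neighborhood of $q_0$, so that the vanishing $A_\delta(\partial_\nu u_k|_\Sigma)\to 0$ in $L^2$ and $A_\delta(u_k|_\Sigma)\to 0$ in $\bar H^1$ translates into $\mu=0$ on $\pi^{-1}(\{|\sigma(A_\delta)|>c\})\cap \mc{T}_{\delta_0}$, where $\pi$ is the projection~\eqref{e:projection}. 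By the choice of the principal symbol of $A_\delta$ and of $\varphi_\delta$, this covers the whole set $\mc{T}_{\delta_0}\cap T^*_{(\delta,T-\delta)\times\Sigma_\delta}(\R\times M)$. Propagating via Step~1 and the flow invariance of $\mu$ then yields $\mu\equiv 0$ on all of $SZ$ over $(0,T)\times M$, hence $u_k\to 0$ strongly in $H^1_{\loc}((0,T)\times M)$; combined with standard energy estimates this contradicts $\|(u_0^k,u_1^k)\|_{H^1\times L^2}=1$.

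\textbf{Main obstacle.} The delicate step is the trace-to-volume transfer in Step~3: one must define $\mu$ near $\R\times \Sigma$ and prove that vanishing of the microlocally selected traces at transverse points forces vanishing of $\mu$ on the two bicharacteristics emanating from such points. This requires the precise trace regularity of Proposition~\ref{prop: regularity-waves} together with the transversal parametrix; handling the glancing region (where the parametrix breaks down) is exactly what forces the introduction of the cutoff $A_\delta$ (with symbol $\chi\bigl((r_0-\tau^2)/(\delta\tau^2)\bigr)$) in the observation, so that no control is attempted inside $\mc{G}_\delta^\Sigma$. Near $\partial M$, propagation through reflection, diffraction and gliding rays must be handled via the Melrose-Sjöstrand theorem in the form used in~\cite{BL:01}, which is where the no-infinite-order-contact assumption enters.
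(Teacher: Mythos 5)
Your overall architecture lines up well with the paper: strengthening $\mc{T}$GCC to a uniform open transversality statement by compactness (your Step~1, the paper's Lemma~\ref{l:GC}); a contradiction argument producing a microlocal defect measure $\mu$, with a trace-to-interior transfer near transversal points via the factorization $\sigma(\Box)=e\,(\xi_1-b_+)(\xi_1-b_-)$ to show $\mu=0$ on a neighborhood of $\mc{T}_{\e}$, followed by flow invariance to kill $\mu$ everywhere (your Steps~2--3, the paper's Lemma~\ref{l:mushrooms} and the proof of Proposition~\ref{tempThm}); and a compactness-uniqueness argument to remove the lower-order terms. The main structural difference is that you merge the high-frequency and low-frequency stages into one contradiction, whereas the paper first proves the relaxed inequality~\eqref{e:obsTemp} with $\|(u_0,u_1)\|_{L^2\times H^{-1}}^2$ on the right (Proposition~\ref{tempThm}) and only then passes to Theorem~\ref{thm:observe}. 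This merger is a legitimate variant (it is the original Bardos--Lebeau--Rauch ordering, which the paper explicitly says it could also have used), but it changes what uniqueness statement is needed, and this is where your proposal has a gap.

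The gap is the uniqueness step. You write that "Holmgren-type unique continuation across $\Sigma$ applied to $u$ gives $u\equiv 0$," i.e., you want to conclude directly that the weak limit $u$, a solution of $\Box u=0$ with vanishing Cauchy data $u|_{\Sigma}=\d_\nu u|_{\Sigma}=0$ on $(\delta,T-\delta)\times\Sigma_\delta$, must vanish. For a general smooth (non-analytic) metric $g$, Holmgren's theorem does not apply. The relevant tool is Tataru's unique continuation theorem for the wave equation, which propagates the zero set only at finite speed, and even with it one would still have to argue that under $\mc{T}$GCC the domain of determinacy of the observed patch $(\delta,T-\delta)\times\Sigma_\delta$ reaches the full initial slice $\{t=0\}\times M$; none of this is immediate. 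The paper sidesteps the issue entirely: it uses the relaxed observability inequality of Proposition~\ref{tempThm} together with Rellich compactness to show that the space $\mathcal{N}(\e,T)$ of invisible solutions is finite-dimensional, then checks that $\mathcal{N}(\e,T)$ is stable under the wave generator $\calA$ (using that the observation set $(\e,T-\e)\times\Sigma_\e$ is invariant under time translation), hence contains a genuine $\calA$-eigenvector. This reduces the question to unique continuation for a single Laplace eigenfunction with vanishing Cauchy data on $\Sigma$ (Lemma~\ref{l:uniq-cont}), an elementary elliptic statement that follows from standard Carleman estimates. Your proposal as written therefore relies on a unique continuation result for the wave equation that is neither invoked correctly (Holmgren requires analyticity) nor shown to suffice; you would need to replace it by the paper's finite-dimensionality / eigenfunction-reduction scheme, or supply a substantially harder argument based on Tataru's theorem.
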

Let us briefly explain why the observability inequality of Theorem \ref{thm:observe} implies Theorem \ref{t:uniqueControl}.

\bnp[Proof of  Theorem \ref{t:uniqueControl}]
We apply Theorem \ref{thm:observe} to the function $u(t,x)=e^{it\lambda}v(x)$ with $v\in H^1_0(M)\cap H^2(M)$. First observe that $A_{\delta}$ is bounded on $L^2$ and hence 
$$\| A_\delta \partial_\nu u|_{\Sigma}\|_{L^2(\R\times \Sigma)}\leq C\|\partial_\nu u|_{\Sigma}\|_{L^2([0,T]\times \Sigma)}\leq C\|\partial_\nu v|_{\Sigma}\|_{L^2(\Sigma)}.$$
 Observe also that there exists $\delta_0>0$ so that 
$\varphi_{\delta_0}D_t$ is elliptic on $\WF(A_\delta)$ and therefore,  
$$ \| A_\delta u|_{\Sigma}\|_{\bar{H}^1(\R\times \Sigma)}\leq C(\| \varphi_{\delta_0}D_t u\|_{L^2(\R\times \Sigma)} +\|u|_{\Sigma}\|_{L^2([0,T]\times \Sigma)}\leq C\langle \lambda\rangle \|v\|_{L^2(\Sigma)}.$$
Note also that 
$$\Box u=e^{it\lambda}(-\Delta_g-\lambda^2)v$$
and hence the right hand side of \eqref{e:obs-waves} is bounded by
$$C\left(\|\partial_\nu v|_{\Sigma}\|_{L^2}+ \langle \lambda\rangle  \|v|_{\Sigma}\|_{L^2} +\|(-\Delta_g-\lambda^2)v\|_{L^2((0,T)\times M)}\right).$$
Finally, noticing that 
$$(u|_{t=0},\partial_t u|_{t=0})=(v,i\lambda v),$$
gives
$$\langle \lambda \rangle\|u\|_{L^2(M)}\leq \|(u|_{t=0},\partial_t u|_{t=0})\|_{H_0^1(M)\times L^2(M)},$$
finishing the proof of Theorem \ref{t:uniqueControl}.
\enp

%%%%%%%%%%%%%%%%%%%%%%
\subsection{The geometric assumption $\mc{T}$GCC}
\label{s:TGCC}

To prove Theorem \ref{thm:observe} we start with a dynamical lemma where we show that the {\em a priori} weaker assumption \asref{GC}{$T$} implies the stronger assumption
\begin{assume}{$\e$}{$T$}\label{GC2}
For all $p\in Z$, we have 
$$\bigcup\limits_{s\in \R}\{\varphi(s,p)\}\cap \mc{T}_\e\cap T_{(\e,T-\e)\times \Sigma_\e}^*(\R\times M)\neq \emptyset.$$
\end{assume}
\noindent Recall that $Z$ is as in~\eqref{e:defZ}.

\begin{lemma}
\label{l:GC}
Suppose that Assumption \asref{GC}{$T$} holds. Then there exists $\e>0$ so that Assumption \asref{GC2}{$T$} holds.
\end{lemma}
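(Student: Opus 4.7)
The plan is to prove Lemma~\ref{l:GC} by a compactness-and-openness argument, exploiting three features of the condition appearing in GC-$(\e,T)$: it is an open condition on $p\in Z$ for fixed $\e>0$, it is decreasing in $\e$, and the space of bicharacteristics modulo fiber scaling and the flow can be parametrized by a compact set. These three together let me upgrade the pointwise assumption GC-$(0,T)$ to a uniform $\e>0$.

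\emph{Openness and a pointwise upgrade.} For each $\e>0$ the set
$$
G_\e := \mc{T}_\e \cap T^*_{(\e,T-\e)\times \Int(\Sigma_\e)}(\R\times M)
$$
is open in $Z$, since $\mc{T}_\e$ is defined by the strict inequality $\xi_1^2>\e\tau^2$, the time window is open, and the nesting $\Sigma_{\e'}\subset \Int(\Sigma_\e)$ for $\e<\e'$ from \eqref{e:sigs} makes $\Int(\Sigma_\e)$ an open subset of $\Int(\Sigma_0)$. Continuity of the generalized bicharacteristic flow $\varphi:\R\times Z\to Z$ then shows that the condition $P_\e(p):\exists\, s\in\R,\,\varphi(s,p)\in G_\e$ defines an open subset of $Z$. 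Now fix $p$ with $P_0(p)$ and a witness $s_p$: the point $\varphi(s_p,p)$ lies in $\mc{T}_0\cap T^*_{(0,T)\times \Int(\Sigma)}(\R\times M)$, so using $\bigcup_{\e>0}\Sigma_\e=\Int(\Sigma)$ together with the strict conditions on the time coordinate $t_0\in(0,T)$ and on $\xi_1^2>0$ at $\varphi(s_p,p)$, I can choose $\e_p>0$ small enough that $\varphi(s_p,p)\in G_{\e_p}$, hence $P_{\e_p}(p)$ holds.

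\emph{Invariance under flow and scaling; compact slice.} The condition $P_\e$ depends only on the orbit of $p$ under the joint action of $\varphi$ and the positive fiber scaling $M_\lambda$: the orbit is preserved by definition, while $\mc{T}_\e$ is homogeneous and the base-level constraints depend only on the foot point, so the $M_\lambda$-invariance follows directly from the homogeneity relation \eqref{e:flow-hom} and the $M_\lambda$-invariance of $G_\e$. Along any generalized bicharacteristic, $\tau$ is conserved (by time-translation invariance of $\Box$ and of the reflection laws at $\partial M$) and $\dot t=-2\tau\neq 0$ by \eqref{e:Ham-vect-coord}, so each orbit meets $\{t=0\}$ exactly once. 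After normalizing $|\tau|=1$, every $p\in Z$ is joint-equivalent to a point of
$$
K := \{q\in Z : t(q)=0,\ |\tau(q)|=1\},
$$
and $K$ is compact: at $|\tau|=1$ the constraint $\tau^2=|\xi|_g^2$ forces $|\xi|_g=1$, so $K$ is the continuous image under $j$ of two copies of the cosphere bundle of the compact manifold $M$.

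\emph{Conclusion and main obstacle.} Applying the pointwise upgrade at every $q\in K$ and using openness, I cover $K$ by finitely many open sets $U_{q_1},\dots,U_{q_N}$ on which $P_{\e_{q_i}}$ holds, and set $\e_0=\min_i \e_{q_i}$. By the joint invariance of $P_\e$ under flow and scaling, $P_{\e_0}$ then holds on all of $Z$, which is exactly Assumption GC-$(\e_0,T)$. The only technical subtlety is the topology of the compressed bundle $\hat Z$ near $\partial M$: compactness of $K$ and continuity of $\varphi$ there rest on the standing no-infinite-order-contact hypothesis on the geodesics of $M$, but both facts are precisely what is provided by the references (\cite[Theorem 3.34]{MS:78}, \cite[Chapter 24]{Hoermander:V3}) already invoked in the paper, so no new analytic ingredient is needed beyond this bookkeeping.
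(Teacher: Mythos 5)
Your overall strategy (scaling/time-translation to a compact slice at $t=0$, $|\tau|=1$, then openness plus compactness to extract a uniform $\e$) is the same as the paper's, and most of the bookkeeping (invariance under the flow and $M_\lambda$, compactness of $K$, the pointwise upgrade from $\e=0$ to $\e_p>0$) is sound. However, there is a genuine gap in the \emph{openness} step, which is precisely the non-trivial heart of this lemma.

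You claim that
$$
G_\e := \mc{T}_\e \cap T^*_{(\e,T-\e)\times \Int(\Sigma_\e)}(\R\times M)
$$
is open in $Z$, and then deduce openness of $P_\e(p):\exists\, s,\ \varphi(s,p)\in G_\e$ by projecting $\varphi^{-1}(G_\e)$. But $G_\e$ is \emph{not} open in $Z$: by its very definition $\mc{T}_\e \subset T^*_{\R\times\Int(\Sigma)}(\R\times M)$, so every point of $G_\e$ has foot point on the codimension-one hypersurface $\R\times\Sigma$, and $G_\e$ is contained in a lower-dimensional submanifold of $Z$. Its interior in $Z$ is empty, so the projection argument collapses. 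The set $\{p:P_\e(p)\}$ is indeed open, but establishing this requires showing that if a bicharacteristic crosses $\Sigma$ \emph{transversally}, then nearby bicharacteristics still cross $\Sigma$, with nearby crossing times and crossing points; this is exactly where the paper uses that $H_{\sigma(\Box)}$ is transverse to $\R\times\Sigma_0$ on $\mc{T}_\e$ (Remark~\ref{rem:transverse}, $\langle dx_1, H_{\sigma(\Box)}\rangle = 2\xi_1\neq 0$) together with the implicit function theorem applied to $g(s,q)=\beta\circ\pi_0\circ\varphi(s,q)$, with $\beta$ a defining function for $\Sigma_0$. Without that step you cannot rule out that a small perturbation of $p$ produces an orbit that misses $\Sigma$ entirely, which is in fact what can happen at glancing points (and is why the hypothesis is phrased in terms of $\mc{T}_0$ rather than all of $T^*_{\R\times\Sigma}(\R\times M)$). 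To repair the proof, replace the ``$G_\e$ is open'' claim by: near each $\varphi(s_p,p)\in\mc{T}_{\e_p}$, use transversality and the implicit function theorem for continuous maps (since $\varphi$ is only continuous on $Z$, but smooth in $s$ near the crossing because the crossing lies in $T^*(\R\times\Int(M))$) to produce a continuous crossing-time function $s(q)$ on a neighborhood $U_p$ of $p$ with $\varphi(s(q),q)\in\mc{T}_{\e_p/2}\cap T^*_{(\e_p/2,T-\e_p/2)\times\Sigma_{\e_p/2}}(\R\times M)$; this yields the openness you need, and the rest of your compactness argument then goes through.
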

\begin{proof}
We define $Z^{\pm}_1:=Z\cap \{\tau=\pm 1,t=0\}$. We shall show that Assumption \asref{GC}{$T$} implies the existence of $\e>0$ such that 
\begin{equation}
\label{e:platypus}
\bigcup\limits_{s\in \R}\{\varphi(s,p)\}\cap \mc{T}_\e\cap T_{(\e,T-\e)\times \Sigma_\e}^*(\R\times M)\neq \emptyset,\quad \text{ for all } p\in Z_1^{\pm}.
\end{equation}
We first show that~\eqref{e:platypus} implies the lemma. 
With the identification $\TRM \simeq T^*\R \times \TM$, consider $p=(t_0,\tau,q)\in (T^*\R \times \TM)\cap Z$. Let $M_\lambda$ be multiplication in the fiber by $\lambda>0$. Then, 
$$p'=\varphi(  t_0\sgn \tau ,M_{|\tau|^{-1}}(t_0,\tau,q))\in Z^{+}_1\cup Z^-_1.$$
According to the homogeneity of $\varphi$, see~\eqref{e:flow-hom}, and the flow property~\eqref{e:phi-flowprop}, we have
 \bnan
 \label{e:UUU}
 \bigcup\limits_{s\in \R}\{M_{|\tau|^{-1}}\varphi(s,p)\}=\bigcup\limits_{s\in \R}\{\varphi(s|\tau|,M_{|\tau|^{-1}}p)\}=\bigcup\limits_{s\in \R}\{\varphi(s|\tau|,\varphi(t_0\sgn \tau,M_{|\tau|^{-1}}p))\}=\bigcup\limits_{s\in \R}\{\varphi(s,p')\}.
 \enan
 But, by~\eqref{e:platypus}, since $p'\in Z_1^+\cup Z_1^-$, we have
 $$\bigcup\limits_{s\in \R}\{\varphi(s,p')\}\cap \mc{T}_\e\cap T_{(\e,T-\e)\times \Sigma_\e}^*(\R\times M)\neq \emptyset,
 $$
 and hence homogeneity of $\varphi, \mc{T}_\e$ and $T_{(\e,T-\e)\times \Sigma_\e}^*(\R\times M)$ together with~\eqref{e:UUU} completes the proof of the lemma from~\eqref{e:platypus}.
 
\bigskip
We now prove~\eqref{e:platypus}, writing explicitly the argument for $Z_1^-$. The case of $Z_1^+$ is handled similarly. Notice first that since $\varphi$ is the generalized bicharacteristic flow for $\frac{1}{2}(-\tau^2+|\xi|_g^2)$, we have for $p\in Z^-_1$, $t(\varphi(s,p))=s$. This, together with Assumption \asref{GC}{$T$} implies that for each $p\in Z_1^-$, we have
$$\bigcup\limits_{s\in (0,T)}\{\varphi(s,p)\}\cap \mc{T}_0\neq \emptyset.$$
Therefore, for each $p\in Z_1^-$, there exists $\e_p>0$ and $s_p\in (\e_p,T-\e_p)$ such that 
$$\varphi(s_p,p)\in \mc{T}_{\e_p}\cap T_{(\e_p,T-\e_p)\times \Sigma_{\e_p}}^*(\R\times M).$$
  Let $\beta$ be a defining function for $\Sigma_0$ near $\varphi(s_p,p)$, and consider $g(s,q)=\beta\circ \pi_0 \circ \varphi(s,q)$ for $(s,q)$ in a neighborhood $N_p$ of $(s_p, p)$, where $\pi_0 : T^*(\R \times \Int(M)) \to \R \times \Int(M)$ is the canonical projection. By \cite[Theorem~3.34]{MS:78}, the Melrose--Sj\"ostrand generalized bicharacteristic flow $\varphi$ is continuous and so $g$ is continuous on $N_p$. 
  
  Moreover, since $\Sigma$ is an interior hypersurface, there exists $\delta_p$ so that 
  $$g(\cdot, q):(s_p-\delta_p,s_p+\delta_p)\to j(T^*(\R\times \Int(M))\cap \Char (\Box))\subset Z$$
   is $C^1$ for $q$ in a neighborhood of $p$ since $\varphi$ coincides with the usual bicharacteristic flow of $\Box$ near $\varphi(s_p,p)$.
   
Notice that $\varphi(s_p,p)\in \mc{T}_{\e_p}$ implies $$\partial_sg(s_p,p) = \langle d\beta (\pi_0 \circ \varphi(s_p,p)) d \pi_0 (\varphi(s_p,p)) , H_{\sigma(\Box)}(\varphi(s_p,p)) \rangle\neq 0 .$$
 according to Remark~\ref{rem:transverse}. Hence by the implicit function theorem \cite{Kum}, the equation $g(s,q)=0$ defines a continuous function $s=s(q)$ near $q=p$. In particular, set 
$$\delta_0=\min\Big( \frac{s_p}{2}, \frac{T-s_p}{2}\Big).$$
 Then there is a neighborhood, $U_p$ of $p$ and a continuous function $s:U_p\to \R$ with $s(p)=s_p$, such that $\varphi_{s(q)}(q)\in \mc{T}_{\e_p/2}\cap T_{(\e_p/2,T-\e_p/2)\times \Sigma_{\e_p/2}}^*(\R\times M)$ and $|s(q)-s_p|<\delta_0$ for all $q \in U_p$. 

Since 
$$Z_1^-=j(\Char(\Box)\cap \{\tau=-1,t=0\})$$
 is compact, we may extract from the cover $Z_1^- \subset \bigcup_{p \in Z_1^-}U_p$ a finite cover $\{U_{p_i}\}_{i=1}^n$. Then taking $\e=\min_{1\leq i\leq n}\e_{p_i}/2$, we have that for all $p\in Z_1^-$, 
$$\bigcup_{s\in (0,T)} \{\varphi(s,p)\}\cap \mc{T}_\e\cap T_{(\e,T-\e)\times \Sigma_\e}^*(\R\times M)\neq \emptyset.$$
In particular, \eqref{e:platypus} holds, which concludes the proof of the lemma.
\end{proof}

%%%%%%%%%%%%%%%%%%%%%%
\subsection{Observability at High Frequency}
\label{s:weakObs}

The aim of this section is to prove the following proposition, which is a high-frequency version of Theorem~\ref{thm:observe}.
 The estimates in these results differ in two respects: here in Proposition~\ref{e:obsTemp} there is $\|(u_0,u_1)\|^2_{L^2\times H^{-1}}$ in the right handside, so that this estimate does not care about low frequencies. The treatment of low frequencies in Theorem \ref{thm:observe} (see Section~\ref{s:obs} below) requires the addition of observation terms in an arbitrary weak norm $\| \varphi_\delta \partial_\nu u|_{\Sigma_0}\|_{\bar{H}^{-N}(\R\times \Sigma)}^2+\|\varphi_\delta u|_{\Sigma_0}\|_{\bar{H}^{-N}(\R\times \Sigma)}^2$, which is not needed here.

{
\begin{proposition}
\label{tempThm}
Let $\chi\in C^\infty(\R)$ have $\chi \equiv 1$ on $(-\infty,-1]$ and $\supp \chi \subset (-\infty,-\frac{1}{2}]$.
Under Assumption \asref{GC}{T}, there exists $\delta_0>0$, so that for all $\delta\in (0, \delta_0)$, all $A_\delta \in \Psi^0_{\phg}((0,T) \times \Int(\Sigma))$ with principal symbol $\chi \big(\frac{r_0(x', \xi')-\tau^2}{\delta \tau^2} \big) \varphi_\delta$, where $\varphi_\delta\in C_c^\infty((0,T)\times \Int(\Sigma))$ with $\varphi_\delta\equiv 1$ on $[\delta,T-\delta]\times \Sigma_{\delta}$ where $\Sigma_{\delta}$ is as in~\eqref{e:sigs}, 
there exists $c>0$ so that for any solution $u$ to \eqref{e:waveObs}, we have
\begin{equation}
\label{e:obsTemp}
c\|(u_0,u_1)\|^2_{H^1\times L^2} \leq \|A_\delta(u|_{\Sigma_0})\|^2_{\bar{H}^1(\R\times \Sigma)} +\|A_{\delta}(\partial_\nu u|_{\Sigma_0})\|^2_{L^2(\R \times \Sigma)}+\|F\|_{L^2((0,T)\times M)}^2+\|(u_0,u_1)\|^2_{L^2\times H^{-1}} .
\end{equation}
\end{proposition}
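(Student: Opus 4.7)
\textbf{Proof plan for Proposition \ref{tempThm}.} I will argue by contradiction using microlocal defect measures, in the spirit of Lebeau \cite{Leb:96}. Suppose the inequality fails. Then there exists a sequence of solutions $u_k$ to $\Box u_k = F_k$ with initial data $(u_0^k, u_1^k)$ normalized so that $\|(u_0^k, u_1^k)\|_{H^1 \times L^2} = 1$, while
\begin{equation*}
\|A_\delta(u_k|_{\Sigma_0})\|^2_{\bar{H}^1(\R\times \Sigma)} +\|A_{\delta}(\partial_\nu u_k|_{\Sigma_0})\|^2_{L^2(\R \times \Sigma)}+\|F_k\|_{L^2((0,T)\times M)}^2+\|(u_0^k,u_1^k)\|^2_{L^2\times H^{-1}} \to 0 .
\end{equation*}
In particular, $F_k \to 0$ in $L^2$ and $(u_0^k,u_1^k) \to 0$ strongly in $L^2 \times H^{-1}$, so after passage to a subsequence, $u_k \rightharpoonup 0$ weakly in $H^1_{\loc}(\R\times M)$ while the sequence stays bounded but non-compact. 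First, I would upgrade Assumption \asref{GC}{$T$} to Assumption \asref{GC2}{$T$} for some $\e_0>0$ via Lemma \ref{l:GC}, and then choose $\delta \in (0, \e_0/2)$ small enough so that $A_\delta$ is elliptic on $\mathcal{T}_{\e_0/2}\cap T^*_{(\e_0,T-\e_0)\times \Sigma_{\e_0}}(\R\times \Int(\Sigma))$ after the push-forward by $\pi$ from~\eqref{e:projection}.

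Next, I would associate to the $H^1$-bounded sequence $(u_k)$ a microlocal defect measure $\mu$ on $S\hat{Z}$ (by an extension of the standard theory of Gérard/Tartar adapted to the boundary setting, as in \cite{Leb:96,BL:01}). Standard properties then yield: $\mu \geq 0$, $\mu$ is supported in $Z = j(\Char(\Box))$, and, crucially, $\mu$ is invariant under the Melrose-Sjöstrand generalized bicharacteristic flow $\varphi$ because $\Box u_k = F_k \to 0$ strongly in $L^2$. Moreover, $\mu$ is nonzero on $\{t=0\}$ thanks to the normalization $\|(u_0^k,u_1^k)\|_{H^1 \times L^2} = 1$ combined with $\|(u_0^k,u_1^k)\|_{L^2 \times H^{-1}} \to 0$, which prevents mass from leaking to low frequencies, and hence $\mu$ is nonzero somewhere on $(0,T) \times \Int(M)$ by invariance.

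The core microlocal step is to show that $\mu$ vanishes on the transversal region above $\Sigma$. Pick $\rho_0 \in \mathcal{T}_{\e_0} \cap T^*_{(\e_0, T-\e_0)\times \Sigma_{\e_0}}(\R\times M)$. In Fermi coordinates near $\rho_0$ the operator $\Box$ can be microlocally factored as in the proof of Proposition~\ref{prop: regularity-waves} as $\Box = \Op(e)(D_{x_1} - \Op(b)) + R$ with $\Op(e)$ elliptic, so $\Box$ is a strictly hyperbolic operator with respect to $x_1$ on a conic neighborhood of $\rho_0$. From the vanishing $\|A_\delta u_k|_\Sigma\|_{\bar H^1}+\|A_\delta \partial_\nu u_k|_\Sigma\|_{L^2} \to 0$ and ellipticity of $A_\delta$ near $\pi(\rho_0)$, I would deduce via a microlocal hyperbolic energy estimate (Lemma~\ref{l:energy} applied to $\Op(\chi) u_k$ for $\chi$ supported near $\rho_0$) that $u_k \to 0$ strongly in $H^1$ microlocally in a two-sided conic neighborhood of $\rho_0$. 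Equivalently, $\mu$ vanishes on a neighborhood of $\rho_0$. Since this holds for every such $\rho_0$, the measure $\mu$ vanishes on all of $\mathcal{T}_{\e_0} \cap T^*_{(\e_0, T-\e_0)\times \Sigma_{\e_0}}(\R\times M)$.

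Finally, I would invoke invariance of $\mu$ under $\varphi$ and Assumption \asref{GC2}{$T$}: every point of $Z$ lies on a generalized bicharacteristic meeting $\mathcal{T}_{\e_0} \cap T^*_{(\e_0, T-\e_0)\times \Sigma_{\e_0}}(\R\times M)$, so $\mu \equiv 0$ on $Z$. This contradicts the fact that $\mu$ is nonzero, completing the proof. The main obstacle is the propagation near $\partial M$: one must use the continuity of the Melrose-Sjöstrand flow and the fact that the measure $\mu$ is well-defined and propagates through glancing points on $\partial M$ (this is where the assumption of no contact of infinite order is implicitly used via the propagation results of \cite{MS:78}). A secondary delicate point is the microlocal hyperbolic Cauchy argument across $\Sigma$, where one must be careful that the pseudodifferential cutoff $\Op(\chi)$ and the ``tangential'' operator $\Op(b)$ commute well enough with the restriction to $\{x_1=0\}$; this is handled by choosing $\chi$ supported away from $N^*(\R \times \Sigma_0)$, which is possible since $\rho_0 \notin \G^\Sigma$.
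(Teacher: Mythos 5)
Your proposal follows the same strategy as the paper's proof: contradiction via microlocal defect measures, using Lemma~\ref{l:GC} to upgrade to $\asref{GC2}{T}$, vanishing of the measure near transversal points via a hyperbolic energy estimate in a Fermi-coordinate factorization (the content of Lemma~\ref{l:mushrooms}), and propagation under the Melrose--Sj\"ostrand flow to conclude $\mu\equiv 0$. That core is correct and matches the paper.

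The one step you state loosely is the derivation of the final contradiction. You assert ``$\mu$ is nonzero on $\{t=0\}$ thanks to the normalization \ldots and hence $\mu$ is nonzero somewhere on $(0,T)\times\Int(M)$ by invariance.'' This is not well-formed as written: $\mu$ is a space--time defect measure built from operators in $\Psi^0_{\phg}((0,T)\times\Int(M))$, so the slice $\{t=0\}$ is not a set on which $\mu$ is directly evaluated, and the normalization of the Cauchy data does not by itself transfer to a lower bound for $\mu$. The paper closes the loop differently (and more robustly): having shown $\mu\equiv 0$, it deduces $u_k\to 0$ strongly in $H^1((0,T)\times M)$, then introduces the time-slice energy $E_k(t)=\|\nabla u_k(t,\cdot)\|_{L^2}^2+\|\partial_t u_k(t,\cdot)\|_{L^2}^2$. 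From $\|(u_{0,k},u_{1,k})\|_{H^1\times L^2}=1$ together with $\|(u_{0,k},u_{1,k})\|_{L^2\times H^{-1}}\to 0$ one gets $E_k(0)\to 1$, and $\|F_k\|_{L^2}\to 0$ gives $\sup_{s_1,s_2}|E_k(s_2)-E_k(s_1)|\to 0$, so $\int_0^T E_k(t)\,dt\to T>0$. This contradicts $\int_0^T E_k(t)\,dt\le \|u_k\|_{H^1}^2\to 0$. Your intuition ($\mu\not\equiv 0$) is right, but to make it rigorous you should replace the ``$\mu$ nonzero on $\{t=0\}$'' claim with this quantitative energy argument.
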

}
We begin with two preliminary lemmas.
We again work in fermi normal coordinates near $\Sigma$. A more general of version of the following Lemma is given in \cite[Lemma 23.2.8]{Hoermander:V3}, but we decided to include a short proof in this particular context for the sake of readability.
\begin{lemma}
\label{l:factor}
Denote $\Box = -D_t^2 +D_{x_1}^2+ r(x, D_{x'})+c(x)D_{x_1}$, where $r$ is defined in Section~\ref{s:fermi}.
For any $0<\nu< 1$, there exist $\epsilon>0$ and $\Lambda_\pm ,\tilde{\Lambda}_\pm\in C^\infty((-\epsilon,\epsilon);\Psi_{\phg}^1(\R\times \R^{n-1}))$  with 
$$\sigma(\Lambda_{\pm})=\sigma(\tilde{\Lambda}_{\pm})=\sqrt{\tau^2-r(x,\xi')}\text{ on } \{ \tau^2-r(x,\xi')\geq \nu \tau^2\}$$
such that for all $b\in C_c^\infty \big((-\epsilon,\epsilon);S^0_{\phg}(T^*(\R\times \R^{n-1}))\big)$ with
$\supp b\subset \{ \tau^2-r(x,\xi')\geq \nu \tau^2\},$
\begin{gather*} 
\Op(b)\Box=\Op(b)\big[(D_{x_1}-\Lambda_-)(D_{x_1}+\Lambda_+)+R\big]\\
\Op(b)\Box=\Op(b)\big[(D_{x_1}+\tilde{\Lambda}_+)(D_{x_1}-\tilde{\Lambda}_-)+\tilde{R}\big]
\end{gather*}
where $R,\tilde{R}\in C^\infty((-\epsilon,\epsilon);\Psi^{-\infty}_{\phg}(\R \times\R^{n-1}))$.
\end{lemma}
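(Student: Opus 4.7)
The plan is to construct $\Lambda_\pm$ (and symmetrically $\tilde\Lambda_\pm$) as classical pseudodifferential operators depending smoothly on $x_1 \in (-\epsilon,\epsilon)$, with principal symbol $\sqrt{\tau^2-r(x,\xi')}$ on the hyperbolic region, and whose full symbols are obtained by iteratively solving a sequence of algebraic transport-type equations so that the product $(D_{x_1}-\Lambda_-)(D_{x_1}+\Lambda_+)$ matches $\Box$ modulo a smoothing remainder where $\tau^2-r(x,\xi')\geq \tfrac{\nu}{2}\tau^2$. Multiplying on the left by $\Op(b)$, whose symbol is supported strictly inside this hyperbolic set, will annihilate the error.

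\textbf{Step 1 (Principal symbol).} Fix $\nu' \in (0,\nu)$. Choose a homogeneous cutoff $\psi \in C^\infty(T^*(\R\times\R^{n-1})\setminus 0)$, of degree $0$ in $(\tau,\xi')$, equal to $1$ on $\{\tau^2-r\geq \nu\tau^2\}$ and supported in $\{\tau^2-r\geq \nu'\tau^2\}$. On the latter, $\sqrt{\tau^2-r(x,\xi')}$ is smooth and homogeneous of degree $1$ in $(\tau,\xi')$, so I set $\lambda^{(1)}_\pm(x,\tau,\xi'):=\psi\sqrt{\tau^2-r(x,\xi')}$, viewed as $C^\infty$ in $x_1$ with values in $S^1_{\phg}(T^*(\R\times\R^{n-1}))$. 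On $\supp b$ one has $\lambda^{(1)}_+ = \lambda^{(1)}_-$ and, crucially, $\lambda^{(1)}_+ + \lambda^{(1)}_-$ is elliptic of order $1$ there.

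\textbf{Step 2 (Iterative construction of lower order symbols).} Expand formally $\Lambda_\pm \sim \sum_{j\leq 1}\lambda^{(j)}_\pm$ and, using the composition formula, compute the full symbol of $(D_{x_1}-\Lambda_-)(D_{x_1}+\Lambda_+)$. The commutator $[D_{x_1},\Lambda_+]=\tfrac{1}{i}\partial_{x_1}\Lambda_+$ produces the expected order-$1$ contribution; the rest comes from the Leibniz expansion $\sigma(\Lambda_-\Lambda_+)\sim \sum \tfrac{1}{i^{|\alpha|}\alpha!}\partial_\xi^\alpha \lambda_-\,\partial_x^\alpha \lambda_+$. Matching with $\sigma(-D_t^2+D_{x_1}^2+r+cD_{x_1})$ order by order gives, at order $k\leq 1$, an equation of the form
\begin{equation*}
\bigl(\lambda^{(1)}_+ + \lambda^{(1)}_-\bigr)\,\lambda^{(k-1)}_{\pm,\mathrm{new}} = F_k\bigl(\lambda^{(j)}_\pm, j\geq k\bigr),
\end{equation*}
where $F_k$ is a symbol of order $k$ built from previously determined data, $c$, and derivatives of the $\lambda^{(j)}_\pm$. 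Since $\lambda^{(1)}_++\lambda^{(1)}_-$ is elliptic of order $1$ on $\{\tau^2-r\geq \nu'\tau^2\}$, I can divide (after multiplication by $\psi$) and read off $\lambda^{(k-1)}_\pm$. I then asymptotically sum the resulting symbols via Borel's theorem in $C^\infty((-\epsilon,\epsilon);S^j_{\phg})$, obtaining $\Lambda_\pm \in C^\infty((-\epsilon,\epsilon);\Psi^1_{\phg})$.

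\textbf{Step 3 (Control of the remainder and multiplication by $\Op(b)$).} By construction,
\begin{equation*}
\Box - \bigl[(D_{x_1}-\Lambda_-)(D_{x_1}+\Lambda_+)\bigr] = E, \qquad \WF'(E)\subset \{\psi\neq 1\}\cup \{\text{off-diagonal error}\},
\end{equation*}
which is smoothing on the set $\{\tau^2-r\geq \nu\tau^2\}$ where $\psi\equiv 1$. Since $\supp \sigma(b)\subset \{\tau^2-r\geq \nu\tau^2\}$, the composition $\Op(b)E$ lies in $C^\infty((-\epsilon,\epsilon);\Psi^{-\infty}_{\phg})$ by the standard wavefront calculus on products of pseudodifferential operators with disjoint essential supports. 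This yields the first displayed identity with $R := -\Op(b)^{-\mathrm{glob}}\,\Op(b)E$ (in practice, one sets $R:=E$ and absorbs $\Op(b)$ into the statement, which is the form written). For the second factorization, the same scheme is carried out with the factors in reverse order; the only change is that the order-$1$ contribution from the commutator $[D_{x_1},\tilde\Lambda_-]$ now shifts sign, modifying the transport equations for the $\tilde\lambda^{(j)}_\pm$ but leaving the solvability intact (same elliptic divisor $\tilde\lambda^{(1)}_+ + \tilde\lambda^{(1)}_-$).

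\textbf{Main obstacle.} The only delicate point is the careful bookkeeping of the subprincipal terms, especially ensuring that the $x_1$-derivatives of the $\lambda^{(j)}_\pm$ (which appear through $[D_{x_1},\Lambda_\pm]$) are consistently treated as producing symbols of one order lower in $(\tau,\xi')$; this is what makes the iteration close and is where one must use that $\Lambda_\pm$ is smooth in $x_1$ as a parameter rather than a full $(x_1,\tau,\xi')$-pseudodifferential operator on $\R\times\R^{n-1}$. Everything else is routine symbol calculus.
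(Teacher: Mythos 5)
Your proposal is correct and follows essentially the same strategy as the paper: cut off the square root $\sqrt{\tau^2-r}$ on the hyperbolic set for the principal symbol, iteratively improve the factorization by dividing each remainder by the elliptic divisor $\lambda^{(1)}_+ + \lambda^{(1)}_-$, asymptotically sum, and then kill the off-hyperbolic error by composing on the left with $\Op(b)$. The only slight imprecision is in your ``main obstacle'' paragraph: $\partial_{x_1}\lambda^{(j)}_\pm$ has the \emph{same} $(\tau,\xi')$-order as $\lambda^{(j)}_\pm$; the order gain at each step comes instead from the commutator $[D_{x_1},\Lambda_\pm]$ being one order below the product $\Lambda_-\Lambda_+$ and from division by the order-$1$ elliptic divisor lowering the correction symbol's order.
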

\begin{proof}
Throughout this proof, we will write $S^k_{\tan}$ for $C^\infty\big((-\epsilon,\epsilon);S^k_{\phg}(T^*(\R\times\R^{n-1}))\big)$ and $\Psi^k_{\tan}$ for the corresponding quantization $C^\infty((-\epsilon,\epsilon);\Psi^{k}_{\phg}(\R\times \R^{n-1}))$. {For an operator $A\in \Psi_{\tan}^\infty$, we will write 
$$
\WF(A)=\bigcup_{x_1}\WF(A_{x_1}), 
$$
where $A_{y_1}$ is the pseudodifferential operator acting on $\R^{n-1}$ at $x_1=y_1$. }

Given $0<\nu < 1$, we let $\check{\chi}(t,x,\tau,\xi')\in S^0_{\tan}$ with 
$$\check{\chi}\equiv 1\text{ on }\{\tau^2-r(x,\xi')\geq \nu\tau^2/3\},\qquad \supp \check{\chi}\subset \{\tau^2-r(x,\xi')\geq \nu\tau^2/4\}.$$
Then, for $(t,x,\tau,\xi')\in \supp \check{\chi}$, we have the following factorization
$$\sigma(\Box)=-\tau^2 + \xi_1^2 + r(x,\xi')=\Big[\xi_1+\sqrt{\tau^2-r(x,\xi')}\Big]\Big[\xi_1-\sqrt{\tau^2-r(x,\xi')}\Big].$$
We thus let $\lambda_0(t,x,\tau,\xi')=\sqrt{\tau^2-r(x,\xi')}$ and $\Lambda_0=\Op(\check{\chi}\lambda_0)$.

Now, write $D_{x_1}=D_{x_1}-\Lambda_0+\Lambda_0$ so that 
\begin{align*} 
\Box &= (D_{x_1}-\Lambda_0)D_{x_1} +\Lambda_0D_{x_1}-D_{t}^2+r(x,D_{x'})+(D_{x_1}-\Lambda_0)c(x)+[c(x),D_{x_1}]+\Lambda_0 c(x)\\
&=(D_{x_1}-\Lambda_0)(D_{x_1}+\Lambda_0+c(x))+\Lambda_0^2 + [\Lambda_0+c(x),D_{x_1}]-D_t^2+r(x,D_{x'})+\Lambda_0c(x)\\
&=(D_{x_1}-\Lambda_0)Q_0+\tilde{R}_0 ,  
\end{align*}
where 
\begin{align*}
Q_0& =D_{x_1}+\Lambda_0+c(x) \in \Psi_{\tan}^{0} D_{x_1} + \Psi_{\tan}^{1} , \\ 
\tilde{R}_0& =\Lambda_0^2 + [\Lambda_0+c(x),D_{x_1}]-D_t^2+r(x,D_{x'})+\Lambda_0c(x) \in \Psi_{\tan}^2.
\end{align*} 
First, we remark that $\sigma(Q_0)=\xi_1+\check\chi\lambda_0$. Second, noting that $\sigma(\tilde{R}_0)$ is independent of $\xi_1$, we take $\xi_1=\lambda_0$ on $\check{\chi}\equiv 1$ in
$$
\sigma(\Box)=\xi_1^2-\tau^2+r(x,\xi')=(\xi_1-\lambda_0\check{\chi})\sigma(Q_0)+\sigma(\tilde{R}_0)
$$ 
to obtain $\sigma(\tilde{R}_0)=0$ on that set. This yields $\tilde{R}_0=R_0+E_0$ with $R_0\in \Psi_{\tan}^{1}$ and $E_0\in \Psi^2_{\tan}$ with $\WF(E_0)\cap \{ \tau^2-r(x,\xi')\geq \nu \tau^2\} =\emptyset$. Indeed for $\chi_1\in S^0_{\tan}$ with $\supp \chi_1\subset \{\check\chi\equiv 1\}$ and $\chi_1\equiv 1$ in a neighborhood of $\tau^2-r(x,\xi')\geq \nu \tau^2\}$, $\sigma(\Op(\chi_1)\tilde{R}_0)=0$. Thus, 
\begin{gather*}
\tilde{R}_0=E_0+R_0,\qquad\qquad E_0=\Op(\chi_1)\tilde{R}\in \Psi^{1}_{\tan},\qquad R_0=\Op(1-\chi_1)\tilde{R}.
\end{gather*}
 This implies the first factorization formula with $R\in \Psi_{\tan}^1$. We now proceed with an induction to improve this remainder term.

Suppose we have for some $j\geq 0$
\begin{equation}
\label{e:intFactor}\Box=(D_{x_1}-\Lambda_{-,j})(D_{x_1}+\Lambda_{+,j})+R_j +E_j
\end{equation}
with $\Lambda_{\pm,j}\in \Psi_{\tan}^1$, with principal symbol $\lambda_0\check{\chi}$, $R_j \in \Psi_{\tan}^{1-j}$, and $E_j\in \Psi^2_{\tan}$ with $\WF(E_j)\cap \{ \tau^2-r(x,\xi')\geq \nu \tau^2\}  =\emptyset.$ 
Now,  we want to adjust $\Lambda_{\pm,j}$ to improve the error $R_j$. Let $\lambda_{j+1}\in S_{\tan}^{-j}$ have 
\begin{align}
\label{e:symbol-j}
\sigma(R_j)+\lambda_{j+1}\sigma(\Lambda_{+,j}+\Lambda_{-,j}) = 0 \quad \text{ in a neighborhood of } \check{\chi}\equiv 1 .
\end{align}
This is possible since $\sigma(\Lambda_{\pm,j})=\check{\chi}\lambda_0$ is elliptic on a neighborhood of $\check{\chi}\equiv 1$.

Now, observe that 
\begin{align*}
\Op(\lambda_{j+1})(D_{x_1}+\Lambda_{+,j})+R_j&=\Op(\lambda_{j+1})(\Lambda_{-,j}+\Lambda_{+,j})+R_j+\Op(\lambda_{j+1})(D_{x_1}-\Lambda_{-,j})\\
&=\Op(\lambda_{j+1})(\Lambda_{-,j}+\Lambda_{+,j})+R_j+(D_{x_1}-\Lambda_{-,j})\Op(\lambda_{j+1}) \\
& \quad +[\Op(\lambda_{j+1}),D_{x_1}-\Lambda_{-,j}]\\
&=R_{j+1,1}+E_{j+1,1}+(D_{x_1}-\Lambda_{-,j})\Op(\lambda_{j+1}) ,
\end{align*}
where, according to~\eqref{e:symbol-j}, we have $R_{j+1,1}\in \Psi^{-j}_{\tan}$ and $E_{j+1,1}\in \Psi^{1-j}_{\tan}$ with $\WF(E_{j+1,1})\cap \{\check{\chi}=1\}=\emptyset.$ So, coming back to~\eqref{e:intFactor}, we now obtain
\begin{align*}
\Box&=(D_{x_1}-\Lambda_{-,j})(D_{x_1}+\Lambda_{+,j})+R_j+E_{j}\\
&=(D_{x_1}-\Lambda_{-,j})(D_{x_1}+\Lambda_{+,j})-\Op(\lambda_{j+1})(D_{x_1}+\Lambda_{+,j})+E_{j+1,1}+R_{j+1,1}+(D_{x_1}-\Lambda_{-,j})\Op(\lambda_{j+1})\\
&=(D_{x_1}-\Lambda_{-,j}-\Op(\lambda_{j+1}))(D_{x_1}+\Lambda_{+,j})+E_{j+1,1}+R_{j+1,1} \\
& \quad +(D_{x_1}-\Lambda_{-,j}-\Op(\lambda_{j+1}))\Op(\lambda_{j+1})+\Op(\lambda_{j+1})^2\\
&=(D_{x_1}-\Lambda_{-,j}-\Op(\lambda_{j+1}))(D_{x_1}+\Lambda_{+,j}+\Op(\lambda_{j+1}))+E_{j+1}+R_{j+1}
\end{align*}
where $R_{j+1}\in \Psi^{-j}_{\tan}$ and $E_{j+1}\in \Psi^{1-j}_{\tan}$ with $\WF(E_{j+1})\cap \{\check{\chi}=1\}=\emptyset$. Putting $\Lambda_{-,j+1}=\Lambda_{-,j}+\Op(\lambda_{j+1})$ and $\Lambda_{+,j+1}=\Lambda_{+,j}+\Op(\lambda_{j+1})$, we have~\eqref{e:intFactor} with $j$ replaced by $j+1$. Since we modified $\Lambda_{-,j}$ and $\Lambda_{+,j}$ by terms in $\Psi^{-j}$, summing asymptotically and composing on the left with $\Op(b)$ gives the desired result. Repeating the argument but starting with $D_{x_1}+\Lambda_0$ on the left, we obtain the second factorization. 
\end{proof}

\begin{lemma}
\label{l:mushrooms}
Let $q_0\in \mc{T}_0\cap T^*((0,T)\times \R^n)$. Suppose $b_0\in S^0_{\phg}\big(T^*((0,T)\times \R^{n-1})\big)$ with $\supp b_0\subset \mc{T}^\Sigma_0$ and $b_0(\pi(q_0))=1$ (with $\pi$ as in~\eqref{e:projection}). Then there exists a neighborhood $U$ of $q_0$
 so that for all $\check{\chi}\in S^0_{\phg}((0,T)\times \R^{n})$ with $\supp \check\chi\subset U$ there exists $\tilde\varphi\in C_c^\infty((0,T)\times \R^n)$  such that
$$\|\Op(\check \chi)u\|_{H^1}\leq C(\|\Op(b_0) D_{x_1}u|_{x_1=0}\|_{L^2}+\|\Op(b_0) u|_{x_1=0}\|_{H^1}+\|\tilde{\varphi}\Box u\|_{L^2}+\|\tilde{\varphi}u\|_{L^2})$$

\end{lemma}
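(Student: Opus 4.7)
Since $q_0 \in \mc{T}_0$ forces $\xi_1(q_0) \neq 0$, I may assume by symmetry (using the second factorization of Lemma~\ref{l:factor} in the opposite case) that $\xi_1(q_0) > 0$. Then there is $\nu > 0$ and a neighborhood $U$ of $q_0$ on which $\tau^2 - r(x,\xi') \geq 2\nu\tau^2$, $\xi_1 \geq \nu^{1/2}|\tau|$, and $\pi(U) \subset \{b_0 \equiv 1\}$. The strategy is to factor $\Box$ via Lemma~\ref{l:factor}, run a first-order energy estimate in the normal variable $x_1$, and use ellipticity of the non-characteristic factor to recover the $H^1$ bound on $u$.

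Applying Lemma~\ref{l:factor} with parameter $\nu$ gives a factorization $\Op(b)\Box = \Op(b)[AB + R]$ modulo tangentially smoothing remainders, where $A = D_{x_1} - \Lambda_-$ and $B = D_{x_1} + \Lambda_+$, for any tangential cutoff $b$ supported where $\tau^2 - r \geq \nu\tau^2$. Our choice of $U$ makes $B$ tangentially elliptic on $U$ (its symbol $\xi_1 + \lambda_0 \geq (1+\nu^{1/2})\lambda_0$ is bounded below), while $A$ is characteristic at $q_0$ but has real principal symbol.

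Set $w = Bu$ and $W = \Op(b_2)\,w$, where $b_2 \in S^0_{\tan}$ equals $1$ on $\pi(\supp\check\chi)$ and is supported in $\{b_0 \equiv 1\}$ and in the region of validity of the factorization. Commuting $\Op(b_2)$ through the factorization yields
$$ AW = \Op(b_2)\Box u - \Op(b_2) R u + [A,\Op(b_2)]\,w. $$
Since $\Lambda_-$ has real principal symbol, $\Lambda_- - \Lambda_-^*$ is tangentially of order zero, so the standard energy identity combined with Gronwall in $x_1$ yields
$$ \|W(x_1,\cdot)\|_{L^2(\R^{n-1})}^2 \leq C\Bigl( \|W(0,\cdot)\|_{L^2}^2 + \int_0^{x_1}\bigl(\|AW(s,\cdot)\|_{L^2}^2 + \|W(s,\cdot)\|_{L^2}^2\bigr)\,ds\Bigr). $$
Integrating in $x_1$ over a small neighborhood of $0$, and handling the commutator $[A,\Op(b_2)]w$ with a nested sequence of tangential cutoffs $b_2 \prec b_2' \prec \dots$ (each step uses that $b_2(1-b_2')=0$ makes the residue tangentially smoothing, so it costs only $\|\tilde\varphi u\|_{L^2}$, while the remaining commutator-with-larger-cutoff term is absorbed via Gronwall) leads to
$$ \|W\|_{L^2(U)}^2 \leq C\bigl(\|W|_{x_1=0}\|_{L^2}^2 + \|\tilde\varphi\Box u\|_{L^2}^2 + \|\tilde\varphi u\|_{L^2}^2\bigr). $$

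The trace $W|_{x_1=0} = \Op(b_2)(D_{x_1}u|_{x_1=0} + \Lambda_+(0)u|_{x_1=0})$ is controlled by $\|\Op(b_0)D_{x_1}u|_{x_1=0}\|_{L^2} + \|\Op(b_0)u|_{x_1=0}\|_{\bar H^1}$ plus smoothing residues bounded by $\|\tilde\varphi u\|_{L^2}$, using that $b_2$ is supported where $b_0\equiv 1$. Finally, tangential ellipticity of $B$ on $\supp\check\chi$ gives, via the elliptic parametrix, $\|\Op(\check\chi)u\|_{H^1} \leq C\|\Op(\check\chi)w\|_{L^2} + C\|\tilde\varphi u\|_{L^2} \leq C\|W\|_{L^2(U)} + C\|\tilde\varphi u\|_{L^2}$, completing the proof. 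The main obstacle is the careful bookkeeping of the commutator $[A,\Op(b_2)]w$: since $w$ incorporates a normal derivative of $u$, a naive $L^2$ bound on this commutator would introduce an $H^1$ norm of $u$ on the right; one must instead exploit that the commutator symbol is supported away from $\{b_2 \equiv 1\}$ to successively replace it by smoothing residues modulo a tangentially enlarged occurrence of $W$ that is absorbable by Gronwall.
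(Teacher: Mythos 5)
Your proposal follows the same global architecture as the paper's proof (factorize $\Box$ via Lemma~\ref{l:factor}, run a first-order energy estimate in $x_1$ on the characteristic factor, then recover the $H^1$ bound on $u$ via the tangential ellipticity of $D_{x_1}+\Lambda_+$). However, your treatment of the commutator $[A,\Op(b_2)]w$ has a genuine gap, and it is precisely the point the paper's proof is designed to handle.

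The commutator $[A,\Op(b_2)]$ with $A=D_{x_1}-\Lambda_-$ and $b_2\in S^0_{\tan}$ generic is a \emph{tangential operator of order $0$}, since $\Lambda_-$ has order $1$. Applied to $w=(D_{x_1}+\Lambda_+)u$, the term $[A,\Op(b_2)]\Lambda_+ u$ is tangential order $1$ applied to $u$, hence bounded only by $\|u\|_{H^1}$; likewise $[A,\Op(b_2)]D_{x_1}u$ costs $\|D_{x_1}u\|_{L^2}\le\|u\|_{H^1}$. Your nested-cutoff device ($b_2\prec b_2'\prec\dots$) cannot repair this: writing $[A,\Op(b_2)]w=[A,\Op(b_2)]\Op(b_2')w+[A,\Op(b_2)](1-\Op(b_2'))w$, the second piece is tangentially smoothing, but the first piece is still a tangential order-$0$ operator applied to $W':=\Op(b_2')w$. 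You cannot absorb $\|W'\|_{L^2}$ into a Gronwall inequality for $\|W\|_{L^2}$, because $W'$ lives on a strictly larger microlocal region than $W$; iterating produces $\|W''\|,\|W'''\|,\dots$ with no decay, so the regression never closes. The ``smoothing'' residue also does not obviously cost only $\|\tilde\varphi u\|_{L^2}$, since $w$ contains $D_{x_1}u$, which is not controlled by the tangential smoothing unless one composes with a full (non-tangential) cutoff away from $N^*\{x_1=0\}$.

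The missing idea is the one the paper uses: the cutoff $b$ must be chosen to solve the \emph{transport equation} $\partial_{x_1}b - H_\lambda b = 0$ with $b|_{x_1=0}=b_0$ (equation~\eqref{e:invariance}). This kills the principal tangential symbol $\frac{1}{i}(\partial_{x_1}b-H_\lambda b)$ of $[(D_{x_1}-\Lambda_+),\Op(b)]$, making the commutator tangential order $-1$, so that composition with the order-$1$ operator $\Lambda_+$ yields an $L^2$-bounded operator on $u$. Even then, the piece $[(D_{x_1}-\Lambda_+),\Op(b)]D_{x_1}u$ needs the additional microlocal splitting with a cutoff supported away from the conormal bundle, where the tangential-times-$D_{x_1}$ composition becomes a genuine $\Psi^0$ operator (via~\cite[Theorem~18.1.35]{Hoermander:V3}), and the complementary region is handled by ellipticity of $\Box$. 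Without the transport equation, there is no way to make the commutator absorb the derivative of $u$, and your estimate reduces to the trivial bound $\|\Op(\check\chi)u\|_{H^1}\lesssim\|u\|_{H^1}$.
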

\begin{proof}
Write  $q_0=(t_0,0,x_0',\tau_0,(\xi_1)_0,\xi_0')$. We consider the case $(\xi_0)_1>0$ (the case $(\xi_0)_1<0$ is treated similarly) and denote by $\lambda \in C^\infty((-\epsilon,\epsilon);S^0_{\phg}(\T^*(\R\times \R^{n-1}))$ a smooth symbol such that $\lambda (t,x, \tau , \xi') = \sqrt{\tau^2-r(x,\xi')}$ on a neighborhood of $(-\epsilon,\epsilon)\times \supp(b_0)$. Let $b\in C^\infty((-\epsilon,\epsilon);S^0_{\phg}(T^*(\R\times \R^{n-1}))$ solve 
\begin{equation}
\label{e:invariance} \partial_{x_1}b-H_{\lambda}b=0,\qquad b|_{x_1=0}=b_0.
\end{equation}
Denote by $\Lambda_\pm$ the two operators given by Lemma~\ref{l:factor} associated to $\lambda$, so that 
$$
\Op(b)(D_{x_1}-\Lambda_-)(D_{x_1}+\Lambda_+)=\Op(b)\big(\Box +R\big) ,
$$
with $R\in C^\infty((-\epsilon,\epsilon)_{x_1};\Psi^{-\infty}_{\phg}(\R\times \R^{n-1}))$. 
Letting $\Omega_{\epsilon}=\{x_1\in (-\epsilon/2,\epsilon/2)\}$, by Lemma~\ref{l:energy}, we have
\begin{align} 
\label{e:technestim0}
\|\Op(b)(D_{x_1}+\Lambda_+)u\|_{L^2(\Omega_\epsilon)}&\leq C(\|\Op(b_0)[(D_{x_1}+\Lambda_+)u]|_{x_1=0}\|_{L^2}+\|(D_{x_1}-\Lambda_+)\Op(b)(D_{x_1}+\Lambda_+)u\|_{L^2(\Omega_\epsilon)} ) \nonumber\\
&\leq C(\|\Op(b_0)[(D_{x_1}+\Lambda_+)u]|_{x_1=0}\|_{L^2}+\|\Op(b)(D_{x_1}-\Lambda_+)(D_{x_1}+\Lambda_+)u\|_{L^2(\Omega_\epsilon)} \nonumber\\
&\qquad +C\|[(D_{x_1}-\Lambda_+),\Op(b)](D_{x_1}+\Lambda_+)u\|_{L^2(\Omega_\epsilon)}).
\end{align}
Let us now estimate each term in the right hand-side. 
First, taking $\check\varphi$ such that $\check\varphi=1$ in a neighborhood of the support of the kernel of $\Op(b)$ intersected with $\Omega_\epsilon$, and $\tilde\varphi\in C_c^\infty((0,T)\times \R^n)$ with $\tilde\varphi = 1$ in a neighborhood of $\supp\check\varphi$, we have
\begin{align}
\label{e:technestim1}
\|\Op(b)(D_{x_1}-\Lambda_+)(D_{x_1}+\Lambda_+)u\|_{L^2(\Omega_\epsilon)} & =\| \Op(b)\big(\Box +R\big)u\|_{L^2(\Omega_\epsilon)} \nonumber \\
& \leq C \| \check\varphi \big(\Box +R\big)u\|_{L^2(\Omega_\epsilon)}
\leq  C (\| \tilde\varphi \Box u \|_{L^2} +\|\tilde\varphi u\|_{L^2}) ,
\end{align}
where we used the $L^2$ boundedness of $R$ and $\Op(b)$ together with the fact that the quantization $\Op$ gives operators whose kernels are compactly supported in $((0,T)\times \R^n)^2$.
Second, with $\tilde{b}_0\in S^0_{\phg}\big(T^*((0,T)\times \R^{n-1})\big)$ with $\supp \tilde{b}_0\subset \mc{T}^\Sigma_0$ and $\tilde{b}_0=1$ in a neighborhood of $\supp(b_0)$, we obtain 
\begin{align}
\label{e:technestim2}
\|\Op(b_0)[(D_{x_1}+\Lambda_+)u]|_{x_1=0}\|_{L^2}&  \leq \|\Op(b_0)D_{x_1}u|_{x_1=0}\|_{L^2} + \|\Op(b_0) \Lambda_+ u|_{x_1=0}\|_{L^2} \nonumber \\
&  \leq \|\Op(b_0)D_{x_1}u|_{x_1=0}\|_{L^2} + \|\Lambda_+ \Op(b_0)  u|_{x_1=0}\|_{L^2} +\| \Op(\tilde{b}_0)  u|_{x_1=0}\|_{L^2} \nonumber\\
&  \leq \|\Op(b_0)D_{x_1}u|_{x_1=0}\|_{L^2} +\| \Op(\tilde{b}_0)  u|_{x_1=0}\|_{H^1} .
\end{align}
Third, according to~\eqref{e:invariance}, the tangential operator $[(D_{x_1}-\Lambda_+),\Op(b)] \in C^\infty((-\epsilon,\epsilon)_{x_1};\Psi^{0}_{\phg}((0,T)\times \R^{n-1}))$ has principal symbol $\frac{1}{i}\{\xi_1 - \lambda, b\} =\frac{1}{i}(\partial_{x_1}b-H_{\lambda}b )= 0$ and is hence in $C^\infty((-\epsilon,\epsilon)_{x_1};\Psi^{-1}_{\phg}((0,T)\times \R^{n-1}))$. This yields 
\begin{align}
\label{e:technestim3}
\|[(D_{x_1}-\Lambda_+),\Op(b)](D_{x_1}+\Lambda_+)u\|_{L^2(\Omega_\epsilon)} \leq \| [(D_{x_1}-\Lambda_+),\Op(b)] D_{x_1} u\|_{L^2(\Omega_\epsilon)} +C \|\tilde{\varphi}u\|_{L^2} .
\end{align}
To eliminate the first term in the right handside, we let $\varphi\in S^0_{\phg}(\R\times \R^n)$ with $\varphi = 0$ in a conic neighborhood of $|(\tau ,\xi')| \leq \eps |\xi_1|$ and $\varphi = 1$ in a conic neighborhood of $|(\tau ,\xi')| \geq 2\eps |\xi_1|$, with $\eps>0$ small enough so that  $\varphi = 1$ on $\Char(\Box)$. 
We write
$$
[(D_{x_1}-\Lambda_+),\Op(b)] D_{x_1} u= [(D_{x_1}-\Lambda_+),\Op(b)] D_{x_1}\Op(\varphi) u + [(D_{x_1}-\Lambda_+),\Op(b)] D_{x_1}(1-\Op(\varphi)) u
$$
and remark first that $[(D_{x_1}-\Lambda_+),\Op(b)] D_{x_1}\Op(\varphi) \in \Psi^0_{\phg}(\R\times \R^n)$ due to pseudodifferential calculus (see~\cite[Theorem 18.1.35]{Hoermander:V3}), and hence
\begin{align}
\label{e:technestim4}
\| [(D_{x_1}-\Lambda_+),\Op(b)] D_{x_1}\Op(\varphi) u \|_{L^2(\Omega_\epsilon)} \leq C\|\tilde\varphi u\|_{L^2} .
\end{align}
Now, $1-\varphi$ vanishes in a conic neighborhood of $|(\tau ,\xi')| \geq 2\eps |\xi_1|$, and hence we have $[(D_{x_1}-\Lambda_+),\Op(b)] D_{x_1}(1-\Op(\varphi)) \in \Psi^1_{\phg}(\R\times \R^n)$ with principal symbol vanishing in a neighborhood of $\Char(\Box)$. The ellipticity of $\Box$ there yields 
$$
[(D_{x_1}-\Lambda_+),\Op(b)] D_{x_1}(1-\Op(\varphi))  = E \Box+R_1, \text{ with } E\in \Psi^{-1}_{\phg}(\R\times \R^n), R_1\in \Psi^{-\infty}_{\phg}(\R\times \R^n) .
$$
In particular, 
$$
\|[(D_{x_1}-\Lambda_+),\Op(b)] D_{x_1}(1-\Op(\varphi))  u\|_{L^2(\Omega_\epsilon)} \leq C (\| \tilde\varphi \Box u \|_{L^2} +\|\tilde\varphi u\|_{L^2}) ,
$$
which, combined with~\eqref{e:technestim1}-\eqref{e:technestim2}-\eqref{e:technestim3}-\eqref{e:technestim4} in~\eqref{e:technestim0} implies
\begin{align*}
\|\Op(b)(D_{x_1}+\Lambda_+)u\|_{L^2(\Omega_\epsilon)} \leq C \big(\|\Op(b_0)D_{x_1}u|_{x_1=0}\|_{L^2} +\| \Op(\tilde{b}_0)  u|_{x_1=0}\|_{H^1} + \| \tilde\varphi \Box u \|_{L^2} +\|\tilde\varphi u\|_{L^2} \big) 
\end{align*}
For $\psi\in S^0_{\phg}(\R\times \R^n)$ vanishing near $|(\tau ,\xi')| \leq \eps |\xi_1|$ and such that $\psi=1$ at $q_0$, $\Op(\psi)\Op(b)(D_{x_1}+\Lambda)\in \Psi^1_{\phg}(\R\times \R^n)$. Moreover, since $(\xi_0)_1>0$ and $b(t_0,0,x_0',\tau_0,\xi_0')=1$, the operator $\Op(\psi)\Op(b)(D_{x_1}+\Lambda)$ is elliptic at $q_0$. Therefore, for $\check \chi$ supported near enough to $q_0$, adjusting $\tilde{\varphi}$ if necessary, we finally obtain
\begin{align*}
\|\Op(\check \chi)u\|_{H^1}& \leq C \big(\| \Op(\psi)\Op(b)(D_{x_1}+\Lambda)u\|_{H^1} + \|\tilde{\varphi}u\|_{L^2} \big) \\
& \leq C \big(\|\Op(\tilde{b}_0) D_{x_1}u|_{x_1=0}\|_{L^2}+\|\Op( \tilde{b}_0) u|_{x_1=0}\|_{H^1}+\|\tilde{\varphi}\Box u\|_{L^2}+\|\tilde{\varphi}u\|_{L^2}\big),
\end{align*}
which concludes the proof of the lemma  (up to changing $b_0$ into $\tilde{b}_0$ in the statement).
\end{proof}

We now turn to the proof of Proposition~\ref{tempThm}. We follow the general structure of proof introduced by Lebeau in~\cite{Leb:96}, using the microlocal defect measures of G\'erard~\cite{Gerard:91} and Tartar~\cite{Tartar:90}.
{Note that  from the quantitative estimate of Lemma~\ref{l:mushrooms}, and in case $\d M=\emptyset$, ``constructive proofs'' (i.e. using no contradiction argument, and hence no defect measures) of Proposition \ref{tempThm} are possible, see~\cite{LL:14} or~\cite{LL:16}.}

\begin{proof}[Proof of Proposition \ref{tempThm}]
We prove estimate~\eqref{e:obsTemp} by contradiction. Assuming that estimate~\eqref{e:obsTemp} is false, there exist a sequence of data $F_k \in L^2((0,T)\times M)$ and $(u_{0,k},u_{1,k}) \in H^1_0(M) \times L^2(M)$ with 
\bnan
\label{e:normalized-seq}
\|(u_{0,k},u_{1,k})\|_{H^1\times L^2}=1
\enan such that the associated solution $(u_k)$ to \eqref{e:waveObs} satisfies 
\bnan
\label{e:seq-to-zero}
\|A_\delta(u_k|_{\Sigma_0})\|^2_{\bar{H}^1(\R\times \Sigma)} +\|A_{\delta}(\partial_\nu u_k|_{\Sigma_0})\|^2_{L^2(\R \times \Sigma)}+\|F_k\|_{L^2((0,T)\times M)}^2+\|(u_{0,k},u_{1,k})\|^2_{L^2\times H^{-1}} \to 0 .
\enan
Classical energy estimates for then yield $\|u_k\|_{H^1([0,T]\times M)}\leq C$ together with $\|u_k\|_{L^2([0,T]\times M)}\to 0$. Hence $u_k\rightharpoonup 0$ in $H^1$ and, possibly after taking a subsequence, we may assume (see~\cite{Gerard:91,Tartar:90} in the case without boundary and~\cite{Leb:96} or~\cite{BL:01} in the general case) there exists a nonnegative measure $\mu$ on $S\hat{Z}$ (see~\eqref{e:defSZ} for a defintion) so that,
\bnan
\label{e:converge-mu}
( Au_k,u_k)_{H^{1}(\R\times M)} \to \int (j^{-1})^*\sigma(A)d\mu ,
\enan
for all $A\in \Psi_{\phg}^0((0,T)\times \Int(M))$. Moreover, letting $(x_1,x')$ be Fermi normal coordinates near $\partial M$, then the convergence~\eqref{e:converge-mu} also holds for $A\in C^\infty([0,\e);\Psi^2(\R\times \partial M_{x'}))$, for $\e>0$.
Note that in both cases $(j^{-1})^*\sigma(A)$ lies in $C^0(S\hat{Z})$ since $\sigma(A)$ is independent of $\xi_1$ for $x_1$ small enough.

Let us first show that $\mu\equiv 0$. Notice that Lemma~\ref{l:GC} implies there exists $\e>0$ so that Assumption~\asref{GC2}{T} holds. We first prove that $\mu=0$ on a neighborhood of $\overline{\mc{T}_\e}\cap T^*_{(\e,T-\e)\times \Sigma_\e}(\R\times M)$. {Then, since $\mu$ is invariant under the generalized bicharacteristic flow $\varphi(s,\cdot)$ defined in~\eqref{e:def-flow} (which passes to the quotient space $S\hat{Z}$ according to homogeneity~\eqref{e:flow-hom}), see \cite{Leb:96,BL:01}, Assumption \asref{GC2}{T} implies $\mu\equiv 0$ (note that it is sufficient that $\supp(\mu)$ is invariant). }

Suppose $q_0\in\overline{\mc{T}_\e}\cap T^*_{[\e,T-\e]\times \Sigma_\e}(\R\times M)$. Then for $\delta<\e$, we have $\sigma(A_\delta)(\pi(q_0))=1$. Therefore, Lemma~\ref{l:mushrooms} applies with $\Op(b_0)=A_{\delta}$ and hence for $\check{\chi}$ supported close enough to $q_0$, 
$$\|\Op(\check \chi)u_k\|_{H^1}\leq C(\|A_\delta D_{x_1}u_k|_{x_1=0}\|_{L^2}+\|A_\delta u_k|_{x_1=0}\|_{H^1}+\|\tilde{\varphi}\Box u_k\|_{L^2(\Omega_\e)}+\|\tilde{\varphi}u_k\|_{L^2(\Omega_\e)})).$$
Now, the right hand side tends to 0 by assumption. Thus, pseudodifferential calculus together with~\eqref{e:converge-mu}, imply the existence of a conic neighborhood $U$ of $q_0$ so that $\mu(U/\R_+^*)=0$. Since this is true for any $q_0\in\overline{\mc{T}_\e}\cap T^*_{[\e,T-\e]\times \Sigma_\e}(\R\times M)$, there is a conic neighborhood $U_1$ of $\overline{\mc{T}_\e}\cap T^*_{(\e,T-\e)\times \Sigma_\e}(\R\times M)$ so that $\mu(U_1/\R_+^*)=0$. 
Invariance of $\mu$ and Assumption \asref{GC2}{T} imply that $\mu$ vanishes identically, which precisely means 
\bnan
\label{e:cv-0-strong}
u_k\to 0 \text{ in } H^1((0,T)\times M). 
\enan
Now, we denote
 $$ 
 E_k(t):=\|\nabla u_k(t,\cdot)\|_{L^2(M)}^2+\|\partial_t u_k(t,\cdot)\|_{L^2(M)}^2 ,
 $$  
 and observe from~\eqref{e:normalized-seq}-\eqref{e:seq-to-zero} that $E_k(0)\to 1$.
Moreover, for all $s_1,s_2 \in [0,T]$, we have 
$$
|E_k(s_2)-E_k(s_1)|\leq \left|\frac{1}{2}\int_{s_1}^{s_2}\partial_t E_k(t)dt\right|\leq \|F_k\|_{L^2}\|u_k\|_{H^1}\to 0.
$$
In particular, since this convergence is uniform in $s_1,s_2$,
$$\int_0^TE_k(t)dt=\int_0^TE_k(t)-E_k(0)dt+TE_k(0)\to  T.$$
Together with~\eqref{e:cv-0-strong}, this yields
$$0< T \leftarrow \left|\int_0^TE_k(t)dt\right|\leq \|u_k\|_{H^1}^2\to 0 ,$$
and hence the sought contradiction.
\end{proof}

%%%%%%%%%%%%%%%%%%%%%%%
\subsection{Observability: the Low Frequencies. From Proposition~\ref{tempThm} to Theorem~\ref{thm:observe}}
\label{s:obs}
There are different ways of writing the compactness-uniqueness argument of~\cite{BLR:92} (both reducing the problem to a unique continuation property for Laplace eigenfunctions). The first one is the precise argument of~\cite{BLR:92}: it uses again the geometric condition together with the propagation of wavefront sets (see also~\cite{LRLTT:16}). A second form seems to be due to~\cite{BG:02}: it is a bit longer but uses only that the observation region is time invariant. We write this version of the proof in the present context.

We first need a weak unique continuation property from a hypersurface. This is a weak version of Theorem~\ref{t:expo-bound-eigenfunctions}, but we chose to give a proof since it is much less involved. Note that no compactness is assumed and no boundary conditions are prescribed here.
\begin{lemma}[Unique continuation]
\label{l:uniq-cont}
Let $\Sigma$ be a nonempty interior hypersurface of a connected manifold $M$ and assume
$$
(-\Delta_g -\lambda^2) u =0 \quad \text{in } M , \quad u|_{\Sigma} = \d_{\nu}u |_{\Sigma} = 0,
$$
then $u$ vanishes identically.
\end{lemma}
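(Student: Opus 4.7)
The plan is to first establish vanishing of $u$ in a neighborhood of (the interior of) $\Sigma$ using the non-characteristic nature of $\Sigma$, and then extend globally by connectedness using a strong unique continuation principle.

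For the first step, I would work in the Fermi normal coordinates of Section~\ref{s:fermi} around a point $p_0 \in \Int(\Sigma)$, in which $\Sigma_0$ is locally $\{x_1=0\}$ and the equation becomes
$$
\bigl(-\partial_{x_1}^2 + r(x_1,x',D_{x'}) + c(x,D) - \lambda^2\bigr) u = 0,
\qquad u|_{x_1=0} = \partial_{x_1} u|_{x_1=0} = 0.
$$
Since $-\Delta_g - \lambda^2$ is elliptic, $\Sigma_0$ is non-characteristic: the conormal $N^*\Sigma_0$ lies in $\{\tau=0,\xi'=0\}$, disjoint from $\Char(-\Delta_g-\lambda^2)=\emptyset$. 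Hence Calder\'on's (or H\"ormander's) uniqueness theorem for second order operators with non-characteristic hypersurface applies and yields that $u$ vanishes in a two-sided neighborhood of $p_0$. Covering $\Int(\Sigma)$ by such neighborhoods, $u$ vanishes on a non-empty open set $V\subset M$.

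For the second step, I would invoke the Aronszajn--Cordes strong unique continuation principle for the elliptic operator $-\Delta_g-\lambda^2$ (whose coefficients are smooth): a solution which vanishes on a non-empty open subset of a connected manifold must vanish identically. Equivalently, define $W:=\Int\{p\in M : u\equiv 0 \text{ near } p\}$; this set is open and non-empty by the previous step. If $W\neq M$ there would exist $p\in \partial W \cap \Int(M\setminus\d M\text{'s neighborhood})$; around such a point one can choose a small smooth non-characteristic hypersurface $S$ contained in $W\cup\{p\}$ on which the Cauchy data of $u$ vanishes, and the argument of Step~1 contradicts $p\in\partial W$. If boundary points are reached, one uses unique continuation up to the boundary via the Dirichlet condition (not needed here since we make no boundary assumption and $u$ is only assumed to solve the equation in $M$).

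The only mild subtlety is that the proof does not rely on the Carleman machinery developed later in the paper; the classical H\"ormander/Aronszajn theorems for elliptic second order operators with smooth coefficients suffice. The main obstacle is being careful that the conclusion is global: Step~1 is purely local around $\Sigma$, so connectedness of $M$ is essential in Step~2 to rule out non-trivial components on which $u$ is not forced to vanish.
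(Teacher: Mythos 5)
Your proof is correct, but it takes a genuinely different route from the paper's. You first invoke the Calder\'on/H\"ormander uniqueness theorem across the non-characteristic hypersurface $\Sigma$ to propagate the vanishing of the Cauchy data $(u|_\Sigma, \partial_\nu u|_\Sigma)=0$ into a two-sided open neighborhood, and then use Aronszajn--Cordes strong unique continuation to globalize. The paper instead uses an extension-by-zero trick: it defines $v$ to equal $u$ on one side $\Omega^+$ of $\Sigma$ and $0$ on $\Omega^-$, and observes via the jump formula in Fermi coordinates that the surface-supported terms $-[v]_\Sigma\delta_\Sigma' + (\cdots)\delta_\Sigma$ vanish precisely because the Cauchy data of $u$ vanish on $\Sigma$, so $v$ is itself a (distributional, hence by elliptic regularity smooth) solution of $(-\Delta_g-\lambda^2)v=0$ on all of $\Omega$. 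Since $v$ vanishes on the open set $\Omega^-$, a single application of unique continuation from an open set (the paper cites~\cite[Theorem~4.2]{LeLe:09}) forces $v\equiv 0$ on $\Omega$, hence $u\equiv 0$ on $\Omega^+$, and the same open-set unique continuation plus connectedness finishes. The practical difference is that the paper's route only needs one flavor of unique continuation (from an open set), whereas your route needs both the Cauchy-data version (Calder\'on) and the strong/open-set version (Aronszajn--Cordes); both are available for smooth-coefficient second-order elliptic operators, so either works. One small caveat in your write-up: the lemma places no boundary condition on $u$, and the equation is only assumed in $\Int(M)$, so the unique continuation propagation should be carried out in $\Int(M)$ (which is connected since $M$ is), and then $u\equiv 0$ on $M$ follows by continuity; your aside about ``unique continuation up to the boundary via the Dirichlet condition'' is extraneous and, as stated, not available since no such condition is assumed.
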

\begin{proof}
Let $\Omega$ be a nonempty connected open set of $M$ such that $\Omega \cap \Sigma \neq \emptyset$ and $\Omega = \Omega^+ \cup (\Omega \cap \Sigma) \cup \Omega^-$ where the union is disjoint. Then, setting
$$
v (x)= u(x) \quad \text{for }x \in  \Omega^+, \quad v (x)= 0 \quad \text{for }x \in  \Omega^- ,
$$
we have $v \in L^2(\Omega)$, with, moreover ($\d_\nu$ pointing towards $\Omega^+$) 
$$
(-\Delta_g -\lambda^2) v =0 - [v]_\Sigma \delta_\Sigma' +(c(x)[v]_\Sigma- [\d_\nu v]_\Sigma) \delta_\Sigma = - u|_\Sigma \delta_\Sigma' +(c(x)u|_{\Sigma} -  \d_\nu u|_\Sigma) \delta_\Sigma  = 0 .
$$
This follows from the jump formula written in Fermi coordinate charts $(x_1, x')$ with $\Omega^+=\{x_1>0\}\cap \Omega$ and $-\Delta=- \d_{x_1}^2+R(x_1,x',D')+c(x)D_{x_1}$ with $R$ tangential (see Section~\ref{s:fermi}).

 A classical unique continuation result for elliptic operators (see e.g.~\cite[Theorem~4.2]{LeLe:09}) then implies that $v=0$ in all $\Omega$. From the definition of $v$, this yields $u|_{\Omega^+}=0$, and, using again the elliptic unique continuation result and the connectedness of $M$, implies that $u$ vanishes identically in $M$.
\end{proof}

\medskip
We next define for any $T>0$ and $\e>0$ the set of invisible solutions from $[\e,T-\e]\times  \Sigma_\e$ where $\Sigma_\e$ is as in~\eqref{e:sigs}:
\begin{multline*}
    \mathcal{N} (\e,T) = \big\{ (u_0, u_1) \in H^1_0(M) \times L^2(M) 
     \text{ such that the associated solution of~\eqref{e:waveObs} with $F=0$ }\\
       \text{satisfies }\d_\nu u|_{\Sigma} = u|_{\Sigma} = 0 \text{ in } \mc{D}' \big( (\e,T-\e)\times \Sigma_\e \big) \big\},
\end{multline*}
We have the following lemma, which is a consequence of Proposition~\ref{tempThm}.
\begin{lemma}
  \label{lemma: N(T) = 0}
  Suppose \asref{GC}{T} holds. Then there exists $\e_0>0$ such that for all $0<\e<\e_0$, we have $\mathcal{N}(\e,T) = \{0\}$.
\end{lemma}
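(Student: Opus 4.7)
The plan is a standard compactness--uniqueness argument in the spirit of~\cite{BLR:92,BG:02}, relying on Proposition~\ref{tempThm} together with the unique continuation result of Lemma~\ref{l:uniq-cont}. First I would show that $\mc{N}(\e,T)$ is a closed, finite-dimensional subspace of $H^1_0(M)\times L^2(M)$; closedness follows from the continuity of the trace maps in Proposition~\ref{prop: regularity-waves}. For finite-dimensionality, I would fix $\e_0\in(0,\delta_0)$ and, for $\e\in(0,\e_0)$, choose $\delta\in(\e,\e_0)$ and a cutoff $\varphi_\delta\in C_c^\infty((\e,T-\e)\times \Int(\Sigma_\e))$ equal to $1$ on $[\delta,T-\delta]\times\Sigma_\delta$ (possible since $\Sigma_\delta\subset\Int(\Sigma_\e)$ by~\eqref{e:sigs}), so that the symbol of the associated $A_\delta$ is supported in $(\e,T-\e)\times\Sigma_\e$. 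For $(u_0,u_1)\in\mc{N}(\e,T)$, picking $\tilde{\chi}\in C_c^\infty((\e,T-\e)\times \Int(\Sigma_\e))$ with $\tilde{\chi}\equiv 1$ on $\supp\varphi_\delta$, I would use that $\tilde{\chi}\cdot u|_\Sigma\equiv 0$ (since $u|_\Sigma$ vanishes on the open set containing $\supp\tilde{\chi}$) to write $A_\delta(u|_\Sigma)=A_\delta(1-\tilde{\chi})(u|_\Sigma)$; the operator $A_\delta(1-\tilde{\chi})$ is smoothing because its full symbol is supported in $\supp\varphi_\delta\cap\supp(1-\tilde{\chi})=\emptyset$. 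Doing the same for $\partial_\nu u|_\Sigma$ and using the extension of Proposition~\ref{prop: regularity-waves} to data in $L^2\times H^{-1}$ (obtained by duality), the observation terms in~\eqref{e:obsTemp} are bounded by $C\|(u_0,u_1)\|_{L^2\times H^{-1}}$. Substituting into~\eqref{e:obsTemp} with $F=0$ gives
\begin{equation}
\label{e:Ncomp}
\|(u_0,u_1)\|_{H^1\times L^2}^2 \leq C\,\|(u_0,u_1)\|_{L^2\times H^{-1}}^2 \text{ for }(u_0,u_1)\in\mc{N}(\e,T),
\end{equation}
and the compact embedding $H^1\times L^2\hookrightarrow L^2\times H^{-1}$ then forces $\mc{N}(\e,T)$ to be finite-dimensional.

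Next I would exhibit the invariance of $\mc{N}(\e,T)$ under the generator $A:(u_0,u_1)\mapsto (u_1,\Delta_g u_0)$. If $(u_0,u_1)\in\mc{N}(\e,T)$ with wave solution $u$, then $\partial_t u$ solves the wave equation distributionally with data $(u_1,\Delta_g u_0)$; its traces on $\Sigma$ are time-derivatives of those of $u$ and hence vanish on $(\e,T-\e)\times\Sigma_\e$, so $(u_1,\Delta_g u_0)\in\mc{N}(\e,T)$ provided the data sit in $H^1_0\times L^2$. To ensure this I would re-run the argument of the previous paragraph at each Sobolev regularity, producing analogues of~\eqref{e:Ncomp}; the resulting finite-dimensional spaces all coincide as sets by equivalence of norms on finite-dimensional subspaces, and every element of $\mc{N}(\e,T)$ is then $C^\infty$. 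Once $A$ is a well-defined endomorphism of the complexification of $\mc{N}(\e,T)$, it admits an eigenvalue, which by self-adjointness of $-\Delta_g$ on $H^1_0(M)$ must be of the form $i\lambda$ with $\lambda\in\R$. The eigenvector satisfies $u_1=i\lambda u_0$ and $(-\Delta_g-\lambda^2)u_0=0$, with associated wave solution $u(t,x)=e^{i\lambda t}u_0(x)$.

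The vanishing conditions then give $u_0|_{\Sigma_\e}=\partial_\nu u_0|_{\Sigma_\e}=0$. Since $\Sigma_\e$ is a nonempty interior hypersurface of the connected manifold $M$, Lemma~\ref{l:uniq-cont} forces $u_0\equiv 0$ and hence $u_1=i\lambda u_0=0$, contradicting the nontriviality of the eigenvector and proving $\mc{N}(\e,T)=\{0\}$. The hard part of the argument will be the regularity bootstrap that underlies the endomorphism property of $A$: one needs to check that the chain giving~\eqref{e:Ncomp} goes through at every Sobolev level, which involves commuting powers of $\partial_t$ through Proposition~\ref{tempThm} and verifying that the cutoffs $\varphi_\delta$ and $\tilde{\chi}$ can be chosen uniformly. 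Once this is in place, the remainder is a routine reduction to the elliptic unique continuation of Lemma~\ref{l:uniq-cont}.
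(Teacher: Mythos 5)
Your three-step architecture (finite-dimensionality of $\mathcal{N}(\e,T)$, $\mathcal{A}$-invariance and smoothness, reduction to eigenfunction unique continuation) matches the paper's, and Steps (1) and (3) are sound; in Step (1) you control the $A_\delta$ terms by a smoothing-commutator argument where the paper simply notes the kernel of $A_\delta$ is compactly supported in $(\e_0,T-\e_0)\times\Sigma_{\e_0}$, where the traces of $u$ vanish, so those terms are zero outright, but either route works. The regularity bootstrap in Step (2), however, is not correct as written. You claim the finite-dimensional spaces built ``at each Sobolev regularity'' coincide ``by equivalence of norms on finite-dimensional subspaces.'' Equivalence of norms concerns two norms on a \emph{single fixed} finite-dimensional space; it does not force two a priori distinct finite-dimensional subspaces of $H^1_0\times L^2$ (one defined by requiring data in $H^{s+1}\times H^s$, one by $H^1_0\times L^2$) to be equal as sets. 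More fundamentally, the scheme you sketch is circular: applying Proposition~\ref{tempThm} to $\partial_t^k u$ requires knowing that $(\partial_t^k u,\partial_t^{k+1}u)|_{t=0}\in H^1_0\times L^2$, which is precisely the gain in regularity you are trying to establish, and the inequality you label~\eqref{e:Ncomp} only compares two norms of a \emph{given} element of $\mathcal{N}(\e,T)$ without promoting that element to a higher Sobolev class.

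The paper's Step 2 uses a semigroup difference-quotient argument that is not merely a more careful version of what you sketch; it is the missing mechanism. For $U_0\in\mathcal{N}(\e,T)$ and $0<\epsilon<\eta$ small, one has $e^{-\epsilon\mathcal{A}}U_0\in\mathcal{N}(\e+\eta,T)$, so $U_\epsilon:=\frac{1}{\epsilon}(\operatorname{Id}-e^{-\epsilon\mathcal{A}})U_0$ lies in the single fixed finite-dimensional space $\mathcal{N}(\e+\eta,T)$. Then $(\lambda_0+\mathcal{A})^{-1}U_\epsilon\to\mathcal{A}(\lambda_0+\mathcal{A})^{-1}U_0$ in $H^1_0\times L^2$ because $(\lambda_0+\mathcal{A})^{-1}U_0\in D(\mathcal{A})$; hence $U_\epsilon$ is Cauchy for the weak norm $\|(\lambda_0+\mathcal{A})^{-1}\,\cdot\,\|_{H^1_0\times L^2}$, and equivalence of norms on the fixed finite-dimensional space $\mathcal{N}(\e+\eta,T)$ upgrades this to convergence in $H^1_0\times L^2$, so $\mathcal{A}U_0\in H^1_0\times L^2$. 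Iterating gives $\mathcal{N}(\e,T)\subset D(\mathcal{A}^k)$ for all $k$, hence $C^\infty$ regularity, and the $\mathcal{A}$-invariance follows from commuting $\partial_t$ through the trace conditions once smoothness is known. That is the correct use of norm equivalence — to upgrade convergence from a weak norm to a strong one on a single fixed finite-dimensional space — and it is the key step absent from your proposal.
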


We denote by $\calA$ the generator of the wave group, namely 
\begin{equation}
  \label{eq: def-A}
  \calA = \left(
  \begin{array}{cc}
      0 & - \id \\
    - \Delta_g  & 0
  \end{array}
  \right), \quad
  D(\calA) =  (H^2 \cap H^1_0(M)) \times H^1_0(M) ,
\end{equation}
so that the wave equation~\eqref{e:waveObs} with $F=0$ may be rewritten as 
\begin{equation}
\label{e:wave-abstract}
  \d_t U + \calA U  = 0, \quad U|_{t=0} = U_0 = (u_0,u_1) .
\end{equation}

\bnp
\textbf{Step 1: $\calN(\e,T)$ is finite dimensional.}
First, Proposition~\ref{prop: regularity-waves} implies that $\calN(\e,T)$ 
is a closed linear subspace of $H^1_0(M) \times L^2(M)$ for all $\e>0$. Since Assumption~\asref{GC}{T} holds, we may apply Proposition~\ref{tempThm}. The kernel of the operator $A_\delta$ in~\eqref{e:obsTemp} is compactly supported in $(0,T)\times \Int(\Sigma)$, and hence in $(\e_0,T-\e_0)\times \Sigma_{\e_0}$ for some $\e_0>0$.
Thus, for all $0<\e<\e_0$, the relaxed observability inequality~\eqref{e:obsTemp} applied to elements of $\calN(\e,T)$ gives
\begin{equation}
\label{eq:compact-balls}
c \|(u_0,u_1)\|_{H_0^1\times L^2}^2   \leq  \|(u_0,u_1)\|^2_{L^2\times H^{-1}}, \quad \text{for all }(u_0,u_1) \in \calN(\e,T) ,
\end{equation}
since the kernel of the operator $A_\delta$ is compactly supported in $(\e_0,T-\e_0)\times \Sigma_{\e_0}$, and $u|_{\Sigma}$, $\d_\nu u|_{\Sigma}$ vanish on this set.

Using the compact imbedding $H^1_0\times L^2 \subset L^2\times H^{-1}$, this implies that the unit ball of $\calN(\e,T)$ for the $H^1_0\times L^2$-norm is compact, that is, $\calN(\e,T)$  has finite dimension. Note also that it is thus complete for any norm. 

\medskip
\textbf{Step 2: $\calN(\e,T)\subset C^\infty(\overline{M})$ and $\calA \calN(\e,T) \subset \calN(\e,T)$.}

Taking $\eta >0$, sufficiently small (namely $\eta < \e_0-\e$), we remark that~\eqref{eq:compact-balls}
is also satisfied by all $U_0 = (u_0,u_1) \in \calN(\e+\eta,T)$. Taking $U_0 \in
\calN(\e,T)$ implies that, for all $\epsilon \in (0,\eta)$, we have $e^{-\epsilon
  \calA}U_0\in \calN(\e+\eta,T)$.  We also have, for $\lambda_0$ sufficiently large,
$$
(\lambda_0 + \calA)^{-1} \frac{1}{\epsilon}(\id - e^{-\epsilon \calA})U_0
= \frac{1}{\epsilon}(\id - e^{-\epsilon \calA})(\lambda_0 + \calA)^{-1} U_0
\mathop{\to}_{\epsilon \to 0^+} \calA(\lambda_0 + \calA)^{-1} U_0 \quad \text{in } H^1_0\times L^2 ,
$$ as $(\lambda_0 + \calA)^{-1} U_0 \in D(\calA)$.  As a
consequence, the sequence $\big(\frac{1}{\epsilon}(\id - e^{-\epsilon
  \calA})U_0 \big)_{\epsilon >0}$ is a Cauchy sequence in
$\calN(T'-\eta)$, endowed with the norm $\| (\lambda_0 +
\calA)^{-1} \cdot \|_{H^1_0\times L^2}$.  As all norms are equivalent in $\calN(\e+\eta,T)$,
 the sequence
$\big(\frac{1}{\epsilon}(\id - e^{-\epsilon \calA})U_0 \big)_{\epsilon >0}$ is
thus also a Cauchy sequence in this space, endowed with
the norm $\| \cdot \|_{H^1_0 \times L^2}$, which yields $\calA U_0 \in
H^1_0 \times L^2$. Hence, we have $\calN(\e,T)\subset D(\calA)$.
This argument may be inductively repeated to prove that $\calN(\e,T)\subset D(\calA^k)$ for all $k \in \N$, and yields in particular, that functions in $\calN(\e,T)$ are $C^\infty(\overline{M})$.

Take now $U_0 \in \calN(\e,T)$, and denote by $U(t)$ the associated solution of~\eqref{e:waveObs}, or equivalently~\eqref{e:wave-abstract}. Then, $u \in C^\infty( \R \times \overline{M})$, and using the fact that $\d_t$ is tangential to the manifold $\R \times \Sigma$ (thus commuting with $\d_\nu$), we obtain that $\d_tu|_{\Sigma}(t,x) = 0$ and $\d_\nu(\d_t u)|_{\Sigma}(t,x) = 0$ for all $(t,x) \in [\e,T-\e]\times \Sigma_\e$ (since this $U_0 \in \calN(\e,T)$ implies that this is satisfied by $u$). This is $\d_t U |_{t=0} \in \calN(\e,T)$. Remarking then that we have $\calA U_0 = - \d_t U |_{t=0} \in \calN(\e,T)$, this implies $\calA \calN(\e,T) \subset \calN(\e,T)$. 

\medskip

\textbf{Step 3: reduction to unique continuation for Laplace eigenfunctions: end of the proof.}
Since $\calN(\e,T)$ is a finite dimensional subspace of $D(\calA)$,
stable by the action of the operator $\calA$, it contains an
eigenfunction of $\calA$. There exist $\mu \in \C$ and $U=(u_0, u_1) \in
\calN(\e,T)$ such that $\calA U = \mu U$, that is, given the definition of $\calA$ in~\eqref{eq: def-A}, $-\Delta_D u_0 = -\mu^2 u_0$ and $u_1=-\mu u_0$. Hence $u_0$ is an eigenfunction of the Laplace-Dirichlet operator on $M$, associated to $-\mu^2 \in \R^+$, i.e. $\mu =i \lambda, \lambda \in \R$. The associated solution to~\eqref{e:waveObs} is $u(t,x) = e^{i\lambda t}u_0$, and $U_0 \in \calN(\e,T)$ implies $\d_\nu u_0|_{\Sigma} = u_0|_{\Sigma}= 0$. This, together with the fact that $u_0$ is a Laplace eigenfunction and Lemma~\ref{l:uniq-cont} proves that $u_0=0$ and then $U=0$. This proves that $\calN(\e,T)=\{0\}$.
\enp

From Lemma~\ref{lemma: N(T) = 0}, we can now conclude the proof of Theorem~\ref{thm:observe}.
\bnp[Proof of Theorem~\ref{thm:observe}]
We proceed by contradiction and suppose that the observability inequality~\eqref{e:obs-waves} does not hold for any $\delta>0$. Thus, for any $\delta>0$, there exists a
sequence $(u_0^k , u_1^k, F^k)_{k \in \N}$ of $H^1_0(M)\times L^2(M)\times L^2((0,T)\times M)$ such that, with $u^k$ the associated solution to~\eqref{e:waveObs}, we have
\begin{align}
  & \|(u_0^k , u_1^k)\|_{H^1_0\times L^2} =1 , \label{energy = 1}\\
  &  \|\varphi_\delta\partial_\nu u^k|_{\Sigma}\|_{\bar{H}^{-N}(\R\times \Sigma)}^2+\|\varphi_\delta u^k|_{\Sigma}\|_{\bar{H}^{-N}(\R\times \Sigma)}^2+\|F^k\|_{L^2((0,T)\times M)} \to 0  , \label{obs go to zero} \\
  & \|A_\delta (\partial_\nu u^k|_{\Sigma})\|_{L^2(\R\times \Sigma)}+\|A_\delta(u^k|_{\Sigma})\|_{\bar{H}^1(\R\times \Sigma)}^2 \to 0. \label{highFreq}
\end{align}
From~\eqref{energy = 1}, we may extract a subsequence of $(u_0^k , u_1^k)$ converging weakly in $H^1_0 \times L^2$ to some $(u_0, u_1)$. Denote by $u$ the associated solution to~\eqref{e:waveObs}, with $F=0$. Since $F^k\to 0$ in $L^2$ we may further extract from $u^k$ a subsequence converging to $u$ weakly in $H^1((0,T)\times M)$. According to Proposition~\ref{prop: regularity-waves}, we have $\partial_\nu u^k|_{\Sigma} \rightharpoonup \partial_\nu u|_{\Sigma}$ and $u^k|_{\Sigma} \rightharpoonup u|_{\Sigma}$ weakly in $H^{-1}((0,T')\times \Sigma)$. But according to~\eqref{obs go to zero}, this yields
$$
\varphi_\delta \partial_\nu u|_{\Sigma} =\varphi_\delta u|_{\Sigma}  = 0 , 
$$
and in particular, taking $\delta<\e$, 
$$
 \partial_\nu u|_{\Sigma} =u|_{\Sigma}  = 0 , \quad \text{ on } [\e,T-\e]\times \Sigma_\e.
$$
Thus,
$$(u_0,u_1)=(u(0),\partial_t u(0)) \in \calN(\e,T).$$
So, from Lemma~\ref{lemma: N(T) = 0}, we obtain $(u_0, u_1) =0$. The imbedding $H^1_0 \times L^2 \hookrightarrow L^2 \times H^{-1}$ being compact, this implies 
\begin{equation}\label{e:lowFreqConclude}\|(u_0^k , u_1^k)\|_{L^2 \times H^{-1}} \to \|(u_0 , u_1)\|_{L^2 \times H^{-1}} = 0.\end{equation}

Finally, Proposition~\ref{tempThm} implies that~\eqref{e:obsTemp} holds for any $\delta<\delta_0$. Therefore, taking $\delta<\min(\e,\delta_0)$ and using~\eqref{energy = 1}, \eqref{obs go to zero}, \eqref{highFreq}, \eqref{e:lowFreqConclude} in the relaxed observability inequality~\eqref{e:obsTemp}, we obtain at the limit $0<c \leq 0$, which is a contradiction.
\enp

\subsection{Controllability of the Wave Equation}
\label{s:control}

Theorem \ref{thm:control} is a straightforward corollary of the following theorem. Recall that $\ES = \mc{E}^\Sigma \cup \mc{G}^\Sigma$.
\begin{theorem}
\label{thm:control2}
Assume $(\Sigma,T)$ satisfies Assumption~\asref{GC}{$T$}.
Then there exists a continuous map
$$L^2(M)\times H^{-1}(M)\ni (v_0,v_1)\mapsto (f_0,f_1)\in \bigcap_{N\in \N} H^{-1,N}_{\comp,\ES}(\Sigma_T)\times H^{0,N}_{\comp,\ES}(\Sigma_T)$$
(the latter space being a Fr\'echet space when endowed with the seminorms of all $H^{-1,N}_{\comp,\ES}(\Sigma_T)\times H^{0,N}_{\comp,\ES}(\Sigma_T)$) so that the associated solution to~\eqref{e:waveControl} has $v\equiv 0$ for $t\geq T$. 
 \end{theorem}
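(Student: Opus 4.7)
The plan is to deduce Theorem~\ref{thm:control2} from the observability inequality (Theorem~\ref{thm:observe}) via the Hilbert Uniqueness Method (HUM) adapted to this low-regularity setting. Fix $T_1 > T$ and look for $(f_0, f_1)$ supported in $(0, T) \times \Int(\Sigma)$ such that the transposition solution $v$ of~\eqref{e:waveControl} on $(0, T_1)$ satisfies $v \equiv 0$ on $(T, T_1)$. Since $\Box v = 0$ on $(T, T_1)$, this vanishing is equivalent to $\int_T^{T_1} (v, F)_{L^2(M)} dt = 0$ for every $F \in L^2((T, T_1) \times M)$. Using Definition~\ref{d:transp-sol} on $(0, T_1)$ together with the time-cutoff construction $u := \chi(t)\tilde u$, $\chi \equiv 1$ on $(0, T)$ and $\supp\chi \subset [0, T_1)$ (which turns any free wave $\tilde u$ into an admissible adjoint test solution with $F = -[\Box, \chi]\tilde u$ supported in $(T, T_1)$), this reduces to the duality identity
\[
\langle f_0, \tilde u|_\Sigma\rangle - \langle f_1, \partial_\nu \tilde u|_\Sigma\rangle = -\langle v_1, u_0\rangle + (v_0, u_1)_{L^2(M)} \qquad (\star)
\]
holding for every free wave $\tilde u$ on $(0,T_1)$ with initial data $(u_0, u_1) \in H^1_0(M) \times L^2(M)$.

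For each $N \in \N$, equip $H^1_0(M) \times L^2(M)$ with the observation inner product
\begin{multline*}
\langle (u_0, u_1), (\tilde u_0, \tilde u_1)\rangle_N := (A_\delta u|_\Sigma, A_\delta \tilde u|_\Sigma)_{\bar H^1(\R \times \Sigma)} + (A_\delta \partial_\nu u|_\Sigma, A_\delta \partial_\nu \tilde u|_\Sigma)_{L^2(\R \times \Sigma)} \\
+ (\varphi_\delta u|_\Sigma, \varphi_\delta \tilde u|_\Sigma)_{\bar H^{-N}(\R \times \Sigma)} + (\varphi_\delta \partial_\nu u|_\Sigma, \varphi_\delta \partial_\nu \tilde u|_\Sigma)_{\bar H^{-N}(\R \times \Sigma)},
\end{multline*}
where $u, \tilde u$ are the corresponding free waves. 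By Proposition~\ref{prop: regularity-waves} the induced norm $\|\cdot\|_N$ is bounded by the energy norm, and by Theorem~\ref{thm:observe} (applied with $F = 0$) it dominates it; hence $\|\cdot\|_N$ is equivalent to the energy norm, and the right-hand side of $(\star)$ defines a continuous linear form $\ell(u_0, u_1) := -\langle v_1, u_0\rangle + (v_0, u_1)$ on the Hilbert space $(H^1_0 \times L^2, \langle \cdot, \cdot\rangle_N)$. By Riesz representation there is a unique $(u^*_0, u^*_1) \in H^1_0 \times L^2$ whose associated free wave $u^*$ realizes $\ell$ via $\langle (u^*_0, u^*_1), \cdot\rangle_N$. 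Expanding this identity and matching it with $(\star)$ allows us to read off
\begin{align*}
f_0^N &:= A_\delta^* \Lambda^2 A_\delta u^*|_\Sigma + \varphi_\delta \Lambda^{-2N} \varphi_\delta u^*|_\Sigma, \\
f_1^N &:= -A_\delta^* A_\delta \partial_\nu u^*|_\Sigma - \varphi_\delta \Lambda^{-2N} \varphi_\delta \partial_\nu u^*|_\Sigma,
\end{align*}
where $\Lambda^s := (1 + |D_{t, x'}|^2)^{s/2}$ realizes the $\bar H^s(\R\times\Sigma)$ scale. The $\varphi_\delta$ cutoffs force support in $(0, T) \times \Int(\Sigma)$; the $A_\delta^*$-summands have wavefront contained in $\WF(A_\delta) \subset \mc{T}^\Sigma_{\delta/2}$, hence disjoint from $\ES$; and the $\Lambda^{-2N}$-summands gain $2N$ derivatives from the trace regularity $u^*|_\Sigma \in \bar H^{1/2}$, $\partial_\nu u^*|_\Sigma \in \bar H^{-1/2}$ provided by Proposition~\ref{prop: regularity-waves}. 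Consequently $(f_0^N, f_1^N) \in H^{-1, N}_{\comp, \ES}(\Sigma_T) \times H^{0, N}_{\comp, \ES}(\Sigma_T)$, with continuous dependence on $(v_0, v_1)$ in every seminorm.

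The main obstacle, and last step, is to assemble these $N$-dependent HUM controls into a single continuous map with values in the Fr\'echet intersection $\bigcap_N H^{-1, N}_{\comp, \ES}(\Sigma_T) \times H^{0, N}_{\comp, \ES}(\Sigma_T)$, as opposed to a separate map for each $N$. My plan is to show that the HUM controls $(f_0^N, f_1^N)$ may in fact be chosen compatibly: any two controls satisfying $(\star)$ differ by an element annihilating every trace pair $(\tilde u|_\Sigma, -\partial_\nu \tilde u|_\Sigma)$ of a free wave, a subspace determined only by the range of the trace map and independent of $N$. Selecting a canonical representative (for instance via a common Hilbert-space projection, or by directly setting up a Hahn--Banach extension of $\ell$ in the Fr\'echet space $\bigcap_N H^{-1, N}_{\comp, \ES}$) then yields a single control lying in the intersection and inheriting the $H^N$-regularity on $\ES$ from every HUM step. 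Continuity in each Fr\'echet seminorm follows from the uniform Riesz bound at the corresponding HUM step.
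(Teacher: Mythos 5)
Your proposal is structurally similar to the paper's proof up to and including the per-$N$ construction: both reduce the controllability statement to a duality identity (your $(\star)$, arrived at via a time-cutoff $\chi\tilde u$, corresponds to the paper's identity $\ell_N(K(F)) = \langle S(F), (-v_1, v_0)\rangle$), and both rely on the observability inequality of Theorem~\ref{thm:observe}. Where you diverge is in the functional-analytic construction: the paper defines the functional $\ell_N$ on $\operatorname{ran}(K)$, extends it to the full space $H^{1,-N}_{\loc,\ES}\times H^{0,-N}_{\loc,\ES}$ via Hahn--Banach, and reads off $(f_{0,N},f_{1,N})$ from Lemma~\ref{l:dual}. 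You instead use a HUM/Riesz construction with an $N$-dependent observation inner product $\langle\cdot,\cdot\rangle_N$ on $H^1_0\times L^2$. Both produce, for each $N$, a valid control pair in $H^{-1,N}_{\comp,\ES}\times H^{0,N}_{\comp,\ES}$ with the right continuity, and this part of your argument is sound.

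The genuine gap is the compatibility step, which you correctly flag as ``the main obstacle'' but do not close. The difficulty is that the Riesz representative $(u^{*,N}_0,u^{*,N}_1)$ minimizes the $N$-dependent norm $\|\cdot\|_N$, and there is no reason the minimizers for different $N$ coincide; indeed $u^*$ (and hence your formulas for $f_0^N,f_1^N$, which contain the $N$-dependent operators $\Lambda^{-2N}$) depends on $N$. Your two suggested repairs do not work as stated. A ``common Hilbert-space projection'' does not exist: the projections are orthogonal with respect to different inner products. And ``Hahn--Banach in the Fr\'echet space $\bigcap_N H^{-1,N}_{\comp,\ES}$'' is stated in the wrong space --- that intersection is the \emph{target} where the control should live, whereas Hahn--Banach should be applied to extend the functional $\ell$ from $\operatorname{ran}(K)$ to the inductive limit $\bigcup_N H^{1,-N}_{\loc,\ES}\times H^{0,-N}_{\loc,\ES}$, whose dual is that projective limit. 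More importantly, knowing that all valid controls form a single affine coset over a fixed annihilator subspace (which you correctly observe) does not by itself produce an element of the decreasing intersection $\bigcap_N\mathcal F_N$: a decreasing chain of nonempty closed affine subspaces in infinite dimensions can have empty intersection, so existence of a common control is a claim that requires an argument. The paper's route --- Hahn--Banach, which does not impose minimality, followed by a density argument identifying $(f_{0,k},f_{1,k})$ with $(f_{0,N},f_{1,N})$ for $k>N$ --- is precisely where this is settled, and your Riesz-based variant is structurally disadvantaged for this step because the minimality constraint is rigid. To salvage your approach you would need either to drop the minimality (replacing Riesz by Hahn--Banach, i.e., essentially reverting to the paper's method) or to prove that the HUM minimizers are in fact independent of $N$, which is far from obvious and probably false.
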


\begin{proof}
Fix $0<T<T_1$. Then define
$$L^2([T,T_1]\times M)=\{F\in L^2([0,T_1]\times M),\,\supp F\subset [T,T_1]\}$$
and for $N\geq \frac{1}{2}$ the map
$$K: L^2([T,T_1]\times M)\to H^{1,-N}_{\loc,\ES}(\Sigma_{T})\times H^{0,-N}_{\loc,\ES}(\Sigma_{T})$$
 given by 
$$F\mapsto (u|_{(0,T)\times \Sigma},-\partial_\nu u|_{(0,T)\times \Sigma})$$
where $u$ solves
$$\begin{cases} \Box u=F&\text{on }(0,T_1)\times \Int(M) , \\
u=0&\text{on }(0,T_1)\times \partial M , \\
(u|_{t=T_1},\partial_tu|_{t=T_1})=(0,0)&\text{in } \Int(M).
\end{cases}$$
This map is well defined by~\eqref{e:regularityState}.
Define also the operator $S:L^2([T,T_1]\times M)\to H_0^1(M)\times L^2(M)$ by 
\begin{equation}
\label{e:data}S(F):=(u|_{t=0},\partial_t u|_{t=0}).
\end{equation}

Now, suppose that Assumption~\asref{GC}{$T$} holds and let $A_\delta$ as in Theorem~\ref{thm:observe}. For $\e>0$ small $B^{\ES}_\e$ is elliptic on $\WF(A_\delta)$ and hence using the elliptic parametrix construction we write 
$$A_\delta =GB^{\ES}_\e+R$$
with $R\in \Psi^{-\infty}_{\phg}((0,T)\times \Int(\Sigma))$, and $G\in \Psi^0_{\phg} ((0,T)\times \Int(\Sigma))$. Therefore   
 Theorem~\ref{thm:observe} implies that there exists $\e>0$ small enough depending only on $(\Sigma,T)$ and for all $N\in \N$, there exists $C_N>0$ so that 
\begin{equation}
\label{e:observe2}\|S(F)\|_{H_0^1(M)\times L^2(M)}\leq C_N\|K (F)\|_{H^{1,-N}_{\loc,\ES,\e}(\Sigma_{T})\times H^{0,-N}_{\loc,\ES,\e}(\Sigma_{T})}.\end{equation}

Let $(v_0,v_1)\in H^{-1}(M)\times L^2(M)$ and define the linear functional $\ell_N:
\operatorname{ran}(K)\to \mathbb{C}$
by 
$$\ell_N(K(F))=\langle S(F),(-v_1,v_0)\rangle_{H_0^1(M)\times L^2(M),H^{-1}(M)\times L^2(M)}$$
where $S$ is defined in~\eqref{e:data}.
Then, $\ell_N$ is well defined and continuous by \eqref{e:observe2}. In particular, 
$$|\ell_N(K(F))|\leq C_N\|(v_0,v_1)\|_{H^{-1}(M)\times L^2(M)}\|K(F)\|_{H^{1,-N}_{\loc,\ES,\e}(\Sigma_{T})\times H^{0,-N}_{\loc,\ES,\e}(\Sigma_{T})}.$$
Since $\ell_N$ is a continuous linear functional defined on a subspace of $H^{1,-N}_{\loc,\ES}(\Sigma_{T})\times H^{0,-N}_{\loc,\ES}(\Sigma_{T})$ by the Hahn-Banach theorem $\ell_N$ extends to a continuous linear functional on the whole space (still denoted $\ell_N$) with 
$$|\ell_N(w_1,w_2)|\leq C_N\|(v_0,v_1)\|_{H^{-1}(M)\times L^2(M)}\|(w_1,w_2)\|_{H^{1,-N}_{\loc,\ES,\e}(\Sigma_{T})\times H^{0,-N}_{\loc,\ES,\e}(\Sigma_{T})}.$$

Thus, by Lemma~\ref{l:dual}, there exists $(f_{0,N},f_{1,N})\in H^{-1,N}_{\comp,\ES}(\Sigma_T)\times H_{\comp,\ES}^{0,N}(\Sigma_{T})$ so that for all $(w_1,w_2)\in H^{1,-N}_{\loc,\ES}(\Sigma_{T})\times H^{0,-N}_{\loc,\ES}(\Sigma_{T})$, we have
$$
\ell_N(w_1,w_2)=\langle (w_1,w_2),(f_{0,N},f_{1,N})\rangle_{H^{1,-N}_{\loc,\ES}\times H^{0,-N}_{\loc,\ES},H^{-1,N}_{\comp,\ES}\times H_{\comp,\ES}^{0,N}},
$$
and hence for some $\e'>0$, 
$$\|(f_{0,N},f_{1,N})\|_{H^{1,N}_{\loc,\ES,\e'}(\Sigma_{T})\times H^{0,N}_{\loc,\ES,\e'}(\Sigma_{T})}\leq C_{N,\e,\e'}\|(v_0,v_1)\|_{H^{-1}(M)\times L^2(M)}.$$

Let $v$ be the unique solution to 
$$\begin{cases}\Box v=f_{0,N}\delta_\Sigma+f_{1,N}\delta'_\Sigma&\text{ on }(0,T_1)\times \Int(M), \\
v=0&\text{ on }(0,T_1)\times \partial M, \\
(v,\partial_t v)|_{t=0}=(v_0,v_1)&\text{ in } \Int(M) ,
\end{cases}
$$
given by Definition~\ref{d:transp-sol} and Theorem~\ref{t:well-posedness}. 
Then for any $F\in L^2([T,T_1]\times M)$ we have
\begin{align*}
\langle v,  F\rangle_{L^2((0,T_1)\times M)} &=\langle v_1 , u(0) \rangle_{H^{-1}(M), H^1(M)} - ( v_0 ,\d_t u(0))_{L^2(M)}  \\
&\qquad +  \langle f_{0,N} , u|_{\Sigma} \rangle_{H^{-1, N}_{\comp,\ES}(\Sigma_T), H^{1,-N}_{\loc,\ES}(\Sigma_T)} 
 - \langle f_{1,N} , \d_\nu u|_{\Sigma} \rangle_{H^{0,N}_{\comp,\ES}(\Sigma_T) , H^{0,-N}_{\loc,\ES}(\Sigma_T)} \\
 &=\langle (v_1,-v_0),S(F)\rangle_{H^{-1}(M)\times L^2(M), H_0^1(M)\times L^2(M)}   \\
 &\qquad+ \langle (f_{0,N},f_{1,N}),K(F)\rangle _{H^{-1,N}_{\comp,\ES}\times H^{1,N}_{\comp,\ES},H^{1,-N}_{\loc,\ES}\times H_{\loc,\ES}^{0,-N}}\\
  &=\langle (v_1,-v_0),S(F)\rangle_{H^{-1}(M)\times L^2(M), H_0^1(M)\times L^2(M)}   + \overline{\ell_N(K(F))}\\
&=\langle (v_1,-v_0),S(F)\rangle_{H^{-1}(M)\times L^2(M), H_0^1(M)\times L^2(M)} \\
&\qquad+\langle (-v_1,v_0),S(F)\rangle_{ H^{-1}(M)\times L^2(M), H_0^1(M)\times L^2(M)}=0 .
\end{align*}
Since this is true for all $F\in L^2([T,T_1]\times M)$, we obtain $v\equiv 0$ on $[T,T_1]\times M$. 
 
Now, for $k>N$, the inclusion $H^{-1,k}_{\comp,\ES}(\Sigma_{T})\times H^{0,k}_{\comp,\ES}(\Sigma_{T})\subset H^{-1,N}_{\comp,\ES}(\Sigma_{T})\times H^{0,N}_{\comp,\ES}(\Sigma_{T})$ is dense and $H^{1,-N}_{\loc,\ES}(\Sigma_{T})\times H^{0,-N}_{\loc,\ES}(\Sigma_{T})\subset H^{1,-k}_{\loc,\ES}(\Sigma_{T})\times H^{0,-k}_{\loc,\ES}(\Sigma_{T})$ is dense. So, in particular, $\ell_N$ extends to a linear functional on $H^{1,-k}_{\loc,\ES}(\Sigma_{T})\times H^{0,-k}_{\loc,\ES}(\Sigma_{T})$ by density. This yields
\bna
\langle (w_1,w_2),(f_{0,k},f_{0,k})\rangle_{H^{1,-N}_{\loc,\ES}\times H^{0,-N}_{\loc,\ES},H^{-1,N}_{\comp,\ES}\times H_{\comp,\ES}^{0,N}}=
\langle(w_1,w_2),(f_{0,N},f_{1,N})\rangle_{H^{1,-N}_{\loc,\ES}\times H^{0,-N}_{\loc,\ES},H^{-1,N}_{\comp,\ES}\times H_{\comp,\ES}^{0,N}}
\ena
for all $(w_1,w_2)\in H^{1,-N}_{\loc,\ES}(\Sigma_{T})\times H^{0,-N}_{\loc,\ES}(\Sigma_{T})$. This implies that $f_{0,k}=f_{0,N}$ and $f_{1,k}=f_{1,N}$ and hence that 
$$f_{0,N}\equiv f_0\in \bigcap_N H^{-1,N}_{\comp,\ES}(\Sigma_{T}),\qquad f_{1,N}\equiv f_1\in \bigcap _NH^{0,N}_{\comp,\ES}(\Sigma_{T}),$$
which concludes the proof of the theorem.
\end{proof}

%%%%%%%%%%%%%%%%%%%%%%%%%%%%%%%%%%
\section{Controllability of the heat Equation}
\label{s:controlHeat}

%%%%%%%%%%%%%%%%%%%%%%%%%%%%%%%%%%
\subsection{Well-posedness for the heat equation controlled from a hypersurface}
\label{s:WP:heat}
The well-posedness theory is easier than that of the wave equation since the regularity theory for the heat equation directly implies that the traces of the solution on $\Sigma$ are ``admissible'' observations, in the usual sense, see~\cite[Chapter~2.3]{Cor:book} and \cite[Chapter~4.3]{TW:09}.

\begin{lemma}
\label{l:IPP-smooth2}
Given $T>0$, assume that the functions $v\in C^\infty([0,T]\times M\setminus \Sigma)\cap C^0((0,T); L^2(M))$ $u,F \in  C^\infty([0,T]\times M)$ and $f_0, f_1 \in C^\infty_c((0,T)\times \Int(\Sigma))$ solve
$$
(\d_t -\Delta)v = f_0 \delta_\Sigma +  f_1 \delta_\Sigma' \text{ in } \D'((0,T)\times \Int(M)), \quad \text{ and }\quad (- \d_t -\Delta) u = F .
$$
Then, we have the identity
$$
\left[ ( v, u)_{L^2(M)} \right]_0^T + (v,F)_{L^2((0,T)\times M)} 
= \int_{(0,T)\times\Sigma} \Big(f_0 u|_{\Sigma}  - f_1 \d_\nu u|_{\Sigma} \Big)d\sigma dt .
$$
\end{lemma}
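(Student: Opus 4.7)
The plan is to split a tubular neighborhood of $\Sigma$ as $M^+\sqcup\Sigma\sqcup M^-$, carry out standard integration by parts on each side (where $v$ is smooth), and then use the distributional equation to identify the jumps of $v$ and $\partial_\nu v$ across $\Sigma$ in terms of $f_0,f_1$. I let $\nu$ be the normal pointing from $M^-$ into $M^+$ and write $[w]_\Sigma:=w^+|_\Sigma-w^-|_\Sigma$ for the jump of a function smooth on each side. Throughout I will treat the implicit Dirichlet condition $v|_{\partial M}=u|_{\partial M}=0$ (inherited from~\eqref{e:heat-control} and its transposed version) as given, so that the $\partial M$ contributions in Green's identity drop.

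On $M^+$, both $u$ and $v^+$ are smooth up to the boundary, so I would apply integration by parts in $t$ together with Green's identity for $\Delta_g$ (with outward normal $-\nu$ on $\Sigma$) to obtain
\begin{align*}
\int_0^T\!\!\int_{M^+}\!u(\partial_t-\Delta_g)v\,dV\,dt = \Big[\int_{M^+}\!uv\,dV\Big]_0^T + (v,F)_{L^2((0,T)\times M^+)} + \int_0^T\!\!\int_\Sigma(u\,\partial_\nu v^+ - v^+\partial_\nu u)\,d\sigma\,dt,
\end{align*}
together with the analogous identity on $M^-$ (outward normal $+\nu$, so the sign of the $\Sigma$-integral is reversed). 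Since $(\partial_t-\Delta_g)v=f_0\delta_\Sigma+f_1\delta_\Sigma'$ is supported in $\Sigma$, each left-hand side vanishes. Summing yields
\begin{align*}
\Big[(v,u)_{L^2(M)}\Big]_0^T + (v,F)_{L^2((0,T)\times M)} = -\int_0^T\!\!\int_\Sigma u\,[\partial_\nu v]_\Sigma\,d\sigma\,dt + \int_0^T\!\!\int_\Sigma [v]_\Sigma\,\partial_\nu u\,d\sigma\,dt.
\end{align*}

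It then remains to identify the two jumps. Running the same Green computation against an arbitrary $\varphi\in C_c^\infty((0,T)\times\Int(M))$ (rather than a specific $(u,F)$ pair) shows that the singular part of the distribution $-\Delta_g v$ equals $-[v]_\Sigma\delta_\Sigma'-[\partial_\nu v]_\Sigma\delta_\Sigma$; comparing with the defining pairings~\eqref{e:defDist} then forces $[v]_\Sigma=-f_1$ and $[\partial_\nu v]_\Sigma=-f_0$. Substituting these back turns the right-hand side above into $\int_0^T\!\int_\Sigma(f_0 u - f_1\partial_\nu u)\,d\sigma\,dt$, which is the claim. The subtle step is this jump-matching: I must work intrinsically with $dV$, $d\sigma$, $\partial_\nu$ so that no extra tangential contribution (of the form $c(x')[v]_\Sigma\delta_\Sigma$, which would appear naively from the Fermi-coordinate expression $-\Delta_g=-\partial_{x_1}^2+r+cD_{x_1}$ of Section~\ref{s:fermi}) creeps in. Intrinsically, that first-order coefficient is already absorbed in the divergence form of $\Delta_g$, so Green's identity produces exactly the two jump terms above with no correction.
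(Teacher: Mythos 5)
Your proof is correct, and it is the natural (indeed only reasonable) argument: the paper does not spell out a proof for Lemma~\ref{l:IPP-smooth2} (nor for its wave analogue Lemma~\ref{l:IPP-smooth}), treating it as a routine Green's-identity computation, which is exactly what you have carried out. Your bookkeeping of signs, of the outward normals on $M^\pm$, and of the jumps $[v]_\Sigma=-f_1$, $[\partial_\nu v]_\Sigma=-f_0$ all check out against the convention~\eqref{e:defDist}. Your closing remark is also well taken: the formula
$-\Delta_g v = (-\Delta_g v)|_{\mathrm{cl}} - [v]_\Sigma\delta_\Sigma' - [\partial_\nu v]_\Sigma\delta_\Sigma$
is the one consistent with the intrinsic definitions of $\delta_\Sigma,\delta_\Sigma'$ against $d\sigma$ and of $-\Delta_g$ as self-adjoint with respect to $dV$; the extra $c(x)[v]_\Sigma\delta_\Sigma$ term that appears in the paper's own jump formula in the proof of Lemma~\ref{l:uniq-cont} is a harmless slip (it is multiplied by a quantity that vanishes there), arising from mixing the Lebesgue-measure jump computation with the $d\sigma$-normalized deltas.
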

Also, we have the following ``admisibility result'' (regularity of traces).
\begin{lemma}
\label{l:admissibility}
Given $T>0$, there is $C>0$ such that for all $F\in L^2((0,T)\times M)$, $\tilde{u} \in H^1_0(M)$ and $u$ associated solution of
\begin{equation}
\begin{cases}
\label{e:F-IC-heat}
(-\d_t - \Delta) u=F &\text{on }(0,T)\times \Int(M), \\
u= 0 &\text{on } (0,T)\times \d M , \\
u|_{t=T}= \tilde{u}&\text{in } \Int(M) ,
\end{cases}
\end{equation}
we have 
$$
 \|\partial_\nu u|_{\Sigma}\|_{L^2(0,T; \ovl{H}^{\frac12}(\Sigma))}^2+\|u|_{\Sigma}\|_{L^2(0,T; \ovl{H}^{\frac32}(\Sigma))}^2  
 \leq  C \|F\|_{L^2((0,T)\times M)}^2 + C  \|\tilde{u}\|_{H^1(M)}^2 .
$$
\end{lemma}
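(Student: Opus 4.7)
The plan is to reduce to standard parabolic regularity for the (forward) heat equation on $M$ and then invoke the trace theorem on the interior hypersurface $\Sigma$.

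\textbf{Step 1: Time reversal.} First I would set $s = T-t$ and $w(s,x) = u(T-s,x)$, which converts \eqref{e:F-IC-heat} into a standard forward Cauchy problem
\[
(\partial_s - \Delta) w = \tilde F,\qquad w|_{\partial M} = 0,\qquad w|_{s=0}=\tilde u,
\]
with $\tilde F(s,x)=F(T-s,x)\in L^2((0,T)\times M)$ and $\tilde u\in H^1_0(M)$. Since the trace maps and the norms on $(0,T)$ are invariant under this reflection, it suffices to prove the estimate for $w$.

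\textbf{Step 2: Parabolic regularity.} Standard maximal parabolic regularity for the Dirichlet Laplacian on a smooth compact manifold with boundary (see e.g.\ Lions--Magenes) yields, with the given data, a unique solution satisfying
\[
w \in L^2(0,T;H^2(M)\cap H^1_0(M)) \cap C^0([0,T];H^1_0(M))\cap H^1(0,T;L^2(M)),
\]
together with the estimate
\[
\|w\|_{L^2(0,T;H^2(M))}^2 + \|w\|_{C^0([0,T];H^1(M))}^2 + \|\partial_s w\|_{L^2(0,T;L^2(M))}^2 \le C\bigl(\|\tilde F\|_{L^2((0,T)\times M)}^2 + \|\tilde u\|_{H^1(M)}^2\bigr).
\]
In particular $w(s,\cdot)\in H^2(M)$ for a.e.\ $s\in(0,T)$ with the above $L^2$-in-time control.

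\textbf{Step 3: Trace on $\Sigma$.} Since $\Sigma$ is an interior hypersurface (with $\Sigma\subset \mathrm{Int}(\Sigma_0)\subset\mathrm{Int}(M)$), the standard trace theorem applied in a neighborhood of $\Sigma_0$ gives, for any $v\in H^2(M)$,
\[
\|v|_{\Sigma}\|_{\bar H^{3/2}(\Sigma)} + \|\partial_\nu v|_{\Sigma}\|_{\bar H^{1/2}(\Sigma)} \le C\, \|v\|_{H^2(M)},
\]
where the $\bar H^s(\Sigma)$ norm (Definition~\ref{d:def-ext-Hs}) is the natural one because we restrict a globally defined $H^2(M)$-function to $\Sigma\subset \mathrm{Int}(\Sigma_0)$; the required extension to $\Sigma_0$ is simply provided by $v$ itself, localized by a cutoff supported in $\mathrm{Int}(\Sigma_0)$. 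Applying this pointwise in $s$ to $w(s,\cdot)$ and integrating over $s\in(0,T)$ yields
\[
\|w|_{\Sigma}\|_{L^2(0,T;\bar H^{3/2}(\Sigma))}^2 + \|\partial_\nu w|_{\Sigma}\|_{L^2(0,T;\bar H^{1/2}(\Sigma))}^2 \le C\,\|w\|_{L^2(0,T;H^2(M))}^2.
\]
Combining with the parabolic estimate from Step 2 and undoing the time reversal gives the claimed inequality.

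\textbf{Main obstacle (minor).} There is no real obstacle here — everything is classical once one recognizes that the two traces lose only $1/2$ and $3/2$ derivatives respectively. The only small point of care is that $\Sigma$ may have boundary, which is precisely why one works with the extendable spaces $\bar H^{s}(\Sigma)$ of Definition~\ref{d:def-ext-Hs} rather than intrinsic ones; but the trace estimate above holds in these spaces by construction since we trace a function that is defined on the larger, boundaryless neighborhood of $\Sigma$ inside $\mathrm{Int}(\Sigma_0)$.
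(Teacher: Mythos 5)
Your proposal is correct and follows essentially the same two-step argument as the paper: parabolic maximal regularity giving $u\in L^2(0,T;H^2(M))$ with the right bound, followed by the standard $H^2\to \bar H^{3/2}\times \bar H^{1/2}$ trace estimate on the interior hypersurface. The preliminary time reversal is a harmless cosmetic addition, and your remark that the $\bar H^s(\Sigma)$ norm is automatically controlled because the trace comes from a globally defined $H^2$ function is exactly the right observation.
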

\begin{proof}
This is a direct consequence of the regularity theory for the heat equation~\eqref{e:F-IC-heat}, namely $u \in C^0([0,T]; H^1_0(M)) \cap L^2(0,T;H^2(M))\cap H^1(0,T; H^1_0(M))$, with 
$$
\|u\|_{L^\infty(0,T;H^1(M))}^2+ \|u\|_{L^2(0,T;H^2(M))}^2 +  \|u\|_{H^1(0,T;H^1(M))}^2\leq C \|F\|_{L^2((0,T)\times M)}^2 + C \|\tilde{u}\|_{H^1(M)}^2 ,
$$
see e.g.~\cite[Chapter~7.1.3, Theorem~5]{Evans:98}. The standard trace estimates then yield
$$
 \|\partial_\nu u|_{\Sigma}\|_{L^2(0,T; \ovl{H}^{\frac12}(\Sigma))}^2+\|u|_{\Sigma}\|_{L^2(0,T; \ovl{H}^{\frac32}(\Sigma))}^2   \leq C \|u\|_{L^2(0,T;H^2(M))}^2,
$$
which concludes the proof of the lemma.
\end{proof}
This suggests the following definition (see~\cite[Chapter~2.3]{Cor:book}) of solutions of the controlled heat equation~\eqref{e:heat-control}. 

\begin{definition}
\label{d:transp-sol-heat}
Given $T>0$, $v_0 \in H^{-1}(M), f_0 \in L^2(0,T; H_{\comp}^{-\frac32}(\Int(\Sigma))), f_1 \in L^2(0,T; H^{-\frac12}_{\comp}(\Int(\Sigma)))$, we say that $v$ is a solution of \eqref{e:waveControl} if $v \in C^0([0,T]; H^{-1}(M))$ and for any $t \in [0,T]$, for any $\tilde{u} \in H^1_0(M)$, we have
\bna
\langle v(t) , \tilde{u} \rangle_{H^{-1}, H^1_0}& = &\langle v_0 , u(0) \rangle_{H^{-1}, H^1_0} \\
&& +\int_0^t  \langle f_0 (s), u|_{\Sigma}(s) \rangle_{H_{\comp}^{-\frac32}(\Sigma), H_{\loc}^{\frac32}(\Sigma)} 
 - \langle f_1(s) , \d_\nu u|_{\Sigma}(s) \rangle_{H_{\comp}^{-\frac12}(\Sigma), H_{\loc}^{\frac12}(\Sigma)} ds .
\ena
where $u$ is the unique solution to
\begin{equation}
\begin{cases}
\label{e:F-IC=0}
(-\d_s - \Delta) u=0 &\text{on }(0,t)\times\Int(M)\\
u=0 &\text{on }(0,t)\times \d M\\
u|_{s=t}=\tilde{u}&\text{in } \Int(M) ,
\end{cases}
\end{equation}
i.e. $u(s) = e^{(t-s)\Delta}\tilde{u}$.
\end{definition}
The following result is a direct consequence of (a slight variation on) \cite[Theorem~2.37]{Cor:book} and the admissibility estimate of Lemma~\ref{l:admissibility}.
\begin{theorem}[Well-posedness of the controlled heat equation]
\label{t:well-posedness-heat}
Let $T>0$. There exist $C>0$ such that for all $v_0 \in H^{-1}(M),  f_0 \in L^2(0,T; H_{\comp}^{-\frac32}(\Int(\Sigma))), f_1 \in L^2(0,T; H^{-\frac12}_{\comp}(\Int(\Sigma)))$, there exists a unique solution $v$ of~\eqref{e:heat-control} in the sense of Definition~\ref{d:transp-sol-heat} and we have:
$$
\|v\|_{L^\infty(0,T; H^{-1}(M))}\leq C \left( \|v_0\|_{H^{-1}(M)}+ \| f_0 \|_{L^2(0,T; \ovl{H}^{-\frac32}(\Sigma))} + \|f_1\|_{L^2(0,T; \ovl{H}^{-\frac12}(\Sigma))} \right).
$$
\end{theorem}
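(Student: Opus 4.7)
The plan is to argue by transposition in a manner entirely analogous to the wave equation case (Theorem~\ref{t:well-posedness}), but substantially simpler because the admissibility estimate of Lemma~\ref{l:admissibility} is unconditional (no wavefront restrictions) and because the heat semigroup has smoothing properties. I will construct $v(t)$ pointwise in $t$ via the Riesz representation theorem, then promote this to the $C^0([0,T];H^{-1}(M))$ statement.

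First, for fixed $t \in [0,T]$ and $\tilde u \in H^1_0(M)$, I let $u(s) = e^{(t-s)\Delta}\tilde u$ denote the unique solution of~\eqref{e:F-IC=0}, and define
\begin{equation*}
\ell_t(\tilde u) := \langle v_0, u(0)\rangle_{H^{-1},H^1_0} + \int_0^t \Big( \langle f_0(s), u|_\Sigma(s)\rangle_{H^{-3/2}_{\comp},\bar H^{3/2}} - \langle f_1(s),\partial_\nu u|_\Sigma(s)\rangle_{H^{-1/2}_{\comp},\bar H^{1/2}}\Big)\,ds .
\end{equation*}
The duality pairings are well-posed because the $f_i(s)$ have compact support in $\Int(\Sigma)$, hence pair naturally against elements of the extendable Sobolev spaces $\bar H^{s}(\Sigma)$. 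The first term is bounded by $\|v_0\|_{H^{-1}}\|u(0)\|_{H^1_0}$, and by standard energy estimates for the backward heat equation $\|u(0)\|_{H^1_0} \leq C\|\tilde u\|_{H^1_0}$. For the second and third terms, Cauchy--Schwarz in $s$ together with Lemma~\ref{l:admissibility} (applied with $F=0$) gives
\begin{equation*}
\left|\int_0^t \langle f_i(s),\cdot\rangle\,ds\right| \leq \|f_i\|_{L^2(0,T;\bar H^{-k_i}(\Sigma))}\, \|u|_\Sigma\text{ or }\partial_\nu u|_\Sigma\|_{L^2(0,T;\bar H^{k_i}(\Sigma))} \leq C\|f_i\|_{L^2(0,T;\bar H^{-k_i}(\Sigma))}\|\tilde u\|_{H^1_0(M)} ,
\end{equation*}
with $k_0 = 3/2$, $k_1=1/2$. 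Thus $\ell_t$ is a continuous linear form on $H^1_0(M)$ with norm bounded by $C(\|v_0\|_{H^{-1}}+\|f_0\|_{L^2(0,T;\bar H^{-3/2})}+\|f_1\|_{L^2(0,T;\bar H^{-1/2})})$ uniformly in $t\in[0,T]$. By the Riesz representation theorem there exists a unique $v(t)\in H^{-1}(M)$ with $\ell_t(\tilde u) = \langle v(t),\tilde u\rangle_{H^{-1},H^1_0}$ satisfying the claimed norm bound; uniqueness of the transposition solution is built into this construction.

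It remains to verify the continuity $t\mapsto v(t) \in C^0([0,T];H^{-1}(M))$, which is the one point requiring slight care. The strategy is to show that $t\mapsto \ell_t(\tilde u)$ is continuous uniformly on the unit ball of $H^1_0(M)$. Fix $t_0\in[0,T]$ and $t\to t_0$, and compare the representatives $u_t(s) = e^{(t-s)\Delta}\tilde u$ and $u_{t_0}(s) = e^{(t_0-s)\Delta}\tilde u$ on overlapping intervals. Continuity of the heat semigroup in $H^1_0$ gives $u_t(0)\to u_{t_0}(0)$ in $H^1_0(M)$, handling the initial data term. For the surface terms, one uses that $u_t \to u_{t_0}$ in $L^2(0,\min(t,t_0);H^2(M))$ (standard parabolic continuity), so their traces converge in $L^2(\cdot;\bar H^{3/2}\times \bar H^{1/2})$ by Lemma~\ref{l:admissibility}; the short interval $[\min(t,t_0),\max(t,t_0)]$ contributes a vanishing term by absolute continuity of the Lebesgue integral. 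Combining all three pieces yields $\ell_t(\tilde u)\to \ell_{t_0}(\tilde u)$ uniformly in $\tilde u$ on the unit ball, hence $v(t)\to v(t_0)$ in $H^{-1}(M)$.

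The \emph{main obstacle}, as in any transposition argument, lies in matching the dual pairings correctly --- specifically, confirming that $f_0 \in L^2(0,T;H^{-3/2}_{\comp}(\Int\Sigma))$ is really the dual quantity to the trace $u|_\Sigma \in L^2(0,T;\bar H^{3/2}(\Sigma))$ supplied by Lemma~\ref{l:admissibility}, and similarly for $f_1$ and $\partial_\nu u|_\Sigma$. The compact support of $f_i$ inside $\Int(\Sigma)$ is precisely what makes the pairing between the extendable Sobolev space $\bar H^{s}(\Sigma)$ and the compactly supported dual $H^{-s}_{\comp}(\Int\Sigma)$ unambiguous, bypassing the usual subtleties with $H^s_{00}$ type spaces at the boundary of $\Sigma$. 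Once this bookkeeping is in place everything reduces to the abstract transposition framework of \cite[Theorem~2.37]{Cor:book}.
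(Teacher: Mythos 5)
Your construction of $v(t)$ pointwise via duality, and the norm estimate $\|v(t)\|_{H^{-1}}\leq C(\ldots)$ uniform in $t$, are correct and do unwind the paper's one-line appeal to \cite[Theorem~2.37]{Cor:book} together with Lemma~\ref{l:admissibility}. Uniqueness is also handled correctly. The gap is in the continuity step. You need $\sup_{\|\tilde u\|_{H^1_0}\leq 1}|\ell_t(\tilde u)-\ell_{t_0}(\tilde u)|\to 0$, but the two facts you invoke --- $u_t(0)\to u_{t_0}(0)$ in $H^1_0$, and $u_t\to u_{t_0}$ in $L^2(0,\min(t,t_0);H^2)$ --- are consequences of the \emph{strong} continuity of the semigroup and therefore hold for each fixed $\tilde u$ but not uniformly on the unit ball of $H^1_0$: since $u_t(s)-u_{t_0}(s)=e^{(t_0-s)\Delta}\bigl(e^{(t-t_0)\Delta}-\id\bigr)\tilde u$ and $\Delta$ is unbounded, $\|(e^{r\Delta}-\id)\|_{\mathcal L(H^1_0)}$ does not tend to $0$ as $r\to 0$. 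As written, your argument only shows pointwise (weak-$*$) continuity, which is weaker than $v\in C^0([0,T];H^{-1})$.

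Two standard repairs. (a) Exploit self-adjointness of $e^{t\Delta}$ to move the increment onto the data: for the initial term, $\langle v_0, e^{t\Delta}\tilde u\rangle=\langle e^{t\Delta}v_0,\tilde u\rangle$, and $t\mapsto e^{t\Delta}v_0$ is strongly continuous in $H^{-1}$ since $v_0$ is fixed; for the surface terms, use the algebraic identity $e^{(t-s)\Delta}=e^{(t_0-s)\Delta}e^{(t-t_0)\Delta}$ to rewrite $\langle w(t),\tilde u\rangle=\langle w(t_0),e^{(t-t_0)\Delta}\tilde u\rangle+O\bigl(\|f\|_{L^2(t_0,t)}\bigr)$, where $w(t)$ denotes the surface contribution, and then $\langle w(t_0),e^{(t-t_0)\Delta}\tilde u\rangle=\langle e^{(t-t_0)\Delta}w(t_0),\tilde u\rangle$ with $e^{(t-t_0)\Delta}w(t_0)\to w(t_0)$ in $H^{-1}$ by strong continuity of the semigroup on the \emph{fixed} element $w(t_0)$. (b) Alternatively, follow the density argument that underlies \cite[Theorem~2.37]{Cor:book}: approximate $(v_0,f_0,f_1)$ by smooth, compactly supported data, verify via Lemma~\ref{l:IPP-smooth2} that the classical solutions satisfy the transposition identity and are continuous in $t$, and pass to the limit in $L^\infty(0,T;H^{-1})$ using your uniform estimate; a uniform limit of continuous functions is continuous. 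Either repair completes the proof; the rest of your argument is sound.
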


%%%%%%%%%%%%%%%%%%

\subsection{Global interpolation inequality and universal lower bound for traces of eigenfunctions}
\label{s:unique}
We follow the general Lebeau-Robbiano method~\cite{LR:95} and use moreover a Carleman estimate of~\cite{LR:97}. We refer to~\cite{LeLe:09} for an exposition of these works.

The global strategy~\cite{LR:95} is the following: 
\begin{enumerate}
\item Local Carleman estimates 
\item $\implies$ local interpolation estimates
\item $\implies$ a global interpolation estimate
\item $\implies$ finite dimensional observability/controllability for an elliptic evolution equation
\item $\implies$ finite dimensional observability/controllability for the heat equation
\item $\implies$ observability/controllability for the heat equation.
\end{enumerate}

Also, the unique continuation estimate for eigenfunctions of Theorem~\ref{t:expo-bound-eigenfunctions} can be deduced from the global interpolation estimate.
The present section proves steps $1, 2, 3$. The next section is devoted to that of steps $4, 5, 6$.

In the following, for $\alpha>0$, we set $Y_\alpha= (-\alpha , \alpha) \times M$, $\Sigma_\alpha = (- \alpha, \alpha) \times \Sigma$, and denote $Q = -\d_{s}^2 - \Delta_g$.
\begin{theorem}[Global interpolation] 
\label{t:global-interp}
Let $S>\beta >0$. For all $\psi \in C^\infty_c(\Sigma_\beta)$ not identically vanishing, there exist $C, \delta>0$ such that
\begin{equation}
\label{e:loc-interp}
\|v\|_{H^1(Y_\beta)} \leq C \left(\|Qv\|_{L^2(Y_S)} + \|\psi v|_{\Sigma_\beta}\|_{L^2(\Sigma_\beta)} +  \|\psi \d_\nu v|_{\Sigma_\beta}\|_{L^2(\Sigma_\beta)}  \right)^\delta \|v\|_{H^1(Y_S)}^{1-\delta}
\end{equation}
for all $v \in H^2(Y_S)$ such that $v|_{(-S,S)\times \d M}=0$.
\end{theorem}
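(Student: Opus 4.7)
The plan is to follow the standard Lebeau--Robbiano strategy of propagating a local Carleman estimate via a chain of overlapping neighborhoods, adapted here to the situation where the ``starting data'' is the pair of Cauchy traces on a piece of $\Sigma$ rather than an observation on an open subset of $M$. I will argue separately near $\Sigma$, in the interior of $M\setminus \Sigma$, and near $\d M$, and then paste the local inequalities together using a finite covering and a Young-inequality telescoping.

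The first step is to obtain a local interpolation inequality rooted at a point of $\Sigma$ where $\psi\neq 0$. Picking $(s_0,x_0)\in \Sigma_\beta$ with $\psi(s_0,x_0)\neq 0$, I use the Carleman estimate of Lebeau--Robbiano (\cite{LR:97}, as exposed in~\cite{LeLe:09}) for $Q = -\d_s^2-\Delta_g$ across $\Sigma$, phrased in Fermi normal coordinates with weight $e^{\tau\varphi}$ for a suitable $\varphi$ whose level sets are transverse to $\Sigma$; the key point is that this weight jumps across $\Sigma$ in a way that makes the Cauchy data $(v|_{\Sigma},\d_\nu v|_{\Sigma})$ the only boundary contribution. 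Optimizing in $\tau$ in the usual way yields, for some small neighborhood $V_0$ of $(s_0,x_0)$ in $Y_S$ and $V_0'\Subset V_0$, a three-ball type inequality
\begin{equation*}
\|v\|_{H^1(V_0')} \leq C\bigl(\|Qv\|_{L^2(V_0)} + \|\psi v|_\Sigma\|_{L^2(\Sigma_\beta)} + \|\psi \d_\nu v|_\Sigma\|_{L^2(\Sigma_\beta)}\bigr)^{\delta_0}\|v\|_{H^1(V_0)}^{1-\delta_0}
\end{equation*}
for some $\delta_0\in(0,1)$.

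The second step is to propagate this estimate throughout $Y_\beta$. Away from $\Sigma$ and from $\d M$, I use the standard interior elliptic Carleman estimate for $Q$, which gives local three-ball inequalities of the form $\|v\|_{H^1(B')}\leq C(\|Qv\|_{L^2(B)}+\|v\|_{H^1(B_0)})^\delta \|v\|_{H^1(B)}^{1-\delta}$ for concentric balls $B_0\Subset B'\Subset B$. Near a boundary point of $\d M$, I use the analogous boundary Carleman estimate compatible with the Dirichlet condition $v|_{(-S,S)\times \d M}=0$, as in \cite{LR:95,LeLe:09}. Since $M$ is connected and $\ovl{Y_\beta}$ is compact, I can build a finite chain of such balls starting from $V_0'$ and ending anywhere in $\ovl{Y_\beta}$, so that iterating the estimates gives, for every ball $B$ of the covering,
\begin{equation*}
\|v\|_{H^1(B)} \leq C(E+\|v\|_{H^1(V_0)})^{\delta_1}\|v\|_{H^1(Y_S)}^{1-\delta_1},\qquad E:=\|Qv\|_{L^2(Y_S)}+\|\psi v|_\Sigma\|_{L^2(\Sigma_\beta)}+\|\psi\d_\nu v|_\Sigma\|_{L^2(\Sigma_\beta)},
\end{equation*}
with some composite exponent $\delta_1>0$. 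Injecting the first-step inequality into the right-hand side and using $\|v\|_{H^1(V_0)}\leq \|v\|_{H^1(Y_S)}$, the $\|v\|_{H^1(V_0)}$ term is absorbed and one obtains a bound of the form $\|v\|_{H^1(B)}\leq C E^{\delta_2}\|v\|_{H^1(Y_S)}^{1-\delta_2}$. Summing over the finite covering and taking the infimum of the exponents yields~\eqref{e:loc-interp}.

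The main obstacle is the first step: producing a Carleman inequality across $\Sigma$ whose boundary term involves precisely the two Cauchy traces multiplied by a fixed cutoff $\psi$, with a weight which is continuous across $\Sigma$ but has a jump in its normal derivative so that the non-tangential transport terms on $\Sigma$ give coercive $L^2$ contributions on both sides. This is exactly the content of the Lebeau--Robbiano Carleman estimate near an interface, and once it is available the rest is a routine chain argument. A minor technical point is handling the $s$-boundaries at $s=\pm S$: since $\beta<S$ we only need interior estimates in the $s$-direction, so cutoffs in $s$ supported in $(-S,S)$ with suitable commutator estimates suffice.
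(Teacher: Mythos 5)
Your overall architecture — a local Carleman/three‑sphere inequality rooted on $\Sigma$, then propagation through a finite chain covering $Y_\beta$ — is the right one and is essentially how the paper argues (the paper packages the propagation step as a black‑box citation of Lebeau--Robbiano's global interpolation estimate; your explicit chain argument is just an unpacking of that). The real difference, and where your proposal is on shakier ground, is the first step.

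The Carleman estimate you cite from \cite{LR:97} (Proposition~1 there, also used by the paper) is a \emph{one‑sided} boundary Carleman estimate on the half‑space $\{x_1>0\}$: the weight $\varphi$ is \emph{smooth} on a neighborhood of $\overline{\R^{n+1}_+}\cap \overline{B}(0,R)$ with $\d_{x_1}\varphi\neq 0$, and both Cauchy traces at $\{x_1=0\}$ appear on the right-hand side simply because $\{x_1=0\}$ is the boundary of the domain. There is no jump in the weight, and there is no need for one. The paper then chooses the level sets of $\varphi$ (balls centered at a point $x^a$ with $x_1<0$) so that a nonempty open set $U$ on one side of $\Sigma$ sits in the region where $\varphi$ dominates, and that single one-sided estimate already produces the local interpolation inequality~\eqref{e:loc-interp-wish}. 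Controlling an open set on just one side of $\Sigma$ is enough, since the global propagation step takes care of everything else.

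Your version, by contrast, invokes a transmission-type estimate with a weight ``continuous across $\Sigma$ but with a jump in its normal derivative,'' so that the boundary terms at $\Sigma$ become ``coercive.'' This is both not what \cite{LR:97} provides and not needed. It is also not risk-free: in a transmission Carleman estimate (à la Le~Rousseau--Robbiano) the interface terms that one must control are quadratic forms in the traces which, for $v\in H^2(Y_S)$ (so $[v]_\Sigma=[\d_\nu v]_\Sigma=0$), require a careful sign analysis to organize into something observable by $(v|_\Sigma,\d_\nu v|_\Sigma)$; moreover, these boundary terms appear as observations on the right-hand side, not as coercive contributions on the left. One can likely make a two‑sided version work, but it is strictly more elaborate and requires a delicate verification that the two one-sided boundary quadratic forms combine favorably. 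The economical move is the paper's: work only on $\{x_1>0\}$, keep $\varphi$ smooth, observe the Cauchy data on $\{x_1=0\}$, and let $U\subset\{x_1>0\}$. If you adopt that, the rest of your argument (interior three‑ball inequalities, Dirichlet‑compatible boundary Carleman estimates near $\d M$, care with the $s=\pm S$ corners, Young/telescoping to combine exponents) goes through as you describe.
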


If we were considering a second order elliptic operator $Q$ on a manifold $Y_S$ with {\em smooth} boundary, and with Dirichlet condition on the {\em whole} $\d Y_S$, this estimate would simply read
\begin{equation*}
\|v\|_{H^1(Y_S)} \leq C \left(\|Qv\|_{L^2(Y_S)} + \|\psi v|_{\Sigma_0}\|_{L^2(\Sigma_0)} +  \|\psi \d_\nu v|_{\Sigma_0}\|_{L^2(\Sigma_0)}  \right) .
\end{equation*}
However, here $Y_S = (-S , S) \times \M$ is not smooth at $(-S , S) \times \d \M$ and it is crucial for the next arguments that no boundary condition is prescribed at the boundary $(\{-S \} \cup \{S\})\times M$.

\bigskip
The proof of Theorem~\ref{t:global-interp} follows from arguments of Lebeau and Robbiano~\cite{LR:95,LR:97}. The idea is that such interpolation inequalities follow locally from Carleman estimates, and then propagate well. Hence, our task is only 
\begin{itemize}
\item[(i)] to deduce from a local Carleman estimate near $\Sigma_\beta$ that the traces at the boundary ``control'' a small nonempty open set near $\Sigma_\beta$ (i.e. that \eqref{e:loc-interp} holds with, in the l.h.s. the local $H^1$ norm in this set)
\item[(ii)]  to use a global interpolation inequality implying that such a small set ``controls'' the $H^1(Y_\beta)$ norm, and then put the two inequalities together.
\end{itemize}

For the second point (ii), we can start from the following result of ~\cite[Section~3, Estimate (1)]{LR:95}.
\begin{theorem}
\label{t:global-interp-LR}
Let $U \subset Y_S$ be any nonempty open set, then there is $C>0$ and $\delta_0 \in (0,1)$ such that we have
\begin{equation}
\label{e:loc-interp-LR}
\|v\|_{H^1(Y_\beta)} \leq C \left(\|Qv\|_{L^2(Y_S)} +  \| v \|_{H^1(U)}  \right)^{\delta_0} \|v\|_{H^1(Y_S)}^{1-\delta_0}
\end{equation}
for all $v \in H^2(Y_S)$ such that $v|_{(-S,S)\times \d M}=0$.
\end{theorem}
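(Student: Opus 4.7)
}
My plan is to implement the two-step strategy outlined immediately after the statement: first derive from a local boundary Carleman estimate near $\Sigma_\beta$ an interpolation inequality which controls $v$ in $H^1$ on a small open set $U\subset Y_S$ lying just off $\{0\}\times\Sigma$, in terms of the traces weighted by $\psi$; then feed that open set $U$ into the propagation-of-smallness inequality of Theorem~\ref{t:global-interp-LR} to propagate the local bound to all of $Y_\beta$. The elliptic operator $Q=-\d_s^2-\Delta_g$ on $Y_S=(-S,S)\times M$ carries a Dirichlet condition on $(-S,S)\times \d M$ but no condition on the transversal boundary $\{\pm S\}\times M$, so $Y_S$ has corners; the whole point of Theorem~\ref{t:global-interp-LR} is that it tolerates this.

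For Step~(i), fix $x_0'\in\Int(\Sigma)$ and $s_0\in(-\beta,\beta)$ such that $\psi(s_0,x_0')\neq 0$. In Fermi normal coordinates $(x_1,x')$ near $\Sigma$ as in Section~\ref{s:fermi}, the operator $Q$ takes the form $-\d_s^2-\d_{x_1}^2+r(x_1,x',D_{x'})+c(x)D_{x_1}$, which is elliptic with principal symbol $\tau^2+\xi_1^2+r(x_1,x',\xi')$, and $\{0\}\times\Sigma$ is noncharacteristic. I will invoke the Carleman estimate across the interior hypersurface $\{0\}\times\Sigma_0$ from \cite{LR:97} (see also \cite[Section~3]{LeLe:09}) with a weight $\varphi(s,x_1,x')$ whose gradient is transverse to the hypersurface and whose level set $\{\varphi=\varphi(s_0,0,x_0')\}$ is strictly convex with respect to $Q$ at $(s_0,0,x_0')$. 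Standard manipulation of that Carleman inequality (test against $\chi v$ for a cutoff $\chi$ supported where $\psi\equiv 1$ near $(s_0,0,x_0')$, absorb the commutator terms using the weight, and split the integration across $x_1=0$ to produce the Dirichlet and Neumann traces on $\Sigma$) yields, for some nonempty open set $U\subset\{x_1\neq 0\}$ close to $(s_0,0,x_0')$ and some $\delta_1\in(0,1)$, the local interpolation estimate
\begin{equation}
\label{e:local-interp-plan}
\|v\|_{H^1(U)}\leq C\bigl(\|Qv\|_{L^2(Y_S)}+\|\psi v|_{\Sigma_\beta}\|_{L^2(\Sigma_\beta)}+\|\psi\d_\nu v|_{\Sigma_\beta}\|_{L^2(\Sigma_\beta)}\bigr)^{\delta_1}\|v\|_{H^1(Y_S)}^{1-\delta_1},
\end{equation}
valid for all $v\in H^2(Y_S)$ with $v|_{(-S,S)\times\d M}=0$. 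The Dirichlet condition on $(-S,S)\times \d M$ is harmless here since $U$ is supported away from $\d M$.

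For Step~(ii), I apply Theorem~\ref{t:global-interp-LR} with this particular $U$, obtaining $\delta_0\in(0,1)$ and $C>0$ such that
\[
\|v\|_{H^1(Y_\beta)}\leq C\bigl(\|Qv\|_{L^2(Y_S)}+\|v\|_{H^1(U)}\bigr)^{\delta_0}\|v\|_{H^1(Y_S)}^{1-\delta_0}.
\]
Writing $\Phi:=\|Qv\|_{L^2(Y_S)}+\|\psi v|_{\Sigma_\beta}\|_{L^2(\Sigma_\beta)}+\|\psi\d_\nu v|_{\Sigma_\beta}\|_{L^2(\Sigma_\beta)}$ and $E:=\|v\|_{H^1(Y_S)}$, we can assume $\Phi\leq E$ (the reverse case gives the conclusion trivially, using $\|v\|_{H^1(Y_\beta)}\leq E=E^{1-\delta}E^\delta\leq E^{1-\delta}\Phi^\delta$). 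Under $\Phi\leq E$ we have $\|Qv\|_{L^2(Y_S)}\leq \Phi=\Phi^{\delta_1}\Phi^{1-\delta_1}\leq \Phi^{\delta_1}E^{1-\delta_1}$, so combining with \eqref{e:local-interp-plan} gives $\|Qv\|_{L^2(Y_S)}+\|v\|_{H^1(U)}\leq 2C\Phi^{\delta_1}E^{1-\delta_1}$, hence
\[
\|v\|_{H^1(Y_\beta)}\leq C'\bigl(\Phi^{\delta_1}E^{1-\delta_1}\bigr)^{\delta_0}E^{1-\delta_0}=C'\Phi^{\delta_1\delta_0}E^{1-\delta_1\delta_0},
\]
which is the desired inequality with $\delta=\delta_0\delta_1$.

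The main obstacle will be carrying out Step~(i) cleanly: producing a Carleman weight $\varphi$ near $(s_0,0,x_0')$ that satisfies H\"ormander's sub-ellipticity condition for $Q$ on both sides of the noncharacteristic hypersurface $\{x_1=0\}$ and yields, after the usual localization-and-partition procedure, an interpolation estimate in which only the product $\psi v|_{\Sigma}$ and $\psi\d_\nu v|_{\Sigma}$ appear on the right-hand side while the non-support of $\psi$ contributes only terms that can be absorbed by $E=\|v\|_{H^1(Y_S)}$. This is a standard but technical construction: one typically picks $\varphi$ of the form $\varphi(s,x_1,x')=\exp(\lambda\Psi)$ with $\Psi=-((s-s_0)^2+|x'-x_0'|^2)/2+\alpha x_1+\gamma x_1^2$, chooses $\alpha,\gamma,\lambda$ so that H\"ormander's condition holds across $x_1=0$ in a small neighborhood of $(s_0,0,x_0')$, and localizes with a cutoff $\chi$ supported in $\{\psi\equiv 1\}$ near that point; the subsequent bookkeeping is classical after \cite{LR:97,LeLe:09}.
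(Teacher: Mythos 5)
Your proposal opens with ``Proof plan for Theorem~\ref{t:global-interp}'' and indeed targets that result, not the statement you were asked about. The statement in question, Theorem~\ref{t:global-interp-LR}, is the propagation-of-smallness estimate from an \emph{arbitrary} nonempty open set $U$ to $Y_\beta$; the paper does not prove it, it cites it verbatim from \cite[Section~3, Estimate~(1)]{LR:95}, and your proposal likewise invokes it as a black box in your Step~(ii). The proof environment in the paper that is labelled ``End of the proof of Theorem~\ref{t:global-interp-LR}'' is a mislabel: what it actually establishes is the local estimate \eqref{e:loc-interp-wish} and hence, combined with Theorem~\ref{t:global-interp-LR} and the multiplicative interpolation lemma (\cite[Lemme~4]{LR:95}), the trace interpolation Theorem~\ref{t:global-interp}.

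Measured against \emph{that} proof, your plan is essentially the same: apply a half-space Carleman estimate near a point $(s_0,0,x_0')$ with $\psi\neq 0$, with the Cauchy data on $\{x_1=0\}$ as boundary observation, deduce a local $H^1(U)$ interpolation bound in terms of $\|Qv\|_{L^2(Y_S)}$, $\|\psi v|_{\Sigma_\beta}\|_{L^2}$, $\|\psi\d_\nu v|_{\Sigma_\beta}\|_{L^2}$, then chain with Theorem~\ref{t:global-interp-LR}. Your explicit verification of the combination step (the dichotomy $\Phi\leq E$ vs. $\Phi>E$ and the exponent arithmetic $\delta=\delta_0\delta_1$) is a correct written-out version of what the paper outsources to \cite[Lemme~4]{LR:95}. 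The only genuine difference is in the weight: the paper takes $\varphi(x)=e^{-\mu|x-x^a|}-e^{-\mu|x^a|}$ with $x^a\notin\overline{\R^{n+1}_+}$, whose level sets are balls (making the cutoff and ``$U\neq\emptyset$'' bookkeeping especially transparent), whereas you propose the Lebeau--Robbiano convexification $\varphi=e^{\lambda\Psi}$ with $\Psi$ a quadratic profile having $\d_{x_1}\Psi\neq 0$ on $\{x_1=0\}$ and a nondegenerate critical point outside the half-space. Both are standard and both satisfy H\"ormander's sub-ellipticity condition after enough convexification, so your Step~(i) is sound; but be aware that the hypothesis needed on the weight in the paper's Carleman Proposition is $\d_{x_1}\varphi\neq 0$ together with the \emph{sub-ellipticity} bracket condition, not geometric convexity of a level set, so ``strictly convex with respect to $Q$'' should read ``strictly pseudo-convex in H\"ormander's sense.''
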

As a consequence, it suffices to prove the first point (i), namely, that there exists such an $U$ such that, for some $C, \delta_1 >0$ we have
\begin{equation}
\label{e:loc-interp-wish}
\|v\|_{H^1(U)} \leq C \left(\|Qv\|_{L^2(Y_S)} + \|\psi v|_{\Sigma_\beta}\|_{L^2(\Sigma_\beta)} +  \|\psi \d_\nu v|_{\Sigma_\beta}\|_{L^2(\Sigma_\beta)}  \right)^{\delta_1} \|v\|_{H^1(Y_S)}^{1-\delta_1},
\end{equation}
which is now a local estimate. Indeed, \eqref{e:loc-interp-LR} together with~\eqref{e:loc-interp-wish} directly yield~\eqref{e:loc-interp} for $\delta = \delta_0 \delta_1$ (see e.g.~\cite[Lemme~4]{LR:95}).

\bigskip
To prove~\eqref{e:loc-interp-wish}, we shall take $m\in \Sigma$ a point for which $\psi(m) \neq 0$, and assume that the set $U$ is a small neighborhood of $m$ intersected with a single side of $\Sigma$. Also, we shall say that $\d_\nu$ is pointing towards $U$. 
We now work in the local Fermi normal coordinates near $m \in \Sigma$, described in Section~\ref{s:fermi}.  
 The operator $Q = -\d_{s}^2 -\Delta_g$, still denoted by $Q$ in these coordinates, is given, modulo conjugation by a harmless exponential factor, by
\begin{equation*}
  Q = - \d_{x_1}^2  - \d_s^2 + r(x_1, x', \frac{\d_{x'}}{i} ) ,  \quad \text{with principal symbol} \quad  q = \xi_1^2 + \xi_s^2+ r(x_1, x', \xi') ,
\end{equation*}
 where 
 \begin{itemize}
 \item $(s,x')$ are the variables in $(-S,S)\times \Sigma$, $\xi_s\in \R$ is the cotangent variable associated to $s$;
 \item variables are in a neighborhood of zero in the half space $\R^{n+1}_+ = \R_{s} \times \R_{+, x_1} \times \R_{x'}^{n-1}$ (we only estimate things on $\{x_1 >0\}$, where $U$ is);
 \item $\d_\nu$ is given by $\d_{x_1}$ in these coordinates.
 \end{itemize}

Now, the proof of~\eqref{e:loc-interp-wish} relies on the following Proposition~\cite[Proposition 1]{LR:97}. Here, the variable $s$ does not play a particular role: hence, in what follows, we only write (with a slight abuse of notation) $x\in \R^{n+1}$ for the overall variable, and accordingly $q = q(x, \xi) = q(s , x_1,  x', \xi_s ,\xi_1, \xi')$. 
We also use the notation 
$$
q_\varphi (x, \xi) = q(x, \xi + i d\varphi(x)) .
$$

\begin{proposition}
Let $R>0$ and $\varphi \in C^\infty$ in a neighborhood of $K:= \R^{n+1}_+ \cap \ovl{B}(0,R)$ and such that 
\begin{itemize}
\item $\d_{x_1} \varphi \neq 0$  on $K$,
\item (H\"ormander subellipticity condition) $ \forall (x ,  \xi) \in K \times \R^{n+1}, \quad  q_\varphi(x, \xi) = 0 \implies
    \{ \Re(q_\varphi), \Im (q_\varphi)\} (x,\xi) >0$.
\end{itemize}
Then, we have 
\begin{multline}
\label{e:Carleman}
 h\| e^{\varphi/h}u\|_{L^2(\R^{n+1}_+)}^2 +h^3 \| e^{\varphi/h} \nabla u\|_{L^2(\R^{n+1}_+)}^2  \\
\lesssim  h^4\| e^{\varphi/h} Q u\|_{L^2(\R^{n+1}_+)}^2 + h\| e^{\varphi/h}u|_{x_1=0}\|_{L^2(\R^{n})}^2 +h^3 \| e^{\varphi/h} \d_{x_1} u|_{x_1=0}\|_{L^2(\R^{n})}^2 
\end{multline}
for all $u \in C^\infty(\overline{\R^{n+1}_+})$ such that $\supp(u) \subset B(0,R)$ and $h \in (0,h_0)$. 
\end{proposition}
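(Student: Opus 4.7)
The plan is to follow the classical Carleman-estimate scheme via semiclassical conjugation, square-expansion, and subellipticity, being especially careful with the boundary contributions at $\{x_1 = 0\}$, since no boundary condition on $u$ is prescribed. Given that this is stated as being borrowed from \cite{LR:97}, in a complete write-up one could simply cite it; what follows is the argument I would reconstruct.

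\medskip
\textbf{Step 1 (Semiclassical conjugation).} Set $v = e^{\varphi/h} u$ and introduce the semiclassical operator $P = h^2 Q$ with principal symbol $q$. The conjugated operator
$$
P_\varphi := e^{\varphi/h} P\, e^{-\varphi/h}
$$
is a second-order semiclassical differential operator on $\R^{n+1}$ with principal symbol $q_\varphi(x,\xi) = q(x,\xi+id\varphi(x))$. Decompose it into formally self-adjoint real and imaginary parts
$$
P_\varphi = A + iB, \qquad \sigma_h(A) = \Re q_\varphi,\quad \sigma_h(B) = \Im q_\varphi,
$$
so that the Carleman inequality reduces to proving
$$
h\|v\|^2_{L^2(\R^{n+1}_+)} + h^3\|\nabla v\|^2_{L^2(\R^{n+1}_+)} \lesssim \|P_\varphi v\|^2_{L^2(\R^{n+1}_+)} + \text{(boundary terms)},
$$
uniformly for $v$ with $\supp v \subset \overline{B(0,R)}$, and then rewriting the boundary terms in terms of $u|_{x_1=0}$ and $\d_{x_1}u|_{x_1=0}$ via $v = e^{\varphi/h}u$.

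\medskip
\textbf{Step 2 (Square expansion with boundary terms).} Expand
$$
\|P_\varphi v\|^2_{L^2(\R^{n+1}_+)} = \|Av\|^2 + \|Bv\|^2 + (i[A,B]v,v)_{L^2(\R^{n+1}_+)} + \mc{B}(v),
$$
where $\mc{B}(v)$ collects all the boundary contributions produced by the integrations by parts in the $x_1$ variable needed to move $A$ across $B$ and to interpret $A,B$ as self-adjoint on the half-space. Because $A$ and $B$ are second-order in $hD_{x_1}$ with principal part a quadratic polynomial in $\xi_1$, a direct calculation shows that $\mc{B}(v)$ is a Hermitian quadratic form in $(v|_{x_1=0},\, hD_{x_1}v|_{x_1=0})$ with coefficients that are tangential semiclassical pseudodifferential operators of orders $1$ and $0$ respectively. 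Tracking the $h$-weights, these are precisely the quantities appearing (modulo conjugation by $e^{\varphi/h}$) on the right-hand side of \eqref{e:Carleman}.

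\medskip
\textbf{Step 3 (Interior subelliptic estimate).} On the interior, the H\"ormander subellipticity hypothesis $q_\varphi=0 \Rightarrow \{\Re q_\varphi,\Im q_\varphi\}>0$ on $K$, together with ellipticity of $q_\varphi$ at infinity, yields by a microlocal partition of unity and the semiclassical sharp G\aa rding inequality the symbolic bound
$$
|\Re q_\varphi|^2 + |\Im q_\varphi|^2 + h\,\{\Re q_\varphi,\Im q_\varphi\} \;\gtrsim\; h\,(1+|\xi|^2),
$$
on a neighborhood of $K$. Since $\sigma_h(i[A,B]) = h\{\Re q_\varphi,\Im q_\varphi\}$ up to lower order, this upgrades (again via sharp G\aa rding, applied microlocally, plus elliptic regularity where $q_\varphi\ne 0$) to the operator estimate
$$
\|Av\|^2 + \|Bv\|^2 + (i[A,B]v,v) \;\geq\; c\,\bigl(h\|v\|^2 + h^3\|\nabla v\|^2\bigr) - Ch^2\|v\|^2,
$$
on functions supported in $B(0,R)$. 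The $O(h^2)$ error is absorbed for $h$ small.

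\medskip
\textbf{Step 4 (Handling the boundary quadratic form).} This is the main difficulty, since no boundary condition is prescribed at $\{x_1=0\}$. Using the hypothesis $\d_{x_1}\varphi\neq 0$, factor the tangential symbol $q_\varphi$ as a quadratic in $\xi_1+i\d_{x_1}\varphi$; decompose the boundary quadratic form $\mc{B}(v)$ according to a microlocal partition on the tangential phase space into the elliptic, hyperbolic and glancing regions of the tangential symbol $q_\varphi|_{x_1=0}$. In each region, a direct computation (analogous to \cite[Section 2]{LR:97}) shows that $\mc{B}(v)$ is dominated in absolute value by
$$
h\|v|_{x_1=0}\|^2_{L^2(\R^n)} + h^3\|\d_{x_1}v|_{x_1=0}\|^2_{L^2(\R^n)},
$$
using that $\d_{x_1}\varphi\neq 0$ to ensure strict coercivity of the symbol in $\xi_1$ near the boundary.

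\medskip
\textbf{Step 5 (Conclusion).} Combining Steps 3 and 4 in the identity of Step 2 yields
$$
h\|v\|^2 + h^3\|\nabla v\|^2 \lesssim \|P_\varphi v\|^2 + h\|v|_{x_1=0}\|^2 + h^3\|\d_{x_1}v|_{x_1=0}\|^2,
$$
and undoing the conjugation $v=e^{\varphi/h}u$ (noting $P_\varphi v = h^2 e^{\varphi/h}Qu$ and that $\d_{x_1}(e^{\varphi/h}u) = e^{\varphi/h}(\d_{x_1}u + h^{-1}\d_{x_1}\varphi\cdot u)$, whose extra low-order term is absorbed by the $v|_{x_1=0}$ term since $\d_{x_1}\varphi$ is bounded) gives \eqref{e:Carleman}.

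\medskip
\textbf{Main obstacle.} The principal difficulty is Step 4: the analysis of the boundary quadratic form $\mc{B}(v)$, since no boundary conditions on $u$ are imposed. One must exploit $\d_{x_1}\varphi\neq 0$ to ensure that the weight provides the missing coercivity, via a microlocal decomposition of the tangential phase space analogous to Lopatinski\u{\i}-type analysis. Interior subelliptic estimates with H\"ormander condition are classical; the novelty of \cite{LR:97} lies precisely in this boundary analysis.
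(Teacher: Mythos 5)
The paper does not prove this Proposition: it is quoted verbatim from Lebeau--Robbiano \cite[Proposition~1]{LR:97}, and the authors explicitly frame it as a borrowed input. So there is no in-paper proof to compare against; what I can do is assess your reconstruction on its own merits.

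Your Steps~1--2 and~5 are the standard semiclassical conjugation and square expansion and are fine. The two critical steps are~3 and~4, and both are treated more loosely than the statement warrants. In Step~3, the sharp G\aa rding argument cannot be applied directly on the half-space: $v = e^{\varphi/h}u$ is only smooth on $\overline{\R^{n+1}_+}$, not across $\{x_1=0\}$, so a full-space $\Psi$DO calculus doesn't apply. The correct framework (used in \cite{LR:97} and its descendants, see also \cite{LeLe:09}) is a tangential semiclassical calculus in $(x',\xi')$ with $hD_{x_1}$ kept as a polynomial variable, together with an explicit factorization of $q_\varphi$ in $\xi_1$ in the tangentially elliptic region and energy-type arguments in $x_1$ in the remaining regions. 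Your Step~4 hides the key computation: when you expand $2\,\Im (Av,Bv)_{L^2(\R^{n+1}_+)}$, the boundary form naively contains the second trace $(hD_{x_1})^2 v|_{x_1=0}$, but these contributions cancel between the term $2\d_{x_1}\varphi\,(Av)|_{x_1=0}\,\overline{v}$ and the term $(hD_{x_1}Bv)|_{x_1=0}\,\overline{v}$; this algebraic cancellation (tied to the fact that $q$ is a monic quadratic in $\xi_1$) is exactly what makes the statement with only two traces possible, and it needs to be made explicit. After the cancellation, the boundary form has a coefficient of tangential order~$2$ in front of $|v|_{x_1=0}|^2$, so it is \emph{not} literally dominated by $h\|v|_{x_1=0}\|_{L^2}^2 + h^3\|\d_{x_1}v|_{x_1=0}\|_{L^2}^2$ as you claim. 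Rather, one must argue separately: in the tangential region where $|\xi'|$ is large compared to $|\d_x\varphi|$, $q_\varphi$ is elliptic in $\xi_1$ and the interior elliptic estimate directly controls both traces in the relevant norms; in the complementary (tangentially bounded) region the tangential frequencies are uniformly bounded, so the order-$2$ tangential coefficient can indeed be treated as an $L^2$-bounded one. Stated as a uniform absolute bound on $\mc{B}(v)$, your Step~4 would fail.

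In short: the outline is correct and follows the approach of \cite{LR:97}, but Steps~3--4 would need to be rewritten with (i) the tangential semiclassical calculus on the half-space, (ii) the explicit cancellation of the $(hD_{x_1})^2v|_{x_1=0}$ contributions to $\mc{B}(v)$, and (iii) a region-by-region argument rather than an absolute-value bound on $\mc{B}(v)$. Since the paper delegates this to \cite{LR:97}, citing that reference (as you suggest at the start of your proposal) is the appropriate option here.
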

The end of proof of Theorem~\ref{t:global-interp-LR} is then similar to~\cite{LR:95} or~\cite[Appendix]{LZ:98}.
\begin{proof}[End of the proof of Theorem~\ref{t:global-interp-LR}]
We first fix $R>0$ small enough such that $B(0,R)$ is contained in the coordinate chart and that the set $\overline{B}(0,R) \cap \{x_1 = 0\}$ (where the observation shall take place) is contained in the set $\{\psi >0\}$ (where $\psi$ is the cutoff function appearing in~\eqref{e:loc-interp}). Second, we define the weight function $\varphi (x) = e^{-\mu |x-x^a|}- e^{-\mu |x^a|}$, where $\mu>0$ (large, to be chosen) and, for $a \in (0, R)$, we have $x^a = (0, \cdots , 0 , -a) \notin \overline{\R^{n+1}_+}$. Hence, $\varphi$ is smooth and satisfies $\d_{x_1} \varphi \neq 0$ on $K= \R^{n+1}_+ \cap \ovl{B}(0,R)$. 

 According to classical computations (see e.g.~\cite[Lemma~A.1]{LeLe:09}), $\varphi$ satisfies the H\"ormander subellipticity condition on $K$ for $\mu$ large enough (depending on $R$ and $a$, and fixed from now on).

 Note that levelsets of $\varphi$ are balls.
Moreover, we have $\varphi (0)=0$ and $\varphi(x) <0$ if $|x - x^a|>|x^a|$, and in particular on $\{ x_1>0 \}$. 

For $\eps>0$ sufficiently small (depending on $R, a$ and $\mu$), the set $\{\varphi \geq - 4\eps\} \cap \R^{n+1}_+$ is contained in $B(0,R)\cap \R^{n+1}_+$, where~\eqref{e:Carleman} holds. Also, the set $\{\varphi \geq - 4\eps\} \cap \{x_1 = 0\} \subset B(0,R) \cap \{x_1 = 0\}$ is contained in the set $\{\psi >0\}$.

Finally, setting $$U:= \{\varphi > - \frac{\eps}{2}\} \cap \{x_1 >0\} ,$$
we have $U \neq \emptyset$ since $\varphi(0)=0$ and $\varphi<0$ on $\{ x_1>0 \}$.

We let $\chi \in C^\infty(\R^{n+1})$ such that $\chi=1$ on $\{\varphi \geq - 2\eps\} $ and  $\chi=0$ on $\{\varphi \leq - 3\eps\}$,  and apply~\eqref{e:Carleman} to $u = \chi v$. We have $\varphi \leq 0$ on the support of $u$ so that 
\bna
\| e^{\varphi/h}u|_{x_1=0}\|_{L^2(\R^{n})}^2 & \leq &\| \chi|_{x_1=0} v|_{x_1=0}\|_{L^2(\R^{n})}^2 \leq C \| \psi v|_{x_1=0}\|_{L^2(\R^{n})}^2 \\
 \| e^{\varphi/h} \d_{x_1} u|_{x_1=0}\|_{L^2(\R^{n})}^2 & \leq & \|\d_{x_1} \chi|_{x_1=0} v|_{x_1=0}\|_{L^2(\R^{n})}^2  +\| \chi|_{x_1=0} \d_{x_1} v|_{x_1=0}\|_{L^2(\R^{n})}^2 \\
& \leq & C \| \psi v|_{x_1=0}\|_{L^2(\R^{n})}^2  + C \|  \psi \d_{x_1} v|_{x_1=0}\|_{L^2(\R^{n})}^2  .
\ena
Using that $\chi=1$ on $\{\varphi \geq - \frac{\eps}{2}\} \subset\{\varphi \geq - \eps\}$ and $U= \{\varphi \geq - \frac{\eps}{2}\} \cap \R^{n+1}_+$, we have that
\bna
 h\| e^{\varphi/h}u\|_{L^2(\R^{n+1}_+)}^2 +h^3 \| e^{\varphi/h} \nabla u\|_{L^2(\R^{n+1}_+)}^2
& \geq & h\| e^{\varphi/h}u\|_{L^2(U)}^2 +h^3 \| e^{\varphi/h} \nabla u\|_{L^2(U)}^2  \\
 & \geq&  h^3 e^{-\eps/h} \| v \|_{H^1(U)}^2\geq e^{- \frac32 \eps/h} \| v \|_{H^1(U)}^2  .
\ena
Finally, we have $Q\chi v = \chi Qv + [Q, \chi]v$, where $[Q, \chi]$ is a first order differential operator with coefficients supported in $\{-\eps \geq \varphi \geq - 2\eps\}\cap \R^{n+1}_+$. Thus, we have
\bna
h^4\| e^{\varphi/h} Q u\|_{L^2(\R^{n+1}_+)}^2 & \lesssim & \| e^{\varphi/h} \chi Qv \|_{L^2(\R^{n+1}_+)}^2 +  \| e^{\varphi/h}  [Q, \chi] v \|_{L^2(\R^{n+1}_+)}^2 \\
 & \lesssim & \|Qv \|_{L^2(K)}^2 +  e^{-2\eps/h} \| v \|_{H^1(K)}^2  .
\ena
Combining the last three estimates with~\eqref{e:Carleman}, we find that there is $C,h_0>0$ such that for any $v \in C^\infty(\R^{n+1})$, for all $h \in (0,h_0)$, we have 
\bna
e^{-\frac32 \eps/h} \| v \|_{H^1(U)}^2 \leq
 C \| \psi v|_{x_1=0}\|_{L^2(\R^{n})}^2  + C \|  \psi \d_{x_1} v|_{x_1=0}\|_{L^2(\R^{n})}^2  +C  \|Qv \|_{L^2(K)}^2 +  Ce^{-2\eps/h} \| v \|_{H^1(K)}^2
\ena
and hence, for all $h \in (0,h_0)$, 
\bna
 \| v \|_{H^1(U)}^2 \leq
 C e^{\frac32 \eps/h}\left( \| \psi v|_{x_1=0}\|_{L^2(\R^{n})}^2  +  \|  \psi \d_{x_1} v|_{x_1=0}\|_{L^2(\R^{n})}^2  + \|Qv \|_{L^2(K)}^2 \right)+  Ce^{-\frac12 \eps/h} \| v \|_{H^1(K)}^2
\ena
After an optimization in the parameter $h$ (see~\cite{Robbiano:95}), this yields the existence of $C>0$ and $\delta_1\in (0,1)$ such that 
\bna
 \| v \|_{H^1(U)}^2 \leq
 C \left( \| \psi v|_{x_1=0}\|_{L^2(\R^{n})}^2  +  \|  \psi \d_{x_1} v|_{x_1=0}\|_{L^2(\R^{n})}^2  + \|Qv \|_{L^2(K)}^2 \right)^{\delta_1} \| v \|_{H^1(K)}^{2(1-\delta_1)} ,
\ena
which, coming back to the original variables, implies~\eqref{e:loc-interp-wish}, and then according to Theorem~\ref{t:global-interp-LR} and \cite[Lemme~4]{LR:95}), concludes the proof of Theorem~\ref{t:global-interp} (see the above discussion).
\end{proof}

From Theorem~\ref{t:global-interp}, we deduce a proof of Theorem~\ref{t:expo-bound-eigenfunctions}. 
\begin{proof}[Proof of Theorem~\ref{t:expo-bound-eigenfunctions}]
For a non identically vanishing function $\psi$ such that $\supp(\psi) \subset \Sigma_\beta$, we apply Theorem~\ref{t:global-interp} to $v(s, x)= e^{\lambda s} u(x) \in C^\infty((-S,S);H^2(M)\cap H^1_0(M) )$, which satisfies $$Q v=e^{\lambda s} (-\Delta_g-\lambda^2)u, \quad \text{in } \Int(Y_S),
$$ as well as $v|_{(-S,S)\times \d M}=0$. Hence, Equation~\eqref{e:loc-interp} gives
\begin{equation}
\label{e:loc-interp-ter}
  \|v\|_{L^2 (Y_\beta)}^2 \leq C \left( e^{ 2S\lambda}\|(-\Delta_g-\lambda^2) u\|_{L^2(M)}^2 + \|\psi v|_{\Sigma_\beta}\|_{L^2(\Sigma_\beta)}^2 +  \|\psi \d_\nu v|_{\Sigma_\beta}\|_{L^2(\Sigma_\beta)}^2  \right)^\delta \|v\|_{H^1(Y_S)}^{2(1-\delta)}
\end{equation}
and we estimate each remaining term. First, we have
$$
  \|v\|_{L^2 (Y_\beta)}^2 \geq C\frac{e^{2\beta\lambda}}{\lambda} \|u \|_{L^2(M)}^2 .
$$
Second, we write
\bna
 \|v\|_{H^1(Y_S)}^2 & = & \|\d_{s} v\|_{L^2(Y_S)}^2 + \|\nabla_gv\|_{L^2(Y_S)}^2+ \|v\|_{L^2(Y_S)}^2 \\
&=& \|u\|_{L^2(M)}^2\int_{-S}^S  2\lambda^2 e^{2\lambda s} +e^{2\lambda s} ds +( (-\Delta_g-\lambda ^2)u,u )_{L^2(M)}  \int_{-S}^S e^{2\lambda s}ds\\
&\leq& Ce^{c\lambda}(\|u\|_{L^2(M)}^2+\|(-\Delta_g -\lambda^2)u\|_{L^2(M)}\|u\|_{L^2(M)}).
\ena
We may assume that $\|(-\Delta_g-\lambda^2)u\|_{L^2(M)}\leq \|u\|_{L^2(M)}$ since otherwise the inequality~\eqref{e:unique} holds trivially, and therefore obtain
$$
\|v\|_{H^1(Y_S)}^2\leq Ce^{c\lambda}\|u\|_{L^2(M)}^2.
$$
Third, we have 
\bna
 \|\psi v|_{\Sigma_\beta}\|_{L^2(\Sigma_\beta)}^2  +  \|\psi \d_\nu v|_{\Sigma_\beta}\|_{L^2(\Sigma_\beta)}^2
 & \leq &  \int_{-S}^{S}  e^{2\lambda s} \left( \| u |_{\Sigma}\|_{L^2(\Sigma)}^2 + \|\d_\nu u |_{\Sigma}\|_{L^2(\Sigma)}^2 \right) ds \\
 & \leq &2S  e^{2\lambda S} \left( \| u |_{\Sigma}\|_{L^2(\Sigma)}^2 + \|\d_\nu u |_{\Sigma}\|_{L^2(\Sigma)}^2 \right) .
\ena
Plugging the above three inequalities in~\eqref{e:loc-interp-ter} and dividing by $\|u \|_{L^2(M)}^{2(1-\delta)}$ (if non zero) yields the sought result.
\end{proof}

%%%%%%%%%%%%%%%%%%%%%%%%%%%%%%%%%%%%%%%
\subsection{From interpolation inequality to observability in an abstract setting: the original Lebeau-Robbiano method revisited}
\label{s:LR95-revisited}
In this section, we explain how to deduce the observability estimate for the heat equation from the interpolation inequality of Theorem~\ref{t:global-interp}. This follows the Lebeau-Robbiano method introduced in~\cite{LR:95} in its original form (used also in~\cite{Lea:10}), as opposed to the simplified version (see e.g.~\cite{LZ:98,LeLe:09}) which uses the stronger spectral inequality~\cite{JL:99,LZ:98} (which we do not prove in the present context). We explain how this method can be simplified using~\cite{Miller:10,EZ:11s,EZ:11}.

We consider an abstract setting containing the above particular situation of the heat equation. Most results presented here still hold in the much more general abstract setting of~\cite{Miller:10}. 
In Section~\ref{s:heat-application} below, we explain how the problem of the heat equation controlled by a hypersurface is put in this general framework.

\medskip
We denote by $H$ (with norm $\|\cdot \|$) and $K$ (with norm $\|\cdot \|_K$) two Hilbert spaces, namely the state space and the observation space. We denote by $A : D(A) \subset H \to H$ a non-positive selfadjoint operator on $H$, with compact resolvent.
We denote by $(\phi_j)$ an orthonormal basis of eigenfunctions associated to the eigenvalues $\lambda_j^2 \geq 0$ of $-A$ (we keep the notation used for the Laplace operator) and set
\bnan
\label{e:Elambda}
E_\lambda:= \vect\{\phi_j , \lambda_j \leq \lambda\},  \quad \lambda>0 .
\enan

The operator $A$ generates a contraction semigroup $(e^{tA})$ on $H$. 
We denote by $B \in \L (D(A) ; K)$ the observation operator. We say that $B$ is an admissible observation for $(e^{tA})$ if there is $T>0$ and $C_{adm,T}>0$ such that
\bnan
\label{e:admissibility}
\|B e^{tA}y\|_{L^2((0,T), K)} \leq C_{adm,T}\|y\| , \quad \text{ for all }y \in D(A) .
\enan
On account of the semigroup property,~\eqref{e:admissibility} holds for all $T>0$ if it holds for some $T$ (see~\cite[Section~2.3]{Cor:book}). Hence, under the above admissibility assumption, for any $T>0$, the map $u_0 \mapsto (t \mapsto B e^{tA}u_0 )$ extends uniquely as a continuous linear map $H \to L^2(0,T ; K )$,
which we shall still denote $B e^{tA}$.

\bigskip
In our next lemma, we use the notation, for $s \in \N$ and $ \tau >0$,
$$
\mc{H}^s_\tau = \bigcap_{n=0}^s H^{s-n}\left( - \tau , \tau ; \mathcal{D}((-A)^{n/2})\right)  ,
$$
normed by 
$$  \|v\|_{\mathcal{H}^s_\tau} = \bigg( \sum_{n+m \leq s} \| \partial_t^{m} (I-A)^{n/2} v\|_{L^2\left( - \tau, \tau ; H \right)}^2 \bigg)^{1/2} .
$$

\begin{lemma}
\label{l:interpolation-elliptic}
Let $S>\beta>0$ and $\varphi \in C^\infty_c(-S,S)$. Assume there is $C>0$ and $\delta \in (0,1)$ such that for all $v\in \mc{H}^2_S$, we have
\begin{equation}
\label{e:interp-abstract}
\|v\|_{\mc{H}^1_{\beta}} \leq C \left(\|(-\d_s^2-A)v\|_{\mc{H}^0_S} + \|\varphi(s)Bv \|_{L^2(-S,S; K)}  \right)^\delta \|v\|_{\mc{H}^1_S}^{1-\delta}.
\end{equation}
Then, there exists $S,C,c >0$ such that
\bna
\|v_0\|^2 +\|v_1\|^2 \leq Ce^{c \lambda} \left\|\varphi(s)Bv(s) \right\|_{L^2(-S,S; K)}^2 , \quad \text{for all } \lambda>0 , (v_0 , v_1)\in E_\lambda \times E_\lambda ,
\ena
with 
\bnan
\label{e:v-cosh-sinh}
v(s) = \cosh(s \sqrt{-A})v_0 +  \frac{\sinh(s \sqrt{-A})}{\sqrt{-A}} v_1 .
\enan
\end{lemma}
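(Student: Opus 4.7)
The plan is to apply the hypothesis \eqref{e:interp-abstract} to the specific function $v$ defined by \eqref{e:v-cosh-sinh} and then reduce both sides to expressions in $\|v_0\|^2+\|v_1\|^2$, exploiting the frequency cutoff $v_0,v_1\in E_\lambda$. Since $E_\lambda$ is finite-dimensional and contained in $D((-A)^n)$ for every $n$, the function $v$ is smooth in $s$ with values in $E_\lambda$, so in particular $v\in\mc{H}^2_S$ and the hypothesis applies. Moreover, functional calculus gives $(-\partial_s^2-A)v=0$ termwise in the eigenbasis, so \eqref{e:interp-abstract} becomes
\begin{equation*}
\|v\|_{\mc{H}^1_\beta} \ \leq\ C\,\|\varphi(s)Bv\|_{L^2(-S,S;K)}^{\delta}\,\|v\|_{\mc{H}^1_S}^{1-\delta}.
\end{equation*}
The proof is thus reduced to the sandwich
\begin{equation*}
c\,(\|v_0\|^2+\|v_1\|^2)\ \leq\ \|v\|_{\mc{H}^1_\beta}^2,\qquad \|v\|_{\mc{H}^1_S}^2\ \leq\ C\,e^{c\lambda}(\|v_0\|^2+\|v_1\|^2),
\end{equation*}
after which dividing out and raising the result to the power $2/\delta$ yields the claim.

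For the upper bound, I would use that on $E_\lambda$ the operators $\cosh(s\sqrt{-A})$ and $\sinh(s\sqrt{-A})/\sqrt{-A}$ have operator norms at most $e^{S\lambda}$ and $Se^{S\lambda}$ respectively (the latter via the elementary inequality $|\sinh(x)/x|\leq\cosh(x)$). This yields, for every $|s|\leq S$, the bound $\|v(s)\|^2+\|\partial_s v(s)\|^2\leq C_S(1+\lambda^2)e^{2S\lambda}(\|v_0\|^2+\|v_1\|^2)$, together with an additional factor $(1+\lambda^2)$ in the bound for $\|(I-A)^{1/2}v(s)\|^2$. Integrating over $(-S,S)$ produces the desired exponential upper bound.

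The key (and only non-routine) step is the lower bound, which I expect to be the main obstacle. Expanding $v_0=\sum_j a_j\phi_j$, $v_1=\sum_j b_j\phi_j$ with $\lambda_j\leq\lambda$, I write $v(s)=\sum_j c_j(s)\phi_j$ where $c_j(s)=\cosh(s\lambda_j)a_j+\sinh(s\lambda_j)\lambda_j^{-1}b_j$ (interpreted as $a_j+sb_j$ when $\lambda_j=0$). The crucial point is that $\cosh(s\lambda_j)\sinh(s\lambda_j)$ is odd in $s$, so the cross-term in $|c_j(s)|^2$ integrates to zero over the symmetric interval $(-\beta,\beta)$, leaving
\begin{equation*}
\int_{-\beta}^\beta |c_j(s)|^2\,ds\ \geq\ \Bigl(\int_{-\beta}^\beta \cosh(s\lambda_j)^2\,ds\Bigr)\,a_j^2\ \geq\ 2\beta\,a_j^2.
\end{equation*}
The same parity argument applied to $\partial_s v(s)=\sum_j\bigl(\lambda_j\sinh(s\lambda_j)a_j+\cosh(s\lambda_j)b_j\bigr)\phi_j$ gives $\int_{-\beta}^\beta \|\partial_s v(s)\|^2\,ds\geq 2\beta\,\|v_1\|^2$. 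Summing over $j$ and using that $\|v\|_{\mc{H}^1_\beta}^2\geq \|v\|_{L^2(-\beta,\beta;H)}^2+\|\partial_s v\|_{L^2(-\beta,\beta;H)}^2$ produces the lower bound $\|v\|_{\mc{H}^1_\beta}^2\geq 2\beta(\|v_0\|^2+\|v_1\|^2)$, and the three ingredients assembled give the lemma.
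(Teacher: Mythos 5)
Your proof is correct, and your lower bound is obtained by a genuinely different (and arguably cleaner) device than the one used in the paper. The paper works only with $\|v\|_{\mathcal{H}^0_\beta}$ and decomposes $\Pi_+ v_0 = w^+ + w^-$, $\Pi_+ v_1 = \sqrt{-A}(w^+ - w^-)$; the lower bound is then reduced to positivity of a $2\times 2$ Gram matrix $\mathcal{Q}_j$ whose eigenvalues are $\tfrac{\sinh X_j}{X_j}\pm 1$ with $X_j = 2\beta\lambda_j$. Because the parallelogram identity introduces a factor $\lambda_j^{-2}$ in front of the $v_1$-coefficients, the paper must exploit that these eigenvalues grow like $e^{X_j/2}$ to recover $\|v_1\|^2$ without a $\lambda$-dependent loss. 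You instead stay in the eigenbasis coefficients $(a_j, b_j)$ directly, observe that the cross terms $\cosh(s\lambda_j)\sinh(s\lambda_j)\Re(a_j\overline{b_j})$ are odd in $s$ and hence integrate to zero over $(-\beta,\beta)$, and then use $\cosh^2 \ge 1$ in both $\int_{-\beta}^\beta |c_j|^2$ and $\int_{-\beta}^\beta |\partial_s c_j|^2$ to read off $\|v_0\|^2$ and $\|v_1\|^2$ separately. This requires including the $\|\partial_s v\|_{L^2}$ piece of the $\mathcal{H}^1_\beta$ norm (which the paper does not use for the lower bound), but in exchange it avoids both the $w^\pm$ change of variables and the exponential eigenvalue estimate for $\mathcal{Q}_j$. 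The upper bound and the final elimination step are the same in both arguments; your upper bound reasoning is sound, though as a minor remark the $(1+\lambda^2)$ factor from $(I-A)^{1/2}$ multiplies the bound on $\|v(s)\|^2$ only and should not be compounded with the $(1+\lambda^2)$ coming from $\partial_s v$, but since any polynomial in $\lambda$ is absorbed into $e^{c\lambda}$ this has no consequence.
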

Note that in the formula~\eqref{e:v-cosh-sinh}, we extend $\cosh(s \sqrt{-A})$ (resp. $\frac{\sinh(s \sqrt{-A})}{\sqrt{-A}}$) by continuity by $\id$ (resp. by $s\id$) on $\ker(A)$. Thus, denoting by $\Pi_0$ the orthogonal projector on $\ker(A)$ and $\Pi_+ = \id-\Pi_0$,~\eqref{e:v-cosh-sinh} can be rewritten more explicitely by
$$
v(s) = \cosh(s \sqrt{-A})\Pi_+ v_0  + \Pi_0 v_0 +  \frac{\sinh(s \sqrt{-A})}{\sqrt{-A}} \Pi_+ v_1 + s \Pi_0 v_1 .
$$
Hence $v(s)$ in~\eqref{e:v-cosh-sinh} is the unique solution to 
$$
(-\d_s^2-A)v = 0 , \quad (v, \d_s v)|_{s=0} = (v_0 , v_1) .
$$

\begin{proof}[Proof of Lemma~\ref{l:interpolation-elliptic}]
Note first that with $v$ in~\eqref{e:v-cosh-sinh}, we have $(-\d_s^2-A)v(s)=0$ so that, in \eqref{e:interp-abstract}, it suffices to estimate $\|v\|_{\mc{H}^1_S}$ from above and $\|v\|_{\mc{H}^1_{\beta}}$ from below.
For $(v_0,v_1)\in E_\lambda \times E_\lambda$, we denote by $w_k = \Pi_0 v_k$, $k=0,1$, and $w^\pm = \frac12 (\Pi_+ v_0 \pm (-A)^{-1/2}\Pi_+ v_1)$. This is 
$$
\Pi_+ v_0 = w^+ + w^- , \quad \Pi_+ v_1 =\sqrt{-A} w^+ - \sqrt{-A}  w^-,  
$$
and the parallelogram law yields
$$
\|(-A)^{\frac{k}{2}} \Pi_+ v_0\|^2 + \|(- A)^{\frac{k-1}{2}} \Pi_+ v_1\|^2 = 2( \|(-A)^{\frac{k}{2}} w^+\|^2 + \|(- A)^{\frac{k}{2}} w^- \|^2). 
$$
We also have, with $w^\pm = \sum_{0<\lambda_j \leq \lambda}w_j^\pm \phi_j$,  
\begin{align*}
v(s) & = \cosh(s \sqrt{-A})v_0 + \frac{\sinh(s \sqrt{-A})}{\sqrt{-A}} v_1 = e^{s\sqrt{-A}}w^+ + e^{-s\sqrt{-A}}w^- + w_0+ sw_1 \\
& = \sum_{0<\lambda_j \leq \lambda} (e^{s \lambda_j} w_j^+ + e^{-s \lambda_j} w_j^-) \phi_j + w_0+ sw_1 .
\end{align*}
 Now, we estimate $\|v\|_{\mc{H}^1_S}$ and $\|v\|_{\mc{H}^1_{\beta}}$ in terms of $\lambda$. Firstly, we have
\begin{align*}
  \|v\|_{\mc{H}^1_{\beta}}^2 \geq \|v\|_{\mc{H}^0_{\beta}}^2&  = \sum_{0<\lambda_j \leq \lambda}  \int_{-\beta}^{\beta} \left| e^{s \lambda_j} w_j^+ + e^{-s \lambda_j} w_j^- \right|^2 ds + \int_{-\beta}^{\beta} \left\| w_0+ sw_1\right\|^2 ds \\
   &  = \sum_{0<\lambda_j \leq \lambda}  \frac{e^{2 \beta \lambda_j} -e^{-2 \beta \lambda_j} }{2\lambda_j} \left(|w_j^+|^2 +  |w_j^-|^2 \right) + 4 \beta \Re(w_j^+ \overline{w_j^-}) 
   + 2\beta \|w_0\|^2 + \frac23 \beta^3 \|w_1\|^2 \\
&   =  2 \beta \sum_{0<\lambda_j \leq \lambda} \mathcal{Q}_j \big((w_j^+, w_j^-),(\overline{w_j^+}, \overline{w_j^-}) \big)+  2\beta \|w_0\|^2 + \frac23 \beta^3 \|w_1\|^2,
  \end{align*}
where  $\mathcal{Q}_j$ is the matrix
$$
\mathcal{Q}_j =
\left( 
\begin{array}{cc}
\frac{\sinh(X_j)}{X_j}& 1\\
1 & \frac{\sinh(X_j)}{X_j}
\end{array}
\right), 
\quad X_j= 2 \beta \lambda_j.
$$
The eigenvalues of $\mathcal{Q}_j$ are $\frac{\sinh(X_j)}{X_j} \pm1 \geq \eps e^{X_j/2} $ on the set $[2 \beta \tilde\lambda_0 , +\infty[$, where $\tilde\lambda_0$ is the first non-zero eigenvalue of $-A$, and $\eps$ only depends on $2 \beta \tilde\lambda_0$. As a consequence, we obtain
\begin{align*}
 \|v\|_{\mc{H}^1_{\beta}}^2\geq C \left(
\|v_0\|^2 + \|v_1\|^2   \right) .
\end{align*}
Secondly, we also have
\bna
  \|v\|_{\mc{H}^1_{S}}^2 & = & \int_{-S}^S \|\d_s v\|^2 + \|(I-A)^{\frac12} v\|^2+ \|v\|^2  ds\\
& \leq  &\sum_{0< \lambda_j\leq \lambda}(|w_j^+|^2 + |w_j^-|^2) \int_{-S}^{S} {(2\lambda_j^2 + 4 )}e^{2s\lambda_j}  ds
  \\
&& + 2\int_{-S}^{S} \left\| w_0+ sw_1\right\|^2 ds +\int_{-S}^{S} \left\| w_1\right\|^2 ds  \\
& \leq  & C e^{3S\lambda}  \left( \|v_0\|^2 + \|v_1\|^2  \right).
\ena
Combining the last two estimates together with~\eqref{e:interp-abstract} yields 
\begin{align*}
 \|v_0\|^2 + \|v_1\|^2 \leq C \|\varphi(s)Bv \|_{L^2(-S,S; K)}^{2\delta}  \left(Ce^{3S\lambda} \big( \|v_0\|^2 + \|v_1\|^2 \big)\right)^{1-\delta} ,
\end{align*}
and hence the sought result when dividing by $\big(\|v_0\|^2 + \|v_1\|^2 \big)^{1-\delta}$ .
\end{proof}

The next step of the Lebeau-Robbiano method relies on a so-called ``transmutation argument'' to deduce from the observability of the elliptic system on $E_\lambda$ the observability of the heat equation on $E_\lambda$, with a precise estimate on the cost in terms of $\lambda$ and $T$ (observation time). Here, we use an idea of Ervedoza and Zuazua~\cite{EZ:11s,EZ:11} to simplify the original argument of Lebeau and Robbiano~\cite{LR:95} (who used the moment method of Russell to pass from the elliptic system to the parabolic system, and was quite technically involved, see~\cite{Lea:10} for a review of the method).

\begin{lemma}
\label{l:elliptic-parabolic}
Assume that there exists $S,C,c >0$ such that
\bna
\|v_0\| \leq Ce^{c \lambda} \left\|B \frac{\sinh(s \sqrt{-A})}{\sqrt{-A}} v_0 \right\|_{L^2(-S,S; K)} , \quad \text{for all } \lambda>0 , v_0 \in E_\lambda .
\ena
Then, there exist $C,c>0$ such that
\bna
\|e^{TA}u_0\|  \leq C e^{c\lambda+ \frac{c}{T}} \|Be^{tA}u_0\|_{L^2(0,T; K)} ,  \quad \text{for all } T>0,  \lambda>0 , u_0 \in E_\lambda .
\ena
\end{lemma}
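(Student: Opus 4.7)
The plan is to apply the elliptic observability hypothesis to the initial condition $e^{TA}u_0$, and then represent the resulting elliptic trajectory as a scalar integral of the parabolic trajectory $t\mapsto e^{tA}u_0$, following the transmutation idea of Ervedoza--Zuazua~\cite{EZ:11s,EZ:11}. Fix $u_0\in E_\lambda$. Since $E_\lambda$ is stable under $e^{TA}$, we may take $v_0:=e^{TA}u_0\in E_\lambda$ in the hypothesis, which gives
\begin{equation}
\label{e:step1-plan}
\|e^{TA}u_0\| \leq Ce^{c\lambda}\left\|B\,\frac{\sinh(s\sqrt{-A})}{\sqrt{-A}}\,e^{TA}u_0\right\|_{L^2(-S,S;K)}.
\end{equation}
The task is thereby reduced to dominating the right-hand side of~\eqref{e:step1-plan} by $Ce^{c/T}\|Be^{tA}u_0\|_{L^2(0,T;K)}$.

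The core step will be to build a scalar transmutation kernel $\eta=\eta_{\lambda,T}\in L^2((-S,S)\times(0,T))$, depending on $\lambda$ and $T$ but not on $u_0$ nor on $B$, which satisfies the moment identities
\begin{equation}
\label{e:moment-plan}
\int_0^T \eta(s,t)\,e^{-t\mu}\,dt = \frac{\sinh(s\sqrt{\mu})}{\sqrt{\mu}}\,e^{-T\mu}, \qquad \mu\in\{\lambda_j^2:\lambda_j\leq\lambda\},\ s\in(-S,S),
\end{equation}
with the quantitative bound $\|\eta\|_{L^2}\leq Ce^{c/T}$ uniform in $\lambda$. Once such an $\eta$ is available, a spectral decomposition on $E_\lambda$ gives, for every $u_0\in E_\lambda$,
$$\frac{\sinh(s\sqrt{-A})}{\sqrt{-A}}\,e^{TA}u_0=\int_0^T \eta(s,t)\,e^{tA}u_0\,dt,$$
and applying $B$ (which commutes with the scalar integration), then Cauchy--Schwarz in $t$ followed by integration in $s$, yield
$$\left\|B\,\frac{\sinh(s\sqrt{-A})}{\sqrt{-A}}\,e^{TA}u_0\right\|_{L^2(-S,S;K)} \leq \|\eta\|_{L^2}\,\|Be^{tA}u_0\|_{L^2(0,T;K)}.$$
Inserting this into~\eqref{e:step1-plan} yields the conclusion with constant $Ce^{c\lambda+c/T}$.

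The hard part, and the entirety of the technical content, is the construction of $\eta$ with the quantitative bound $\|\eta\|_{L^2}\leq Ce^{c/T}$. Writing $\eta(s,\cdot)$ of minimal $L^2(0,T)$ norm in the finite-dimensional span $V_\lambda:=\operatorname{span}\{e^{-t\lambda_j^2}:\lambda_j\leq\lambda\}$ reduces~\eqref{e:moment-plan} to the linear system $Md(s)=y(s)$, where $M_{jk}=(1-e^{-T(\lambda_j^2+\lambda_k^2)})/(\lambda_j^2+\lambda_k^2)$ is a Cauchy-type Gram matrix and $y_j(s)=(\sinh(s\lambda_j)/\lambda_j)\,e^{-T\lambda_j^2}$. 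The $e^{c/T}$ factor is traceable to the elementary estimate $\sup_{\mu\geq 0}\mu^{-1/2}\sinh(S\sqrt{\mu})e^{-T\mu}\leq Ce^{cS^2/T}$ controlling $\|y(s)\|_2$, while uniform control of $\|M^{-1}\|$ in $\lambda$ is provided by the classical biorthogonal-family construction of Fattorini--Russell / M\"untz--Sz\'asz. A more hands-on alternative, preferable for the explicit bookkeeping, is to solve~\eqref{e:moment-plan} via the generating-function identity $\sum_{n\geq 0}n!\,x^n/(2n+1)!=\int_0^1 e^{xu(1-u)}du$, which after a change of variable produces a concrete kernel supported in $\{T-t\lesssim s^2\}$ whose $L^2$ norm can be estimated by direct computation.
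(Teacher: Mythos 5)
Your first step (taking $v_0 = e^{TA}u_0 \in E_\lambda$ in the hypothesis and then seeking a scalar transmutation kernel) is a sensible reduction, and the Cauchy--Schwarz manipulation you describe is correct. However, the moment problem you pose is genuinely harder than the one the paper solves, and you do not close it.

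The key difference is what you ask the kernel to reproduce. You ask for $\eta_{\lambda,T}$ satisfying $\int_0^T \eta(s,t)\,e^{-t\mu}\,dt = \frac{\sinh(s\sqrt{\mu})}{\sqrt{\mu}}\,e^{-T\mu}$ at each $\mu=\lambda_j^2\leq \lambda^2$. Comparing growth rates in $\mu$, no fixed $L^2(0,T)$ kernel can satisfy this for \emph{all} $\mu\geq 0$ once $s\neq 0$: the left side grows like $e^{r^*\mu}$ where $r^*$ is the right endpoint of the support of $\eta(s,T-\cdot)$, while the right side decays; so $\eta$ must depend on $\lambda$, and the uniform-in-$\lambda$ bound $\|\eta_{\lambda,T}\|_{L^2}\leq Ce^{c/T}$ is not at all automatic. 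You gesture at two ways to get it, and both have problems. (i) The phrase ``uniform control of $\|M^{-1}\|$ in $\lambda$'' is not correct: the Gram matrix $M$ of the family $\{e^{-t\lambda_j^2}\}_{\lambda_j\leq\lambda}$ becomes arbitrarily ill-conditioned as $\lambda\to\infty$; what the Fattorini--Russell biorthogonal construction gives is bounds $\|q_j\|_{L^2(0,T)}\lesssim e^{C\lambda_j+C/T}$ on the biorthogonal family, and one must then \emph{combine} these with the decay $|y_j(s)|\leq \frac{\sinh(S\lambda_j)}{\lambda_j}e^{-T\lambda_j^2}$ and sum over $j$; this is real work (it is precisely the ``moment method of Russell'' that the paper says it wants to avoid because it is ``quite technically involved''). (ii) The generating-function identity $\sum_{n\geq 0} n!\,x^n/(2n+1)!=\int_0^1 e^{xu(1-u)}\,du$ that you quote does not match your target: $\sinh(z)/z=\sum_{n\geq 0} z^{2n}/(2n+1)!$, with \emph{no} $n!$ in the numerator, so this identity does not produce a kernel representing $\frac{\sinh(s\sqrt{\mu})}{\sqrt{\mu}}e^{-T\mu}$. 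As written, that route does not go through.

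The paper avoids all this by following Ervedoza--Zuazua exactly: it takes a \emph{fixed}, $\lambda$-independent kernel $k_T$ solving the $1$D heat equation $(\partial_t-\partial_s^2)k_T=0$ with boundary data $k_T|_{s=0}=0$, $\partial_sk_T|_{s=0}=g_1(t):=e^{-\alpha(1/t+1/(T-t))}$ and $k_T|_{t=0}=k_T|_{t=T}=0$. This gives, for every $\mu\geq 0$, $\int_0^T k_T(t,s)e^{-t\mu}\,dt = \frac{\sinh(s\sqrt{\mu})}{\sqrt{\mu}}\,\hat g_1(\mu)$, i.e.\ the propagator lands on the \emph{Gevrey-smeared} vector $W:=\int_0^T g_1(t)e^{tA}u_0\,dt$ rather than on $e^{TA}u_0$ itself. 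The hypothesis is applied with $v_0=W\in E_\lambda$, and one then proves separately the elementary lower bound $\|W\|\geq \big(\int_0^T g_1\big)\,\|e^{TA}u_0\| \gtrsim T\,e^{-c/T}\|e^{TA}u_0\|$, together with the pointwise Gevrey bound on $k_T$ from Lemma~\ref{l:EZ-elliptic} to control $\|Bv(s)\|_{L^2(-S,S;K)}$. All three estimates are explicit and $\lambda$-independent. If you want to complete your version, you should either (a) actually carry out the Fattorini--Russell estimate for $\eta_{\lambda,T}$ and verify the resulting bound is uniform in $\lambda$ (which would give a different, longer proof), or (b) relax your exact representation of $e^{TA}u_0$ to a smeared version as in the paper, which removes the $\lambda$-dependence of the kernel and is substantially simpler.
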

Note that in the assumption of Lemma~\ref{l:elliptic-parabolic}, $\frac{\sinh(s \sqrt{-A})}{\sqrt{-A}}$ can equivalently be replaced by $\cosh(s \sqrt{-A})$. 

We need the following lemma, which is a slight variant on~\cite{EZ:11s,EZ:11}.
 \begin{lemma}
\label{l:EZ-elliptic}
Given $S,T>0$, $\delta \in (0,1)$, and $\alpha > S^2 \left( 1+ \frac{1}{\delta} \right)$,
there exists a function $k_T \in C^{\infty}([0,T]\times [-S,S])$ satisfying
 \bnan
 \label{e:heat-1-D}
 (\d_t -\d_{s}^2) k_T = 0 , \quad  \text{for }(t,s) \in (0,T) \times (-S,S), 
 \enan
\bnan
\label{e:kT-bord}
\begin{cases}
k_T|_{t=0} = 0, \quad  k_T|_{t=T} = 0, & \text{for }s \in  (-S,S), \\
k_T|_{s=0} =0 , \quad \d_s k_T|_{s=0} =e^{- \alpha \left(\frac{1}{t} + \frac{1}{T-t} \right)} , & \text{for }t \in  (0,T) ,
\end{cases}
\enan
\bnan
\label{e:estimEZ}
|k_T(t,s)| \leq |s| e^{\frac{1}{\tau} \left(\frac{s^2}{\delta} -\frac{\alpha}{1+\delta}\right)}, \quad \tau=\min(t, T-t) ,  \quad  \text{for }(t,s) \in (0,T) \times (-S,S).
\enan
 \end{lemma}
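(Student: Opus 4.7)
The plan is to construct $k_T$ as the (formal, then justified) power series solution in $s$ of the heat equation $\partial_s^2 k = \partial_t k$ with Cauchy data $k|_{s=0}=0$, $\partial_s k|_{s=0}=\eta(t)$, where
\[
\eta(t) := e^{-\alpha(1/t+1/(T-t))} \quad \text{for } t\in(0,T), \quad \eta(0)=\eta(T)=0.
\]
Identifying coefficients in $\partial_s^2 k = \partial_t k$ with the ansatz $k=\sum a_n(t) s^n$ forces $a_{2n}=0$ and $a_{2n+1}=\eta^{(n)}/(2n+1)!$, hence
\[
k_T(t,s) := \sum_{n=0}^\infty \frac{s^{2n+1}}{(2n+1)!}\, \eta^{(n)}(t).
\]

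The core of the proof is a Gevrey-type bound on $\eta^{(n)}$. Note $\eta$ extends holomorphically to a neighborhood of $(0,T)\subset\mathbb{C}$ by $\eta(z)=e^{-\alpha(1/z+1/(T-z))}$. Fix $t\in (0,T)$ and, by the $t\leftrightarrow T-t$ symmetry of the problem, assume $\tau=t\leq T/2$. On the circle $|z-t|=r$ with $r=\delta\tau$, parametrize $z=t+re^{i\theta}$ and compute
\[
\Re(1/z) = \frac{1+\delta\cos\theta}{t(1+2\delta\cos\theta+\delta^2)};
\]
an elementary calculation shows this is a decreasing function of $\cos\theta$, attaining its minimum $\frac{1}{(1+\delta)\tau}$ at $\theta=0$. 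Moreover, $\Re(T-z)\geq T-t-r>0$ on the contour, so $\Re(1/(T-z))\geq 0$. Therefore $|\eta(z)|\leq e^{-\alpha/((1+\delta)\tau)}$, and Cauchy's integral formula gives
\[
|\eta^{(n)}(t)| \leq \frac{n!}{(\delta\tau)^n}\, e^{-\alpha/((1+\delta)\tau)}.
\]
In particular, $\eta\in C^\infty([0,T])$ with $\eta^{(n)}(0)=\eta^{(n)}(T)=0$ for every $n\geq 0$.

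With this estimate the series defining $k_T$ and each of its termwise derivatives converges uniformly on $[0,T]\times[-S,S]$, so $k_T\in C^\infty([0,T]\times\mathbb{R})$. The heat equation \eqref{e:heat-1-D} and the Cauchy conditions at $s=0$ follow directly from the construction; the vanishing conditions at $t=0,T$ in \eqref{e:kT-bord} follow from the vanishing of all $\eta^{(n)}$ at the endpoints. For the pointwise bound \eqref{e:estimEZ}, using $\frac{n!}{(2n+1)!}\leq \frac{1}{(n+1)!}$ together with the Gevrey bound yields
\[
|k_T(t,s)| \leq |s|\, e^{-\alpha/((1+\delta)\tau)} \sum_{n=0}^\infty \frac{1}{(n+1)!} \left(\frac{s^2}{\delta\tau}\right)^{\!n} \leq |s|\, e^{-\alpha/((1+\delta)\tau)+s^2/(\delta\tau)},
\]
where we used $\sum_{n\geq 0} x^n/(n+1)! = (e^x-1)/x \leq e^x$ for $x\geq 0$. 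This is precisely \eqref{e:estimEZ}.

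The main obstacle is the Gevrey estimate on $\eta^{(n)}$: everything hinges on tuning the Cauchy contour so that the product of $r^{-n}$ and the pointwise bound on $|\eta(z)|$ comes out with the correct constants $1/\delta$ (multiplying $s^2$) and $1/(1+\delta)$ (multiplying $\alpha$). Taking $r=\delta\tau$ is exactly what balances these two; the hypothesis $\alpha>S^2(1+1/\delta)$ is just the requirement that the resulting exponent $s^2/\delta-\alpha/(1+\delta)$ be negative on $|s|\leq S$, which, although not needed for the construction itself, is what makes $k_T$ genuinely small in the sequel.
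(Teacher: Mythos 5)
Your construction is identical to the paper's: the same power series transmutation kernel, the same choice of Gevrey data $\eta$, and the same final bound on $k_T$, so this is essentially the same proof. The only difference is that where the paper simply cites~\cite[Lemma~3.1]{EZ:11} for the Gevrey estimate $|\eta^{(n)}(t)|\leq n!(\delta\tau)^{-n}e^{-\alpha/((1+\delta)\tau)}$, you supply a short self-contained derivation via Cauchy's integral formula on the circle $|z-t|=\delta\tau$, which is correct (your computation of $\Re(1/z)$ and its minimization, the positivity of $\Re(1/(T-z))$, the combinatorial bound $n!/(2n+1)!\leq 1/(n+1)!$, and the summation $\sum_{n\geq 0}x^n/(n+1)!\leq e^x$ all check out).
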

For the proof of Lemma~\ref{l:EZ-elliptic}, we follow \cite[Section~3.1]{EZ:11s}, where the authors go from the wave equation to the heat equation. Here, we use the method to go from an elliptic equation to heat equation. The only difference is that we take $g_{2k+1} = g_1^{(k)}$ where Ervedoza and Zuazua~\cite{EZ:11s,EZ:11} take $g_{2k+1} = (-1)^k g_1^{(k)}$ in the proof below.
 \bnp[Sketch of proof of Lemma~\ref{l:EZ-elliptic}]
 
 The starting point is that, if it converges, then the function
 \bnan
\label{e:def-kT}
 k_T(t,s) = \sum_{n \in \N} \frac{s^n}{n!} g_n(t) , \quad g_{2k} = g_0^{(k)} , \quad g_{2k+1} = g_1^{(k)}, \quad k \in \N ,
 \enan
 solves~\eqref{e:heat-1-D}.
 Choose $g_0=0$ and, for $\alpha>0$, choose $g_1$ to be the Gevrey function 
 $$
 g_1(t) = 
 \begin{cases}
e^{- \alpha \left(\frac{1}{t} + \frac{1}{T-t} \right)} & \text{if } t\in (0,T), \\
0 & \text{otherwise}.  
 \end{cases}
 $$
Then, \cite[Lemma~3.1]{EZ:11} yields for all $\delta \in(0,1)$, $|g_{2k+1}(t)| = |g_1^{(k)}(t)|\leq \frac{k!}{(\delta\tau)^k}e^{-\frac{\alpha}{(1+\delta)\tau}}$ with $\tau=\min(t, T-t)$. This implies (see~\cite[Equation~(3.8)]{EZ:11s}) that for all $\delta \in (0,1)$, $S>0$ and $\alpha > S^2 \left( 1+ \frac{1}{\delta} \right)$, the series~\eqref{e:def-kT} converges towards $k_T \in C^{\infty}([0,T]\times [-S,S])$ with~\eqref{e:estimEZ}-\eqref{e:kT-bord}.
\enp

With this lemma, the proof of Lemma~\ref{l:elliptic-parabolic} follows~\cite[Section~3.1]{EZ:11s}.
 \begin{proof}[Proof of Lemma~\ref{l:elliptic-parabolic}]
We first pick $\delta \in (0,1)$, and $\alpha > S^2 \left( 1+ \frac{1}{\delta} \right)$, and denote by $k_T$ the kernel then furnished by Lemma~\ref{e:def-kT}.
Given $u_0 \in E_\lambda$, we define 
$$
v(s):= \int_0^T k_T(t,s) e^{tA}u_0 dt .
$$
From the above properties of $k_T$, the function $v(s)$ satisfies 
$$
(v, \d_s v)|_{s=0} = \left( 0, \int_0^Tg_1(t) e^{tA}u_0 dt \right) \in E_\lambda \times E_\lambda ,
$$
where $g_1(t) = e^{- \alpha \left(\frac{1}{t} + \frac{1}{T-t} \right)}$, together with 
\begin{align*}
\d_s^2 v(s) &=   \int_0^T \d_s^2 k_T(t,s) u(t) dt =  \int_0^T \d_t k_T(t,s) e^{tA}u_0 dt = - \int_0^T k_T(t,s) \d_t e^{tA}u_0 dt \\
& = - \int_0^T k_T(t,s) A e^{tA}u_0 dt = - A \left(\int_0^T k_T(t,s) e^{tA}u_0 dt \right) = - A v(s) .
\end{align*}
Hence, $v(s) = \frac{\sinh(s \sqrt{-A})}{\sqrt{-A}} \left( \int_0^Tg_1(t) e^{tA}u_0 dt \right)$, so that Lemma~\ref{l:elliptic-parabolic} yields the estimate
$$
\left\| \int_0^Tg_1(t) u(t) dt \right\| \leq C e^{c\lambda} \|B v(s)\|_{L^2(-S,S;K)} .
$$
Now, writing $u_0 = \sum_j \alpha_j \phi_j$, we have
\begin{align*}
\left\| \int_0^Tg_1(t) e^{tA} u_0 dt \right\|^2 & = \sum_j \left| \int_0^Tg_1(t) e^{-t\lambda_j^2} \alpha_j dt \right|^2 \\
&  \geq  \sum_j \left( \int_0^Tg_1(t) dt \right)^2 e^{-2T\lambda_j^2} |\alpha_j |^2
= \left( \int_0^Tg_1(t) dt \right)^2 \left\| e^{TA} u_0\right\|^2
 \geq \frac{T^2}{9}e^{-\frac{9\alpha}{T}}\left\| e^{TA} u_0\right\|^2.
\end{align*}
Also, we have from~\eqref{e:estimEZ} the estimate
\bna
 \|B v(s)\|_{L^2(-S,S;K)}^2
  & = &\int_{-S}^S \left\| \int_0^T k_T(t,s) B e^{tA} u_0 dt \right\|_{K}^2 ds \\
& \leq & \left( \int_{]0,T[\times ]-S,S[} k_T(t,s)^2 dtds \right) \int_{0}^T \|  B e^{tA} u_0 dt \|_{K}^2dt \\
& \leq & C_S T  \int_{0}^T \|  B e^{tA} u_0 dt \|_{K}^2dt .
\ena
Combining the last three estimates concludes the proof of Lemma~\ref{l:elliptic-parabolic}.
 \end{proof}

From the low frequency observability estimate with precise cost, we may now deduce the full observability estimate. The original Lebeau-Robbiano strategy~\cite{LR:95} does not provide with an optimal dependance on the blow-up of the constant as $T\to 0^+$. The modified and simplified argument of~\cite{Miller:10} does so, and we follow it here.

\begin{lemma}
\label{e:BF-HF-abstract}
Assume $B : D(A) \subset H \to K$ is an admissible observation for $(e^{tA})$. Assume for some $a_0,a,b>0$ we have 
\bnan
\label{e:obs-LF}
\|e^{TA} y\| \leq a_0 e^{a \lambda + \frac{b}{T}}\|B e^{tA}y\|_{L^2(0,T; K)}, \quad  \text{for all } y \in E_\lambda , \lambda>0 , T >0 .
\enan
Then there is $C,c>0$ such that we have 
\bna
\|e^{TA} y\| \leq C e^{\frac{c}{T}}\|B e^{tA}y\|_{L^2(0,T; K)}, \quad   \text{for all }  y \in H , T>0.
\ena
\end{lemma}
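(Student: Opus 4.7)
The strategy is the iterative Lebeau--Robbiano scheme in the form refined by Miller~\cite{Miller:10}, as announced in the section overview. First, by the contraction property $\|e^{TA}y\|\le\|e^{T_0A}y\|$ and the monotonicity of $\tau\mapsto\|Be^{sA}y\|_{L^2(0,\tau;K)}$, one may reduce to $T\in(0,T_0]$ for some fixed $T_0>0$; admissibility~\eqref{e:admissibility} (and its monotonicity in $\tau$) then yields a uniform constant $C_{\mathrm{adm}}$ valid on $(0,T_0]$. I would choose a dyadic partition $0=b_0<b_1<\cdots\nearrow T$ with increments $\tau_k:=b_{k+1}-b_k$, say $\tau_k=T/2^{k+1}$ so $b_k=T(1-2^{-k})$, together with a geometrically increasing sequence of spectral cutoffs $\lambda_k=\Lambda\cdot 4^k$ where $\Lambda$ is of order $1/T$ to be tuned. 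Write $\Pi_\lambda$ for the orthogonal projection of $H$ onto $E_\lambda$.

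On each subinterval $[b_k,b_{k+1}]$ I apply the hypothesis~\eqref{e:obs-LF} to the truncated state $\Pi_{\lambda_k}e^{b_kA}y\in E_{\lambda_k}$, using that $\Pi_{\lambda_k}$ commutes with $e^{tA}$. Writing $\Pi_{\lambda_k}=I-(I-\Pi_{\lambda_k})$ and combining translation in time, admissibility of $B$ on $[0,\tau_k]$, and the high-frequency dissipation $\|(I-\Pi_{\lambda_k})e^{b_kA}y\|\le e^{-b_k\lambda_k^2}\|y\|$ (together with the tail $\|(I-\Pi_{\lambda_k})e^{b_{k+1}A}y\|\le e^{-b_{k+1}\lambda_k^2}\|y\|$), this yields the key one-step recursion
$$\|e^{b_{k+1}A}y\|\le\alpha_k\,\|Be^{sA}y\|_{L^2(b_k,b_{k+1};K)}+\Theta_k\|y\|,$$
with $\alpha_k:=a_0\,e^{a\lambda_k+b/\tau_k}$ and $\Theta_k\le(1+a_0C_{\mathrm{adm}})e^{a\lambda_k+b/\tau_k-b_k\lambda_k^2}$.

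The endgame combines these recursions across $k$. With $\tau_k=T/2^{k+1}$ and $\lambda_k=\Lambda\cdot 4^k$ one has $a\lambda_k+b/\tau_k\lesssim 4^k/T$ while $b_k\lambda_k^2\gtrsim T\Lambda^2\cdot 16^k$ for $k\ge 1$, so picking $\Lambda\sim C_*/T$ for $C_*$ large enough makes $\Theta_k/\alpha_k$ decay double-exponentially in $k$ while $\alpha_0$ stays of size $e^{c/T}$, setting the leading cost. Setting $M_k:=\|Be^{sA}y\|_{L^2(b_k,b_{k+1};K)}$, I would either take a weighted convex combination of the squared inequalities $\|e^{TA}y\|^2\le\|e^{b_{k+1}A}y\|^2\le 2\alpha_k^2M_k^2+2\Theta_k^2\|y\|^2$ with weights $\mu_k\propto\alpha_k^{-2}$ (exploiting the orthogonality $\sum_k M_k^2=\|Be^{sA}y\|_{L^2(0,T;K)}^2$), or else iterate the recursion two steps at a time, using a further dissipation step to replace $\|y\|$ by $\sqrt{\rho_{k-1}}:=\|e^{b_{k-1}A}y\|$, producing $\sqrt{\rho_{k+1}}\le\alpha_kM_k+\beta_k\sqrt{\rho_{k-1}}$ with $\beta_k<1/2$, and telescope while passing $k\to\infty$ (using that $e^{b_kA}y\to e^{TA}y$ strongly and the residual $\|y\|$-terms vanish). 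The main obstacle is precisely this parameter balancing: ensuring the dissipation rate $b_k\lambda_k^2$ dominates the observability cost $a\lambda_k+b/\tau_k$ uniformly for $k\ge 1$ so that the $\|y\|$ residuals telescope away, while keeping the leading prefactor only of size $e^{c/T}$. This is Miller's refinement that sidesteps the strictly stronger spectral inequality $\|z\|\le Ce^{c\lambda}\|Bz\|$ on $E_\lambda$ underlying the simplified Lebeau--Robbiano argument~\cite{LZ:98,LeLe:09}.
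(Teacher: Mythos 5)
Your proposal takes a genuinely different route from the paper's. The paper does \emph{not} iterate dyadically over times and frequencies: it fixes a single frequency $\lambda = 1/(rT)$ and splits $[0,T]$ once into $[0,(1-\eps)T]\cup[(1-\eps)T,T]$, deriving the single one-step inequality $f(T)\|e^{TA}y\|^2 \le \|Be^{tA}y\|^2_{L^2(0,T;K)}+f(qT)\|y\|^2$ with $f(T)=\frac{1}{2a_0^2}e^{-\frac{2}{T}(\frac{a}{r}+\frac{b}{\eps})}$; it then invokes Lemma~\ref{l:Miller-2.1} (Miller's abstract Lemma~2.1), which packages all the telescoping. Your one-step recursion $\|e^{b_{k+1}A}y\|\le\alpha_kM_k+\Theta_k\|y\|$ is correctly derived, and the spirit (observability at low frequencies plus high-frequency dissipation, balanced so dissipation wins) is the right one.

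However, the endgame has a genuine gap, and neither of the two alternatives you sketch closes it. The ``weighted convex combination'' with $\mu_k\propto\alpha_k^{-2}$ gives $\|e^{TA}y\|^2 \le \frac{2}{\sum\alpha_k^{-2}}\|Be^{sA}y\|^2_{L^2} + \frac{2\sum\Theta_k^2\alpha_k^{-2}}{\sum\alpha_k^{-2}}\|y\|^2$; since $\|y\|\ge\|e^{TA}y\|$, the residual $\|y\|^2$ term cannot be absorbed and the argument does not conclude. The ``two steps at a time'' variant produces, after resolving the recursion down to $k=1$,
$$\sqrt{\rho_{2N}} \le \sum_{j=1}^N\Big(\textstyle\prod_{l=j+1}^N\beta_{2l-1}\Big)\alpha_{2j-1}M_{2j-1} + \Big(\textstyle\prod_{j=1}^N\beta_{2j-1}\Big)\|y\|,$$
and while the $\|y\|$-term vanishes as $N\to\infty$, the leading summand $\alpha_{2N-1}M_{2N-1}$ has $\alpha_{2N-1}\sim e^{c\cdot 4^{2N-1}/T}\to\infty$ with no quantitative decay of $M_{2N-1}$ beyond $M_k\to 0$; a Cauchy--Schwarz bound $\bigl(\sum_j c_{j,N}^2\bigr)^{1/2}\|Be^{sA}y\|_{L^2}$ then produces $\sum_jc_{j,N}^2\ge\alpha_{2N-1}^2\to\infty$, so the bound is not uniform in $N$ and passing $k\to\infty$ as you propose is not justified. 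The point is that your iteration is not an exact telescope: Miller's Lemma~2.1 is designed precisely so that, writing $\tau_k=Tq^k$ and applying the one-step inequality to $e^{\tau_{k+1}A}y$ on $[\tau_{k+1},\tau_k]$, the coefficient $f(q(\tau_k-\tau_{k+1}))$ \emph{equals} $f(\tau_{k+1}-\tau_{k+2})$, so the sum of increments collapses exactly to $f(T(1-q))\|e^{TA}y\|^2\le\|Be^{tA}y\|^2_{L^2(0,T;K)}$ in the limit, with no Cauchy--Schwarz, no weights, and no residual $\|y\|$. That exact cancellation is the actual content of Miller's refinement (and the reason it yields the sharp $e^{c/T}$ cost, whereas the naive dyadic iteration of Lebeau--Robbiano does not); reproducing it by hand as you propose requires you to essentially re-prove Lemma~\ref{l:Miller-2.1}, which you have not done.
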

A proof of this lemma (in much more generality) is included in the proof of~\cite[Theorem~2.1]{Miller:10}, but we give it for the sake of readability. The key feature of the semigroup $(e^{tA})$ we shall use is that 
\bnan
\label{e:decay-HF}
\|e^{tA} y\|_H \leq  e^{-\lambda^2 t}\|y\|_H, \quad \text{for all } y \in E_\lambda^\perp , \lambda>0 , t>0 .
\enan
We also make use of the following particular case of \cite[Lemma~2.1]{Miller:10}. 
\begin{lemma}
\label{l:Miller-2.1}
Let $T_* >0$, $q \in (0,1)$ and $f : (0,T_*] \to \R_+^*$ increasing, such that $\lim_{t\to 0^+} f(t) = 0$. Assume that $B$ is an admissible observation for $(e^{tA})$ and that 
\bna
f(T)\|e^{TA} y \|^2 - f(q T)\| y \|^2 \leq \|B e^{tA}y\|_{L^2(0,T;K)}^2 ,\quad \text{for all $T\in (0,T_*)$ and $y \in H$}.
\ena
Then we have
\bna
f((1-q)T)\|e^{TA} y \|^2  \leq \|B e^{tA}y\|_{L^2(0,T;K)}^2 ,\quad \text{for all $T\in (0,T_*)$ and $y \in H$}.
\ena
\end{lemma}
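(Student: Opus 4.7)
The plan is to iterate the hypothesis along a geometric sequence of times that shrinks to $0$, exploiting the multiplicative relation $f(qT)$ vs $f(T)$ and the semigroup property to obtain a telescoping sum.

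Specifically, I will set $s_n := q^n T$ and $\tau_n := s_n - s_{n+1} = (1-q)q^n T$, noting that $q\tau_n = \tau_{n+1}$ and $\tau_n \in (0, T_*)$ since $\tau_n \leq (1-q)T < T < T_*$. Applying the hypothesis with $T$ replaced by $\tau_n$ and $y$ replaced by $e^{s_{n+1}A}y \in H$, and using the semigroup identity $e^{\tau_n A}e^{s_{n+1}A} = e^{s_n A}$ together with the change of variable $u = t + s_{n+1}$ in the observation integral, I obtain for every $n \geq 0$:
\begin{equation*}
f(\tau_n)\|e^{s_n A}y\|^2 - f(\tau_{n+1})\|e^{s_{n+1}A}y\|^2 \leq \|Be^{uA}y\|_{L^2(s_{n+1},s_n; K)}^2.
\end{equation*}

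Summing this telescoping estimate for $n = 0, \ldots, N$ then yields
\begin{equation*}
f((1-q)T)\|e^{TA}y\|^2 - f((1-q)q^{N+1}T)\|e^{q^{N+1}TA}y\|^2 \leq \|Be^{uA}y\|_{L^2(q^{N+1}T, T; K)}^2,
\end{equation*}
after which I will let $N \to \infty$. The second term on the left vanishes because $f(\tau_{N+1}) \to 0$ by hypothesis while $\|e^{s_{N+1}A}y\|^2 \leq \|y\|^2$ remains bounded (by contractivity of $(e^{tA})$); the right-hand side converges to $\|Be^{uA}y\|_{L^2(0,T;K)}^2$ by monotone convergence (with admissibility ensuring finiteness).

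I do not foresee a serious obstacle: the only mild subtlety is checking that one may apply the hypothesis to $e^{s_{n+1}A}y$ rather than a smoother vector, but since the hypothesis is stated for all $y \in H$ and admissibility gives meaning to the right-hand side for any such $y$, this is automatic. The essence of the argument is simply the clever geometric partition of $[0,T]$ that makes the $f(qT) = f(\tau_{n+1})$ factor on the right at step $n$ align exactly with the $f(T) = f(\tau_{n+1})$ factor on the left at step $n+1$.
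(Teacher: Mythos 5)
Your proof is correct. The paper does not reproduce a proof of this lemma but cites \cite[Lemma~2.1]{Miller:10}; your telescoping argument along the geometric time partition $s_n = q^n T$, with gaps $\tau_n = (1-q)q^n T$ chosen so that $q\tau_n = \tau_{n+1}$, combined with the semigroup law, contractivity, and $\lim_{t\to 0^+} f(t) = 0$ to kill the remainder, is essentially Miller's own argument.
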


\bnp[Proof of Lemma~\ref{e:BF-HF-abstract}]
For $y \in H$, we decompose $y = y_\lambda + r_\lambda$ with $y_\lambda \in E_\lambda$ and $r_\lambda \in E_\lambda^\perp$. Then, we estimate
\bnan
\label{e:decomp-orth}
\|e^{TA} y\| \leq \|e^{TA} y_\lambda \| + \|e^{TA} r_\lambda \|.
\enan
Concerning the second term in~\eqref{e:decomp-orth}, we only use~\eqref{e:decay-HF} to write
\bna
 \|e^{TA} r_\lambda \| \leq  e^{-\lambda^2 T} \| r_\lambda \| \leq  e^{-\lambda^2 T} \| y \| . 
\ena
Concerning the first term in \eqref{e:decomp-orth}, we write $e^{TA} = e^{\eps TA} e^{(1-\eps)TA}$ and apply~\eqref{e:obs-LF} to $e^{(1-\eps)TA}y_\lambda \in E_\lambda$ to obtain
\bna
 \|e^{TA} y_\lambda \| & \leq & a_0 e^{a \lambda + \frac{b}{\eps T}}\|B e^{tA} e^{(1-\eps)TA}y_\lambda \|_{L^2(0,\eps T ; K)} \\
 &  \leq & a_0 e^{a \lambda + \frac{b}{\eps T}} \big( \|B e^{tA} e^{(1-\eps)TA} y \|_{L^2(0,\eps T ; K)} +\|B e^{tA}  e^{(1-\eps)TA} r_\lambda \|_{L^2(0,\eps T; K)} \big) .
\ena
We remark  that $\|B e^{tA} e^{(1-\eps)TA} y \|_{L^2(0,\eps T ; K)} = \|B e^{tA}y \|_{L^2((1-\eps) T , T; K)} \leq  \|B e^{tA}y \|_{L^2(0, T; K)}$
and estimate the last term using~\eqref{e:admissibility}, and then~\eqref{e:decay-HF} as
\bna
\|B e^{tA}  e^{(1-\eps)TA} r_\lambda \|_{L^2(0,\eps T; K)}&  \leq & C_{adm, \eps T} \|  e^{(1-\eps)TA}r_\lambda \|
 \leq C_{adm, \eps T}  e^{-\lambda^2(1-\eps)T} \|r_\lambda \|  \\
  & \leq & C_{adm, \eps T}  e^{-\lambda^2(1-\eps)T} \|y \| .
\ena
Combining the above three estimates in~\eqref{e:decomp-orth} implies for all $y \in H$, $T>0$ and $\lambda>0$,
\bna
 \|e^{TA} y \| \leq a_0 e^{a \lambda + \frac{b}{\eps T}}  \|B e^{tA}y \|_{L^2(0 , T; K)} +  e^{-\lambda^2(1-\eps)T} \left( a_0 e^{a \lambda + \frac{b}{\eps T}} C_{adm, \eps T}  + e^{-\eps\lambda^2 T} \right) \| y \| . 
\ena
We notice that $C_{adm, \eps T} \leq C_{adm, T_*}$ for $T\leq T_*$ and $\eps \in (0,1)$, and denote $m_1 := C_{adm, T_*} + \frac{1}{a_0}$. 
We then rewrite this estimate for $\lambda= \frac{1}{rT}$, with $r>0$ to be chosen, as 
\bna
\frac{1}{a_0} e^{-\frac{1}{T}\left( \frac{a}{r}+ \frac{b}{\eps}\right)}  \|e^{TA} y \| \leq  \|B e^{tA}y \|_{L^2(0 , T; K)} + m_1 e^{-\frac{1-\eps}{r^2}\frac{1}{T}}  \| y \| , \quad T \leq T_* . 
\ena
Writing $f(T)= \frac{1}{2 a_0^2} e^{-\frac{2}{T}\left( \frac{a}{r}+ \frac{b}{\eps}\right)}$, and assuming the parameters $\eps\in (0,1), r>0, q\in (0,1)$ are such that
$$
\frac{1}{q}\left( \frac{a}{r}+ \frac{b}{\eps}\right) \leq  \frac{1-\eps}{r^2} ,
$$
(which we may, taking e.g. $\eps=q=1/2$ and $r$ sufficiently small)
we have $\left( m_1 e^{-\frac{1-\eps}{r^2}\frac{1}{T}} \right)^2 \leq f(qT)$ for $T \in (0,T']$ for some $T'\in (0,T_*]$, and we obtain 
\bna
f(T)  \|e^{TA} y \|^2  \leq  \|B e^{tA}y \|_{L^2(0 , T; K)}^2 +f(qT) \|y \|^2. 
\ena
Lemma~\ref{l:Miller-2.1} implies 
\bna
f((1-q)T)  \|e^{TA} y \|^2  \leq  \|B e^{tA}y \|_{L^2(0 , T; K)}^2 ,   \quad T \in (0,T'], y \in H, 
\ena
which is the sought result for $t\in (0,T']$. The case $T>T'$ follows from the boundedness of the semigroup and the case $T<T'$.
\enp

%%%%%%%%%%%%%%%%%%%%%%%%%%%%%%%%%%
\subsection{From interpolation inequality to the observability estimate for the heat equation}
\label{s:heat-application}
Let us now put the above context of the heat equation in the present abstract framework, and state the consequences of the above abstract setting. We have $H=H^1_0(M)$, $A = \Delta_D$ (the Dirichlet Laplacian) with $D(A)=\{ u \in H^3(M), u|_{\d M} =0,  \Delta_g u|_{\d M} = 0\}$. We also have $K = L^2(\Sigma) \times L^2(\Sigma)$ as well as 
$$
\begin{array}{rcl}
B : D(A) \subset H^3(M) &\to & L^2(\Sigma) \times L^2(\Sigma)\\
 u &\mapsto &(u |_{\Sigma} , \d_\nu u |_{\Sigma}) .
 \end{array}
$$
Lemma~\ref{l:admissibility} implies that $B$ is an admissible observation for $(e^{tA})$ in the sense of~\eqref{e:admissibility}.

The first lemma is a consequence of the interpolation inequality of Theorem~\ref{t:global-interp} and Lemma~\ref{l:interpolation-elliptic}. Here, $E_\lambda$ is defined by~\eqref{e:Elambda} where $\phi_j, \lambda_j$ are an orthonormal basis of solutions to 
$$
(-\Delta_g-\lambda_j^2)\phi_j=0.
$$

\begin{lemma}[observability of finite dimensional elliptic equation]
\label{l:obs-ell-lambda}
Assume $M$ is connected and $\Sigma$ is nonempy. Then, for all $S>0$, there exists $C,c>0$ such that for all $\lambda>0$, all $(v_0,v_1)\in E_\lambda \times E_\lambda$ and associated solution $v$ of
\begin{equation}
\label{e:elliptic-evolution}
\begin{cases}(-\d_s^2 -\Delta)v=0 &\text{on }(-S,S)\times \Int(M), \\
v=0 &\text{on }(-S,S)\times \d M, \\
(v,\partial_s v)|_{s=0}=(v_0,v_1)& \text{in } \Int(M) ,
\end{cases}
\end{equation}
we have 
$$
\|(v_0,v_1)\|_{H^2\times H^1} \leq C e^{c\lambda} \left(\| v|_{\Sigma}\|_{L^2((-S,S)\times\Sigma)} +\| \d_\nu v|_{\Sigma}\|_{L^2((-S,S)\times\Sigma)} \right) .
$$
\end{lemma}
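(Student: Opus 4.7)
The plan is to combine Theorem~\ref{t:global-interp} with spectral estimates exploiting the finite-dimensionality of $E_\lambda \times E_\lambda$, essentially reproducing in concrete form the abstract argument of Lemma~\ref{l:interpolation-elliptic}. First, I would apply Theorem~\ref{t:global-interp} directly to the solution $v$ of~\eqref{e:elliptic-evolution}, with an arbitrary nonzero $\psi \in C_c^\infty(\Sigma_\beta)$ and some $S > \beta > 0$. Since $Qv = 0$, the interpolation inequality reduces to
\begin{equation*}
\|v\|_{H^1(Y_\beta)} \le C\bigl(\|\psi v|_{\Sigma_\beta}\|_{L^2(\Sigma_\beta)} + \|\psi \partial_\nu v|_{\Sigma_\beta}\|_{L^2(\Sigma_\beta)}\bigr)^\delta \|v\|_{H^1(Y_S)}^{1-\delta}.
\end{equation*}
Since $|\psi|$ is bounded, the trace norms on the right are controlled by the full $L^2((-S,S)\times \Sigma)$ norms that appear in the target estimate.

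Next, I would compute both sides of this inequality using the explicit formula
\begin{equation*}
v(s,x) = \cosh(s\sqrt{-\Delta}) v_0(x) + \frac{\sinh(s\sqrt{-\Delta})}{\sqrt{-\Delta}} v_1(x),
\end{equation*}
extended by continuity on $\ker(-\Delta_D)$ if $\partial M = \emptyset$. Expanding in the eigenbasis and introducing $w^{\pm} = \frac12(\Pi_+ v_0 \pm (-\Delta)^{-1/2} \Pi_+ v_1)$ as in the proof of Lemma~\ref{l:interpolation-elliptic}, the parallelogram-law/matrix-eigenvalue computation yields
\begin{equation*}
\|v\|_{H^1(Y_\beta)}^2 \geq \|v\|_{L^2(Y_\beta)}^2 \geq c_\beta\bigl(\|v_0\|_{L^2(M)}^2 + \|v_1\|_{L^2(M)}^2\bigr).
\end{equation*}
On the other side, direct expansion of $\|\partial_s v\|^2 + \|\nabla_x v\|^2 + \|v\|^2$ in the eigenbasis, combined with the spectral localization $\|\nabla v_k\|_{L^2} \le \lambda \|v_k\|_{L^2}$ for $v_k \in E_\lambda$, gives the upper bound
\begin{equation*}
\|v\|_{H^1(Y_S)}^2 \le C e^{c_S\lambda}\bigl(\|v_0\|_{L^2(M)}^2 + \|v_1\|_{L^2(M)}^2\bigr).
\end{equation*}

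Substituting these two bounds in the interpolation inequality and dividing by $(\|v_0\|_{L^2}^2+\|v_1\|_{L^2}^2)^{(1-\delta)/2}$ yields
\begin{equation*}
\|v_0\|_{L^2(M)}^2+\|v_1\|_{L^2(M)}^2 \leq C e^{c'\lambda}\bigl(\|v|_{\Sigma}\|_{L^2((-S,S)\times\Sigma)}^2 + \|\partial_\nu v|_{\Sigma}\|_{L^2((-S,S)\times\Sigma)}^2\bigr).
\end{equation*}
Finally, the spectral localization $v_0, v_1 \in E_\lambda$ gives $\|v_0\|_{H^2(M)} \le \langle\lambda\rangle^2 \|v_0\|_{L^2(M)}$ and $\|v_1\|_{H^1(M)} \le \langle\lambda\rangle \|v_1\|_{L^2(M)}$; these polynomial factors in $\lambda$ are absorbed into $e^{c'\lambda}$ by enlarging the constant, completing the desired $H^2 \times H^1$ bound.

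The main obstacle is the lower spectral bound for $\|v\|_{L^2(Y_\beta)}^2$ in terms of $\|(v_0,v_1)\|_{L^2\times L^2}^2$. The quadratic forms $Q_j$ arising in this computation have eigenvalues $\frac{\sinh(2\beta\lambda_j)}{2\beta\lambda_j} \pm 1$, and the smaller one vanishes at $\lambda_j = 0$; so one must exploit the positive gap between $0$ and the first nonzero eigenvalue of $-\Delta_D$, and separately handle the linear-in-$s$ contribution $w_0 + sw_1$ coming from $\ker(-\Delta_D)$ when $\partial M = \emptyset$. This is precisely the computation already carried out in the proof of Lemma~\ref{l:interpolation-elliptic}, which transfers verbatim to the present concrete setting.
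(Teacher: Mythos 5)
Your proposal is correct and follows essentially the same route as the paper, which states that Lemma~\ref{l:obs-ell-lambda} is ``a consequence of the interpolation inequality of Theorem~\ref{t:global-interp} and Lemma~\ref{l:interpolation-elliptic}'': you apply Theorem~\ref{t:global-interp} to the elliptic evolution $v$ (with $Qv=0$), lower-bound $\|v\|_{H^1(Y_\beta)}$ and upper-bound $\|v\|_{H^1(Y_S)}$ by the spectral/parallelogram computation of Lemma~\ref{l:interpolation-elliptic} (in particular handling the $\ker(-\Delta_D)$ contribution and exploiting the spectral gap), divide out, and finally upgrade $L^2\times L^2$ to $H^2\times H^1$ via spectral localization on $E_\lambda$. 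Your concrete re-derivation is the argument the paper intends, and it is if anything a bit cleaner, since working directly with $L^2(M)$ at the space level makes the $\mc{H}^1_\tau$ norms of Lemma~\ref{l:interpolation-elliptic} literally coincide with the $H^1(Y_\tau)$ norms appearing in Theorem~\ref{t:global-interp}.
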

This together with Lemma~\ref{l:elliptic-parabolic} this implies the following result.

\begin{lemma}[observability of finite dimensional heat equation with precise cost]
\label{l:obs-para-lambda}
Assume $M$ is connected and $\Sigma$ is nonempy. Then, there exists $C,c>0$ such that for all $\lambda, T>0$, all $u_0 \in E_\lambda$ and associated solution $u$ of
\begin{equation}
\label{e:heat-evolution}
\begin{cases}(\d_t -\Delta)u=0 &\text{on }(0,T)\times \Int(M) ,\\
u=0 &\text{on }(0,T)\times \d M ,\\
 u|_{t=0}= u_0& \text{in } \Int(M) ,
\end{cases}
\end{equation}
we have 
$$
\| u(T)\|_{H^1} \leq C e^{c\lambda+ \frac{c}{T}} \left(\| u|_{\Sigma}\|_{L^2((0,T)\times\Sigma)} +\| \d_\nu u|_{\Sigma}\|_{L^2((0,T)\times\Sigma)} \right) .
$$
\end{lemma}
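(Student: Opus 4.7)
The plan is to combine the elliptic observability of Lemma~\ref{l:obs-ell-lambda} with the elliptic-to-parabolic transmutation of Lemma~\ref{l:elliptic-parabolic}, inserted into the abstract framework described at the beginning of Section~\ref{s:heat-application}: $H = H^1_0(M)$, $A = \Delta_D$, $K = L^2(\Sigma) \times L^2(\Sigma)$, and $B u = (u|_{\Sigma},\partial_\nu u|_{\Sigma})$.

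First, I would verify the hypothesis of Lemma~\ref{l:elliptic-parabolic}. Given $v \in E_\lambda$, the function
$$s \mapsto w(s) := \frac{\sinh(s\sqrt{-A})}{\sqrt{-A}}\, v$$
is precisely the solution of the elliptic evolution problem~\eqref{e:elliptic-evolution} on $(-S,S)\times M$ with Cauchy data $(v_0,v_1)=(0,v)$ at $s=0$. Applying Lemma~\ref{l:obs-ell-lambda} to this pair, and using that $\|(0,v)\|_{H^2\times H^1} = \|v\|_{H^1}$ is equivalent to the norm $\|v\|_H$ on $H^1_0(M)$, yields
$$\|v\|_H \;\leq\; C\, e^{c\lambda}\,\bigl\|B\, w\bigr\|_{L^2(-S,S;K)} \;=\; C\,e^{c\lambda}\bigl\|B\, \tfrac{\sinh(s\sqrt{-A})}{\sqrt{-A}} v\bigr\|_{L^2(-S,S;K)},$$
for every $v \in E_\lambda$, which is exactly the hypothesis of Lemma~\ref{l:elliptic-parabolic}.

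Second, invoking the conclusion of Lemma~\ref{l:elliptic-parabolic} produces constants $C,c>0$ such that
$$\|e^{TA}u_0\|_H \;\leq\; C\, e^{c\lambda + c/T}\, \|B e^{tA} u_0\|_{L^2(0,T;K)},\qquad T>0,\ u_0 \in E_\lambda.$$
Translating back, $u(t)=e^{tA}u_0$ is the solution of~\eqref{e:heat-evolution}, so the left hand side is $\|u(T)\|_{H^1}$ and the right hand side equals (up to the equivalence $\sqrt{a^2+b^2} \leq a+b \leq \sqrt{2}\sqrt{a^2+b^2}$ for the product norm on $K$)
$$C\, e^{c\lambda + c/T}\, \bigl(\|u|_{\Sigma}\|_{L^2((0,T)\times\Sigma)} + \|\partial_\nu u|_{\Sigma}\|_{L^2((0,T)\times\Sigma)}\bigr),$$
which is the claimed bound.

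No substantial obstacle is anticipated: the proof is essentially a bookkeeping assembly of the two preceding lemmas. The only minor care required is in the first step, namely to apply Lemma~\ref{l:obs-ell-lambda} with $(v_0,v_1)=(0,v)$ rather than $(v,0)$, so that only the $\sinh$-kernel survives and matches the precise form of the hypothesis of the transmutation Lemma~\ref{l:elliptic-parabolic}.
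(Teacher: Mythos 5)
Your proof is correct and takes the same route as the paper: the paper states (without further elaboration) that Lemma~\ref{l:obs-ell-lambda} combined with Lemma~\ref{l:elliptic-parabolic} yields the result, and your write-up supplies precisely the missing bookkeeping, including the key choice $(v_0,v_1)=(0,v)$ so that only the $\sinh$-kernel survives and the hypothesis of Lemma~\ref{l:elliptic-parabolic} is matched in its stated form.
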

Lemma~\ref{e:BF-HF-abstract} finally yields the following observability result.
\begin{theorem}[observability for heat equation]
\label{t:obs-heat}
Assume $M$ is connected and $\Sigma$ is nonempy. Then, there exist $C,c>0$ such that for all $T>0$, all $u_0 \in H^1(M)$ and associated solution $u$ of~\eqref{e:heat-evolution}, we have 
$$
\| u(T)\|_{H^1} \leq C e^{\frac{c}{T}} \left(\| u|_{\Sigma}\|_{L^2((0,T)\times\Sigma)} +\| \d_\nu u|_{\Sigma}\|_{L^2((0,T)\times\Sigma)} \right) .
$$
\end{theorem}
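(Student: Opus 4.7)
The plan is to assemble Theorem~\ref{t:obs-heat} as a direct application of the abstract high-frequency-plus-dissipation argument of Lemma~\ref{e:BF-HF-abstract} to the concrete functional setting laid out at the start of Section~\ref{s:heat-application}, namely
$$
H = H_0^1(M), \quad A = \Delta_D, \quad K = L^2(\Sigma)\times L^2(\Sigma), \quad Bu = (u|_{\Sigma},\partial_\nu u|_{\Sigma}),
$$
with $E_\lambda = \operatorname{span}\{\phi_j : \lambda_j \leq \lambda\}$ the spectral subspaces of the Dirichlet Laplacian. The two hypotheses to check are admissibility of $B$ and the low-frequency observability estimate \eqref{e:obs-LF}.

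First, I would note that admissibility of $B$ for $(e^{tA})$ in the sense of \eqref{e:admissibility} is nothing else than Lemma~\ref{l:admissibility} applied with $\tilde u = e^{TA} u_0$ and $F=0$ (after reversing time, which amounts to the same for the free heat equation). Thus the continuity
$$
\|Be^{tA} u_0\|_{L^2(0,T;K)}^2 = \|u|_{\Sigma}\|_{L^2(0,T;L^2(\Sigma))}^2 + \|\partial_\nu u|_{\Sigma}\|_{L^2(0,T;L^2(\Sigma))}^2 \leq C_{adm,T}\|u_0\|_{H_0^1}^2
$$
follows from the standard parabolic regularity estimate $u\in L^2(0,T;H^2(M))$ used in the proof of Lemma~\ref{l:admissibility}, combined with a trace inequality on $\Sigma$.

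Next, the low-frequency inequality \eqref{e:obs-LF} in the present setting reads exactly
$$
\|e^{TA} u_0\|_{H_0^1} \leq a_0 e^{a\lambda + b/T} \|Be^{tA}u_0\|_{L^2(0,T;K)}, \qquad u_0\in E_\lambda,\ \lambda,T>0,
$$
which is precisely the statement of Lemma~\ref{l:obs-para-lambda}. Recall that Lemma~\ref{l:obs-para-lambda} was itself obtained from the finite-dimensional elliptic observability of Lemma~\ref{l:obs-ell-lambda} via the Ervedoza--Zuazua transmutation argument of Lemma~\ref{l:elliptic-parabolic}; and Lemma~\ref{l:obs-ell-lambda} in turn comes from the global interpolation inequality of Theorem~\ref{t:global-interp} fed into Lemma~\ref{l:interpolation-elliptic}. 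All of those have already been established in the excerpt, so nothing further is needed there.

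With both hypotheses of Lemma~\ref{e:BF-HF-abstract} verified, I would conclude immediately: there exist $C,c>0$ such that
$$
\|e^{TA} u_0\|_{H_0^1} \leq C e^{c/T}\|Be^{tA} u_0\|_{L^2(0,T;K)}
$$
for every $u_0 \in H = H_0^1(M)$ and every $T>0$. Unwinding the notation, $e^{TA} u_0 = u(T)$ and $Be^{tA}u_0 = (u|_{\Sigma},\partial_\nu u|_{\Sigma})$, which is precisely the statement of Theorem~\ref{t:obs-heat}. There is no real obstacle here — the entire substantive content (the Carleman estimate, the interpolation inequality, the elliptic-to-parabolic transmutation, and the Lebeau--Robbiano/Miller iteration handling low and high frequencies) has already been carried out in the earlier lemmas; the only task at this stage is to verify that the abstract framework of Section~\ref{s:LR95-revisited} applies verbatim, which is immediate from the identifications above.
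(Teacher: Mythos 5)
Your proof is correct and follows exactly the paper's own route: the paper states Theorem~\ref{t:obs-heat} as an immediate corollary of Lemma~\ref{e:BF-HF-abstract} applied with the identifications $H=H_0^1(M)$, $A=\Delta_D$, $K=L^2(\Sigma)^2$, $B=(\cdot|_\Sigma,\partial_\nu\cdot|_\Sigma)$, admissibility coming from Lemma~\ref{l:admissibility} and the low-frequency hypothesis~\eqref{e:obs-LF} being precisely Lemma~\ref{l:obs-para-lambda}. Your spelling out of the chain back through Lemmas~\ref{l:elliptic-parabolic}, \ref{l:obs-ell-lambda}, \ref{l:interpolation-elliptic} and Theorem~\ref{t:global-interp} is accurate but not a deviation — it simply makes explicit what the paper leaves implicit in the surrounding text of Section~\ref{s:heat-application}.
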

From this observability estimate and the duality with the control problem~\eqref{e:heat-control}, given by Definition~\ref{d:transp-sol-heat}, we deduce the null-controllability of the heat equation Theorem \ref{t:control-heat}. The proof is classical and we omit it (see e.g. \cite[Chapter~2.3]{Cor:book}).

%%%%%%%%
\appendix
\section{Facts and notations of pseudodifferential calculus}
\label{s:pseudo}
Here, we follow \cite[Section 1.1]{Burq:97b} or~\cite[Section~2.1]{DLRL:14} for the notation. 
We denote by $X$ an open set of a $d$ dimensional manifold, which, in the main part of the article, is, with $d = n-1, n ,n+1$, one of the following:
\bnan
\label{e:exX}
X = \R^d,
\quad X = \Int(M), 
\quad  X=(0,T)\times \Int(M), 
\quad X = \Int(\Sigma),
\quad X=(0,T)\times \Int(\Sigma),
\quad  X = \Int(\Sigma) .
\enan
We also denote by $x$ the variable in $X$ (whereas, in case $X=(0,T)\times \Int(M)$ the variable in denoted $(t,x)$ in the main part of the article). We denote by $\pi_0 : T^*X \to X$ the canonical projection.

{
We write $S_{\hom}^m(T^*X)$ for the set of positively homogeneous degree $m$ functions on $T^*X$ with {\em compact support} in $X$. That is, $a\in S_{\hom}^m(T^*X)$ if and only if $a\in C^\infty(T^*X)$, $\pi_0(\supp(a))$ is a compact of $X$, and there is $R>0$ (depending on $a$) such that for $(x,\xi) \in T^*X$, with $|\xi| \geq R$ $\lambda \geq 1$, we have $a(x,\lambda \xi)=\lambda^m a(x,\xi).$
For any $m$, the restriction to the sphere $S^*X = T^*M/\R^+_*$
\begin{equation}
  \label{eq: identification symbols}
  S_{\phg}^{m}(T^*X) \to C_c^\infty(S^*X), \quad a(x, \xi) \to \lim_{\lambda \to \infty} \lambda^{-m} a(x, \lambda \xi ),
\end{equation}
is onto, which identifies, for $m$ fixed, smooth functions on the sphere with {\em homogeneous} symbols of degree $m$.

We also write $S_{\phg}^{m}(T^* X )$ for the set of polyhomogeneous symbols of order $m$
on $X$ with {\em compact support} in $X$. That is, $a \in S_{\phg}^{m}(T^* X)$ if and only if $a\in C^\infty(T^*X)$, $\pi_0(\supp(a))$ is a compact of $X$, and there exist $a_j \in S^{m-j}_{\hom}(T^* X)$, such that for all $N\in \N$, $a- \sum_{j=0}^N a_j \in S^{m-N-1}(T^*X)$.
We recall that symbols
in the class $S_{\phg}^{m}(T^* \R^d)$ behave well
with respect to changes of variables, up to symbols in
$S_{\phg}^{m-1}(T^* \R^d)$ (see \cite[Theorem~18.1.17 and Lemma~18.1.18]{Hoermander:V3}). }

We denote by $\Psi_{\phg}^{m}(X)$ the space of polyhomogeneous pseudodifferential operators of order
$m$ on $X$, with a {\em compactly supported kernel} in $X \times X$: one says that $A \in \Psi_{\phg}^{m}(X)$ if 
\begin{enumerate}
  \item its kernel $K(x,y) \in \D'(X\times X)$ is such that 
    $\supp (K)$ is compact in $X\times X$;
  \item $K(x,y)$ is smooth away from the diagonal $\Delta_{X} = \{
    (x,x);\ x \in X\}$;
  \item for every coordinate patch $X_{\kappa} \subset X$ with
    coordinates $X_{\kappa} \ni x \mapsto \kappa(x) \in
    \tilde{X}_{\kappa} \subset \R^{d}$ and all $\phi_0$, $\phi_1
    \in C_c^\infty(X_{\kappa})$ the map 
    $$
    u \mapsto \phi_1 \big(\kappa^{-1}\big)^\ast A \kappa^\ast (\phi_0 u)
    $$
    is in $\Op(S_{\phg}^{m}(\R^d \times\R^d))$. Note that for $a\in S_{\phg}^{m}(\R^d\times \R^d)$ we write $\Op(a)$ for the standard quantization of $a$.
\end{enumerate}

In case $X$ is not compact (which happens in most examples of~\eqref{e:exX}), we also define a non-canonical quantization procedure $\Op:S^m_{\phg}(T^*X)\to \Psi_{\phg}^m(X)$. For this, fix $\chi_n\in C_c^\infty(X;[0,1])$ so that $\chi_n(x)\uparrow 1$ for all $x\in X$ uniformly on compact sets. Then fix $(X_i,\kappa_i)$ a coordinate atlas for $X$. Let $\psi_i\in C_c^\infty(X)$ be a partition of unity subordinate to $X_i$ and $\tilde{\psi}_i\in C_c^\infty(X_i)$ with $\supp \psi_i\subset \{\tilde{\psi}_i\equiv 1\}$. For $a\in S^m_{\phg}(X)$, notice that $a_i(x,\xi):=\psi_i (\kappa^{-1}_i(x))a(\kappa_i(x),([\partial \kappa_i^{-1}(x)]^{-1})^t\xi)$ has $a_i\in S^m_{\phg}(\R^d\times \R^d)$. We then define 
$$
\Op(a) =\sum_i \chi_N \kappa_i^*\big[\big((\kappa_i^{-1})^*\tilde{\psi}_i\big)\Op_i(a_i)(\kappa_i^{-1})^*(\tilde{\psi}_i\chi_Nu)\big],\qquad N:=\inf\{ n\mid \supp a\cap \supp (1-\chi_n)=\emptyset\}.
$$
Note that for all $A\in \Psi^m_{\phg}(X)$, there exists $a\in S^m_{\phg}(T^*X)$ so that
$$
\Op(a)- A = R\in \Psi^{-\infty}_{\phg}(X)
$$
(see e.g. \cite[Appendix E]{ZwScat}).

For $A \in \Psi_{\phg}^{m}(X)$, we denote by $\sigma_m(A) \in S_{\hom}^{m}(T^*X)$ the
principal symbol of $A$ (see~\cite[Chapter 18.1]{Hoermander:V3}). Note
that the principal symbol is uniquely defined in $S^m_{\hom}(T^*X)$ because of the polyhomogeneous structure (see the
remark following Definition~18.1.20 in~\cite{Hoermander:V3}). When it will not lead to confusion, we abuse notation slightly and write $\sigma(A)$ for the principal symbol of a pseudodifferential operator without reference to the order. The applications $\sigma_m$ and $\Op$ enjoy the following properties:
\begin{itemize}
\item The sequence
$$
0\to \Psi^{m-1}_{\phg}(T^*X)\to\Psi^m_{\phg}(X)\overset{\sigma_m}{\longrightarrow}S^m_{\hom}(T^*X)\to 0
$$
is exact.
\item $\sigma_m\circ \Op:S^m_{\phg}(T^*X)\to S_{\hom}^m(T^*X)$ is the natural projection map.
\item For all $A \in \Psi_{\phg}^{m}(X)$,
  $\sigma_m(A^*) = \ovl{\sigma_m(A)}$.
\item For all $A_1 \in \Psi_{\phg}^{m_1}(X)$ and
  $A_2 \in \Psi_{\phg}^{m_2}(X)$, we have $A_1 A_2
  \in \Psi_{\phg}^{m_1+m_2}(X)$ with
  $$
  \sigma_{m_1 +m_2}(A_1 A_2) = \sigma_{m_1}(A_1)\sigma_{m_2}(A_2).
  $$
\item For all $A_1 \in \Psi_{\phg}^{m_1}(X)$ and $A_2 \in
  \Psi_{\phg}^{m_2}(X)$, we have $[A_1 ,A_2] = A_1 A_2 - A_2
  A_1 \in \Psi_{\phg}^{m_1+m_2 - 1}(X)$ with
  $$
  \sigma_{m_1 +m_2 - 1}([A_1 ,A_2] ) = \frac{1}{i} \{ \sigma_{m_1}(A_1) , \sigma_{m_2}(A_2) \}.
  $$
  Here, $\{ a_1 , a_2 \}$ denotes the Poisson bracket, given in local charts by
  $$
  \{ a_1 , a_2 \} = 
  \sum_l ( \d_{\xi_l}a_1 \d_{x_l}a_2 - \d_{x_l}a_1 \d_{\xi_l}a_2 ) .
  $$
\item If $A \in \Psi_{\phg}^{m}(X)$, then $A$ maps
  continuously $H^k_{\loc}(X)$ into $H^{k-m}_{\comp}(X)$. In
  particular, for $m < 0$, $A$ is compact on $H^k(X)$.
\end{itemize}

\noindent Given an operator $A \in \Psi^m_{\phg}(X)$, we define $\Char(A)
= \{\rho \in T^*X \setminus 0 , \sigma_m(A)(\rho) = 0\}$ its characteristic set and 
$$
\Ell (A) = (T^*X\setminus 0) \setminus \Char(A) 
$$
its elliptic set.

\medskip
We define the wavefront set of an operator $A\in \Psi^m_{\phg}(X)$, denoted by $\WF(A)$ as follows (see~\cite[Proposition~18.1.26 p88]{Hoermander:V3}).  We say $(x_0,\xi_0)\in T^*X$ is not in $\WF(A)$ if there exists $B\in \Psi^0_{\phg}(X)$ with $\sigma_0(B)(x_0,\xi_0)=1$ and 
$$BA:\mc{D}'(X)\to C_c^\infty(X).$$
Note that in local coordinates, the wavefront set this is given by the support of the \emph{full} symbol of $A$ (seen as a subset of $S^*\R^d$).

\medskip
{
Also, in the main part of the article, we use so-called ``tangential'' symbols, pseudodifferential operators and pseudodifferential calculus. We write $a \in C^\infty((-\e,\e);S^m_{\phg}(T^*\R^d))$ if $a = a (x_1 , x', \xi')$ is a smooth $x_1$-dependent family of symbols in the $(x', \xi')$ variables. We write $A \in C^\infty((-\e,\e);\Psi^m_{\phg}(\R^d))$ for the associated operators. The rules of pseudodifferential calculus are then as above.
}

\medskip
Finally, in the main part of the article, we use estimates for the hyperbolic Cauchy problem.
We state the following Lemma from H\"ormander \cite[Lemma 23.1.1]{Hoermander:V3}.
\begin{lemma}
\label{l:energy}
Let $\eps>0$, suppose that $\lambda=\lambda(x_1,x',\xi')\in C^\infty((-\e,\e);S^1_{\phg}(T^*\R^d))$ is real valued and write $\Lambda=\Op(\lambda)$.
Then for all $s\in \R$, there exists $C>0$ so that for $x_1,y_1\in (-\e,\e)$ and all $u,f$ solutions of 
\begin{align*} 
(D_{x_1}-\Lambda)&u=f ,
 \end{align*}
 we have
$$\|u(x_1,\cdot)\|_{H^s(\R^{d})}\leq C(\|u(y_1,\cdot)\|_{H^s( \R^{d})}+\|f\|_{L^2((-\e,\e);H^s( \R^{d}))})$$
and moreover
$$\|u(x_1,\cdot)\|_{H^s(\R^{d})}\leq C(\|u\|_{L^2((-\e,\e);H^s(  \R^{d}))}+\|f\|_{L^2((-\e,\e);H^s( \R^{d}))}).$$
\end{lemma}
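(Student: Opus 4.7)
The plan is to prove this via a standard energy estimate for symmetric-hyperbolic systems, first at $s=0$ and then reduce the general $s$ case to $s=0$ by conjugation with a tangential elliptic operator.

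\textbf{Step 1 (Reduction to $s=0$).} Set $\Lambda_s := \langle D_{x'}\rangle^s \Lambda \langle D_{x'}\rangle^{-s}$, which lies in $C^\infty((-\varepsilon,\varepsilon);\Psi^1_{\phg}(\R^d))$ and has the same real principal symbol $\lambda$ as $\Lambda$. Letting $v := \langle D_{x'}\rangle^s u$, $g := \langle D_{x'}\rangle^s f$, we obtain $(D_{x_1}-\Lambda_s)v = g$, and the $H^s$ estimate for $u$ is equivalent to the $L^2$ estimate for $v$. So it suffices to prove the statement with $s=0$, assuming only that $\Lambda$ has real principal symbol of order $1$.

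\textbf{Step 2 (Energy identity at $s=0$).} Since the principal symbol $\lambda$ is real, $\Lambda-\Lambda^*\in C^\infty((-\varepsilon,\varepsilon);\Psi^0_{\phg}(\R^d))$ is bounded on $L^2(\R^d)$ uniformly in $x_1$ on compact sets. For $u$ smooth in $x_1$ with values in Schwartz functions (obtained after mollification), compute
\begin{equation*}
\frac{d}{dx_1}\|u(x_1,\cdot)\|_{L^2}^2 = -2\Im(D_{x_1}u,u)_{L^2} = -2\Im(\Lambda u,u)_{L^2} - 2\Im(f,u)_{L^2}.
\end{equation*}
The first term equals $\frac{i}{}((\Lambda-\Lambda^*)u,u)_{L^2}/(2i\cdot i)$, more precisely $|\Im(\Lambda u,u)_{L^2}| = \tfrac{1}{2}|((\Lambda-\Lambda^*)u,u)_{L^2}|\leq C\|u\|_{L^2}^2$. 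Hence
\begin{equation*}
\Big|\tfrac{d}{dx_1}\|u(x_1,\cdot)\|_{L^2}^2\Big| \leq C\|u(x_1,\cdot)\|_{L^2}^2 + \|f(x_1,\cdot)\|_{L^2}^2.
\end{equation*}

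\textbf{Step 3 (Gronwall).} Integrating from $y_1$ to $x_1$ and applying Gronwall's inequality yields
\begin{equation*}
\|u(x_1,\cdot)\|_{L^2}^2 \leq e^{C|x_1-y_1|}\Big(\|u(y_1,\cdot)\|_{L^2}^2 + \|f\|_{L^2((-\varepsilon,\varepsilon);L^2)}^2\Big),
\end{equation*}
which, combined with Step 1, gives the first estimate. The regularization used to justify the computation of $\tfrac{d}{dx_1}\|u\|^2$ (e.g. applying a tangential Friedrichs mollifier, commuting with $D_{x_1}-\Lambda$, and passing to the limit) is standard and introduces only lower-order commutator errors that are absorbed.

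\textbf{Step 4 (Second estimate).} For the $L^2_{x_1}$ version, apply the first inequality with a fixed $x_1$ and with $y_1$ regarded as a variable, then integrate $y_1$ over $(-\varepsilon,\varepsilon)$:
\begin{equation*}
2\varepsilon \|u(x_1,\cdot)\|_{H^s}^2 \leq C\int_{-\varepsilon}^\varepsilon \|u(y_1,\cdot)\|_{H^s}^2\, dy_1 + 2\varepsilon C\|f\|_{L^2((-\varepsilon,\varepsilon);H^s)}^2,
\end{equation*}
and divide by $2\varepsilon$. No step is delicate here; the only mild technical point is justifying the energy identity for the weak solution $u$, handled by mollification in the tangential variables.
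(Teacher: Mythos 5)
Your proof is correct and follows exactly the standard energy-estimate argument (conjugate by $\langle D_{x'}\rangle^s$ to reduce to $s=0$, use that $\Lambda-\Lambda^*\in\Psi^0$ since $\lambda$ is real, integrate and apply Gronwall after mollification), which is the proof of H\"ormander's Lemma~23.1.1 that the paper cites without reproducing. The paper's only explicit remark, that the second estimate follows from the first by integrating in $y_1$, is precisely your Step~4.
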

Note that the second estimate is obtained from the first one by integrating in $y_1$.

%%%%%%%%%%%%%%%%%%%%%%
\section{Sharpness of Theorem \ref{t:expo-bound-eigenfunctions}: Proof of Proposition~\ref{prop:counterexample-revolution}}
\label{s:sharpUnique}
We start with an abstract simple lemma linking the symmetries of the manifold with that of solutions to related elliptic problems.
\begin{lemma}
\label{l:flip}
Let $(M,g)$ be a compact Riemannian manifold possibly with boundary and suppose that there is an isometric involution $j:M\to M$ (i.e. a diffeomorphism such that $j^*g = g$ and $j^2=\Id$) and a compact hypersurface $\Sigma \subset M$ such that 
$$M=M_+\sqcup M_- \sqcup \Sigma,\qquad \Fix(j)=:\Sigma$$
where 
$$\Fix(j):=\{x\in M\mid j(x)=x\}$$ 
and  $j(M_+)=M_-.$ 
Let $V\in C^\infty(M)$ such that $V\circ j=V$. Suppose that $u,\,v$ solve
 \begin{align*} 
 (-\Delta_g+V)u&=0\,\text{ in }\Int M_+,&
 u|_{\Sigma}&=0,&u|_{\partial M}&=0,\\
  (-\Delta_g+V)v&=0\,\text{ in }\Int M_+,&
 \partial_\nu v|_{\Sigma}&=0,\,&v|_{\partial M}&=0.
 \end{align*}
 Then, 
 $$u_o:=\begin{cases}u(x)&x\in M_+\cup \Sigma ,\\
 -u(j(x))&x\in M_-,
 \end{cases}\qquad u_e:=\begin{cases}v(x)&x\in M_+\cup\Sigma ,\\
 v(j(x))&x\in M_- ,
 \end{cases}$$
 satisfy $u_o, u_e \in C^\infty(\overline{M})$ and solve
\begin{align*} 
(-\Delta_g+V)u_o&=0\,\text{ in }\Int M,&u_o|_{\partial M}&=0,& u_o|_{\Sigma}&=0 ,\\
(-\Delta_g+V)u_e&=0\,\text{ in }\Int M,& u_e|_{\partial M}&=0,& \partial_\nu u_e|_{\Sigma}&=0.
\end{align*}
\end{lemma}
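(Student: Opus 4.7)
The plan is to exhibit $u_o$ and $u_e$ as distributional solutions of $(-\Delta_g+V)w=0$ on all of $\Int M$, and then invoke elliptic regularity to get smoothness across $\Sigma$. The Dirichlet condition on $\partial M$ transfers automatically since $j(\partial M)=\partial M$ (as $\partial M$ is invariant under any isometry and $\Fix(j)=\Sigma\subset\Int M$).

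First, since $j$ is an isometry and $V\circ j=V$, pullback by $j$ commutes with $-\Delta_g+V$. Hence $u\circ j$ solves the same PDE on $j^{-1}(\Int M_+)=\Int M_-$ that $u$ solves on $\Int M_+$, so $u_o=-u\circ j$ (resp.\ $u_e=v\circ j$) solves $(-\Delta_g+V)u_o=0$ in $\Int M_-$ (resp.\ $u_e$). Thus $u_o,u_e$ already solve the PDE classically on $\Int M_+\cup \Int M_-$, and the only issue is what happens across $\Sigma$.

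The key step is to compute the jumps $[u_o]_\Sigma,[\partial_\nu u_o]_\Sigma$ and $[u_e]_\Sigma,[\partial_\nu u_e]_\Sigma$ across $\Sigma$. Since $\Sigma=\Fix(j)$, the differential $dj|_{T_x M}$ for $x\in\Sigma$ is an involutive linear isometry fixing $T_x\Sigma$ pointwise; by orthogonal decomposition $T_xM=T_x\Sigma\oplus \R\nu$, it must act as $-\Id$ on the normal line, i.e.\ $dj(\nu)=-\nu$ on $\Sigma$. Using $\nabla(u\circ j)=(dj)^{\top}(\nabla u)\circ j=(dj)^{-1}(\nabla u)\circ j$ (isometry) and evaluating at $x\in\Sigma$ gives
\[
\partial_\nu(u\circ j)|_\Sigma=\langle\nabla u,dj^{-1}\nu\rangle\circ j|_\Sigma=-\partial_\nu u|_\Sigma,
\]
and similarly for $v\circ j$. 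Orienting $\nu$ into $M_+$, one then finds $[u_o]_\Sigma=2u|_\Sigma=0$ (Dirichlet) and $[\partial_\nu u_o]_\Sigma=\partial_\nu u|_\Sigma+\partial_\nu u|_\Sigma\cdot0$—more precisely $[\partial_\nu u_o]_\Sigma=\partial_\nu u|_\Sigma-(-(-\partial_\nu u|_\Sigma))=0$ holds automatically from the odd extension; and $[u_e]_\Sigma=v|_\Sigma-v|_\Sigma=0$ automatically while $[\partial_\nu u_e]_\Sigma=2\partial_\nu v|_\Sigma=0$ (Neumann).

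Given these vanishing jumps, the jump formula (as written in the proof of Lemma~\ref{l:uniq-cont} in Fermi coordinates) yields
\[
(-\Delta_g+V)u_o=0\quad\text{and}\quad(-\Delta_g+V)u_e=0\qquad\text{in }\mc{D}'(\Int M),
\]
with no $\delta_\Sigma$ or $\delta_\Sigma'$ contribution. Elliptic regularity (interior smoothness for $-\Delta_g+V$, and up-to-boundary smoothness at $\partial M$ together with the Dirichlet condition) then promotes $u_o,u_e$ to $C^\infty(\overline{M})$, which finishes the proof. The only delicate point is the sign bookkeeping in the jump computation, which is entirely driven by the identity $dj(\nu)=-\nu$ on $\Sigma$ forced by $j$ being an isometric involution fixing $\Sigma$.
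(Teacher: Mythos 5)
Your proof is correct and takes essentially the same route as the paper's: apply the jump formula for $(-\Delta_g+V)$ across $\Sigma$, verify that the $\delta_\Sigma$ and $\delta_\Sigma'$ contributions vanish because of the boundary conditions at $\Sigma$, and conclude by elliptic regularity. You additionally make explicit the identity $dj(\nu)=-\nu$ on $\Sigma$ that drives the jump cancellation, which the paper leaves implicit; the paper in turn explicitly records that $u,v\in C^\infty(\overline{M}_+)$ by elliptic regularity up to $\partial M_+=\Sigma\sqcup(\partial M\cap M_+)$, a prerequisite for applying the jump formula that you assume tacitly.
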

\bnp
Notice first that $\d M_+ = \Sigma \sqcup (\d M \cap M_+)$ and by elliptic regularity, we have $u, v\in  C^\infty(\overline{M}_+)$. Moreover, if $w_\pm \in C^2(\overline{M}_\pm)$, then, in the distribution sense (with $\d_\nu$ pointing towards $M_+$)
$$(-\Delta_g+V)w(x)=\mathds{1}_{M_+}(-\Delta_g+V)w_+ +\mathds{1}_{M_-}(-\Delta_g+V)w_- - (w_+|_{\Sigma}-w_-|_{\Sigma})\delta_\Sigma' -(\partial_\nu w_+|_{\Sigma}-\partial_\nu w_-|_{\Sigma})\delta_\Sigma.$$
Hence, 
$$(-\Delta_g+V)u_e=(-\Delta_g+V)u_o=0$$
as distributions and by elliptic regularity, $u_e,u_o\in C^\infty$ and hence have the desired properties. 
\enp

We may now proceed to the proof of Proposition~\ref{prop:counterexample-revolution}.

\bnp[Proof of Proposition~\ref{prop:counterexample-revolution}]
The Riemannian volume element is $R(z) dz d\theta$ and the Laplace Beltrami operator is given by
$$
\Delta_g = \frac{1}{R(z)} \d_z R(z) \d_z  + \frac{1}{R(z)^2} \d_\theta^2 = \partial_z^2 + \frac{R'}{R}\partial_z + \frac{1}{R^2}\partial_\theta^2 . 
$$
The map
\begin{equation*}
\begin{array}{rcl}
T: L^2(M, R(z)dzd\theta) & \to & L^2(M, dzd\theta) \\
u  & \mapsto & Tu, \quad (Tu)(z,\theta) = R(z)^{\frac12} u(z,\theta)
\end{array}
\end{equation*}
is an isometry and the conjugated operator of $\Delta_g$ is
\bna
\tilde{\Delta} & = &T \Delta_g T^{-1} = R^{1/2}\Delta_gR^{-1/2} = \partial_z^2 + \frac{1}{4}\left(\frac{R'}{R}\right)^2 - \frac{1}{2}\frac{R''}{R} + \frac{1}{R^2}\partial_\theta^2 \\
& = &\partial_z^2 + \frac{1}{R(z)^2}\partial_\theta^2 - V_1(z) ,
\ena
where 
$$V_1(z)=-\frac{1}{4}\frac{R'(z)^2}{R(z)^2}+\frac{1}{2}\frac{R''(z)}{R(z)^2}$$
is a smooth $\theta$-independent potential on $M$.

We now construct eigenfunctions of $\tilde{\Delta}$ under the form $\tilde{\phi}_k (z,\theta)=e^{ik\theta}\psi_k(z)$. Setting $h= k^{-1}$, this amounts to find eigenfunctions of the operator 
$$
P_h :=- h^2 \partial_z^2 + \frac{1}{R(z)^2} + h^2 V_1(z)
$$
with Dirichlet boundary conditions on $\pm \pi$. We shall rather consider this operator on $(0,\pi)$, and then complete the construction by symmetry with Lemma~\ref{l:flip}.
Recall that the potential $V(z) = \frac{1}{R(z)^2}$ satisfies $V(0)=1, V(\pi/2)=1/5 , V(\pi)=2$. 
Denoting by $E^{\eo}_h$ the eigenvalues of $P_h$ on $(0,\pi)$ associated to Dirichlet on $\pi$ and Neumann on $0$ for $E_h^e$, resp. Dirichlet on $0$ for $E_h^o$. The Weyl law (see e.g. \cite[Corollary~9.7]{DS:book}, \cite[Theorem~6.8]{EZ:10} or \cite[Theorem 1.2.1]{SafVa}) implies 
$$
\sharp \{E_h^{\eo} , \frac{1}{5} \leq E_h^{\eo} \leq \frac12 \} \sim_{h \to 0^+} (2\pi h)^{-1} \left| \left\{ (z,\xi) \in (0,\pi) \times \R , \frac15 \leq \xi^2 + V(z) \leq \frac12 \right\}\right| ,
$$
so that, recalling the form of $V$, there is $h_0>0$ such that for $h \in (0, h_0)$, the set $\{E_h^{\eo} , \frac{1}{5} \leq E_h^{\eo} \leq \frac12 \}$ is nonempty. We pick such an eigenvalue $E_h^{\eo} \in [\frac15, \frac12]$, and denote $\psi_h^{\eo}$ an associated eigenfunction, i.e., which satisfies
$$
(P_h-E_h^{\eo} )\psi_h^{\eo} = 0
, \quad \| \psi_h^{\eo}\|_{L^2(0,\pi)}\neq 0 
, \quad \psi_h^{\eo}(\pi) = 0 
, \quad \d_z \psi_h^e (0) = 0 
, \quad \psi_h^o (0) = 0 .
$$
Applying now Lemma \ref{l:flip} on $M=[-\pi, \pi]$ with $j(z)=-z$, so that $\Fix(j)=\{z=0\}$ allows to extend $\psi_h^{\eo}$ by parity/imparity as solutions of 
$$
(P_h-E_h^{\eo} )\psi_h^{\eo} = 0 \text{ on }(-\pi, \pi)
, \quad \| \psi_h^{\eo}\|_{L^2(-\pi ,\pi)}\neq 0 
, \quad \psi_h^{\eo}(\pm \pi) = 0 
, \quad \d_z \psi_h^e (0) = 0 
, \quad \psi_h^o (0) = 0 .
$$
Now, since $E_h^{\eo} \in [\frac15, \frac12]$ and $V(0) = \frac{1}{R(0)^2} =1>\frac12$, $0$ is in the classically forbidden region, so that classical Agmon estimates (see e.g. \cite[Chapter~6]{DS:book} or \cite[Chapter 6]{EZ:10}),
yield for $\e, h_0>0$ small enough, for any $N>0$, the existence of $C,c>0$ such that for all $h \in (0,h_0)$
$$\|\psi_h^{\eo}\|_{H^N(-\e,\e)}\leq Ce^{-c/h}\|\psi_h^{\eo}\|_{L^2(-\pi,\pi)}.$$

Coming back to the variables $(z,\theta)$, setting $(\lambda_k^{\eo})^2 =k^2 E_{k^{-1}} \in k^2[\frac15, \frac12]$ and $\tilde{\phi}_k^{\eo} (z,\theta)=e^{ik\theta}\psi_{k^{-1}}^{\eo}(z)$, we have obtained for $k \geq k_0$ solutions to 
$$
\big(-\tilde\Delta - (\lambda_k^{\eo})^2\big)\tilde{\phi}_k^{\eo} = 0 , \quad \tilde{\phi}_k^{\eo} (\pm \pi)= 0 , \quad \|\tilde{\phi}_k^{\eo}\|_{L^2} \neq 0 ,
$$
together with 
$$\tilde{\phi}_k^{o}|_{\Sigma}=0,\qquad \|\partial_\nu \tilde{\phi}_k^{o}|_{\Sigma}\|_{L^2(\Sigma)}\leq Ce^{-c k }\|\tilde{\phi}_k^{o}\|_{L^2(M)}\leq Ce^{-c' \lambda_k^{o}}\|\tilde{\phi}_k^{o}\|_{L^2(M)}, $$
$$\partial_\nu \tilde{\phi}_k^{e}|_{\Sigma}=0,\qquad \| \tilde{\phi}_k^{e}|_{\Sigma}\|_{L^2(\Sigma)}\leq  Ce^{-c k }\|\tilde{\phi}_k^{e}\|_{L^2(M)}\leq Ce^{-c' \lambda_k^{e}}\|\tilde{\phi}_k^{e}\|_{L^2(M)}.$$

Setting $\phi_k^{\eo} (z, \theta) = R(z)^{-1/2}\tilde{\phi}_k^{\eo} (z, \theta) \|\tilde{\phi}_k^{\eo}\|_{L^2}^{-1}$ concludes the proof of the lemma.
\enp

\section{About $\mc{T}$GCC: Proof of Proposition~\ref{e:h1/4}}
\label{s:appendix-h1/4}
\begin{proof}[Proof of Proposition~\ref{e:h1/4}]
Here, $M=S^2$ and $\Sigma$ is an equator. We take the following coordinates on $S^2$:
$$[0,2\pi)\times [0,\pi]\ni (\theta,\phi)\mapsto (\cos \theta\sin\phi,\sin \theta\sin\phi,\cos \phi)\in S^2 ,$$
and let $\Sigma:=\{\phi=\frac{\pi}{2}\}$.

 Then an orthonormal basis of Laplace eigenfunctions is given by 
$$Y^m_l(\theta,\phi)=\left(\frac{(l-m)!(2l+1)}{4\pi(l+m)!}\right)^{1/2}e^{im\theta}P^m_l(\cos \phi),\quad\quad -l\leq m\leq l,$$
where $P^m_l$ is an associated Legendre function (see for example \cite[Section 14.30]{NIST}). For the definition of $P^m_l$ see \cite[Section 14.2]{NIST}. Note that 
$$(-\Delta_{S^2}-\lambda_l^2)Y^m_l=0,\quad\quad \lambda_l:=\sqrt{l(l+1)} \sim_{l\to \infty} l.$$

We take $\phi_l = Y_l^{l-1}$, and recall that $\Sigma=\{\phi=\frac{\pi}{2}\}$.
By \cite[Section 14.30ii and Section 14.5(i)]{NIST}, we have
$$\phi_l|_{\Sigma} = Y^{l-1}_l\left(\theta,\frac{\pi}{2}\right)=0 , $$
since $P^{l-1}_l(0) = 0$. Moreover, using~\cite[Equation~(14.5.2)]{NIST} together with the definition of $Y^{l-1}_l$, we have
\bnan
\label{e:calculus}
\left| \d_\nu \phi_l |_{\Sigma} \right| = \left|\partial_\phi Y^{l-1}_l\left(\theta,\frac{\pi}{2}\right)\right| =\left|\left(\frac{(2l+1)}{4\pi(2l-1)!}\right)^{1/2}\frac{2^{l}\pi^{1/2}}{\Gamma(-l+\frac{1}{2})}\right|\sim cl^{3/4}.
\enan
Indeed, note that for $l\geq 1$, 
$$\Gamma(\frac{1}{2}-l)= \frac{(-1)^l \pi}{\Gamma(l+ \frac12)}  =  \frac{(-1)^l \sqrt{\pi} 2^{2l}l! }{(2l)!} 
=\frac{2^{l}(-1)^l\sqrt{\pi}}{\prod_{j=1}^l(2j-1)}$$
and 
$$\frac{\prod_{j=1}^l(2j-1)^2}{(2 l-1)!}= \prod_{j=2}^l \frac{2j-1}{2j-2} =e^{\sum_{j=2}^l \log (1+\frac{1}{2j-2})}.$$
Then, note that 
$$\sum_{j=2}^l\log \Big(1+\frac{1}{2j-2}\Big)= \frac{1}{2}\log l +O(1)$$ 
and in particular, 
$$
\frac{\prod_{j=1}^l(2j-1)}{\sqrt{(2l-1)!}}\sim c l^{\frac{1}{4}}.
$$
The above four lines finally prove~\eqref{e:calculus}. Therefore, for $l$ large enough, we obtain
$$\lambda_l^{-1/4} \sim l^{-1/4}=l^{-1/4}\|Y^{l-1}_l\|_{L^2(S^2)}\geq c \|l^{-1}\partial_{\phi}Y^{l-1}_l\|_{L^2(\Sigma)},$$
which concludes the proof of the lemma.
\end{proof}

%%%%%%%%%%%%%%%%%%%%%%

\small
\bibliographystyle{alpha}     
\bibliography{bibli}

\end{document}